%% file: macro.tex
\documentclass[letterpaper,11pt,reqno]{amsart} 
\usepackage[margin=1in]{geometry}
\usepackage[T1]{fontenc}
\input{SL_style}

\begin{document}
\title[FPP on spread-out cycle graphs]{Phase Transition and Fluctuation Results for First-Passage Percolation on Spread-Out Cycle Graphs}
\author[Dey]{Partha S.~Dey$^\star$}
\author[Kim]{Daecheol Kim$^\dagger$}
\address{Department of Mathematics, University of Illinois Urbana-Champaign, Urbana, Illinois 61801}
\email{\{$^\star$psdey,$^\dagger$dk43 \}@illinois.edu}
%\date{\today}
\subjclass[2020]{Primary 60K35; Secondary 60F05, 60J80, 82B43.}
\keywords{First-passage percolation, spread-out graph, phase transition,
central limit theorem, Crump--Mode--Jagers process, branching random walk,
extreme-value, hop-count.}
%%%%%%%%%%%%%%%%%%%%%%%%%%%%%%%%%%%%%%%%%%%%%%%%%%%
\begin{abstract}
We study first-passage percolation on the $\ell$-spread-out one-dimensional cycle of size $n$, where vertices are connected if their graph distance is at most $\ell$. We assign i.i.d.~non-negative random weights from a Weibull distribution $\go_e \sim \Exp(1)^{1/\theta}$ to the edges for $\theta>0$ fixed. This paper investigates the transition in the asymptotic behavior of the passage time $T_n$ between two typical vertices and the hop-count of the optimal path as the connectivity parameter $\ell$ diverges with $n$. We identify two fundamentally distinct geometric regimes. In the \emph{mesoscopic} regime ($1 \ll \ell \ll n$), the optimal path locally mimics a spatial branching random walk but remains globally constrained to a one-dimensional geometry. We establish a law of large numbers characterized by the front speed of a Crump--Mode--Jagers branching random walk, prove a central limit theorem with Gaussian fluctuations when  $\ell\ll n^{1/4}$, and show that the expected hop-count grows proportionally with the spatial distance. In the \emph{macroscopic} regime ($\ell \approx \gl n$ for $\gl \in (0,1/2)$), the graph becomes a highly connected mean-field network. We prove that the passage time collapses to a $\log n$ scale with constant order non-Gaussian fluctuations, explicitly determining the extreme-value limit driven by the collision of two independent non-spatial CMJ processes. We establish a law of large numbers for the hop-count. Finally, we rigorously trace the transition in the order of the mean of $T_n$ between these two regimes, demonstrating an order transition for the passage time across the critical connectivity threshold $\ell \asymp n/\log n$. Our results provide a comprehensive deterministic-range interpolation from spatial Gaussian fluctuations to mean-field extreme-value fluctuations.
\end{abstract}
%%%%%%%%%%%%%%%%%%%%%%%%%%%%%%%%%%%%%%%%%%%%%%%%%%%
\maketitle
\setcounter{tocdepth}{1}\tableofcontents

%%%%%%%%%%%%%%%%%%%%%%%%%%%%%%%%%%%%%%%%%%%%%%%%%%%
\section{Introduction}\label{sec:intro}
\subsection{The Model}\label{ssec:model}
Consider a locally finite base graph $G$. Given an integer $\ell \ge 2$, we construct the $\ell$-spread-out graph $G^{(\ell)}$ by connecting every pair of vertices whose graph distance is at most $\ell$. In this article, we study first-passage percolation on $G^{(\ell)}$ with independent and identically distributed non-negative edge weights. Let 
\[
T^{(\ell)}_G(u,v):=\text{first-passage time from vertex $u$ to vertex $v$ in } G^{(\ell)},
\]
representing the minimum time required for information to propagate across the network from vertex $u$ to vertex $v$.

One motivation comes from propagation on a periodic network with a growing interaction range. Vertices represent individuals or routers, edges correspond to direct communication links, and weights represent a transmission time or latency. Thus, $T_G^{(\ell)}(u,v)$ is the minimum travel time for an infection or message from $u$ to $v$; such FPP interpretations naturally arise in epidemic and routing models~\cite{bhk14epidemic,baccelli09sinr,ln18}. When $G$ is a cycle, the model also serves as a deterministic counterpart to ring-based small-world networks. While classical small-world models introduce random long-range connections via rewiring or sparse shortcuts~\cite{wattsstrogatz98,newmanwatts99,kv16}, our model connects every pair within distance $\ell$ deterministically.

The parameter $\ell$ controls the interaction range while the edge-weight law remains fixed. When $\ell$ is fixed, the graph geometry dominates at large scales but there are several local bypasses. When $\ell$ grows with $n$, the number of available local routes increases. Finally, the model moves toward a highly connected, mean-field-like regime, when $\ell$ is comparable to the graph diameter. Our goal is to determine how the asymptotic mean, fluctuation, and distributional properties of $T^{(\ell)}(u,v)$ evolve when the graph distance between $u$ and $v$ increases to infinity and the connectivity parameter $\ell$ varies. We ask three interrelated questions, namely, a)~Propagation Speed, b)~Distributional Phase Transitions, and c)~Optimal Path Geometry.

The behavior of the first-passage time across the domain of size $n$ depends intrinsically on the relative growth of the connectivity range $\ell$ as $n \to \infty$. This model naturally partitions into three distinct regimes:
\begin{enumerate}[i)]
 \item \textbf{Microscopic Regime} ($\ell\asymp 1$):~The connectivity is local and bounded.
 \item \textbf{Mesoscopic Regime} ($1 \ll \ell \ll n$):~The connectivity diverges, but stays sublinear in the system size.
 \item \textbf{Macroscopic Regime} ($\ell \approx \gl n$, for some $0<\gl\le 1/2$):~The connectivity range is a strictly positive fraction of the entire network.
\end{enumerate}
In this article, we mainly focus on the case when the base graph $G$ is the one-dimensional cycle graph $\dT_{n}:=\dZ/n\dZ$ on vertex set $\cV_{n}:=\{0,1,\ldots,n-1\}$ and $\ell=\ell_{n}$ increases to $\infty$ as the graph size increases to $\infty$. We write
\begin{align}\label{eq:cycle-distance}
 d_n(x,y):=\min\bigl\{\abs{x-y},n-\abs{y-x}\bigr\}
\end{align}
for the graph distance on $\dT_n$, and we identify $\cV_n$ with $\{0,1,\ldots,n-1\}$ throughout; in particular the vertex $n$ is the vertex $0$. On the $n$-cycle the graph distance never exceeds $\lfloor n/2\rfloor$, so $\gl\in(0,1/2]$ exhausts the macroscopic range, and $\gl=1/2$ gives the complete graph.

Formally, given a positive integer $\ell\le n/2$, the $\ell$-spread-out version (also known as the $\ell$-th power) of $\dT_n$ is defined by $\dT_n^{(\ell)} = (\cV_n, \cE_n^{(\ell)})$, where $\cE_n^{(\ell)}$ is the set of edges
\begin{align}\label{eq:cycle-edge-set}
	\cE_n^{(\ell)} := \bigl\{ \{x,y\}\subseteq\cV_n : 1\le d_n(x,y) \leq \ell \bigr\}.
\end{align}
Let $\go_{e}=\go_{x,y}=\go_{y,x}$ denote non-negative, i.i.d.~random weights assigned to every edge $e=\{x, y\} \in \cE_n^{(\ell)} $ with common distribution function $F$.

\begin{ass}\label{ass:weight}
 We assume that the edge weights $(\go_e,e\in\cE_n^{(\ell)})$ follow a Weibull distribution, \ie
\begin{align}\label{eq:edge-law}
 \go_e\sim \Exp(1)^{1/\theta} \text{ for some fixed } \theta>0.
\end{align}
We denote for $t\ge 0$,
\begin{alignat*}{2}
 F(t) &:=\pr(\go_e\le t)=1-e^{-t^\theta},
 \qquad 
 &\bar F(t)&:=1-F(t)=e^{-t^\theta},
 \\
 H(t)&:=-\log\bar F(t)=t^\theta,
 \qquad
 &h(t)&:=H'(t)=\theta t^{\theta-1},
\end{alignat*}
the distribution function, survival function, cumulative hazard function and hazard density, respectively.
Thus $H(\go_e)\sim\Exp(1)$.
\end{ass} 

For vertices $u,v$ in the graph $\dT_{n}^{(\ell)}$, we write
\begin{align*}
 T_{n}^{(\ell)}(u,v)
 := T^{(\ell)}_{\dT_n}(u,v) = 
 \inf_{\pi:u\to v}\sum_{e\in\pi} \go_e
\end{align*}
for the first-passage time restricted to paths entirely within $\dT_{n}^{(\ell)}$. In the companion paper~\cite{DK26a}, we fully characterized the \textit{microscopic regime} for general $F$. By relying on the spatial one-dimensional structure, we identified random ``pivot nodes'' that decompose the optimal path into exactly independent and identically distributed renewal blocks. This decomposition led to classical Gaussian central limit theorems or non-Gaussian stable limit laws, depending on the tail of the edge weights.

However, as $\ell \to \infty$, the regeneration framework completely breaks down. The increasing number of available paths allows geodesics to bypass local bottlenecks with high probability. As a result, ordinary pivot nodes disappear, and the regenerative approach used in the microscopic regime no longer applies. In this paper, we turn our focus to the \emph{mesoscopic} and \emph{macroscopic} regimes to characterize the precise fluctuation limits and the distributional phase transitions as the model evolves from a one-dimensional local regime toward a mean-field regime. Analysis of the meso- and macroscopic regimes in this article is self-contained and can be read independently of the companion paper~\cite{DK26a}.

Fix a deterministic target location $\sfu\in(0,1/2)$. \emph{Throughout the paper, assume that $\sfu_n \in \dN$ and $\sfu_n/n \to \sfu$ as $n\to\infty$}. We are interested in the fixed-target point-to-point first-passage time on the $\ell$-spread-out cycle
\begin{align}\label{eq:fixed-target-Tn}
 T_n=T_n^{(\ell)}:=T_{\dT_n}^{(\ell)}(0,\sfu_n).
\end{align}
This deterministic-target formulation is the common setup for both the mesoscopic and macroscopic results. In the mesoscopic proofs, we will exploit the conditions $\sfu<1/2$ and $1\ll \ell\ll n$ to compare the cycle geometry near the geodesic scale with an $\ell$-spread-out line segment; these line-segment restrictions are needed only within the proof sections. In the macroscopic regime, the limiting law is independent of the particular fixed target $\sfu\in(0,1/2)$. For proof convenience, the macroscopic proof may therefore use an independent uniform target $U_n$ in place of $\sfu_n$; the corresponding random-target formulation is
\begin{align}\label{eq:unif-target-Tn}
	T_n^{\unif}
	:=
	T_n^{(\ell)}(0,U_n).
\end{align}

\subsection{Main results}\label{ssec:main-results}
The mesoscopic and macroscopic regimes exhibit different limiting mechanisms. In the mesoscopic regime, the range $\ell$ diverges but remains sublinear. The large-scale geometry is still one-dimensional, while the local optimization no longer has the exact pivot-renewal structure of the microscopic regime. Instead, local Dijkstra explorations are approximated by a spatial CMJ branching random walk, and the remaining one-dimensional structure can be recovered through a block decomposition.

Define 
\begin{alignat}{2}
	\fa_\ell &:=\bigl((2\ell-1)\cdot \Gamma(\theta+1)\bigr)^{1/\theta},\qquad
 &&\gth\ :=\max\{1,1/\theta\}, \label{eq:aell-def}\\
 \gf(\beta)&:=(\sinh\beta)/{\beta},
 \qquad
 &&\vstar:=
 \inf_{\beta>0} \gf(\beta)^{1/\theta}/\beta,\label{def:vstar}
\end{alignat}
and let
\begin{align}\label{eq:beta_c}
 \gbc:=\text{the unique minimizer in the definition of $\vstar$}.
\end{align}
The infimum in~\eqref{def:vstar} is attained at a unique $\gbc\in(0,\infty)$; this is verified in Lemma~\ref{lem:betac-unique} below. Note that $\gf(\beta)=\E e^{\beta\xi}$ for $\xi\sim\Unif[-1,1]$, and that $\vstar\in(0,\infty)$.
The constant $\vstar$ can be interpreted as the right-front speed of the limiting spatial CMJ branching random walk (see Section~\ref{sec:local-cmj} for more details). Now, we present our first main result for deterministic target. 

\begin{thm}[Mesoscopic]\label{thm:meso-main}
Assume that the edge weights satisfy Assumption~\ref{ass:weight}. Fix $\sfu\in(0,1/2)$ and let $T_n^{(\ell)}(0,\sfu_n)$ be the first-passage time defined in~\eqref{eq:fixed-target-Tn}. The following holds.
\begin{enumeratea}
 \item \label{meso-mean} When $1\ll \ell\log\ell \ll n$, we have
 \begin{align} \label{eq:meso-mean}
 \frac{\fa_\ell\ell}{\sfu_n} \cdot T_n^{(\ell)}(0,\sfu_n)
 \stackrel{\pr}{\to}
 \frac1{\vstar}
 \text{ and }
 \frac{\fa_\ell\ell}{\sfu_n} \cdot\E T_n^{(\ell)}(0,\sfu_n)
 \to \frac{1}{\vstar} \text{ as }n\to\infty,
\end{align}
where $\vstar\in (0,\infty)$ is the constant defined in~\eqref{def:vstar}.

 \item \label{meso-var} When $1\ll\ell\ll n$, there is a constant $C=C(\theta,\sfu)\in (0,\infty)$ such that, for all large $n$,
\begin{align} \label{eq:meso-var-ublb}
 C^{-1}\cdot \left( \frac n{\ell^{2+2/\theta}} \vee\frac1{\ell^{2/\theta}} \right)
 \le \var\bigl(T_n^{(\ell)}(0,\sfu_n)\bigr)
 \le C\cdot \frac{n}{\ell^{1+2/\theta}}\cdot (\log\ell)^{2\gth}.
\end{align}

 \item \label{meso-clt} When $\ell\ll n^{1/4}(\log n)^{-3\gth/2}$, we have
 \begin{align}\label{eq:meso-clt}
 \frac{T_n^{(\ell)}(0,\sfu_n)-\E T_n^{(\ell)}(0,\sfu_n)}{\sqrt{\var\bigl(T_n^{(\ell)}(0,\sfu_n)\bigr)}}
 \Rightarrow\N(0,1)\text{ as } n\to\infty.
\end{align}
\end{enumeratea}
\end{thm}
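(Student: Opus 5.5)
The plan is to compare the local geometry with a spatial CMJ branching random walk, and then to recover the one-dimensional large-scale structure through a block decomposition.

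\emph{Local picture.} Fix a vertex $x$ and rescale space by $\ell$ and time by $\fa_\ell$. The weights of the $2\ell$ edges at $x$, multiplied by $\fa_\ell$ (recall $(2\ell-1)\Gamma(\theta+1)=\fa_\ell^\theta$), form approximately a Poisson process on $[0,\infty)\times[-1,1]$. Its intensity is $\theta t^{\theta-1}\,dt\cdot\tfrac12 dx/\Gamma(\theta+1)$, up to the normalization fixed by $\fa_\ell$. Hence the first $O(\ell^{1-\epsilon})$ steps of the Dijkstra exploration from $0$ can be coupled with a CMJ branching random walk whose displacements are $\Unif[-1,1]$. We must control the collisions that occur when the explored cluster revisits vertices, and these become rare while the cluster is small compared with $\ell$.

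\emph{Front speed.} For this branching random walk, the standard many-to-one and Biggins-type computation gives the Malthusian parameter $\alpha(\beta)$, determined by $\E\sum_i e^{\beta\xi_i-\alpha\tau_i}=1$, in the form $\alpha(\beta)=c\,\gf(\beta)^{1/\theta}$. The rightmost particle at time $t$ therefore sits at $\vstar t(1+o(1))$, where $\vstar=\inf_\beta\alpha(\beta)/\beta$. Lemma~\ref{lem:betac-unique} supplies the uniqueness of the minimizer $\gbc$, which the fluctuation bounds need.

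\emph{Law of large numbers, part (a).} I would cut $[0,\sfu_n]$ into blocks of length $L\ell$, with $L=L_n\to\infty$ slowly and $L\ell\ll n$. This is where $\ell\log\ell\ll n$ enters, so that enough blocks are present. For the upper bound, I concatenate near-optimal block crossings. Each crossing costs $\approx L\ell/(\vstar\fa_\ell\ell)\cdot\ell$ in original units, and the errors from the block junctions are $O(1/\fa_\ell)$ per block. For the lower bound, I use a first-moment count of paths via the many-to-one formula: the expected number of paths reaching distance $\sfu_n$ in time $(1-\epsilon)\sfu_n/(\vstar\fa_\ell\ell)$ decays exponentially. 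Uniform integrability, obtained from the trivial bound by a straight path of about $\sfu_n/\ell$ hops, upgrades convergence in probability to convergence of the mean.

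\emph{Variance, part (b).} For the upper bound I would use the Efron--Stein or Kesten--Benaim--Rossignol inequality, $\var T_n\le\sum_e\E[(\go_e-\go_e')_+^2\mathbf 1_{e\in\gamma}]$. The geodesic has about $n/\ell$ edges, each of weight about $(\log\ell)^{1/\theta}/\fa_\ell$ at worst (an extreme-value bound on the minimal edges used). This gives $\frac n\ell\cdot\ell^{-2/\theta}(\log\ell)^{2\gth}$. Alternatively, I would bound martingale increments over blocks. For the lower bound I would use two separate arguments:
\begin{itemize}
\item the term $\ell^{-2/\theta}$ comes from conditioning on everything except the edges at the vertex $0$, whose minimum weight has variance of order $\fa_\ell^{-2}$;
\item the term $n\ell^{-2-2/\theta}$ comes from a Newman--Piza style argument. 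We resample, in each of the $n/\ell$ blocks, a set of edges that the geodesic uses with positive probability, and each such resampling changes $T_n$ by order $1/(\ell\fa_\ell)$.
\end{itemize}

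\emph{CLT, part (c).} I would write $T_n$ as a sum of nearly independent block passage times between ``approximate pivots''. These are short windows, of length $K\ell$ with $K\asymp\log n$, and the geodesic must cross them. Coalescence of geodesics in $1{+}1$ dimensions with spread $\ell$ should make the dependence between blocks that are more than one window apart exponentially small. Then a CLT for $m$-dependent triangular arrays (Lindeberg with Stein's method) applies. The error from replacing $T_n$ by the block sum has to be $o(\sqrt{\var})$, and by part (b) $\sqrt{\var}\gtrsim\sqrt n\,\ell^{-1-1/\theta}$. The per-block error is polylog$\cdot\ell^{-1/\theta}$ times the number of blocks, so this requires $\ell^{4}(\log n)^{6\gth}\ll n$, which is the stated hypothesis.

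I expect the main obstacle to be establishing this approximate regeneration. Concretely, one must show that geodesic fragments starting from different points at one block boundary coalesce, or have equal passage-time differences, within $O(\ell\log n)$ distance with overwhelming probability. I would first try to prove this through the CMJ coupling and a shape/coupling argument for competing Dijkstra fronts.
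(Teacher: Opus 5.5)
Your part~(a) follows the paper's route closely. The lower bound is a first-moment count over the $\ell$-spread-out tree tilted at $\beta=\gbc$, and the upper bound uses CMJ-coupled short-window crossings; a few details there need repair and are listed at the end. The genuine gaps are in the bulk variance lower bound and in the CLT.

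\textbf{Variance lower bound.} Your Newman--Piza step does not reach the stated order. If each of the $n/\ell$ blocks contributes a squared effect of order $(\ell\fa_\ell)^{-2}$, the total is $n\ell^{-3-2/\theta}$. That is a full factor $\ell$ short of $n\ell^{-2-2/\theta}$. This loss is what tilting all edge weights of a block costs: a block carries $\asymp\ell^2$ edge variables, so the information cost is $\asymp\ell^2$ per block. Equivalently, rescaling all $\asymp n\ell$ weights costs $\eps\sqrt{n\ell}$ in total variation.

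The missing idea is to perturb the Dijkstra exploration, which has only $n$ steps, instead of the weight field.
\begin{itemize}
\item By Lemma~\ref{lem:hazard-residual}, the hazard-scale residuals of the active edges are i.i.d.\ $\Exp(1)$ given the history. Each discovery step therefore has conditional law $e^{-A_j(q)}\,\dd A_{j,e}(q)$.
\item Under $H\mapsto(1+\eps)H$, the Hellinger affinity of one step is exactly $2\sqrt{1+\eps}/(2+\eps)$, however many edges are active. Hence $\dTV\bigl(\cL(T_{[n]}),\cL((1+\eps)^{-1/\theta}T_{[n]})\bigr)\le C\eps\sqrt n$.
\item Take $\eps\asymp n^{-1/2}$ and use the lower tail $\pr(T_{[n]}\le t_0n\ell^{-1-1/\theta})\le 2e^{-cn/\ell}$. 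Lemma~\ref{lem:var-cc} then gives $\var T_{[n]}\gtrsim(\eps\, n\ell^{-1-1/\theta})^2=n\ell^{-2-2/\theta}$.
\end{itemize}
For the endpoint term, note that $T_{[n]}=\min_i(A_i+\go_{0,i})$ with $\cF_0$-measurable random shifts. You need a conditional variance bound uniform in the shifts (Lemma~\ref{lem:shifted-var-min}), not the variance of the smallest weight at $0$.

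\textbf{CLT.} Your plan rests on geodesic coalescence with exponentially small inter-block dependence. You do not prove this, and it is exactly the regenerative structure that disappears as $\ell\to\infty$. It is also unnecessary.

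Your own accounting would not give the stated window. If interface errors of size $\ell^{-1/\theta}\,\mathrm{polylog}$ add linearly over $n/b$ blocks, negligibility against $\sqrt n\,\ell^{-1-1/\theta}$ needs $b\gg\ell\sqrt n$. Lyapunov, with block fourth moments $\lesssim b^2\ell^{-2-4/\theta}$, needs $b\ll n\ell^{-2}$. Together these force $\ell\ll n^{1/6}$, not $n^{1/4}$.

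The paper deletes every edge crossing the interfaces $x_i=ib$, so $S_n^{(b)}$ is an exact sum of independent block times. Deterministically $0\le S_n^{(b)}-T_{[n]}\le\sum_i(Y_{i,-}^{(\ell)}+Y_{i,+}^{(\ell)})$ by Lemma~\ref{lem:block-cut}, with each detour bounded via Lemma~\ref{lem:complete-flooding}. The decisive point is that the \emph{centered} cut error grows like $\sqrt q$, not $q$. Let $\eps_N$ be the worst $L^2$ norm of the centered error over $N$ consecutive blocks. Splitting at a middle interface gives independent half-errors on disjoint edge sets, so $\eps_N\le(\eps_{\lfloor N/2\rfloor}^2+\eps_{N-\lfloor N/2\rfloor}^2)^{1/2}+C\norm{Y^{(\ell)}}_2$. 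Hence $\norm{\oT_{[n]}-\oS_n^{(b)}}_2\le C\sqrt{n/b}\,(\log\ell)^{\gth}\ell^{-1/\theta}$. The first condition becomes $b\gg\ell^2(\log\ell)^{2\gth}$, which together with the Lyapunov condition yields $\ell^4(\log\ell)^{6\gth}\ll n$.

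Lyapunov also needs block fourth moments, which plain Efron--Stein does not supply. The paper gets both moment bounds of Lemma~\ref{lem:block-moments} from the same dyadic splitting. Your Efron--Stein variance upper bound does work, but only after localizing. For a window $W$ of length $2\ell$ with restricted diameter $D_W$, one has $\sum_{e\in\gamma,\,e\subset W}\go_e^2\le D_W^2$. This gives $(\log\ell)^{2\gth}$, whereas a global extreme-value bound brings in a power of $\log n$. Finally, the segment-to-cycle transfer needs the mean gap between the two arcs from part~(a), plus fourth moments, to make the long arc negligible.

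\textbf{Details in (a).}
\begin{itemize}
\item Junction costs are not $O(1/\fa_\ell)$ if a crossing must end at a prescribed vertex. Hitting a given vertex costs at least of order $\log\ell/\fa_\ell$, which dominates the block time $L/\fa_\ell$ for slowly growing $L$.
\item The paper avoids junctions altogether. Each crossing is confined to its window, so the one-sided front estimate (Lemma~\ref{lem:finite-direc-one-side-speed}) is needed. It is stopped at first entrance into the next window and restarted there on fresh edges (Proposition~\ref{prop:finite-graph-window-tools}~(iii)).
\item The $\log\ell$ cost is then paid once, at $\sfu_n$, and that is where $\ell\log\ell\ll n$ enters. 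Having enough blocks only needs $n/\ell\to\infty$.
\item Uniform integrability needs the greedy best-of-$\asymp\ell$ forward path plus a terminal complete-graph connection. A fixed straight path is too slow by a factor $\fa_\ell$.
\item The birthday bound keeps collisions rare only while the cluster has $\ll\sqrt\ell$ particles, not $\ell^{1-\epsilon}$.
\end{itemize}
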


\begin{rem}[Suboptimality of the variance lower bound]
\label{rem:dijkstra-lower-bound-loss}
Up to logarithmic corrections, the true variance is expected to scale as $n\ell^{-1-2/\theta+o(1)}$.
Heuristically, a geodesic from $0$ to $n$ can be decomposed into roughly $n\ell^{-1}$ many independent local crossings, each traversing a spatial distance of scale $\ell$. Since each local crossing spans a time scale of $\ell^{-1/\theta}$, it contributes a local variance of order $\ell^{-2/\theta+o(1)}$. Summing these independent contributions yields the conjectural optimal scale $n\ell^{-1}\cdot \ell^{-2/\theta+o(1)} = n\ell^{-1-2/\theta+o(1)}$,
which matches the variance upper bound up to a polylogarithmic factor.

The global Dijkstra perturbation used in the proof of the variance lower bound combined with the result from~\cite[Lemma 1.2]{cha19} loses exactly one power of $\ell$. Because our comparison rescales the entire Dijkstra exploration simultaneously, it accumulates a total variation penalty of order $\eps\sqrt{n}$, forcing the choice $\eps\asymp n^{-1/2}$ to keep the error small. Given that the typical passage time is of order $n\ell^{-1-1/\theta}$, the deterministic shift produced by this global scaling is only
$
 \eps\cdot n\ell^{-1-1/\theta}
 \asymp
 \sqrt n\cdot \ell^{-1-1/\theta}.
$
Squaring this separation via Chatterjee's criterion yields precisely the suboptimal lower bound $n\ell^{-2-2/\theta}$.
\end{rem}
\begin{rem}[Expected CLT window]\label{rem:meso-clt-window}
We expect the Gaussian central limit theorem to hold for all $\ell\ll n/\log n$. Heuristically, as long as $n/\ell\to\infty$, the geodesic still traverses a diverging number of essentially one-dimensional correlation segments. Although each segment is locally governed by a CMJ-type wavefront, the total passage time remains a sum of many weakly dependent local contributions. Thus, the fluctuation mechanism should remain Gaussian until the number of these independent segments stops diverging, which is precisely the macroscopic regime.

In the proof of the CLT, we use a block decomposition approach with blocks of size $b$ with $\ell\ll b\ll n$. There are approximately $n/b$ many independent blocks and each interface introduces an approximation error of the order $\ell^{-1/\theta+o(1)}$, with total error bounded by $\ell^{-1/\theta+o(1)}\cdot \sqrt{n/b}$. To apply a CLT for triangular arrays, we need the error to be smaller than the standard deviation of the first-passage time and to verify Lyapunov fourth-moment condition we need $n/b\cdot \text{fourth central moment for fpp in a block} \ll \text{squared variance}$. Combining with our current variance lower bound, the first condition needs $n\ell^{-2-2/\theta}\gg \ell^{-2/\theta}\cdot n/b$ or $b\gg \ell^2$ and the second condition needs $n/b\cdot b^2\ell^{-2-4/\theta}\ll n^2\ell^{-4-4/\theta}$ or $b\ll n\ell^{-2}$. This restricts $\ell \ll n^{1/4+o(1)}$ in Theorem~\ref{thm:meso-main}~\eqref{meso-clt}. This is not expected to be optimal. Rather than reflecting a genuine phase transition of the model, this restriction is a technical bottleneck arising from our current variance lower bound. Establishing the sharp variance order
$
 \var(T_n)\asymp n\ell^{-1-2/\theta}
$
would extend the Gaussian fluctuation limit essentially throughout the entire mesoscopic regime $1\ll \ell\ll n$, up to polylogarithmic factors. See the discussion in Section~\ref{sec:open}.
\end{rem}

Next, we focus on the macroscopic regime. As $\ell$ becomes proportional to $n$, the graph becomes dense and the model enters a mean-field-like regime. In this setting, the linear growth of the passage time and its Gaussian fluctuations completely vanish. Instead, the passage time collapses to a logarithmic scale, with fluctuations that are of order $O(1)$ and non-Gaussian.

\begin{thm}[Macroscopic fluctuation limit]\label{thm:Macro}
Assume that the edge weights satisfy Assumption~\ref{ass:weight} and $\ell/n\to \gl\in(0,1/2)$. Let $U_n$ be an independent uniform vertex in $\{1,2,\ldots,n-1\}$, $T_n^{(\ell)}(0,U_n)$ be the first-passage time defined in~\eqref{eq:unif-target-Tn}, and let $\fa_\ell$ be as in
\eqref{eq:aell-def}. Then
\begin{align*}
 \fa_\ell\cdot T_n^{(\ell)}(0,U_n)-\log n
 \Rightarrow S_\infty\text{ as } n\to\infty,
\end{align*}
where
\begin{align}\label{eq:macro-limit-representation}
 S_\infty\stackrel{d}{=}
 -\log M_\infty^{(1)}-\log M_\infty^{(2)}-G_3-\log\theta,
\end{align}
$M_\infty^{(1)},M_\infty^{(2)}$ are i.i.d.~copies of $M_\infty$ defined in~\eqref{eq:Minfty-def}, $G_3$ has the standard Gumbel distribution and is independent of $(M_\infty^{(1)},M_\infty^{(2)})$.
In particular,
\begin{align*}
 \frac{n^{1/\theta}}{\log n}\cdot T_n^{(\ell)}(0,U_n)
 \stackrel{\pr}{\to} \bigl(2\gl\Gamma(\theta+1)\bigr)^{-1/\theta} \text{ as } n\to\infty.
\end{align*}
The same results hold if we replace the random target $U_n$ by a fixed target $\sfu_n$ as defined in~\eqref{eq:fixed-target-Tn}.
\end{thm}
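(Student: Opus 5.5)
The plan follows the standard mean-field "two growing clusters" strategy, in the spirit of FPP on the complete graph and on dense random graphs. We start by reducing the weights. Since $\fa_\ell^\theta=(2\ell-1)\Gamma(\theta+1)$ and $\ell\asymp n$, only edges with $\go_e=O((\log n/n)^{1/\theta})$ matter. On that scale $\pr(\go_e\le t)=t^\theta(1+o(1))$, so the rescaled weights $\fa_\ell\go_e$ behave near zero like $\Exp$-type variables with intensity $\theta x^{\theta-1}\,dx/((2\ell-1)\Gamma(\theta+1))$.

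\textbf{Step 1: local CMJ approximation.} Run Dijkstra explorations from $0$ and from the target simultaneously. Each vertex has $2\ell$ neighbours, so for $|\cdot|\le n^{1/2-\eps}$ explored vertices the exploration from each endpoint is coupled with high probability with a non-spatial CMJ process. In this process each individual produces offspring at the points of a Poisson process with intensity $\theta x^{\theta-1}/\Gamma(\theta+1)\,dx$. Its Malthusian parameter is $1$, because $\int_0^\infty e^{-x}\theta x^{\theta-1}/\Gamma(\theta+1)\,dx=1$. I take $M_\infty$ in~\eqref{eq:Minfty-def} to be the martingale limit, so that the population size satisfies $N(t)e^{-t}\to M_\infty$ a.s. The coupling error comes from collisions (already-explored vertices) and from deviations of the weight law. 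Collisions are controlled by a birthday bound. The deviations $\fa_\ell\go_e$ versus the Poisson limit are bounded in total variation by $O(t^\theta\cdot$ polylog$/n)$ per edge.

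\textbf{Step 2: collision time.} Run both clusters up to time $t_n=\tfrac12\log n$. The cluster sizes are then $\approx M^{(1)}_\infty\sqrt n$ and $M^{(2)}_\infty\sqrt n$. The two explorations are asymptotically independent because they are disjoint with high probability. We then count connecting edges. Consider an edge between a vertex of age $a$ in cluster 1 and a vertex of age $b$ in cluster 2, with $a+b+\fa_\ell\go_e\le s$. Such an edge exists with probability $\approx\frac{1}{n}\cdot\frac{\theta (s-a-b)^{\theta}}{\Gamma(\theta+1)\cdot\theta}\cdot\frac{2\ell}{2\ell-1}$, computed for each pair at cycle distance $\le\ell$. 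Here is where the geometry enters. For uniform $U_n$, or by symmetry for fixed $\sfu_n$ since $\ell/n\to\gl$, a fraction of the relevant pairs is within distance $\ell$. This fraction is exactly compensated by $\fa_\ell$, which is normalized by $2\ell-1$ rather than $n$. I must check carefully that the density of adjacent pairs is $2\ell/n$, which cancels with $\fa_\ell^\theta$. Integrating against the stable age distributions $e^{-a}$ gives a Poisson number of connecting edges with mean $\theta^{-1}\cdot M^{(1)}M^{(2)}e^{s}/n$, up to the constant $\int\!\!\int\!\!\int$. This mean is computed via the Laplace transform, and the factor $\theta$ appears as $\log\theta$. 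Hence
\[
\pr\bigl(\fa_\ell T_n-\log n> s\mid M^{(1)},M^{(2)}\bigr)\to\exp\bigl(-\theta M^{(1)}M^{(2)}e^{s}\bigr).
\]
This is exactly the law of $-\log M^{(1)}-\log M^{(2)}-G_3-\log\theta$. Rigorously I would use a Poisson approximation (Chen–Stein / second moment) conditionally on the two explorations up to time $t_n$, together with time-shifting of the freely evolving clusters.

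\textbf{Step 3: consequences.} The law of large numbers follows because $\fa_\ell\sim(2\gl n\Gamma(\theta+1))^{1/\theta}$ and $S_\infty$ is tight. The fixed-target version follows because the Step 2 computation depends on the target only through the number of adjacent pairs, which is $2\ell$ per vertex regardless of the target.

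The main obstacle is Step 1–2 uniformity. It requires showing that the Dijkstra exploration up to size $n^{1/2}$ (not merely $n^{1/2-\eps}$) is still CMJ-like, and that $M_\infty$ is reached; the latter uses Kesten–Stigum-type $L\log L$ convergence of $N(t)e^{-t}$. For this I would first prove the result with clusters grown to $n^{1/2-\eps}$ and then absorb the remaining $\eps\log n$ of growth into the Poisson connection count. This exploits the fact that, conditionally, the later growth is again CMJ with a sum of independent martingale limits.
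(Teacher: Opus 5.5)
The two-cluster architecture (CMJ clusters meeting near rescaled time $\tfrac12\log n$) is right. But two steps that carry the real content are asserted rather than proved, and one of them cannot be done the way you suggest.

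\textbf{First gap: the spatial constraint.} In Step~1 you couple each exploration to a \emph{non-spatial} CMJ process. When $\gl<1/2$ this discards exactly what the collision rate depends on. A pair $(u,v)\in\cC_1(s)\times\cC_2(s)$ is a potential bridge only if $d_n(u,v)\le\ell$. Hence $B_s=N_1(s)N_2(s)\,(\pi_1^s*\widetilde\pi_2^s)(I_n)$, where $\pi_i^s$ are the empirical position measures on the circle and $I_n$ is the arc of length $\approx2\gl$.

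That each vertex has degree $2\ell$ says nothing about this quantity, and ``symmetry'' does not give $B_s/(N_1N_2)\to2\gl$:
\begin{itemize}
\item For a fixed target there is nothing to average over.
\item For the uniform target, averaging over $U_n$ gives only the mean. The fluctuation is governed by $\sum_{k\neq0}\abs{\pihat_1^s(k)}^2\abs{\pihat_2^s(k)}^2\abs{\widehat{\ind}_{I_n}(k)}^2$, which is small only if the clusters equidistribute. Clusters concentrated near their roots would give a bridge fraction of $0$ or $1$, and the limit would be a mixture, not $S_\infty$.
\end{itemize}

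What you need is the following: uniformly over $s\in[\eps\log n,\tfrac12\log n+y]$, for each fixed $k\neq0$ and bounded age weight $g$, $N_i(s)^{-1}\sum_{v\in\cC_i(s)}g(A_v^{(i)}(s))e^{2\pi\mathrm{i}kv/n}\to0$ in probability. The paper proves this in two steps. A many-to-two bound for the spatial branching random walk on $\dT_n$ uses that the jump law's Fourier coefficients tend to $\sin(2\pi k\gl)/(2\pi k\gl)$, which has modulus $<1$; hence the $k$-th mode grows in $L^2$ only like $e^{\rho_k s}$ with $\rho_k<1$. The Erd\H{o}s--Tur\'an inequality then converts modes into arc counts, jointly with ages. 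This yields both $B_s/(N_1N_2)\to2\gl$ and the product stable-age law of bridge endpoints (Proposition~\ref{prop:pair}). It is also what handles the fixed target, since re-rooting at $\sfu_n$ only multiplies each mode by a unimodular phase.

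\textbf{Second gap: the constant $\theta$.} Step~2 never actually derives it. You state a mean ``$\theta^{-1}M^{(1)}M^{(2)}e^{s}/n$ up to a constant'' and then write $\exp(-\theta M^{(1)}M^{(2)}e^{s})$. A static Chen--Stein count cannot get the constant right, because it depends on how the exploration conditions the cross-edges.
\begin{itemize}
\item If $a,b$ are current ages and the weights are treated as unconditioned, the count has mean $2\gl N_1N_2\,\E(A_1+A_2)^\theta/\fa_\ell^\theta\approx(\theta+1)N_1N_2/n$, since $\E(A_1+A_2)^\theta=\Gamma(\theta+2)$.
\item If $a,b$ are birth times and you sum over the whole history, you overcount by a factor of order $\log n$, because paths share prefixes.
\end{itemize}
Given the explorations, a bridge whose later endpoint was born when the other endpoint had age $\abs{a-b}$ must satisfy $\fa_\ell\go_e>\abs{a-b}$; otherwise that vertex would have joined the other cluster. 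So only $(a+b)^\theta-\abs{a-b}^\theta$ of hazard remains, with mean $\Gamma(\theta+2)-\Gamma(\theta+1)=\theta\Gamma(\theta+1)$.

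The paper makes this exact with three ingredients:
\begin{itemize}
\item the half-distance identity $\sfS_{\col}=\tfrac12\fa_\ell T_n$;
\item a censored two-source exploration in which each bridge is frozen at its activation lower bound;
\item memorylessness in the hazard scale $H(t)=t^\theta$.
\end{itemize}
Together these give $\pr(\sfS_{\col}>s)=\E e^{-\gL_s}$ with $\gL_s=\int_0^s2\theta\fa_\ell^{-\theta}\sum_{e\in\cB_q}A_e(q)^{\theta-1}\,\dd q$ (Proposition~\ref{prop:cox}). The mixing input then replaces $\sum_eA_e(q)^{\theta-1}$ by $\Gamma(\theta+1)B_q$; for $\theta<1$ this needs an integrated truncation at small ages. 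Integrating $2\gl N_1N_2\approx2\gl W_1W_2e^{2q}$ gives $\theta W_1W_2e^{2y}$.

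Finally, the $n^{1/2-\eps}$ versus $n^{1/2}$ obstacle you flag needs no restart of the clusters. The total ghost mass up to time $\tfrac12\log n+y$ has bounded expectation (Lemma~\ref{lem:ghost-compensator}). This is negligible against the populations on the window, which are of order at least $n^{\eps}$.
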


\begin{cor}[The exponential case $\theta=1$]\label{cor:exponential-case}
When $\theta=1$, the limiting CMJ process is the rate-one Yule process and $M_\infty\sim\Exp(1)$. Hence, if $G_i:=-\log M_\infty^{(i)}$ for $i=1,2$, then $G_1,G_2,G_3$ are independent standard Gumbel random variables and $S_\infty\equald G_1+G_2-G_3$.
\end{cor}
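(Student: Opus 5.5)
The corollary follows from Theorem~\ref{thm:Macro} once the limiting CMJ process in the case $\theta=1$ is identified. We assume $M_\infty$ in~\eqref{eq:Minfty-def} is the almost-sure limit of the Malthusian-normalized population $e^{-\alpha t}Z(t)$ (or of the corresponding intrinsic martingale) for the non-spatial CMJ process that arises as the local limit of Dijkstra's exploration. In that process, each individual has offspring born at the points of a Poisson process whose intensity is determined by the rescaled edge weights $\fa_\ell\go_e$.

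\textbf{Step 1: identify the process.} For $\theta=1$ we have $\fa_\ell=2\ell-1$, so each vertex has $2\ell$ neighbours carrying weights $\go_e\sim\Exp(1)$ and scaled by $(2\ell-1)$. By the Poisson limit for the order statistics of $2\ell$ i.i.d.\ exponentials multiplied by $2\ell-1$, the birth times of a given individual's children converge to a rate-one homogeneous Poisson process on $[0,\infty)$. In general the intensity is $\theta t^{\theta-1}\,dt$, i.e.\ $H'(t)$, and this equals $1$ when $\theta=1$. A CMJ process in which every individual reproduces at constant rate $1$ forever is exactly the rate-one Yule process. Its Malthusian parameter is $\alpha=1$, which matches $\int_0^\infty e^{-\alpha t}\,dt=1$.

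\textbf{Step 2: identify $M_\infty$.} For a Yule process started from a single individual, $Z(t)\sim\mathrm{Geom}(e^{-t})$ on $\{1,2,\ldots\}$. Hence
\[
\pr\bigl(e^{-t}Z(t)>x\bigr)=(1-e^{-t})^{\lfloor xe^t\rfloor}\to e^{-x},
\]
so $e^{-t}Z(t)\to M_\infty\sim\Exp(1)$ almost surely, with the limit identified by the martingale convergence theorem. The one point to check against~\eqref{eq:Minfty-def} is the normalization. If $M_\infty$ is defined through the intrinsic martingale, the constant is $\int_0^\infty te^{-t}\,dt=1$, so no extra scaling factor enters. Both conventions therefore give $\Exp(1)$.

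\textbf{Step 3: compute the law of $S_\infty$.} If $M\sim\Exp(1)$, then
\[
\pr(-\log M\le x)=\pr(M\ge e^{-x})=e^{-e^{-x}},
\]
which is the standard Gumbel distribution function. Setting $G_i:=-\log M_\infty^{(i)}$ for $i=1,2$, the variables $G_1,G_2$ are i.i.d.\ standard Gumbel and independent of $G_3$. Since $\log\theta=0$, the representation~\eqref{eq:macro-limit-representation} becomes $S_\infty\equald G_1+G_2-G_3$.

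The only delicate step is making sure the normalization of $M_\infty$ in~\eqref{eq:Minfty-def} is the one under which the limit has mean $1$. This is a matter of checking the definition, not a real obstacle.
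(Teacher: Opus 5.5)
Your proposal is correct and follows the same route as the paper. At $\theta=1$ the reproduction measure is $\mu_1(\dd s)=\dd s$, so the CMJ process is the rate-one Yule process, and $e^{-t}Z(t)\to M_\infty\sim\Exp(1)$, consistent with the paper's normalization $\E M_\infty=1/\theta=1$. Since $-\log$ of an $\Exp(1)$ variable is standard Gumbel, \eqref{eq:macro-limit-representation} with $\log\theta=0$ gives $S_\infty\equald G_1+G_2-G_3$.

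One side remark in your Step 1 is inaccurate, though harmless here. Under the paper's normalization $\fa_\ell$, the general limiting intensity is $\mu_\theta(\dd s)=s^{\theta-1}\,\dd s/\Gamma(\theta)$, not $\theta s^{\theta-1}\,\dd s$; the two differ by the factor $\Gamma(\theta+1)$ and coincide only at $\theta=1$.
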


The mesoscopic law gives a passage-time scale $\sfu_n/(\fa_\ell\ell)$, whereas the macroscopic law gives the scale $(\log n)/\fa_\ell$. These two scales meet when $n/\ell$ is of order $\log n$, or equivalently when $\ell$ is of order $n/\log n$. The next theorem records the resulting first-order crossover and a proof is given in Section~\ref{subsec:Macro-mean}.

\begin{thm}[First-order crossover]\label{thm:macro-cross}
Assume Assumption~\ref{ass:weight}, fix $\sfu\in(0,1/2)$, and let $T_n=T_n^{(\ell)}(0,\sfu_n)$.
\begin{enumeratea}
 \item \label{cross:meso}
 If $1\ll\ell\ll n/\log n$, then
 \begin{align*}
 \frac{\fa_\ell\ell}{\sfu_n}T_n
 \stackrel{\pr}{\to}
 \frac1{\vstar},
 \qquad
 \frac{\fa_\ell\ell}{\sfu_n}\E T_n
 \to
 \frac1{\vstar}.
 \end{align*}

 \item \label{cross:right}
 If $\liminf_{n\to\infty}\frac{\ell\log n}{n} >0$ and $\ell\le\frac n2,$
 then
 \begin{align*}
 T_n
 =
 \Theta_{\pr}\left(
 \frac{\log n}{\ell^{1/\theta}}
 \right),
 \qquad
 \E T_n
 =
 \Theta\left(
 \frac{\log n}{\ell^{1/\theta}}
 \right).
 \end{align*}

 \item \label{cross:macro}
 If $\ell/n\to\gl\in(0,1/2)$, then
 \begin{align*}
 \frac{\fa_\ell T_n}{\log n}
 \stackrel{\pr}{\to}
 1,
 \qquad
 \frac{\fa_\ell\E T_n}{\log n}
 \to
 1.
 \end{align*}
\end{enumeratea}
\end{thm}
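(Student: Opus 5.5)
The plan is to treat the three parts separately. Parts~\eqref{cross:macro} and~\eqref{cross:meso} are essentially extensions of Theorems~\ref{thm:Macro} and~\ref{thm:meso-main}. Part~\eqref{cross:right} needs a separate two-sided estimate: a uniform CMJ-type lower bound combined with a constructive upper bound.

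\emph{Part~\eqref{cross:macro}.} Convergence in probability follows from Theorem~\ref{thm:Macro} with fixed target, since $\fa_\ell T_n-\log n$ is tight and $\log n\to\infty$. To upgrade this to convergence of $\E T_n/\log n$, we show that $(\fa_\ell T_n/\log n)_n$ is uniformly integrable, which we do by bounding its second moment.

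\begin{itemize}
\item For the upper tail we use $k$ edge-disjoint two-hop paths $0\to w\to\sfu_n$. There are order $n$ admissible intermediate vertices $w$ when $\ell\approx\gl n$ with $\gl>0$.
\item Each such two-hop path has weight at most $2\max(\go,\go')$. The minimum of $m\asymp n$ i.i.d.\ such variables has tail $\pr(\cdot>t)\le (1-F(t/2)^2)^{m}$.
\item This gives $\E[(\fa_\ell T_n)^2]=O((\log n)^2)$, which is sufficient for uniform integrability after dividing by $\log n$.
\end{itemize}

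\emph{Part~\eqref{cross:meso}.} For $\ell\log\ell\ll n$, this is exactly Theorem~\ref{thm:meso-main}\eqref{meso-mean}. The condition $\ell\ll n/\log n$ implies $\ell\log\ell\ll n$, because $\log\ell\le\log n$. So part~\eqref{cross:meso} is a restatement, and nothing new is needed beyond checking this implication.

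\emph{Part~\eqref{cross:right}.} Here $n/\ell\le C\log n$, and the claim is $T_n\asymp \log n/\ell^{1/\theta}$.

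\textbf{Upper bound.} Cover the distance $\sfu_n$ by $K=\lceil \sfu_n/\ell\rceil=O(\log n)$ macro-steps between consecutive hubs $x_0=0,x_1,\dots,x_K=\sfu_n$ with spacing at most $\ell$.
\begin{itemize}
\item Inside each window, run the two ingredients of Theorem~\ref{thm:Macro}: a fast CMJ growth from each hub to size $\ell^{1/2}$, then a connection between the two clusters. This costs $O((\log \ell)/\fa_\ell)$ per step.
\item This naive bound gives a total of $K\log\ell/\fa_\ell$. That is too big when $K\asymp\log n$, since it equals order $(\log n)^2/\ell^{1/\theta}$.
\item So we do it more cleverly. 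Grow a single CMJ ball from $0$ until it contains $n^{\delta}$ vertices, at cost $O(\log n/\fa_\ell)$ by the Yule/CMJ comparison. Likewise grow one from $\sfu_n$.
\item A ball of size $n^{\delta}$ in the $\ell$-spread-out cycle spreads spatially: each generation moves distance up to $\ell$. After $O(\log n)$ generations the ball is spread over the whole arc between its centre and the other endpoint.
\item Alternatively, and more simply, use a greedy path of $K$ hops, each taking the minimum over order $\ell$ choices of the edge advancing by at least $\ell/2$. Each such hop costs $O(\ell^{-1/\theta})$ in expectation with exponential tails, so the total is $O(K\ell^{-1/\theta})=O(\log n\,\ell^{-1/\theta})$. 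This also gives $\E T_n=O(\log n/\ell^{1/\theta})$.
\end{itemize}

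\textbf{Lower bound.} Compare the Dijkstra exploration from $0$ with a non-spatial CMJ process in which each vertex has at most $2\ell$ children with weights $\go$. By a union bound over paths, the expected number of paths of $k$ hops with total weight at most $t$ is at most $n\cdot(2\ell)^k\, \pr(\sum_{i\le k}\go_i\le t)$.
\begin{itemize}
\item For Weibull weights the sum satisfies $\pr(\sum_{i\le k}\go_i\le t)\le (Ct^\theta)^{k}\cdot$(a combinatorial factor) for small $t$.
\item Summing over $k$ and choosing $t=c\log n/\ell^{1/\theta}$ with $c$ small makes the expectation $o(1)$ provided $k\gtrsim\log n$. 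A path of fewer than $c'\log n$ hops must have $k\ge\sfu_n/\ell\ge c''$, which needs separate handling.
\item The cleaner route is the extreme-value argument: the ball of radius $t$ has expected size at most $\exp(C\ell t^\theta\cdot\ldots)$. Set this against $n$, since reaching a typical vertex requires volume of order $n/\ell$.
\end{itemize}
The main obstacle is making this first-moment bound sharp enough to give the lower bound $\Theta(\log n/\ell^{1/\theta})$ uniformly across the window $n/\log n\lesssim\ell\le n/2$. In particular, it must handle the interplay between the spatial constraint (at least $\sfu_n/\ell$ hops) and the growth rate. The lower bound in probability then also gives the lower bound in expectation by Markov's inequality.
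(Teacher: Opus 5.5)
Part~(a) and the convergence in probability in part~(c) are fine. The uniform-integrability step in part~(c), however, does not work.

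Two-hop paths $0\to w\to\sfu_n$ exist only when $d_n(0,\sfu_n)\le 2\ell$, roughly $\sfu\le 2\gl$, and this is not assumed. Even when there are $m\asymp n$ edge-disjoint ones, each satisfies $\pr(\go+\go'\le t)\asymp t^{2\theta}$. Your tail bound is then $(1-F(s/(2\fa_\ell))^2)^m\approx\exp(-cs^{2\theta}/n)$, which is still close to $1$ at $s=C\log n$. The minimum of these path weights is of order $n^{-1/(2\theta)}$, so $\fa_\ell$ times it is of order $n^{1/(2\theta)}$. The resulting bound is only $\E[(\fa_\ell T_n)^2]=O(n^{1/\theta})$: paths with a bounded number of hops are exactly the ones that are slow at the $(\log n)/\fa_\ell$ scale.

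You need a construction with order $\log n$ hops. Your part~(b) greedy path is the natural candidate, but it has its own defect. Greedy steps that take the best of $\ge\ell/2$ forward edges cannot land on the prescribed vertex $\sfu_n$: a single prescribed last edge has weight of order one, far above the scale $\log n/\ell^{1/\theta}$. Indeed, your bound $O(K\ell^{-1/\theta})$ would give $\fa_\ell T_n=O(1)$ when $\ell\asymp n$, contradicting Theorem~\ref{thm:Macro}.

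The paper's repair is as follows. Run the greedy path only until it first enters $[\sfu_n-\ell,\sfu_n]$, and note that no edge with both endpoints in this window has been revealed. Then finish with the two-point complete-graph bound of Lemma~\ref{lem:complete-flooding} on $K_{\ell+1}$; this is your own first bullet, used once at the end rather than in every window. The result is $\norm{\fa_\ell T_n}_p\le C_p(n/\ell+\log\ell+1)$ for every $p$ (Lemma~\ref{lem:macro-cycle-Lp}). This is $O(\log n)$ throughout the regime of part~(b), and in particular it supplies the uniform integrability needed in part~(c).

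The lower bound in part~(b) is left open, and the obstacle you describe comes from the path count. For a fixed target, a simple $k$-edge path from $0$ to $\sfu_n$ has its last step forced, so there are at most $(2\ell)^{k-1}$ of them, not $n(2\ell)^k$. Combine this with $\pr(\go_1+\cdots+\go_k\le t)\le\Gamma(\theta+1)^kt^{k\theta}/\Gamma(k\theta+1)$ and $\fa_\ell^\theta=(2\ell-1)\Gamma(\theta+1)$. Sum over all $k\ge1$ using the Mittag--Leffler estimate $\sum_k r^{k\theta}/\Gamma(k\theta+1)\le C_\eta e^{(1+\eta)r}$ (Lemma~\ref{lem:ML-bound}). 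This gives $\pr(\fa_\ell T_n\le s)\le C\ell^{-1}e^{(1+\eta)^2s}$.

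Take $s=(1-\eps)\log n$ and choose $\eta$ with $(1+\eta)^2(1-\eps)<1$. Since $\ell\ge cn/\log n$, the bound is $O(n^{(1+\eta)^2(1-\eps)-1}\log n)\to0$. The single factor $1/\ell$ from the fixed endpoint is what beats the growth $e^{s}$. No split into short and long paths, no use of the spatial constraint $k\ge\sfu_n/\ell$, and no extreme-value heuristic are needed, and you even get the sharp lower constant $1$. Relatedly, your volume heuristic should compare $e^{s}$ with $\ell$, not with $n/\ell$: the latter would only force $s\gtrsim\log(n/\ell)=O(\log\log n)$ in this regime.
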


The second item includes the critical window $\ell\asymp n/\log n$. In particular, if $\ell\asymp n/\log n$, then
\begin{align*}
 T_n
 =
 \Theta_{\pr}\left(
 \frac{(\log n)^{1+1/\theta}}{n^{1/\theta}}
 \right),
 \qquad
 \E T_n
 =
 \Theta\left(
 \frac{(\log n)^{1+1/\theta}}{n^{1/\theta}}
 \right).
\end{align*}

For $\theta=1$, this is the order $(\log n)^2/n$. The theorem does not identify the exact constant in the critical window; the constant $1$ is proved here only when $\ell/n\to\gl\in(0,1/2)$. See Theorem~\ref{thm:Macro}.

We also analyze the hop-count, that is, the number of edges in the optimal path achieving the first-passage time. Since the edge-weight distribution is continuous and the graph is finite, the geodesic between any two vertices is almost surely unique; see the discussion around $\Omega_*$ in Section~\ref{sec:collision}.
\begin{thm}[Hop-count asymptotics]\label{thm:hop-count-asymptotics}
Assume that the edge weights satisfy Assumption~\ref{ass:weight}.
\begin{enumeratea}
\item \label{hop:meso} \textup{(Mesoscopic law of large numbers)}
Assume that $1 \ll \ell \ll n/\log n$. Let $H_n^{(\ell)}(0,\sfu_n)$ denote the number of edges in the a.s.\ unique geodesic realising $T_n^{(\ell)}(0,\sfu_n)$. Then, for fixed $\sfu\in(0,1/2)$, we have
\begin{align}\label{eq:meso-hop-mean}
 \frac{\ell}{\sfu_n} H_n^{(\ell)}(0,\sfu_n) \stackrel{\pr}{\to} \frac{\gbc}{\theta}
 \text{ and }
 \frac{\ell}{\sfu_n} \E H_n^{(\ell)}(0,\sfu_n)
 \to \frac{\gbc}{\theta} \text{ as } n\to\infty,
\end{align}
where $\gbc$ is the constant defined in~\eqref{eq:beta_c}.

\item \label{hop:macro} \textup{(Macroscopic law of large numbers)}
Assume that $\ell/n\to\gl\in(0,1/2)$. Let $U_n$ be chosen uniformly from $\dT_n\setminus\{0\}$ and let $H_n^{(\ell)}(0,U_n)$ be the number of edges in the geodesic from $0$ to $U_n$. Then
\begin{align}\label{eq:macro-hop-lln-main}
 \frac{H_n^{(\ell)}(0,U_n)}{\log n}
 \stackrel{\pr}{\to}\frac1\theta ,
 \text{ and }
 \frac{\E H_n^{(\ell)}(0,U_n)}{\log n} \to \frac{1}{\theta} \text{ as } n\to\infty.
\end{align}
\end{enumeratea}
\end{thm}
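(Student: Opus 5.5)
Both parts rest on the same mechanism. We realise the geodesic as the ray to the target in the Dijkstra exploration, read off the number of hops as the generation of that vertex in the exploration tree, and use the fact that exploration trees are approximated by CMJ processes, spatial in part~(a) and non-spatial in part~(b).

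\textbf{Part (a).} We rescale time by $\fa_\ell$ and space by $\ell$. Locally, the exploration from $0$ is then close to the spatial CMJ branching random walk of Section~\ref{sec:local-cmj}. A vertex reached at generation $k$ with displacement $x\ell$ satisfies a many-to-one identity: its weight is a $k$-fold convolution of $\Gamma(\theta+1)$-normalised Weibull waiting times, tilted by $e^{\beta\xi}$ with $\xi\sim\Unif[-1,1]$. A first-moment computation at the front, optimised over generation and tilt, shows the following. The right front advancing at speed $\vstar$ is realised by particles whose generation-to-displacement ratio equals $\partial_v$ of the Legendre dual at the minimiser. Differentiating $\log\gf(\beta)^{1/\theta}-\log(\beta v)$ at $\gbc$, this ratio equals $\gbc/\theta$ per unit of rescaled displacement.

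The argument then has two halves.
\begin{itemize}
\item \emph{Upper bound.} For any $\eps>0$, the expected number of vertices within the time window $T_n(1+\delta)$ and at the target scale whose hop count lies outside $(\gbc/\theta\pm\eps)\sfu_n/\ell$ is exponentially small in $\sfu_n/\ell$. This follows from a Chernoff bound using strict convexity of the rate function, which is the same convexity that gives uniqueness of $\gbc$ in Lemma~\ref{lem:betac-unique}.
\item \emph{Combination.} We combine this with the LLN $T_n\fa_\ell\ell/\sfu_n\to1/\vstar$ from Theorem~\ref{thm:meso-main}\eqref{meso-mean}, extended to $\ell\ll n/\log n$ by Theorem~\ref{thm:macro-cross}\eqref{cross:meso}. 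Every path of near-optimal weight must then have hop count near $(\gbc/\theta)\sfu_n/\ell$, so $H_n$ concentrates there.
\end{itemize}

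\textbf{Part (b).} Here we use the collision representation behind Theorem~\ref{thm:Macro}. We grow two explorations, from $0$ and from $U_n$, each approximated by a non-spatial CMJ process with Malthusian rate $\alpha$ normalised to $1$ after scaling by $\fa_\ell$. The collision happens when each cluster has size about $\sqrt n$, that is, at scaled time $\tfrac12\log n+O(1)$ on each side. In a CMJ process with Weibull reproduction, the generation of a typical individual born at time $t$ satisfies $\mathrm{gen}/t\to 1/(\alpha\mu)$, where $\mu$ is the mean age at childbearing under the Malthusian measure. For Weibull weights normalised as in $\fa_\ell$, the edge weight has density proportional to $t^{\theta-1}e^{-c t^\theta}$ near $0$, since only the $\ell^{1/\theta}$-scale behaviour near $0$ matters. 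With Laplace transform $\propto\lambda^{-\theta}$, this gives $\alpha\mu=\theta$. Each half of the path therefore has $(\log n)/(2\theta)$ hops, and the total is $(\log n)/\theta$.

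This requires a LLN for the generation of a uniformly chosen individual of the CMJ process. Along the stable-age-distribution tilt, generations form a renewal process with inter-renewal mean $\mu$. The collision edge is chosen size-biased but with $O(1)$ time offsets, which do not affect the leading order.

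\textbf{Expectations and main obstacle.} To pass to expectations we need uniform integrability. This follows from bounding $H_n$ by a constant times $\sfu_n/\ell$ (respectively $\log n$) outside events of probability $n^{-2}$, using the same first-moment path-counting bounds, together with the trivial bound $H_n\le n$.

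The main obstacle is the coupling error in part~(a). The CMJ approximation is local, on space scale $\ell$, while the geodesic spans $\sfu_n/\ell\to\infty$ blocks. We therefore need the concentration of hops to hold uniformly over the block decomposition. I would first try to handle this by a first-moment argument on the full graph directly. The count of paths with $k$ hops, displacement $\sfu_n$ and weight at most $t$ is bounded explicitly by $(2\ell)^k$ times the Weibull convolution tail, which avoids the coupling altogether.
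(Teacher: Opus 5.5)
Your route to the in-probability statements is the paper's. In (a) the paper runs exactly the first-moment count you fall back on at the end. Lifted simple paths are identified with vertices of the $\ell$-spread-out tree, and the tilt $e^{\gbc\sfp_\ell(v)-q\sft_\ell(v)}$ has generation-$m$ mass $K_\ell(q)^m$. Taking $q$ slightly above or below $q_\star=\gbc\vstar$ kills both generation tails at the deterministic time $(1/\vstar+\gd)\sfu_n/\ell$. This works because the $q$-derivative of the exponent at $q_\star$ is $(1-\theta r/\gbc)/\vstar\neq0$ for $r\neq\gbc/\theta$. Use this deterministic time in the first-moment count, not the random window $T_n(1+\gd)$. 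No coupling is involved, so your ``main obstacle'' never arises.

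In (b) the paper likewise writes $H_n=\Gen_1(V_{1,n})+\Gen_2(V_{2,n})+1$. It concentrates generations at $s/\theta$ by many-to-one plus a Chernoff bound for spine ages of law $\nu_n\approx\Gamma(\theta,1)$. The collision edge, however, is not ``size-biased with $O(1)$ offsets''. By the marked Cox formula (Proposition~\ref{prop:cox}) it is drawn among the bridges with weight $A_e(s)^{\theta-1}$. So you need the integrated hazard over $[\fs_0,\fs_1]$ carried by bridges with a bad-generation endpoint to be $o_{\pr}(1)$. That follows from three ingredients:
\begin{itemize}
\item the expected bad fraction of the proposal population is exponentially small, uniformly in $n$;
\item the crude count $\fB_s^{\mathrm{bad}}\le|\Bad_{1,n}(s;\gd)|N_2(s)+|\Bad_{2,n}(s;\gd)|N_1(s)$;
\item the age truncation of Proposition~\ref{prop:pair}, needed since the kernel is singular at $0$ when $\theta<1$.
\end{itemize}
A law of large numbers for a uniformly chosen individual is not by itself enough.

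The genuine gap is the passage to expectations. Write $f_n:=\sfu_n/\ell$ in (a) and $f_n:=\log n$ in (b). Path counting controls only $\{H_n\ge k,\ \fa_\ell T_n\le c_0k\}$, whose probability is at most $\rho^k$. To get $\pr(H_n>Cf_n)\le n^{-2}$ you would also need $\pr(\fa_\ell T_n>c_0Cf_n)\le n^{-2}$, and the bound you get this way fails for $\theta<1$.

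To see this, consider the event that all $2\ell$ edges at the source have scaled weight above $t$. It forces $\fa_\ell T_n>t$ and has probability about $\exp(-t^\theta/\Gamma(\theta+1))$. This is $n^{-o(1)}$ both for $t\asymp\log n$ (part (b)) and for $t\asymp\sfu_n/\ell$ with $\sfu_n/\ell\ll(\log n)^{1/\theta}$ (allowed in part (a)). The trivial bound $H_n\le n$ costs a factor of order $\ell$ in (a) and $n/\log n$ in (b). So this route does not give uniform integrability.

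The fix, which is what the paper does, is not to aim for a polynomially small exceptional set. For every $k$ one has $\pr(H_n\ge k)\le\rho^k+\pr(\fa_\ell T_n>c_0k)$. Integrating in $k$ reduces uniform integrability of $H_n/f_n$ to $L^2$-boundedness of $\fa_\ell T_n/f_n$. That bound is supplied by the greedy estimate of Proposition~\ref{prop:finite-graph-window-tools}(i) in (a) and by Lemma~\ref{lem:macro-cycle-Lp} in (b).
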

\begin{rem}
 Note that, in the mesoscopic regime, the average time per hop is $(\vstar \fa_\ell)^{-1}/(\gbc/\theta) = {\theta}/(\gf(\gbc)^{1/\theta} \fa_\ell)$. The constant ${\theta}/\gf(\gbc)^{1/\theta}$ can be viewed as the mean age between births under the tilted spine law with $\beta=\gbc,\kappa=\gf(\gbc)^{1/\theta}$, as $\int se^{-\kappa s}\mu_\theta(\dd s)/\int e^{-\kappa s}\mu_\theta(\dd s) =\theta/\kappa$. Theorem~\ref{thm:meso-main}~\eqref{meso-mean} and Theorem~\ref{thm:hop-count-asymptotics}~\eqref{hop:meso} are therefore mutually consistent. The same comparison in the macroscopic regime returns $\log n/\theta$ generations, matching Theorem~\ref{thm:hop-count-asymptotics}~\eqref{hop:macro}. See Section~\ref{ssec:local-cmj-objects} for more details. When $\theta\gg1$, we have $\gbc\sim \theta, \vstar\sim e/\theta, \fa_\ell\sim (2\ell)^{1/\theta}\theta/e$, so that $\ell/\sfu_n\cdot \E T_n\sim 1$ and $\ell/\sfu_n\cdot \E H_n\sim 1$ for large $n$. When $\theta\ll1$, we have $\gbc/\theta\sim \sqrt{3/\theta}\gg1$ and many short hops are taken by the geodesic.
\end{rem}

The proof of Theorem~\ref{thm:hop-count-asymptotics} is presented in Section~\ref{sec:hop-count-fluct}. Next, we present a brief roadmap for the proof of our main results presented in this section.

%%%%%%%%%%%%%%%%%%%%%%%%%%%%%%%%%%%%%%%%%%%%%%%%%%%
\subsection{Roadmap for the proofs}\label{subsec:roadmap}
Both mesoscopic and macroscopic regimes share the same local CMJ approximation, and we use standard tools from CMJ and renewal theory, such as branching random walks and stable-age distributions. However, the global proof strategies are fundamentally different. The main technical novelty of our approach lies in the exact, model-specific reductions that connect the geometry of the spread-out graph to these classical theories.

\subsubsection{Roadmap for the mesoscopic regime}
We identify $\vstar$ as the front speed of the limiting spatial CMJ branching random walk. To apply this ideal speed to our graph, we locally couple the branching tree to the spread-out line before identifying the optimal path. Proposition~\ref{prop:finite-graph-window-tools} establishes the time needed to cross a local segment, along with the independence properties required to concatenate successive segments together. 
The upper bound on passage time follows by iterating these, while the matching lower bound arises because the tree structure overcounts available paths (path overlaps). Finally, a standard comparison transfers this segment estimate to the cycle.

For fluctuation control, Lemma~\ref{lem:block-cut} decomposes the total passage time into a sum of independent block times with small boundary errors. Sections~\ref{sec:meso-moment} and~\ref{sec:meso-fluct} show that these boundary errors are negligible, establish the required variance bounds, and yield the Gaussian central limit theorem. Finally, the restriction $\ell^4 (\log \ell)^{6\gth} \ll n$ is simply an artifact of our variance lower bound proof, rather than a genuine transition in the fluctuation behavior.

\subsubsection{Roadmap for the macroscopic regime}
Here, we grow two exploration clusters from the source and target. This reduction separates into a deterministic geometric step and a probabilistic conditional step. The half-distance identity and Proposition~\ref{prop:cox}, respectively, yield
\begin{align*}
 \tau_{\col}=\text{the collision time}=\frac{1}{2}T_n,
 \text{ and }
 \pr(\fa_\ell \tau_{\col}>s)=\E e^{-\gL_s},
\end{align*}
where $\gL_s$, defined in~\eqref{eq:Lambda-def}, is the cumulative hazard accumulated by all bridge edges that have become active by rescaled time $s$. Informally, each active bridge contributes the hazard accumulated since it became available to connect the two growing clusters, so $\gL_s$ measures their total opportunity to collide by time $s$. Conditional on the exploration history, $e^{-\gL_s}$ is exactly the probability that none of these bridges has yet triggered a collision. Here, we note that the collision hazard $\gL_s$ depends on the actual spatial locations of the available vertices. 

\smallskip
\noindent{\bfseries Removal of the spatial geometry.}
We couple each growing cluster to an ideal branching tree, showing that internal self-intersections within a single cluster are negligible. However, tracking population sizes alone is insufficient, since the collision hazard $\gL_s$ depends on the actual spatial locations of the available vertices. Proposition~\ref{prop:pair} uses Fourier decay and an application of the Erd\H{o}s--Tur\'an inequality for interval discrepancy~\cite[Theorem~2.5]{kn74} to establish that the two clusters become uniformly mixed across the space. Because of this spatial mixing, the rate at which the two clusters connect simplifies to a mean-field product of their population sizes, $2\gl N_1(s)N_2(s)$, and the states of the connecting vertices become independent. This is the critical step that allows us to replace the complex spatial exploration with tractable, purely probabilistic calculations.

\smallskip
\noindent{\bfseries Integrated hazard and hop-count.}
This spatial mixing result implies that for every fixed $y\in\dR$,
\begin{align*}
 \gL_{\frac12\log n+y}
 \Rightarrow
 \theta M_\infty^{(1)}M_\infty^{(2)}e^{2y}\text{ as } n\to\infty.
\end{align*}
Combining this limit with the exact Cox formula yields an explicit Gumbel-type limit for the passage time. Tracking generations within the underlying branching process gives the macroscopic hop-count law of large numbers, while a path-counting argument handles the crossover behavior at the regime boundary.

%%%%%%%%%%%%%%%%%%%%%%%%%%%%%%%%%%%%%%%%%%%%%%%%%%%
\subsection{Literature review}\label{ssec:literature}
In this section, we will briefly review existing literature related to the model discussed in this article.
First-passage percolation on lattices was introduced by Hammersley and Welsh~\cite{hamwelsh65} as a model for flow of liquid through porous media; see Auffinger, Damron, and Hanson~\cite{adh17} for a modern account. In the classical lattice setting the graph is fixed, whereas here the deterministic range $\ell$ and hence the degree grow with $n$. We organize the discussion around the two mechanisms in our proofs: one-dimensional block accumulation in the mesoscopic regime and two-source collision in the macroscopic regime.

\begin{enumeratea}
\item \textbf{Finite-range FPP and regenerative models.}
Fixed-range and finite-width one-dimensional models are typically analyzed via regeneration or finite-dimensional state processes. Relevant results include finite-height and periodic graphs~\cite{herrndorf85,a15}, ladder and width-two models~\cite{renlund10,schlemm09,schlemm11,fgs11}, and thin cylinders with growing width~\cite{cd13}. Building on this regenerative picture, the microscopic companion paper~\cite{DK26a} shows that for a fixed range $\ell$, pivot vertices occur with positive density. This yields an exact renewal-reward decomposition and Gaussian or stable fluctuations. However, this mechanism is not uniform in $\ell$: as $\ell\to\infty$, the proliferation of bypasses makes pivot vertices too rare to decompose the path.

\item \textbf{Complete graphs and random-graph FPP.}
For the complete graph with independent mean-one exponential weights, Janson~\cite{Jan99} identified the typical distance, flooding time, and weighted diameter scales as $\log n/n$, $2\log n/n$, and $3\log n/n$, respectively. Bhamidi~\cite{bh08} subsequently developed an exploration-process approach. The closest precursor to our macroscopic theorem is Bhamidi and van der Hofstad~\cite{bh12}, where two growing clusters connect through a point-process collision, yielding a limit that involves two branching-process martingales and a Gumbel variable. When $\ell=\lfloor n/2\rfloor$, our graph is $K_n$, and their model matches our complete-graph endpoint after time normalization. For $\gl<1/2$, however, bridge pairs must also satisfy a deterministic spatial constraint, necessitating the spatial mixing arguments developed here. The corresponding complete-graph diameter is studied in~\cite{bvdh17}.

Related passage-time, hop-count, and flooding results for configuration, Erd\H{o}s--R\'enyi, and inhomogeneous random graphs appear in~\cite{bhh10a,bhh11,kk15,adl13,mountfordSaliba19}, with more general edge-weight laws treated in~\cite{bhh17,dss23}. In these models, the underlying graph is random and locally tree-like. In contrast, our branching approximations stem from different structural features: in the mesoscopic regime, they arise from the minimum of many incident weights in a deterministic band graph; in the macroscopic regime, bridge availability remains explicitly constrained by the deterministic cycle geometry.

\item \textbf{Small-world, long-range, and spatial models.}
The cycle serves as the local substrate for classical small-world models~\cite{wattsstrogatz98,newmanwatts99}. Barbour and Reinert~\cite{br01,br06} obtained branching approximations for graph distances in ring-based small worlds. For weighted Newman--Watts graphs, Komj\'athy and Vadon~\cite{kv16} proved a logarithmic passage-time limit and a hop-count central limit theorem. Their graph features a bounded mean degree and sparse random shortcuts, whereas our graph contains all edges up to the deterministic range $\ell$. 

In long-range percolation, edges are random with probabilities decaying over distance~\cite{bb01,biskup04,chatterjeeDey2016LRFPP}. Scale-free spatial models introduce heavy-tailed vertex heterogeneity~\cite{dvdhh13,dhw14,hhj17}. Other spatial comparisons include penalized complete-graph FPP~\cite{vdhl23}, random geometric graphs~\cite{colettiEtAl23}, and deterministic growing-degree graphs~\cite{martinsson18}. Notably, Mallein and Tesemnikov~\cite{mt22} demonstrated how varying a connectivity parameter in a sparse ordered directed graph sharply alters optimal path geometry. Our model similarly explores a connectivity-driven transition, but it retains a one-dimensional ambient geometry with a hard deterministic cutoff at distance $\ell$. For a broader perspective on empirical network densification, spatial features, and disordered media, see~\cite{barthelemy11,leskovec07,schlapfer14,shutters18,bbchs03}.

\item \textbf{Branching processes and CMJ theory.}
The martingale and stable-age inputs rely on the theory of Crump--Mode--Jagers processes~\cite{nerman81,jn84}, while the spatial front analysis utilizes branching-random-walk speed and change-of-measure methods~\cite{biggins95,lyons97,shi15,br05,Meiners2010}. In random-graph FPP, CMJ processes naturally arise from locally tree-like neighborhoods. Here, they emerge differently: after rescaling by $\fa_\ell$, the small incident edge weights converge to a Poisson reproduction process, while the displacement marks preserve the deterministic band geometry.

\end{enumeratea}

The present model provides a deterministic growing-range baseline for weighted propagation on a cycle. To the best of our knowledge, i.i.d.~FPP on deterministic powers of a line or cycle has not been previously studied in a regime where $\ell\to\infty$ with the system size. The mesoscopic analysis combines a spatial CMJ front with one-dimensional block fluctuations, while the macroscopic analysis couples the collision mechanism of~\cite{bh12} with Fourier mixing for spatially admissible bridges.

\subsection{Notations}\label{ssec:notation}
All asymptotics are taken as $n\to\infty$. The range $\ell$ may depend on $n$. We write $a_n\ll b_n$ if $a_n/b_n\to0$, and $a_n\asymp b_n$ if the ratio is bounded away from both $0$ and $\infty$. Constants denoted by $c,C,C_p$ may change from line to line and are independent of $n$ and $\ell$, although they may depend on fixed parameters such as $\theta,\sfu,\gl$, and $p$. For real numbers $a<b$, $[a,b]$ denotes the continuous interval or the discrete set $[a,b]\cap\dZ$ depending on the context.

For random variables, $X_n\Rightarrow X$ denotes convergence in distribution and $X_n\stackrel{\pr}{\to}X$ denotes convergence in probability. We write $X\equald Y$ for equality in distribution and $ \norm{X}_p:=\bigl(\E|X|^p\bigr)^{1/p}
$ for the $L_p$ norm of $X$ for $p\ge1$.
For an integrable random variable $X$, we write $\overline X:=X-\E X$ when no confusion is possible. The total variation distance between two probability measures $\mu$ and $\nu$ is denoted by $\dTV(\mu,\nu)$.
The time normalization $\fa_\ell=\fa_{\ell,\theta}$ is defined in~\eqref{eq:aell-def}.

For a graph or an induced subgraph $A$, $T_A(u,v)$ denotes the restricted first-passage time from $u$ to $v$, and
\begin{align*}
 T_A(u,B):=\inf_{b\in B}T_A(u,b)
\end{align*}
for $B\subseteq A$. If the ambient graph is clear, the subscript is omitted.
The CMJ reproduction measure, the spatial CMJ speed $\vstar$, and the non-spatial CMJ martingale limit $M_\infty$ are defined in Section~\ref{sec:local-cmj}. Most proof-specific variables, such as block lengths, cut times, collision times, bridge hazards, and auxiliary CMJ couplings, are defined locally in the sections where they are used.

%%%%%%%%%%%%%%%%%%%%%%%%%%%%%%%%%%%%%%%%%%%%%%%%%%%
\subsection{Organization of the paper}\label{ssec:organization}
The rest of the paper is organized as follows.
In Section~\ref{sec:prelim}, we present the preliminary results on local CMJ approximation and the block decomposition technique. 
Section~\ref{sec:meso-moment} develops the mesoscopic moment estimates: the
complete-graph flooding bound (Lemma~\ref{lem:complete-flooding}), the dyadic
second- and fourth-moment upper bounds on a line segment
(Lemma~\ref{lem:block-moments}), the Dijkstra hazard-rescaling variance lower
bound (Proposition~\ref{prop:var-lb}), and the transfer of the variance estimates
to the cycle; together these prove Theorem~\ref{thm:meso-main}~\eqref{meso-var}.
Section~\ref{sec:meso-fluct} converts a variance lower bound into a Gaussian
limit via the block decomposition and Lyapunov's theorem, proving
Theorem~\ref{thm:meso-main}~\eqref{meso-clt}.
Section~\ref{sec:cmj-mean} identifies the mesoscopic time constant $\vstar$
through a spatial CMJ branching random walk, a coupling of the limiting CMJ tree
with the $\ell$-spread-out tree, and a chaining of short-window crossings; this
proves Theorem~\ref{thm:meso-main}~\eqref{meso-mean} and completes the transfer
of the CLT from the segment to the cycle.
Section~\ref{sec:macro-cox} sets up the two-source censored exploration in the
macroscopic regime and derives the exact half-distance identity
(Proposition~\ref{prop:half}) and the exact Cox bridge-hazard representation
(Proposition~\ref{prop:cox}).
Section~\ref{sec:macro-cmj} carries out the finite-$n$ CMJ approximation, the
Fourier method based Erd\H{o}s--Tur\'an spatial-mixing argument, and the asymptotics of the
integrated bridge hazard, and proves Theorem~\ref{thm:Macro}; its final
Subsection~\ref{subsec:Macro-mean} proves the first-order crossover result,
Theorem~\ref{thm:macro-cross}.
Section~\ref{sec:hop-count-fluct} proves the hop-count asymptotics of
Theorem~\ref{thm:hop-count-asymptotics} in both regimes.
Section~\ref{sec:open} collects open problems. Appendix~\ref{app:cmj-to-lspread} contains the proofs of the CMJ process to $\ell$-spread out coupling results used in Section~\ref{sec:cmj-mean} and Appendix~\ref{app:macro-cmj-inputs} contains the proofs of the finite-$n$ CMJ
results used in Section~\ref{sec:macro-cmj}.

%%%%%%%%%%%%%%%%%%%%%%%%%%%%%%%%%%%%%%%%%%%%%%%%%%%
\section{Preliminary results}\label{sec:prelim}

In this Section, we fix the local normalization and the limiting CMJ objects used throughout the mesoscopic and macroscopic proofs; and setup the basics of block decomposition technique. The global proof arguments for the main results are kept separate: the mesoscopic fluctuation proof (Section~\ref{sec:meso-fluct}) uses block decompositions, the mesoscopic mean proof (Section~\ref{sec:cmj-mean}) uses a spatial CMJ branching random walk, and the macroscopic proof (Sections~\ref{sec:macro-cox} and~\ref{sec:macro-cmj}) uses two-source growth together with a Cox bridge-hazard representation. 
The common theme is the same in all cases: after the correct time rescaling, the small edge weights incident to a typical vertex converge to a CMJ reproduction point process. 

%%%%%%%%%%%%%%%%%%%%%%%%%%%%%%%%%%%%%%%%%%%%%%%%%%%
\subsection{Local scaling and limiting CMJ processes}\label{sec:local-cmj}
Recall that the edge distribution satisfies Assumption~\ref{ass:weight}. The exact exponential structure is not in the original time variable $\go$, but in the hazard variable $H(\go)$. This observation is the fundamental mechanism behind both the Dijkstra perturbation argument and the macroscopic Cox representation.

\begin{lem}[Deferred decisions in hazard scale]
\label{lem:hazard-residual}
Let $\cI$ be a finite set of edges and let $L_e\ge0$, $e\in\cI$, be deterministic lower bounds. Conditionally on the event
$
 \{\go_e>L_e\text{ for all }e\in\cI\},
$
the random variables
\begin{align*}
 H(\go_e)-H(L_e)=\go_e^\theta-L_e^\theta,
 \quad e\in\cI,
\end{align*}
are independent rate-one exponentials. The same holds conditionally on any exploration history that reveals only the lower bounds $L_e$ for the edges in $\cI$.
\end{lem}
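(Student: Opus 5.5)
The plan is to prove the deterministic statement first, using only independence across edges and the explicit form of $H$, and then to reduce the exploration-history statement to it by a strong-Markov (deferred-decision) argument for the exploration.

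\textbf{Step 1 (deterministic bounds).} Since the $\go_e$ are independent, the event $\{\go_e>L_e\ \forall e\in\cI\}=\bigcap_{e\in\cI}\{\go_e>L_e\}$ is an intersection of independent events. Hence, under the conditional law, the $\go_e$ remain independent, and each has the law of $\go_e$ given $\go_e>L_e$. It therefore suffices to treat a single edge. For $x\ge0$, since $H$ is continuous and strictly increasing,
\begin{align*}
\pr\bigl(H(\go_e)-H(L_e)>x\,\big|\,\go_e>L_e\bigr)
=\frac{\bar F\bigl(H^{-1}(H(L_e)+x)\bigr)}{\bar F(L_e)}
=\frac{e^{-H(L_e)-x}}{e^{-H(L_e)}}=e^{-x}.
\end{align*}
This is the memoryless property of $H(\go_e)\sim\Exp(1)$. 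Independence across $e\in\cI$ then gives the joint claim.

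\textbf{Step 2 (exploration histories).} For the second sentence, I formalize an exploration as a sequential (Dijkstra-type) procedure. At each stage, the information revealed about an unrevealed edge $e$ is exactly the event $\{\go_e>L_e\}$, where $L_e$ is the current exposure threshold. This threshold is a measurable function of the previously revealed weights of \emph{other} edges. An edge that is revealed has its weight fully exposed and is excluded from $\cI$.

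I proceed by induction on the number of revelation steps. On each atom of the history (fixing which edges are revealed, their values, and the order), the conditioning event is
\begin{align*}
\{\go_f=w_f,\ f\text{ revealed}\}\cap\{\go_e>L_e(w),\ e\in\cI\},
\end{align*}
where the thresholds $L_e(w)$ are determined by the revealed values $w$. By independence of the edge weights, conditioning on the revealed edges does not affect the unrevealed ones. Step 1 then applies with the deterministic thresholds $L_e(w)$.

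\textbf{Main obstacle.} The only real obstacle is measure-theoretic: the conditioning is on events of probability zero when continuous revealed values are involved, and the stopping times are random. I would handle this with regular conditional distributions. Concretely, I would write the joint density of $(\go_f)_{\text{revealed}}$ on the history event and check that it factorizes as
\begin{align*}
\Bigl(\prod_{f\text{ revealed}} F'(w_f)\Bigr)\cdot\prod_{e\in\cI}\bar F(L_e(w))\cdot \mathbf 1\{\text{consistent order}\}.
\end{align*}
This factorization shows that the conditional law of the unrevealed weights is the product of tail-conditioned laws. The key point is that each $L_e$ depends only on other edges' weights, which is exactly the defining property of the exploration process.
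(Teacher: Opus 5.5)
Your proposal is correct. Step 1 is exactly the paper's proof: the memoryless computation $e^{-H(L_e)-x}/e^{-H(L_e)}=e^{-x}$, followed by factorization of the conditioning event over the edges in $\cI$. The paper gives no separate argument for the exploration-history sentence (only an informal induction later, in the proof of Proposition~\ref{prop:cox}), so your Step~2 supplies a justification the paper leaves implicit. That step works because, on the history event, the constraint on each unrevealed edge is exactly $\{\go_e>L_e(w)\}$ with $L_e(w)$ a function of the other, revealed weights, which is what makes the density factorize.
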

\begin{proof}
For $x\ge0$, we have 
$
\pr\bigl(H(\go_e)-H(L_e)>x \bigm| \go_e>L_e\bigr)
= e^{-H(L_e)-x}/e^{-H(L_e)}
=e^{-x}$. Independence follows because the conditioning event factors over the edges.
\end{proof}

We use the common time normalization
$\fa_\ell$ from~\eqref{eq:aell-def}.
Since $\theta$ is fixed throughout the paper, we usually suppress it from the notation. The choice of $\fa_\ell$ in~\eqref{eq:aell-def} incorporates both the effective non-backtracking degree $(2\ell-1)$, which dominates the local exploration geometry asymptotically, and the factor $\Gamma(\theta+1)$. The latter is specifically chosen to ensure that the limiting offspring measure $\mu_\theta$ has a Malthusian parameter of exactly one.
Indeed, if $\go\sim \Exp(1)^{1/\theta}$, using the fact that $\fa_\ell^\theta=(2\ell-1)\theta\Gamma(\theta)$, we obtain the identity
\begin{align}\label{eq:aell-density}
 (2\ell-1)\cdot \pr(\fa_\ell\go\in\dd u)
 =
 \frac{u^{\theta-1}}{\Gamma(\theta)}
 e^{-(u/\fa_\ell)^\theta}\,\dd u,
 \quad u>0.
\end{align}
Thus, for every fixed $\ga>0$, we have $
(2\ell-1)\E e^{-\ga \fa_\ell\go}\to \ga^{-\theta}$ as $\ell \to \infty$.
The limiting CMJ reproduction measure is
\begin{align}\label{eq:limiting-mu}
 \mu_\theta(\dd s)
 := \frac{s^{\theta-1}}{\Gamma(\theta)}\,\dd s,
 \quad s>0.
\end{align}
It is locally finite and satisfies
\begin{align}\label{eq:mu-laplace}
 \int_0^\infty e^{-\ga s}\mu_\theta(\dd s)
 =\ga^{-\theta},
 \quad \ga>0.
\end{align}
In particular,
$
 \int_0^\infty e^{-s}\mu_\theta(\dd s)=1,
$
so the Malthusian parameter is $1$. Also,
$
 \int_0^\infty s e^{-s}\mu_\theta(\dd s)=\theta.
$
The following point-process formulation is the local convergence used in the CMJ couplings.

\begin{lem}[Local offspring point-process limit]
\label{lem:local-offspring-pp}
Let $D_\ell\in\{2\ell-1,2\ell\}$ and let $\go_1,\ldots,\go_{D_\ell}$ be i.i.d.~copies of $\Exp(1)^{1/\theta}$. Then, as $\ell \to \infty$, the point process $\Pi_\ell:=\sum_{j=1}^{D_\ell}\gd_{\fa_\ell\go_j}$ converges in distribution (in the vague topology on locally finite point measures on $(0,\infty)$) to a $\operatorname{PPP}(\mu_\theta)$.
\end{lem}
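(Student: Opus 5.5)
The plan is to prove convergence of Laplace functionals, which characterizes vague convergence of point processes on $(0,\infty)$. By the standard criterion (Kallenberg), it suffices to show that for every continuous $f\ge0$ with compact support in $(0,\infty)$,
\begin{align*}
 \E\exp\Bigl(-\sum_{j=1}^{D_\ell} f(\fa_\ell\go_j)\Bigr)\to\exp\Bigl(-\int_0^\infty\bigl(1-e^{-f(s)}\bigr)\mu_\theta(\dd s)\Bigr).
\end{align*}
Because the $\go_j$ are i.i.d., the left-hand side factors exactly as $\bigl(1-p_\ell\bigr)^{D_\ell}$, where
\begin{align*}
 p_\ell:=\E\bigl[1-e^{-f(\fa_\ell\go)}\bigr].
\end{align*}
The whole argument then reduces to the single asymptotic $D_\ell\, p_\ell\to\int(1-e^{-f})\,\dd\mu_\theta$ together with $p_\ell\to0$.

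To get this asymptotic, use the exact density identity~\eqref{eq:aell-density}, which gives
\begin{align*}
 (2\ell-1)\,p_\ell=\int_0^\infty\bigl(1-e^{-f(u)}\bigr)\frac{u^{\theta-1}}{\Gamma(\theta)}e^{-(u/\fa_\ell)^\theta}\,\dd u.
\end{align*}
Let $K=[a,b]\subset(0,\infty)$ contain the support of $f$. The integrand is bounded by $\frac{u^{\theta-1}}{\Gamma(\theta)}\mathbf 1_{K}(u)$, which is integrable. Since $\fa_\ell\to\infty$, the factor $e^{-(u/\fa_\ell)^\theta}$ converges to $1$ uniformly on $K$. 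Dominated convergence therefore yields $(2\ell-1)p_\ell\to\int(1-e^{-f})\,\dd\mu_\theta<\infty$. Because $D_\ell/(2\ell-1)\to1$, it follows that $D_\ell p_\ell$ has the same limit, and in particular $p_\ell=O(1/\ell)\to0$.

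Finally, to conclude, write $(1-p_\ell)^{D_\ell}=\exp\bigl(D_\ell\log(1-p_\ell)\bigr)$. Since $\log(1-p)=-p+O(p^2)$, we have $D_\ell\log(1-p_\ell)=-D_\ell p_\ell+O(D_\ell p_\ell^2)=-D_\ell p_\ell+O(1/\ell)$, which gives the Poisson Laplace functional. The limit is indeed $\operatorname{PPP}(\mu_\theta)$, because $\mu_\theta$ is locally finite on $(0,\infty)$; it is in fact finite on every $(0,b]$. No step is a genuine obstacle. The only point needing care is invoking the correct convergence criterion for the vague topology on $(0,\infty)$, which requires compactly supported test functions. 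Note that mass does not accumulate near $0$ in the limit only because $\mu_\theta((0,b])=b^\theta/\Gamma(\theta+1)<\infty$, so the limit is a bona fide locally finite point process there as well. One could alternatively check convergence of avoidance probabilities and intensities on finite unions of intervals, but the Laplace-functional route above is the shortest.
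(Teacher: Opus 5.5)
Your proposal is correct and follows essentially the same route as the paper: you factor the Laplace functional as $(1-p_\ell)^{D_\ell}$, evaluate $(2\ell-1)p_\ell$ via the density identity~\eqref{eq:aell-density} and dominated convergence, and use $D_\ell/(2\ell-1)\to1$. Your explicit expansion $D_\ell\log(1-p_\ell)=-D_\ell p_\ell+O(1/\ell)$ simply spells out the limit that the paper states in one line.
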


\begin{proof}
Let $f$ be non-negative, continuous and compactly supported in $(0,\infty)$, and
put $g:=1-e^{-f}$. By~\eqref{eq:aell-density} and dominated convergence,
$D_\ell\,\E g(\fa_\ell\go)\to\int_0^\infty g(s)\mu_\theta(\dd s)$, using
$D_\ell/(2\ell-1)\to1$. Therefore
\begin{align*}
 \E e^{-\la f,\Pi_\ell\ra}
 =\bigl(1-\E g(\fa_\ell\go)\bigr)^{D_\ell}
 \to
 \exp\left\{-\int_0^\infty\bigl(1-e^{-f(s)}\bigr)\mu_\theta(\dd s)\right\},
\end{align*}
which is the Laplace functional of $\operatorname{PPP}(\mu_\theta)$.
\end{proof}

%%%%%%%%%%%%%%%%%%%%%%%%%%%%%%%%%%%%%%%%%%%%%%%%%%%
\subsubsection{The Crump--Mode--Jagers process and the observables used in the two regimes}
\label{ssec:local-cmj-objects}

A Crump--Mode--Jagers process is specified by assigning to every individual an independent copy of a point process of ages at which that individual gives birth. This reproduction point process is a $\operatorname{PPP}(\mu_\theta)$. Let
\begin{align*}
 \sT:=\bigcup_{k\ge0}\dN^k
\end{align*}
be the Ulam--Harris tree, with root $\varnothing$. For each $u\in\sT$, let
$
 0<\tau_{u,1}<\tau_{u,2}<\cdots
$
be the points of an independent Poisson point process on $(0,\infty)$ with intensity $\mu_\theta$. Writing $ui$ for the $i$-th child of $u$, define the birth times recursively by
\begin{align*}
 \sft(\varnothing):=0,
 \qquad
 \sft(ui):=\sft(u)+\tau_{u,i},
 \quad i\ge1.
\end{align*}
For $t\ge0$, let
\begin{align*}
 \cN_t:=\{v\in\sT:\sft(v)\le t\},
 \qquad
 Z_\theta(t):=|\cN_t|.
\end{align*}
Thus $Z_\theta(t)$ is the number of individuals born by time $t$, including the root. Since $\E Z_\theta(t) = \sum_{k\ge0} t^{k\theta}/\Gamma(k\theta+1) < \infty$, the process is non-explosive.

The Malthusian parameter is $1$ by~\eqref{eq:mu-laplace}. The standard non-lattice and $x\log x$ conditions are readily verified (e.g., $\E(\sum_{i\ge1}e^{-\tau_{\varnothing,i}})^2 = 1+2^{-\theta} < \infty$). Hence, the Malthusian martingale and random-characteristic theorems~\cite{nerman81,jn84} give
\begin{align}\label{eq:Minfty-def}
 e^{-t}Z_\theta(t) \to M_\infty \quad \text{a.s.\ and in } L^1.
\end{align}
The limit $M_\infty$ is strictly positive almost surely and non-degenerate, with $\E M_\infty = 1/\theta$. For $\theta=1$, this corresponds to the rate-one Yule process and $M_\infty \sim \Exp(1)$.

The random-characteristic theory also gives the stable-age distribution: for every bounded continuous function $g$,
\begin{align*}
 \frac1{Z_\theta(t)} \sum_{v\in\cN_t}g(t-\sft(v))
 \stackrel{\pr}{\to} 
 \int_0^\infty g(a)e^{-a}\,\dd a.
\end{align*}
Thus, the limiting age of a uniformly sampled individual has the $\Exp(1)$ law. If $A_1, A_2 \stackrel{\mathrm{i.i.d.}}{\sim} \Exp(1)$ represent two independent stable ages, their sum follows a $\mathrm{Gamma}(2,1)$ distribution, yielding
\begin{align}\label{eq:age-pair-gamma}
 \E(A_1+A_2)^{\theta-1}
 =
 \int_0^\infty a^{\theta-1}a e^{-a}\,\dd a
 =
 \Gamma(\theta+1).
\end{align}
Proposition~\ref{prop:pair} later proves that the endpoint ages sampled by macroscopic bridge edges asymptotically follow this exact product law, supplying the necessary uniform integrability for $0<\theta<1$. This stable-age moment~\eqref{eq:age-pair-gamma} naturally explains the presence of $\Gamma(\theta+1)$ in the normalization $\fa_\ell$ defined in~\eqref{eq:aell-def}.

For the mesoscopic regime, we study a spatially marked version of this CMJ process. We assign each parent--child pair $(u, ui)$ an independent displacement $\xi_{u,i} \sim \Unif(-1,1)$ and define
\begin{align*}
 \sfp(\varnothing):=0,
 \qquad
 \sfp(ui):=\sfp(u)+\xi_{u,i}.
\end{align*}
The resulting marked CMJ process is a branching random walk, with rightmost position
\begin{align*}
 \sfM_t:=\max_{v\in\cN_t}\sfp(v)
\end{align*}
at time $t$. For $\beta\in\dR$, let
$
 \gf(\beta):=\E e^{\beta\xi}
$
be the moment generating function of $\xi$.
For a spatial tilt $\beta>0$, the tilted Malthusian exponent $\gk_\theta(\beta)$ satisfying $\gf(\beta) \int_0^\infty e^{-\gk_\theta(\beta)s}\mu_\theta(\dd s) = 1$ is simply $\gk_\theta(\beta) = \gf(\beta)^{1/\theta}$.
 By the general branching-random-walk speed theory~\cite{biggins95}, the right-front speed is therefore
\begin{align*}
 \vstar = \vstar_\theta = \inf_{\beta>0}{\gk_\theta(\beta)}/{\beta} = \inf_{\beta>0}{\gf(\beta)^{1/\theta}}/{\beta}.
\end{align*}
Proposition~\ref{prop:cmj-speed} later records the exact form used here.
Hence the two regimes use the same CMJ process: the mesoscopic proof retains the spatial marks and studies $\sfM_t$, whereas the macroscopic proof forgets the marks and studies $Z_\theta(t)$ and the associated ages.

%%%%%%%%%%%%%%%%%%%%%%%%%%%%%%%%%%%%%%%%%%%%%%%%%%%
\subsection{Block decomposition}
\label{ssec:meso-block}
Block decomposition plays an important role in the proof of the moment upper bounds and of the central limit theorem in the mesoscopic regime. We first work on a finite line segment, where the block decomposition produces an exact sum of independent block passage times, and then compare the resulting estimates with the passage time on the cycle.

For two integers $u<v$, we write $[u,v]:=\{u,u+1,\ldots,v\}$. For an integer $m\ge1$, we write
\begin{align*}
 \cE_{[0,m]}^{(\ell)}
 :=\bigl\{\{u,v\}:0\le u<v\le m,\ v-u\le\ell\bigr\}
\end{align*}
for the edge set of the $\ell$-spread-out segment on $[0,m]$, and we set
\begin{align}\label{eq:segment-Tm}
 T_{[m]}:=T^{(\ell)}_{[0,m]}(0,m).
\end{align}
\emph{Throughout the mesoscopic sections, $T_{[m]}$ always denotes this
$\ell$-spread-out \emph{segment} passage time of spatial length $m$, on a segment
carrying an independent copy of the weight field; $T_n$ always denotes the cycle
passage time~\eqref{eq:fixed-target-Tn}.} In particular $T_{[\sfu_n]}$, not
$T_{[n]}$, is the segment quantity that approximates $T_n$.

We begin with the deterministic approximation that replaces the original graph by a graph cut at regularly spaced interfaces. The cut graph has an exact sum decomposition into independent block-crossing times. 
The penalty for imposing this independence is strictly local: whenever the original geodesic jumps over an interface, we reroute it through the interface vertex. The required rerouting cost is bounded by two flooding times in windows of size $\ell$.
This approximation argument does not use the CMJ front-speed asymptotics and does not attempt to identify the optimal local fluctuation scale. Its role is to show that the centered passage time can be approximated in $L^2$ by a triangular array of independent block variables with an explicit error bound. With an appropriate variance lower bound, one can obtain a Gaussian CLT.

Fix an integer block length $b\in(2\ell,n/2)$ and put
\begin{align*}
 q:=\left\lfloor n/b\right\rfloor\ge 2,
 \qquad
 r:=n-qb\in\{0,1,\ldots,b-1\}.
\end{align*}
Define the deterministic interfaces
\begin{align*}
 x_i:=ib\ \text{ for } 0\le i\le q-1,
 \qquad
 x_q:=n.
\end{align*}
Thus, the blocks $[x_0,x_1],[x_1,x_2],\ldots,[x_{q-2},x_{q-1}]$ have length $b$, while the last block $[x_{q-1},x_q]$ has length $b+r\in[b,2b)$. The block-cut graph is obtained from the $\ell$-spread-out finite segment by deleting all edges whose endpoints do not lie in the same block. 
Let 
\begin{align}\label{eq:block-cut-sum}
 S_n^{(b)}:=
 \sum_{i=1}^{q} T^{(\ell)}_{[x_{i-1},x_i]}(x_{i-1},x_i)
\end{align}
denote the first-passage time from $0$ to $n$ in the block-cut graph. Since the block variables depend on disjoint edge sets, the variables $T^{(\ell)}_{[x_{i-1},x_i]}(x_{i-1},x_i)$, $i=1,2,\ldots,q-1$, are i.i.d.~with the law of $T_{[b]}$, while $T^{(\ell)}_{[x_{q-1},x_q]}(x_{q-1},x_q)$ has the law of $T_{[b+r]}$ and is independent of the other summands. Figure~\ref{fig:block-decomposition} illustrates this.

\begin{figure}[htbp]
\centering
\begin{tikzpicture}[x=1.3cm,y=1cm,every node/.style={font=\small}]
 % main segment
 \draw[thick] (0,0) -- (11.4,0);

 % key points
 \coordinate (A0) at (0,0);
 \coordinate (A1) at (2.0,0);
 \coordinate (A2) at (4.0,0);
 \coordinate (A3) at (6.0,0);
 \coordinate (A4) at (8.5,0);
 \coordinate (A5) at (10.2,0);
 \coordinate (A6) at (11.4,0);

 % points and labels
 \foreach \P/\lab in {
 A0/$0$,
 A1/$b$,
 A2/$2b$,
 A3/$3b$,
 A4/$(q-1)b$,
 A5/$qb$,
 A6/$n$
 }{
 \fill (\P) circle (1.5pt);
 \draw (\P)+(0,0.14) -- +(0,-0.14);
 \node[below=5pt] at (\P) {\lab};
 }

 % ellipses
 \node at (7.5,0.28) {$\cdots$};
 \node at (7.5,-0.90) {$\cdots$};

 % highlighted representative cut/interface only at x_3=3b
 \draw[thick] (A3)+(0,-0.32) -- +(0,0.85);

 % label for the representative deleted edges
 \node[align=center] at (6.0,1.15)
 {dotted arcs = representative deleted edges};

 % representative deleted crossing edges at the highlighted cut x_3=3b
 \draw[densely dotted,thick]
 (5.00,0) to[out=60,in=120] (6.90,0);
 \draw[densely dotted,thick]
 (4.70,0) to[out=66,in=114] (6.85,0);
 \draw[densely dotted,thick]
 (5.15,0) to[out=52,in=128] (7.15,0);
 \draw[densely dotted,thick]
 (5.45,0) to[out=42,in=138] (7.35,0);

 % block brackets below
 \draw (0,-0.72) -- (2.0,-0.72);
 \draw (0,-0.62) -- (0,-0.82);
 \draw (2.0,-0.62) -- (2.0,-0.82);
 \node[below=3pt] at (1.0,-0.72) {$[x_0,x_1]$};

 \draw (2.0,-0.72) -- (4.0,-0.72);
 \draw (2.0,-0.62) -- (2.0,-0.82);
 \draw (4.0,-0.62) -- (4.0,-0.82);
 \node[below=3pt] at (3.0,-0.72) {$[x_1,x_2]$};

 \draw (4.0,-0.72) -- (6.0,-0.72);
 \draw (4.0,-0.62) -- (4.0,-0.82);
 \draw (6.0,-0.62) -- (6.0,-0.82);
 \node[below=3pt] at (5.0,-0.72) {$[x_2,x_3]$};

 \draw (8.5,-0.72) -- (11.4,-0.72);
 \draw (8.5,-0.62) -- (8.5,-0.82);
 \draw (11.4,-0.62) -- (11.4,-0.82);
 \node[below=3pt] at (9.95,-0.72) {$[x_{q-1},x_q]$};

 % remainder marker inside the last block
 \draw (10.2,0.28) -- (11.4,0.28);
 \draw (10.2,0.20) -- (10.2,0.36);
 \draw (11.4,0.20) -- (11.4,0.36);
 \node[above=3pt] at (10.8,0.28)
 {remainder $r=n-qb$};
\end{tikzpicture}
\caption{Block decomposition on the finite segment $[0,n]$. In this graph, every edge crossing an interface is deleted; the dotted arcs show representative deleted edges at $x_3=3b$.}
\label{fig:block-decomposition}
\end{figure}

For each internal interface $x_i$, $1\le i\le q-1$, define the local left/right flooding times as
\begin{align*}
Y_{i,-}^{(\ell)}
&:=
\max_{1\le j< \ell}
T^{(\ell)}_{(x_i-\ell,x_i]}(x_i-j,x_i),\qquad
Y_{i,+}^{(\ell)}
:=
\max_{1\le j< \ell}
T^{(\ell)}_{[x_i,x_i+\ell)}(x_i,x_i+j).
\end{align*} 
These variables control the local detour cost. In the $\ell$-spread-out graph, a geodesic might bypass the interface $x_i$ by jumping directly over it. To force the path to visit $x_i$ and decouple the adjacent blocks, we must reroute the crossing edge through $x_i$. The sum $Y_{i,-}^{(\ell)} + Y_{i,+}^{(\ell)}$ provides a uniform upper bound for this rerouting cost. We denote by $Y^{(\ell)}$ a generic copy of $Y_{i,+}^{(\ell)}$.

\begin{lem}\label{lem:block-cut}
Assume that $b>2\ell$. Then, with $S_n^{(b)}$ as defined in~\eqref{eq:block-cut-sum}, we have
\begin{align}\label{eq:block-decomp-comparison}
 0\le S_n^{(b)}-T_{[n]} \le \sum_{i=1}^{q-1}(Y_{i,-}^{(\ell)} + Y_{i,+}^{(\ell)}).
\end{align}
\end{lem}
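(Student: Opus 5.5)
The plan is to prove the two inequalities separately, treating both as deterministic comparisons that hold for every realization of the weights on the $\ell$-spread-out segment $[0,n]$.

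\emph{Lower bound.} The block-cut graph is a subgraph of the $\ell$-spread-out segment on $[0,n]$, so every path in it is admissible for $T_{[n]}$. Moreover, any path from $0$ to $n$ in the block-cut graph must pass through every interface $x_1,\dots,x_{q-1}$. Indeed, no edge joins two distinct blocks, and consecutive blocks share only their endpoint. So its optimal passage time is exactly $S_n^{(b)}$. This identification follows because the path splits at the interfaces into crossings of each block using only that block's edges, and these crossings can be optimized independently. This gives $S_n^{(b)}\ge T_{[n]}$.

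\emph{Upper bound.} Let $\pi$ be a geodesic for $T_{[n]}$, and fix an internal interface $x_i$. If $\pi$ visits $x_i$, nothing needs to change there. Otherwise, I will look at the first edge $\{a,c\}$ of $\pi$ that crosses $x_i$. Since $\pi$ starts at $0<x_i$ and ends at $n>x_i$, some edge must jump across $x_i$, so $a<x_i<c$ with $c-a\le\ell$. Then $x_i-a\in[1,\ell)$ and $c-x_i\in[1,\ell)$, which means $a\in(x_i-\ell,x_i)$ and $c\in(x_i,x_i+\ell)$. I then replace the edge $\{a,c\}$ by a geodesic from $a$ to $x_i$ inside $(x_i-\ell,x_i]$, followed by a geodesic from $x_i$ to $c$ inside $[x_i,x_i+\ell)$. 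This costs at most $Y_{i,-}^{(\ell)}+Y_{i,+}^{(\ell)}$, and dropping the nonnegative weight $\go_{a,c}$ only helps.

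The main care point is to make the modifications at different interfaces compatible, so that the resulting walk genuinely lies in the block-cut graph with the stated additive cost. Here the hypothesis $b>2\ell$ is what I will use. The detour windows $(x_i-\ell,x_i+\ell)$ for distinct $i$ are disjoint, since $x_{i+1}-x_i=b>2\ell$. The detour at $x_i$ also uses only edges with both endpoints in a single block, since $(x_i-\ell,x_i]\subset[x_{i-1},x_i]$ and $[x_i,x_i+\ell)\subset[x_i,x_{i+1}]$. However, $\pi$ may cross interfaces several times and may backtrack, so a naive edge-by-edge replacement is awkward. I would avoid this by a cleaner construction. For each $i$, let $e_i$ be the first crossing edge of $x_i$ along $\pi$, or the visit to $x_i$ if there is one, and cut $\pi$ at these points. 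After the surgery at $e_{i-1}$ and $e_i$, the piece of $\pi$ between them becomes a walk from $x_{i-1}$ to $x_i$. I then bound its passage time from below by the block time $T^{(\ell)}_{[x_{i-1},x_i]}(x_{i-1},x_i)$ after showing it can be kept inside the block. If the piece leaves the block through an edge crossing $x_i$, then that edge is a crossing of $x_i$ occurring before $e_i$, contradicting the choice of $e_i$ as the first crossing. If it leaves by crossing $x_{i-1}$ backwards, I instead use the last crossing of $x_{i-1}$ before $e_i$. So I will define $e_{i-1}$ relative to $e_i$ by a backward induction from $x_q=n$, choosing at each interface the last crossing before the chosen crossing of the next interface. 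This ordering is the step I expect to be the main obstacle.

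With this choice, each segment of $\pi$ between consecutive chosen crossings stays strictly within one block, apart from its endpoints. Concatenating these segments with the local detours then produces a path in the block-cut graph of weight at most $T_{[n]}+\sum_{i=1}^{q-1}\bigl(Y_{i,-}^{(\ell)}+Y_{i,+}^{(\ell)}\bigr)$. Since $S_n^{(b)}$ is the minimal passage time in the block-cut graph, this proves the upper bound.
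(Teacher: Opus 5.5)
\paragraph{Verdict.} Your lower bound is fine, but the upper bound has a genuine gap that the backward-inductive ordering does not fix.

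\paragraph{The step that fails.} You claim that, with the backward-inductive choice, each piece of $\pi$ between consecutive chosen crossings stays in one block. This is false. Once $e_i$ is the last crossing of $x_i$ before $e_{i+1}$, it is no longer the \emph{first} crossing of $x_i$. So your argument ruling out an upward exit through $x_i$ is lost: the piece from $e_{i-1}$ to $e_i$ may jump over $x_i$, wander, and come back before $e_i$. No rule that picks a \emph{single} crossing per interface, shared by the two adjacent blocks, can work. A counterexample already exists for one interface and $\ell\ge3$. Let $\pi$ stay below $x_1$ until it reaches $x_1-2$, then take $x_1-2\to x_1+1\to x_1-1\to x_1+2$, and stay above $x_1$ afterwards. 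This is the geodesic if the edges of this simple path have weight less than $1/n$ and all other edges have weight $1$; the lemma is pathwise, so such configurations must be handled. The part of $\pi$ before $e_1$ lies in $[x_0,x_1]$ only if $e_1=\{x_1-2,x_1+1\}$. The part after $e_1$ lies in $[x_1,x_2]$ only if $e_1=\{x_1-1,x_1+2\}$.

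\paragraph{How to repair it.} The missing idea is to decouple the two sides of each interface and discard the excursion between them. For block $i$, let $f_i$ be the first index at which $\pi$ enters $[x_i,n]$. Let $j_i<f_i$ be the last earlier index at which $\pi$ lies in $[0,x_{i-1}]$. All vertices of $\pi$ strictly between $j_i$ and $f_i$ lie in $(x_{i-1},x_i)$. Close this piece with at most one detour at each end:
\begin{itemize}
\item from $x_{i-1}$ within $[x_{i-1},x_{i-1}+\ell)$, at cost $\le Y^{(\ell)}_{i-1,+}$;
\item into $x_i$ within $(x_i-\ell,x_i]$, at cost $\le Y^{(\ell)}_{i,-}$;
\item no detour is needed at $x_0=0$ or at $x_q=n$.
\end{itemize}
Using $b>2\ell$, this gives a walk from $x_{i-1}$ to $x_i$ inside block $i$, and hence an upper bound on $T^{(\ell)}_{[x_{i-1},x_i]}(x_{i-1},x_i)$. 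The edges of $\pi$ kept for block $i$ have both endpoints in $[x_{i-1},x_i]$, so they are disjoint for different $i$. Summing over $i$ yields the claim, with each $x_i$ charged once by $Y^{(\ell)}_{i,-}$ and once by $Y^{(\ell)}_{i,+}$. Note that the crossing of $x_i$ used by block $i$ (the first one) and the one used by block $i+1$ (the last one before $f_{i+1}$) are in general different edges.

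\paragraph{Remark on the paper's proof.} The paper reroutes only the single edge leaving the last vertex left of $x_i$ and then loop-erases. It therefore tacitly relies on the same single-crossing picture, and the first-entrance/last-exit choice above is what makes either argument rigorous.
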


\begin{proof}[Proof of Lemma~\ref{lem:block-cut}]
Deleting edges can only increase passage times, which yields $T_{[n]}\le S_n^{(b)}$.
 In the block decomposition graph, any path from $0$ to $n$ must visit the interfaces $x_1,\ldots,x_{q-1}$ in order, which decouples the passage time into the sum $S_n^{(b)}$ given in~\eqref{eq:block-cut-sum}.

To prove the upper bound, let $\pi^{(0)}$ be a geodesic from $0$ to $n$ in the original segment. We recursively construct paths $\pi^{(1)},\ldots,\pi^{(q-1)}$ such that $\pi^{(i)}$ does not jump over any of the interfaces $x_1,\ldots,x_i$.
Given $\pi^{(i-1)}$, let $a_i$ be the last vertex on this path to the left of $x_i$, and $b_i$ be the first vertex to the right of $x_i$ ($a_i<x_i\le b_i$). If the edge $\{a_i, b_i\}$ jumps over $x_i$ (i.e., $a_i < x_i < b_i$), then $a_i \in (x_i-\ell, x_i)$ and $b_i \in (x_i, x_i+\ell)$. 
To eliminate this jump, we replace the single edge $\{a_i, b_i\}$ with the concatenation of two shortest paths: $\tilde\pi_{i,-}$ from $a_i$ to $x_i$ inside $(x_i-\ell, x_i]$, and $\tilde\pi_{i,+}$ from $x_i$ to $b_i$ inside $[x_i, x_i+\ell)$.

Let $\pi^{(i)}$ be the loop-erased version of this modified path. Since $\tilde\pi_{i,-}$ and $\tilde\pi_{i,+}$ have weights bounded by $Y_{i,-}^{(\ell)}$ and $Y_{i,+}^{(\ell)}$ respectively, and loop-erasure decreases total weight, we obtain
\begin{align*}
    W(\pi^{(i)}) \le W(\pi^{(i-1)}) + Y_{i,-}^{(\ell)} + Y_{i,+}^{(\ell)},
\end{align*}
where $W(\cdot)$ denotes the total weight of a path. Since $b>2\ell$, the rerouting at $x_i$ is restricted to $(x_{i-1}, x_{i+1})$, preventing new jumps over earlier interfaces. Thus, $\pi^{(q-1)}$ is a valid path in the block decomposition graph. Iterating the weight bound yields
\begin{align*}
    S_n^{(b)} \le W(\pi^{(q-1)}) \le T_{[n]} + \sum_{i=1}^{q-1} \bigl( Y_{i,-}^{(\ell)} + Y_{i,+}^{(\ell)} \bigr),
\end{align*}
proving~\eqref{eq:block-decomp-comparison}.
\end{proof}

Note that an interval of length at most $\ell$ induces a complete graph on its vertex set. Thus, we can control the local detour error using the following lemma.

\begin{lem}
\label{lem:complete-flooding}
Let the complete graph $K_m$ have i.i.d.~edge weights with law $\Exp(1)^{1/\theta}$. For distinct vertices $u,v$, let $T(u,v)$ be their first-passage time, and let
\begin{align*}
 Y^{(m)} := \max_{w\in V(K_m)}T(u,w)
\end{align*}
be the flooding time from $u$. For every fixed $p\ge1$, there is $C_p<\infty$ such that, for all $m\ge2$,
\begin{align}\label{eq:complete-flooding}
 \sup_{u\ne v} \norm{T(u,v)}_p \le C_p\frac{\log m}{m^{1/\theta}},
 \text{ and }
 \norm{Y^{(m)}}_p \le C_p\frac{(\log m)^{\gth}}{m^{1/\theta}},
 \text{ where } \gth=\max\{{1,\theta^{-1}}\}.
\end{align}
\end{lem}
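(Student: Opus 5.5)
We prove both bounds in \eqref{eq:complete-flooding} by exhibiting explicit paths whose weights are controlled by minima of many independent weights, and then computing $L^p$ norms from tail bounds. The basic tool is the following fact: if $W$ is the minimum of $k$ i.i.d.\ copies of $\Exp(1)^{1/\theta}$, then $W\equald (E/k)^{1/\theta}$ with $E\sim\Exp(1)$. Hence $\norm{W}_p\le C_p k^{-1/\theta}$ and $\pr(W>t)=e^{-kt^\theta}$.

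\textbf{Step 1: the point-to-point bound.} We first bound $T(u,v)$ by a two-hop path. For each of the $m-2$ intermediate vertices $w$, the path $u\to w\to v$ has weight $\go_{uw}+\go_{wv}$, and these weights are independent across $w$. Hence
\[
\pr(T(u,v)>t)\le\prod_w \pr(\go_{uw}+\go_{wv}>t)\le \bigl(1-F(t/2)^2\bigr)^{m-2}.
\]
For small $t$ this is at most $\exp(-c\,m\,t^{2\theta})$. This only gives order $m^{-1/(2\theta)}$, which is too crude.

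Instead we use the standard Dijkstra growth argument and follow the cluster from $u$ until it has size $k$. At that stage the next vertex is added at the minimum of $k(m-k)$ residual weights. In the hazard scale this is governed by Lemma~\ref{lem:hazard-residual}, but in the original scale the residuals are not memoryless when $\theta\neq1$.

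To avoid this, we compare with an explicit path construction. Grow a ``greedy'' cluster from $u$ using only fresh edges, and symmetrically a cluster from $v$; this mirrors the collision heuristic. Concretely, for $\ell_0:=\lceil \log_2 m\rceil$ levels, we build a tree from $u$ in which each vertex picks the cheapest of about $m/(2\cdot 2^j)$ fresh edges to unused vertices. Each step then costs about $(2^j/m)^{1/\theta}$. Reaching a set of size $\sqrt m$ costs about $\sum_j (2^j/m)^{1/\theta}\lesssim m^{-1/\theta}\log m$; for $\theta\ge1$ the dominant contribution is $\log m$ steps each of order $m^{-1/\theta}$. Doing the same from $v$ and connecting the two sets by the cheapest of roughly $m$ disjoint bridge edges costs $O(m^{-1/\theta})$.

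It is simpler to do the whole construction with a fixed number of steps of order $\log m$. At each of these steps we take the minimum over at least $m/4$ fresh independent edges. Each step then contributes $\norm{\cdot}_p\le C_p m^{-1/\theta}$, and Minkowski's inequality over $O(\log m)$ steps gives the first bound. The only bookkeeping needed is to check that the edges used at each step are disjoint from those used earlier, which holds because we always choose edges to vertices not yet used.

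\textbf{Step 2: the flooding bound.} Here we need a maximum over $w$. The last few vertices are the expensive ones, since vertex $w$ is reached from the big cluster only through the minimum of about $m/2$ edges. That minimum is of size $(E_w/m)^{1/\theta}$, and the maximum of $m$ such values is of order $(\log m/m)^{1/\theta}$. This produces the factor $(\log m)^{1/\theta}$.

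We therefore bound
\[
Y^{(m)}\le T(u,A)+\max_{w}\min_{a\in A}\bigl(\ldots\bigr),
\]
where $A$ is a set of size at least $m/2$ reached via the Step 1 construction, at cost $O_p(\log m\cdot m^{-1/\theta})$. For the second term, note that each $w\notin A$ has at least $m/2$ edges to $A$. Taking $A$ to be fixed on the event where it is determined by independent edges, a union bound gives
\[
\pr\Bigl(\max_w \min_{a\in A}\go_{wa}>t\Bigr)\le m\,e^{-mt^\theta/2}.
\]
The $L^p$ norm of this term is then $\le C_p(\log m/m)^{1/\theta}$. We also need the weight from $u$ to each $a\in A$, which Step 1's tree controls uniformly because every $a\in A$ lies in the constructed tree at depth $O(\log m)$. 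Adding the two contributions gives a bound of order $(\log m+(\log m)^{1/\theta})m^{-1/\theta}\le 2(\log m)^{\gth}m^{-1/\theta}$.

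\textbf{Main obstacle.} The delicate point is the independence bookkeeping: the set $A$ and the path weights to it must depend only on edges disjoint from the final edges $\{w,a\}$ used for the remaining vertices. We handle this by splitting the vertex set deterministically into two halves. The tree is built inside the first half, and the second half's vertices are reached via edges between the halves. This keeps the two stages independent, at the cost of constants only.
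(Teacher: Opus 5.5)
\textbf{There is a genuine gap in Step 1, the heart of the lemma.} You never produce a cluster of size of order $m$ (or even $\sqrt m$) from $u$ at cost $O_p(m^{-1/\theta}\log m)$.

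\emph{Your explicit construction fails.} Each of the $2^j$ level-$j$ vertices takes the cheapest of about $m/2^{j+1}$ fresh edges, so the level costs are of order $(2^j/m)^{1/\theta}$. This is a geometric series with ratio $2^{1/\theta}>1$, so it is dominated by its last term. Reaching size $\sqrt m$ therefore costs order $m^{-1/(2\theta)}$, which is exactly the two-hop bound you had rejected. Bounding all vertices of a level at once needs a maximum over the $2^j$ minima, which is worse still. The claim $\sum_j(2^j/m)^{1/\theta}\lesssim m^{-1/\theta}\log m$ is false.

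\emph{The ``simpler'' replacement does not repair this.} A minimum over $m/4$ fresh edges selects one new vertex. So $O(\log m)$ such steps give a cluster of size $O(\log m)$, and the last hop into $v$ (or a bridge to a cluster around $v$) then costs a negative power of $\log m$, not $m^{-1/\theta}$. Step 2 inherits the gap, since it needs a set $A$ with $|A|\ge cm$ reached at cost $O_p(m^{-1/\theta}\log m)$.

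Your halving trick has a separate problem. Either $A$ is the whole first half, which is the flooding problem again on $K_{m/2}$, or the first-half vertices outside $A$ are never handled.

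\textbf{The missing idea} is to let the whole frontier compete jointly and to use an order statistic rather than a minimum. Let the frontier $F_j$ have $k_j=2^j$ vertices, and let $U_j$ be the $N_j\ge m/2$ untouched vertices. The edges between $F_j$ and $U_j$ have never been examined, so the variables $X_{j,w}:=\min_{z\in F_j}\go_{z,w}$, $w\in U_j$, are independent with $X_{j,w}^\theta\sim\Exp(k_j)$. Take $F_{j+1}$ to be the $2k_j$ vertices with the smallest $X_{j,w}$. The layer cost $X_j^\star$, the $2k_j$-th order statistic, then satisfies
\[
(X_j^\star)^\theta\equald\sum_{i=0}^{2k_j-1}\frac{E_{j,i}}{k_j(N_j-i)},
\]
which is stochastically dominated by $\frac{4}{k_jm}\,\mathrm{Gamma}(2k_j,1)$. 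Hence $\norm{X_j^\star}_p\le C_pm^{-1/\theta}$ uniformly in $j$. The factor $k_j$ cancels, and this cancellation is exactly why each doubling of the frontier costs only $O(m^{-1/\theta})$.

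After $J=O(\log m)$ doublings, $|F_J|\ge m/8$. Every vertex of $\bigcup_{i\le J}F_i$ is reached by time $S_J=\sum_{j<J}X_j^\star$, with $\norm{S_J}_p\le C_pm^{-1/\theta}\log m$. The edges from $F_J$ to every vertex outside $\bigcup_{i\le J}F_i$ are still unexamined; this includes $v$ if you exclude it from the growth. One more minimum over $|F_J|$ fresh edges then gives the two-point bound. Your union-bound computation, applied to $\min_{z\in F_J}\go_{z,w}$, gives the flooding bound. No splitting into halves is needed.
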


\begin{rem}
    For $s=1/\theta$, Bhamidi and van der Hofstad~\cite[Theorem~1.1]{bh12} derive the limiting law for the distance between two fixed vertices, confirming the point-to-point order in the first estimate of~\eqref{eq:complete-flooding}. Their theorem does not provide the uniform flooding bound in the second estimate of~\eqref{eq:complete-flooding}; this is the additional estimate supplied by Lemma~\ref{lem:complete-flooding}.
\end{rem}

\begin{proof}[Proof of Lemma~\ref{lem:complete-flooding}]
It suffices to prove when $m\ge16$, adjusting $C_p$ for smaller values. A standard Dijkstra exploration reveals information about unexplored edges, losing their independence. We bypass this difficulty by building a tree step-by-step, restricted to previously unobserved edges.

Fix a root $u$. For the two-point bound, fix a target $v\ne u$ and exclude it from the tree growth; for the flooding bound, the excluded set is empty. Let $F_0=\{u\}$, $A_0=F_0$, and $k_0=1$. Inductively, suppose the current frontier $F_j$ has size $k_j=2^j$, and let $A_j=\bigcup_{i=0}^j F_i$. We double the frontier size at each step ($k_j = 2^j$) to ensure the tree reaches a macroscopic size in $O(\log m)$ steps, while maintaining a uniform bound on the connection cost at each layer. As long as $k_j<m/8$, let $U_j$ be the set of unreached vertices outside $A_j$ and the excluded set. 

We restrict our attention to the edges between $F_j$ and $U_j$. Since these edges have not been previously examined, their weights are unconditioned and independent of the exploration history. Given this history, the minimum connection weights
\begin{align*}
 X_{j,w} := \min_{z\in F_j}\go_{z,w}, \quad w\in U_j,
\end{align*}
are independent and satisfy $X_{j,w}^\theta \sim \Exp(k_j)$. Since $|A_j|=2k_j-1<m/4$, the number of available vertices is $N_j:=|U_j|\ge m/2$.

Let $r_j:=2k_j$. We form the next frontier $F_{j+1}$ by selecting the $r_j$ vertices in $U_j$ that minimize $X_{j,w}$. The cost to complete this layer is the $r_j$-th order statistic $X_j^\star$ of the sequence $(X_{j,w})_{w\in U_j}$.
 By standard properties of exponential order statistics, we can write $(X_j^\star)^\theta \stackrel{\mathrm d}{=} \sum_{i=0}^{r_j-1} \frac{E_{j,i}}{k_j(N_j-i)}$, where $E_{j,i}$ are i.i.d.~$\Exp(1)$. Using the bounds $N_j \ge m/2$ and $i < 2k_j < m/4$, the denominator satisfies $k_j(N_j-i) > k_j m/4$. Consequently, $(X_j^\star)^\theta$ is stochastically dominated by $\frac{4}{k_j m} G_j$, where $G_j \sim \mathrm{Gamma}(r_j, 1)$. 

For any $q>0$, the moments of the Gamma distribution satisfy $\E(G_j^q) \le C_q r_j^q$. By choosing $q=p/\theta$ and substituting $r_j=2k_j$, the factor $k_j$ cancels out, yielding a uniform bound in $j$,
\begin{align}\label{eq:fresh-frontier-stage}
 \norm{X_j^\star}_p \le C_p m^{-1/\theta}.
\end{align}
Consequently, every vertex in $F_{j+1}$ can be reached from $u$ within time $S_{j+1}:=\sum_{i=0}^j X_i^\star$.

Let $J$ be the first index such that $k_J\ge m/8$. Then $J\le C\log m$. By Minkowski's inequality and~\eqref{eq:fresh-frontier-stage},
\begin{align}\label{eq:fresh-frontier-growth-cost}
 \norm{S_J}_p \le C_p\frac{\log m}{m^{1/\theta}}.
\end{align}

To establish the two-point bound, recall that the target $v$ was intentionally excluded from the tree growth. Thus, the edges between $F_J$ and $v$ remain unconditioned. The minimum connection cost $M_v := \min_{z\in F_J}\go_{z,v}$ satisfies $M_v^\theta \sim \Exp(k_J)$. Since $k_J \ge m/8$, it follows that $\norm{M_v}_p \le C_p m^{-1/\theta}$. Using the triangle inequality $T(u,v) \le S_J + M_v$, we obtain the first bound in~\eqref{eq:complete-flooding}.

For the flooding bound, we reveal the fresh edges from $F_J$ to all unreached vertices $w \notin A_J$. Given the exploration history, the minimal connection costs $M_w := \min_{z\in F_J}\go_{z,w}$ are mutually independent and satisfy $M_w^\theta \sim \Exp(k_J)$. A union bound gives
\begin{align*}
 \pr\left( \max_w M_w>t \;\middle|\; F_0,F_1,\ldots,F_J \right) \le m\exp\{-k_J t^\theta\}.
\end{align*}
Integrating this tail probability and using the fact that $k_J \ge m/8$, we obtain $\norm{\max_w M_w}_p \le C_p\left(\frac{\log m}{m}\right)^{1/\theta}$. 
Every vertex is reached by time $S_J+\max_w M_w$. Since $\log m + (\log m)^{1/\theta} \le 2(\log m)^{\gth}$, combining this with~\eqref{eq:fresh-frontier-growth-cost} proves the second bound in~\eqref{eq:complete-flooding}.
\end{proof}

%%%%%%%%%%%%%%%%%%%%%%%%%%%%%%%%%%%%%%%%%%%%%%%%%%%
\section{Mesoscopic moment control: proof of Theorem~\ref{thm:meso-main}~\eqref{meso-var}}
\label{sec:meso-moment}

The block decomposition reduces the fluctuation problem to local moment estimates. We control the flooding cost at a single interface using Lemma~\ref{lem:complete-flooding}, and use a dyadic splitting argument to obtain variance and fourth-moment upper bounds on arbitrary line segments. We next prove the variance lower bound by combining~\cite[Lemma 1.2]{cha19} with a Dijkstra perturbation idea. 
Finally, we obtain the variance estimate for the passage time on the cycle.

%%%%%%%%%%%%%%%%%%%%%%%%%%%%%%%%%%%%%%%%%%%%%%%%%%%
\subsection{Moment upper bounds}
\label{ssec:recursive-moment-bounds}

We next use the complete-graph estimate to obtain the variance and fourth-moment bounds for passage times.

\begin{lem}\label{lem:block-moments}
There exists $C<\infty$ such that, for all $\ell\ge2$ and all $m\ge2\ell$, with $T_{[m]}:=T_{[0,m]}^{(\ell)}(0,m)$, we have
\begin{align}
 \var(T_{[m]})&\le C\frac{m(\log\ell)^{2\gth}}{\ell^{1+2/\theta}},
 \label{eq:block-var-bound}\\
 \E\abs{T_{[m]}-\E T_{[m]}}^4
 &\le C\frac{m^2(\log\ell)^{4\gth}}{\ell^{2+4/\theta}}.
 \label{eq:block-fourth-bound}
\end{align}
\end{lem}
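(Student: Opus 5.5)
We propose to prove both bounds by a dyadic splitting recursion built on Lemma~\ref{lem:block-cut}, with Lemma~\ref{lem:complete-flooding} supplying the interface errors. Write $\sigma_2(m):=\norm{T_{[m]}-\E T_{[m]}}_2$ and $\sigma_4(m):=\norm{T_{[m]}-\E T_{[m]}}_4$, and set $\varepsilon_\ell:=(\log\ell)^{\gth}\ell^{-1/\theta}$. Since an interval of length $\ell$ spans a complete graph on $\ell$ vertices, Lemma~\ref{lem:complete-flooding} gives $\norm{Y^{(\ell)}}_p\le C_p\varepsilon_\ell$ for $p=2,4$, and the same holds for $Y_{i,-}^{(\ell)}$.

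\textbf{Splitting step.} Take $m\ge 4\ell$ and apply Lemma~\ref{lem:block-cut} on $[0,m]$ with a single interface at $x_1=\lfloor m/2\rfloor$, which is admissible because both halves are longer than $2\ell$. This gives
\[
T_{[m]}=T'+T''-R,\qquad 0\le R\le Y_-+Y_+,
\]
where $T'$ and $T''$ are independent copies of $T_{[\lfloor m/2\rfloor]}$ and $T_{[\lceil m/2\rceil]}$. Centering and using independence,
\[
\sigma_2(m)\le\bigl(\sigma_2(\lfloor m/2\rfloor)^2+\sigma_2(\lceil m/2\rceil)^2\bigr)^{1/2}+2C\varepsilon_\ell .
\]
For the fourth moment of a sum $X'+X''$ of independent centered variables we use
\[
\E(X'+X'')^4=\E X'^4+\E X''^4+6\,\E X'^2\,\E X''^2 .
\]
Combined with Minkowski's inequality for the error term $R$, this yields
\[
\sigma_4(m)\le\bigl(\sigma_4(m_1)^4+\sigma_4(m_2)^4+6\sigma_2(m_1)^2\sigma_2(m_2)^2\bigr)^{1/4}+2C\varepsilon_\ell .
\]

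\textbf{Induction.} The base case is $2\ell\le m<4\ell$. Here we bound the centered moments by $2\norm{T_{[m]}}_p$ and compare with a route through the complete graph on $[0,\ell]$ and then into $[m-\ell,m]$ through a common chain of a few windows. Each window crossing costs $O(\varepsilon_\ell)$ by the flooding bound, so $\sigma_p(m)\le C\varepsilon_\ell\le C\sqrt{m/\ell}\,\varepsilon_\ell$.

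For the induction we claim $\sigma_2(m)\le A\sqrt{m/\ell}\,\varepsilon_\ell - B\varepsilon_\ell$. The additive error is absorbed using
\[
\sqrt{2\,(m/2)}-\sqrt{m}=0 ,
\]
so a pure square-root ansatz does not close on its own. Instead we use the standard strengthened hypothesis $\sigma_2(m)\le \varepsilon_\ell\,(A\sqrt{m/\ell}-B)$. With this hypothesis,
\[
\sqrt{2}\,\bigl(A\sqrt{m/(2\ell)}-B\bigr)+2C\le A\sqrt{m/\ell}-B
\quad\text{provided}\quad (\sqrt2-1)B\ge 2C .
\]
The base case is then fixed by choosing $A$ large relative to $B$. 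Rounding of $m/2$ is handled by monotonicity in the ansatz, or by allowing $m_1\ne m_2$ via concavity. Squaring gives \eqref{eq:block-var-bound}.

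Alternatively, we can avoid the induction by applying Lemma~\ref{lem:block-cut} directly with blocks of length $b\in[2\ell,4\ell)$. We then bound each block variance by $C\varepsilon_\ell^2$ and the total error by $\norm{\sum_i(Y_{i,-}+Y_{i,+})-\E(\cdot)}_2$. \textbf{This is the main obstacle:} the interface errors are not independent of each other (they are only $1$-dependent, since they live in windows of size $\ell$ that lie in disjoint blocks). Neighbouring $Y_{i,\pm}$ therefore give a variance of order $(m/\ell)\varepsilon_\ell^2$, which is fine. The catch is that the error $R$ is not centered, so only the fluctuation of $R$ enters, and $R$ is a complicated function, not the sum of the $Y$'s. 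The dyadic route avoids this issue, so we commit to the recursion.

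\textbf{Fourth moment.} With $\sigma_2(m)^2\le C m\varepsilon_\ell^2/\ell$ in hand, we prove $\sigma_4(m)^4\le D(m/\ell)^2\varepsilon_\ell^4$ by the same induction. The recursion gives
\[
\sigma_4(m)^4\le 2D(m/2\ell)^2\varepsilon_\ell^4+\tfrac{6}{4}C^2(m/\ell)^2\varepsilon_\ell^4 ,
\]
plus the error term, and the leading coefficient $D/2+3C^2/2\le D$ once $D\ge 3C^2$. There is slack of order $D/2$ to absorb the additive $O(\varepsilon_\ell)$ error after expanding $(x+y)^4$, where $y=O(\varepsilon_\ell)$ and $x\asymp (m/\ell)^{1/2}\varepsilon_\ell$. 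The cross terms are of lower order in $m/\ell$ and are absorbed for $D$ large; this is again done via a strengthened hypothesis with a subtracted lower-order term. This gives \eqref{eq:block-fourth-bound}.
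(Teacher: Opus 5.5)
Your argument is correct and is essentially the paper's proof. Like the paper, you use midpoint splitting via the two-block case of Lemma~\ref{lem:block-cut}, Lemma~\ref{lem:complete-flooding} for the interface error and for a base case built from boundedly many window crossings, and the identity $\E(X+X')^4=\E X^4+\E X'^4+6\,\E X^2\,\E X'^2$ for the fourth moment. The only difference is bookkeeping: the paper works with the dyadic maxima $M_{p,r}=\max_{2\ell\le m\le 2^r}\norm{T_{[m]}-\E T_{[m]}}_p$ and simply iterates $M_{2,r}\le\sqrt2\,M_{2,r-1}+C\eta_{\ell,\theta}$, whose geometric sum is dominated by its last term. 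So neither the subtracted constant $B$ nor a separate rounding argument is needed; if you keep your version, use the monotonicity route for rounding, since concavity of $\sqrt{\cdot}$ points the wrong way there.
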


\begin{proof}
Set
\begin{align*}
 \eta_{\ell,\theta}:=\frac{(\log\ell)^{\gth}}{\ell^{1/\theta}},
\end{align*}
and choose $k_0=k_0(\ell)$ with $2^{k_0-1}\le4\ell<2^{k_0}$. For
$r\ge k_0$, define
\begin{align*}
 M_{2,r}:=\max_{2\ell\le m\le2^r}\norm{T_{[m]}-\E T_{[m]}}_2,
 \qquad
 M_{4,r}:=\max_{2\ell\le m\le2^r}\norm{T_{[m]}-\E T_{[m]}}_4.
\end{align*}
For the base case $2\ell\le m\le2^{k_0}<8\ell$, partition $[0,m]$ into at most eight contiguous subintervals, each of length between $\ell/2$ and $\ell$. If such a subinterval has length $d$, then its induced graph is $K_{d+1}$, where $d\asymp\ell$. Lemma~\ref{lem:complete-flooding}, applied with $m=d+1$, gives for $p=2,4$
\begin{align*}
 \norm{T^{(\ell)}_{[a,a+d]}(a,a+d)}_p
 \le C\eta_{\ell,\theta}.
\end{align*}
Concatenating these subinterval crossings gives an admissible path from $0$ to $m$. By Minkowski's inequality and $\norm{X-\E X}_p\le2\norm{X}_p$,
\begin{align}
 M_{2,k_0}\le C\eta_{\ell,\theta},
 \qquad
 M_{4,k_0}\le C\eta_{\ell,\theta}.
 \label{eq:base-block}
\end{align}

For the recursive step, consider an interval length $m$ such that $2^{r-1}<m\le2^r$ for some $r>k_0$, and split $[0,m]$ at the midpoint $v=\lfloor m/2\rfloor$. 
Let
\begin{align*}
 T^L:=T^{(\ell)}_{[0,v]}(0,v),
 \qquad
 T^R:=T^{(\ell)}_{[v,m]}(v,m),
 \qquad
 T^I:=T^{(\ell)}_{[0,m]}(0,m).
\end{align*}
The two-block version of Lemma~\ref{lem:block-cut} gives
\begin{align*}
 0\le T^L+T^R-T^I\le Y_{v,-}^{(\ell)}+Y_{v,+}^{(\ell)}.
\end{align*}
After centering, the half-block terms $T^L-\E T^L$ and $T^R-\E T^R$ are independent, and the centered cut-error has an $L^2$ norm bounded by $C\eta_{\ell,\theta}$. Since the variance of the sum of independent variables is the sum of their variances, applying the triangle inequality in $L^2$ gives
\begin{align*}
 M_{2,r}\le \sqrt2\,M_{2,r-1}+C\eta_{\ell,\theta}.
\end{align*}
Iterating from $k_0$ gives, for every $r\ge k_0$,
\begin{align*}
 M_{2,r}\le C2^{(r-k_0)/2}\eta_{\ell,\theta}.
\end{align*}
If $2^{r-1}<m\le2^r$, then $2^r\le2m$ and $2^{k_0}>4\ell$, so
$
 2^{(r-k_0)/2}\le C\sqrt{m/\ell}.
$
Therefore
\begin{align*}
 \norm{T_{[m]}-\E T_{[m]}}_2
 \le C\sqrt{\frac m\ell}\frac{(\log\ell)^{\gth}}{\ell^{1/\theta}},
\end{align*}
which proves~\eqref{eq:block-var-bound}.

For the fourth moment, the same split gives
\begin{align*}
 T^I-\E T^I=X+X'+E,
\end{align*}
where $X:=T^L-\E T^L$, $X':=T^R-\E T^R$ are independent centered half-block terms and $\norm{E}_4\le C\eta_{\ell,\theta}$. Since $X$ and $X'$ are independent and centered,
\begin{align*}
 \norm{X+X'}_4
 &=\left(\E X^4+\E (X')^4+6\E X^2\E (X')^2\right)^{1/4}
 \le 2^{1/4}M_{4,r-1}+C M_{2,r-1}.
\end{align*}
Consequently,
\begin{align*}
 M_{4,r}\le 2^{1/4}M_{4,r-1}+C M_{2,r-1}+C\eta_{\ell,\theta}.
\end{align*}
Setting $d=r-k_0$ and iterating this inequality, while substituting the established $L^2$ bound $M_{2,r-1-j}\le C2^{(d-1-j)/2}\eta_{\ell,\theta}$, yields
\begin{align*}
 M_{4,r}
 &\le 2^{d/4}M_{4,k_0}
 +C\sum_{j=0}^{d-1}2^{j/4}M_{2,r-1-j}
 +C\eta_{\ell,\theta}\sum_{j=0}^{d-1}2^{j/4}\\
 &\le C2^{d/4}\eta_{\ell,\theta}
 +C\eta_{\ell,\theta}\sum_{j=0}^{d-1}2^{j/4}2^{(d-1-j)/2}
 +C2^{d/4}\eta_{\ell,\theta}
 \le C2^{d/2}\eta_{\ell,\theta}.
\end{align*}
Thus, if $2^{r-1}<m\le2^r$,
\begin{align*}
 \norm{T_{[m]}-\E T_{[m]}}_4
 \le C\sqrt{\frac m\ell}\frac{(\log\ell)^{\gth}}{\ell^{1/\theta}}.
\end{align*}
Raising to the fourth power gives~\eqref{eq:block-fourth-bound}.
\end{proof}

Lemma~\ref{lem:complete-flooding} controls the centered block-cut error, while~\eqref{eq:block-fourth-bound} is the Lyapunov input for the independent block sum. The remaining ingredient is a lower bound on $\var(T_{[n]})$.

%%%%%%%%%%%%%%%%%%%%%%%%%%%%%%%%%%%%%%%%%%%%%%%%%%%
\subsection{Variance lower bound}
\label{ssec:meso-var-lower}

We now prove a variance lower bound needed for the Gaussian fluctuation argument using the perturbative idea from Chatterjee~\cite{cha19}, see Lemma~\ref{lem:var-cc} below. This roughly says that if a random variable and a nearby perturbation have close laws but are usually separated by a deterministic amount, then the original variable cannot be too concentrated.

\begin{lem}[{\cite[Lemma 1.2]{cha19}}]\label{lem:var-cc}
Let $X\in L^2$ and let $X'$ be another real-valued random variable. Suppose that there exist $\Delta>0$ and $p,\eta\in[0,1)$ with $p+\eta<1$ such that
$\pr(\abs{X-X'}\le\Delta)\le p$, and 
$\dTV(\cL(X),\cL(X'))\le\eta$. Then
\begin{align*}
\var(X)\ge \frac14\gD^2\pr(\abs{X-\E X}\ge \gD/2)\ge\frac{1-p-\eta}{8}\Delta^2.
\end{align*}
\end{lem}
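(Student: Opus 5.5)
The plan is to prove the two inequalities separately. The first is a truncated Chebyshev bound. The second comes from the observation that $X$ and $X'$ cannot both lie in a short interval around $\E X$ when they are far apart, while the total variation bound forces $X'$ to lie in that interval about as often as $X$ does.

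\emph{Step 1 (first inequality).} Write $\mu:=\E X$, which is finite since $X\in L^2$. Restricting the expectation to the event $\{\abs{X-\mu}\ge\Delta/2\}$ gives
\begin{align*}
\var(X)=\E(X-\mu)^2\ge \E\bigl[(X-\mu)^2\mathbf 1\{\abs{X-\mu}\ge\Delta/2\}\bigr]\ge \frac{\Delta^2}{4}\,\pr\bigl(\abs{X-\mu}\ge\Delta/2\bigr).
\end{align*}
This needs no hypothesis beyond $X\in L^2$.

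\emph{Step 2 (reduction for the second inequality).} By Step 1, it suffices to show
\begin{align*}
\pr\bigl(\abs{X-\mu}\ge\Delta/2\bigr)\ge \frac{1-p-\eta}{2}.
\end{align*}
Let $A:=(\mu-\Delta/2,\mu+\Delta/2)$ be the open interval of length $\Delta$ centred at $\mu$. If both $X\in A$ and $X'\in A$, then $\abs{X-X'}<\Delta$. Hence $\{X\in A\}\cap\{X'\in A\}\subseteq\{\abs{X-X'}\le\Delta\}$, and this event has probability at most $p$ by hypothesis. By inclusion--exclusion,
\begin{align*}
\pr(X\in A)+\pr(X'\in A)\le 1+\pr(X\in A,\,X'\in A)\le 1+p.
\end{align*}

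\emph{Step 3 (using total variation).} Since $A$ is a Borel set, the total variation hypothesis gives $\pr(X'\in A)\ge\pr(X\in A)-\eta$. Substituting this into Step 2 yields $2\pr(X\in A)\le1+p+\eta$, that is,
\begin{align*}
\pr\bigl(\abs{X-\mu}\ge\Delta/2\bigr)=1-\pr(X\in A)\ge\frac{1-p-\eta}{2}.
\end{align*}
Combining this with Step 1 gives $\var(X)\ge(1-p-\eta)\Delta^2/8$. The condition $p+\eta<1$ only ensures that the resulting bound is nontrivial.

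There is no real obstacle in this argument. The only point needing care is the choice of an open interval of length exactly $\Delta$, which makes the inclusion $\{X\in A,\,X'\in A\}\subseteq\{\abs{X-X'}\le\Delta\}$ valid with the non-strict inequality appearing in the hypothesis.
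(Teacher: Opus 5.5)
Your proof is correct and complete. It is also essentially the same argument as the cited \cite[Lemma 1.2]{cha19}: the paper imports that lemma without proof, and its displayed intermediate bound $\frac14\Delta^2\pr(\abs{X-\E X}\ge\Delta/2)$ signals the same two steps you use (truncated Chebyshev, then inclusion--exclusion on $\{X\in A\}$, $\{X'\in A\}$ combined with the total-variation comparison on the single set $A$). One small correction to your closing remark: the inclusion $\{X\in A,\,X'\in A\}\subseteq\{\abs{X-X'}\le\Delta\}$ holds equally for the closed interval. Openness of $A$ only makes its complement exactly $\{\abs{X-\E X}\ge\Delta/2\}$, and even the closed version would suffice, since it yields the same lower bound for the smaller event $\{\abs{X-\E X}>\Delta/2\}$.
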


However, a direct perturbation of all $\Theta(n\ell)$ edge variables is too costly.
Instead, in the cumulative-hazard scale $H(t)=t^\theta$, the unrevealed Dijkstra residuals are conditionally independent exponentials. Comparing one vertex-discovery kernel at a time leads to a total variation cost of order $\eps\sqrt n$, uniformly in $\ell$.
There is also an unavoidable local contribution from the first edge leaving an endpoint. Indeed, after conditioning on all edge weights not incident to the source, the passage time is the minimum of independent variables with deterministic shifts.
Applied at the source, the next lemma gives an endpoint contribution of order $\ell^{-2/\theta}$, which becomes dominant when $\ell$ is at least of order $\sqrt n$.

\begin{lem}
\label{lem:shifted-var-min}
Fix $\theta>0$. There exists a constant $c_\theta>0$ such that the following
holds for every integer $d\ge1$. Let $a_1,\ldots,a_d\in[0,\infty)$ be
deterministic, let $\go_1,\ldots,\go_d$ be i.i.d.~random variables satisfying
$\go_i\equald\Exp(1)^{1/\theta}$. Set
$
 Y:=\min_{1\le i\le d}(a_i+\go_i).
$
Then
\begin{align}\label{eq:shifted-var-min}
 \var(Y)\ge c_\theta d^{-2/\theta}.
\end{align}
\end{lem}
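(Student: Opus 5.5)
Throughout, write $\go_i=E_i^{1/\theta}$ with $E_i$ i.i.d.~$\Exp(1)$. The idea is to lower bound $\var(Y)$ by the conditional variance coming from a single favourable coordinate, and to make that coordinate's fluctuations visible on the scale $d^{-1/\theta}$ by exhibiting a window of that size in which $Y$ can move freely.

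\textbf{Step 1: rescaling.} Set $t_0:=d^{-1/\theta}$. Since variance is translation invariant, I may subtract a constant from all $a_i$ and assume $\min_i a_i=0$. Write $Y=\min(a_{i_0}+\go_{i_0},Y')$, where $a_{i_0}=0$ and $Y':=\min_{i\ne i_0}(a_i+\go_i)$ is independent of $\go_{i_0}$. Hence $Y\le \go_{i_0}$. For every fixed $t>0$,
\begin{align*}
\pr(Y>t)=\prod_i \bar F((t-a_i)_+)\ge \prod_i e^{-t^\theta}=e^{-dt^\theta}.
\end{align*}
In particular $\pr(Y>t_0)\ge e^{-1}$. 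For the other side, $\pr(Y\le t_0/2)\ge \pr(\go_{i_0}\le t_0/2)=1-e^{-2^{-\theta}/d}$, which is only of order $1/d$. So this crude route is too weak; I need a finer decomposition.

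\textbf{Step 2: condition on the other coordinates.} I will use $\var(Y)\ge \E\var(Y\mid \cF)$, where $\cF$ is generated by the $\go_i$ for $i\ne i_0$ and $i_0$ is the index with $a_{i_0}=0$. Conditionally on $\cF$, $Y=\min(\go_{i_0},Y')$ with $Y'$ fixed. On the event $\{Y'\ge s\}$, $Y$ equals $\go_{i_0}$ on $[0,s)$ and is capped at $Y'$. So $\var(Y\mid\cF)\ge \var(\min(\go_{i_0},s))\ge c\,s^2\,\pr(\go_{i_0}\le s)$, and for small $s$ the last probability is about $s^\theta$. Taking $s\asymp t_0$ gives roughly $t_0^{2}\cdot d^{-1}$, which loses a factor of $d$. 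Conditioning on a single coordinate is therefore too lossy.

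\textbf{Step 3: the actual plan, a hazard-scale quantile argument.} Instead, compare two quantiles of $Y$ directly, using $\var(Y)\ge \frac14 (q_2-q_1)^2\min(\pr(Y\le q_1),\pr(Y\ge q_2))$. Let $\Lambda(t):=\sum_i ((t-a_i)_+)^\theta$, so that $\pr(Y>t)=e^{-\Lambda(t)}$. The function $\Lambda$ is continuous, nondecreasing, and vanishes at $t=0$, so its quantiles $q_1<q_2$ with $\Lambda(q_1)=\log(4/3)$ and $\Lambda(q_2)=\log 4$ exist. The goal is $q_2-q_1\ge c\,d^{-1/\theta}$.

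\textbf{Case $\theta\ge1$.} Here $\Lambda^{1/\theta}$ is a norm-type function: the $\ell^\theta$-norm of the vector $((t-a_i)_+)_i$. Each coordinate is $1$-Lipschitz in $t$, so $\Lambda(t)^{1/\theta}$ is Lipschitz in $t$ with constant $d^{1/\theta}$ (the $\ell^\theta$-norm of the all-ones vector). Therefore $q_2-q_1\ge d^{-1/\theta}\bigl((\log4)^{1/\theta}-(\log(4/3))^{1/\theta}\bigr)$.

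\textbf{Case $\theta<1$.} The same bound would follow from the concavity of $x\mapsto x^\theta$, but $\Lambda^{1/\theta}$ is no longer Lipschitz, and this case is the main obstacle. I would first try to control $\Lambda'$ directly:
\begin{align*}
\Lambda'(t)=\theta\sum_{i} (t-a_i)_+^{\theta-1}.
\end{align*}
Let $k$ be the number of active terms at $q_2$. For these, $\sum (t-a_i)^\theta\le \log 4$. I still need an upper bound on $\sum (t-a_i)^{\theta-1}$ integrated over $[q_1,q_2]$. Use the identity
\begin{align*}
\Lambda(q_2)-\Lambda(q_1)=\sum_i\bigl[(q_2-a_i)_+^\theta-(q_1-a_i)_+^\theta\bigr]\le k\,(q_2-q_1)^\theta,
\end{align*}
which holds by subadditivity of $x\mapsto x^\theta$. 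The left side equals $\log 3$, so $q_2-q_1\ge(\log3/k)^{1/\theta}\ge(\log 3/d)^{1/\theta}$.

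In both cases the quantile inequality then yields $\var(Y)\ge c_\theta d^{-2/\theta}$.
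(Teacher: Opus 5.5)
Your Step 3 is correct and is essentially the paper's argument. Both proofs write $\pr(Y>t)=e^{-\Lambda(t)}$, fix two hazard levels, show that $\Lambda$ cannot rise by a fixed amount over a window shorter than $c_\theta d^{-1/\theta}$, and turn the resulting quantile gap into a variance bound. For $\theta<1$ both use subadditivity of $x\mapsto x^\theta$. The only real difference is for $\theta\ge1$: you observe that $\Lambda(t)^{1/\theta}$ is the $\ell^\theta$-norm of $((t-a_i)_+)_i$, hence $d^{1/\theta}$-Lipschitz by Minkowski. The paper instead uses $(x+y)^\theta-x^\theta\le C_\theta(x^{\theta-1}y+y^\theta)$ together with H\"older, so your version is slightly cleaner. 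Steps 1--2 are unnecessary detours.
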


\begin{proof}
For $t\ge0$, define $g(t):=\sum_{i=1}^d \max\{t-a_i,0\}^\theta$.
By independence and the exact Weibull tail $\pr(\go_i>x)=e^{-x^\theta}$, the survival function of $Y$ is
\begin{align}\label{eq:shifted-surv}
 \pr(Y>t)
 =\prod_{i=1}^d\exp\bigl(-\max\{t-a_i,0\}^\theta\bigr)
 =e^{-g(t)}.
\end{align}
The function $g$ is continuous, $g(t)=0$ for $t\le \min_i a_i$, strictly increasing for $t>\min_i a_i$, and $g(t)\to\infty$ as $t\to\infty$.
Hence there is a unique $t_0\ge0$ such that $g(t_0)=1$.
We claim that there exists $\delta\in(0,1]$, depending only on $\theta$, such that with $s:=\delta d^{-1/\theta}$, we have
\begin{align}\label{eq:shifted-hazard-inc}
 g(t_0+s)-g(t_0)\le1.
\end{align}

If $0<\theta\le1$, the subadditivity of $x\mapsto x^\theta$ on $[0,\infty)$ gives
$\max\{t_0+s-a_i,0\}^\theta-\max\{t_0-a_i,0\}^\theta\le s^\theta$.
Consequently,
\begin{align*}
 g(t_0+s)-g(t_0)\le ds^\theta = \delta^\theta,
\end{align*}
and~\eqref{eq:shifted-hazard-inc} follows by choosing any $\delta\le1$.

If $\theta>1$, there is a constant $C_\theta<\infty$ such
that $(x+y)^\theta-x^\theta\le C_\theta(x^{\theta-1}y+y^\theta)$ for all $x,y\ge0$.
Setting $x_i:=\max\{t_0-a_i,0\}$, the bound $\max\{t_0+s-a_i,0\}\le x_i+s$ gives
\begin{align*}
 g(t_0+s)-g(t_0)
 \le C_\theta s\sum_{i=1}^d x_i^{\theta-1}
 +C_\theta ds^\theta.
\end{align*}
By H\"older's inequality and $g(t_0)=1$,
\begin{align*}
 \sum_{i=1}^d x_i^{\theta-1}
 \le d^{1/\theta}\left(\sum_{i=1}^d x_i^\theta\right)^{(\theta-1)/\theta}
 =d^{1/\theta}.
\end{align*}
Therefore, $g(t_0+s)-g(t_0)\le C_\theta(\delta+\delta^\theta)$. Choosing $\delta>0$ sufficiently small proves~\eqref{eq:shifted-hazard-inc} for $\theta>1$.

Let $Y'$ be an independent copy of $Y$. By~\eqref{eq:shifted-surv} and~\eqref{eq:shifted-hazard-inc},
\begin{align*}
 \pr(Y\le t_0)=1-e^{-1}
 \text{ and }
 \pr(Y\ge t_0+s)=e^{-g(t_0+s)}\ge e^{-2}.
\end{align*}
It follows that
\begin{align*}
 \var(Y)
 =\frac12\E (Y-Y')^2
 &\ge s^2\pr(Y'\le t_0)\pr(Y\ge t_0+s)
 \ge (1-e^{-1})e^{-2}s^2.
\end{align*}
Since $s^2=\delta^2d^{-2/\theta}$, this proves~\eqref{eq:shifted-var-min}.
\end{proof}

We now apply this criterion to the passage time $T_{[n]}$. The proof consists of two components. First, we establish a lower-tail bound to ensure that $T_{[n]}$ is large with high probability. Second, we execute the dynamic Dijkstra hazard-rescaling to bound the total variation distance between the original and perturbed passage times.

\begin{prop}[Variance lower bound]\label{prop:var-lb}
Assume $n/\ell\to\infty$ and $\go_e\equald\Exp(1)^{1/\theta}$. 
Then there exists $c>0$ such that, for all sufficiently large $n$, we have
\begin{align}\label{eq:line-var-lb}
 \var(T_{[n]})\ge c \left( \frac{n}{\ell^{2+2/\theta}} \vee \frac{1}{\ell^{2/\theta}} \right).
\end{align}
Similarly, we have
\begin{align}\label{eq:cycle-var-lb}
 \var\bigl(T_n\bigr)
 \ge c\left( \frac n{\ell^{2+2/\theta}} \vee \frac{1}{\ell^{2/\theta}} \right).
\end{align}
\end{prop}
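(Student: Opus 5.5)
We prove the two lower bounds in~\eqref{eq:line-var-lb} separately and take the maximum; the cycle version~\eqref{eq:cycle-var-lb} is obtained by the same arguments applied verbatim to $T_n$.

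\emph{The endpoint term $\ell^{-2/\theta}$.} Condition on all weights not incident to the source $0$. Every path from $0$ to $n$ starts with one of the $d\le 2\ell$ edges $\{0,j\}$, so
\[
T_{[n]}=\min_j\bigl(\go_{0,j}+a_j\bigr),
\]
where $a_j$ is the passage time from $j$ to $n$ avoiding $0$. The $a_j$ are measurable with respect to the conditioning. Lemma~\ref{lem:shifted-var-min} then gives $\var(T_{[n]}\mid\cdot)\ge c_\theta d^{-2/\theta}\ge c\ell^{-2/\theta}$. Since $\var X\ge\E\var(X\mid\cF)$, this bound passes to the unconditional variance. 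On the cycle we have $d=2\ell$ and the same argument applies.

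\emph{The bulk term $n\ell^{-2-2/\theta}$.} We use Lemma~\ref{lem:var-cc} with $X=T_{[n]}$, $X'$ a perturbed passage time, and $\eps=\gd n^{-1/2}$ for a small $\gd$.

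\textbf{Step 1: deterministic separation.} Take $\Delta\asymp\eps\,n\ell^{-1-1/\theta}$. We need a lower-tail bound $\pr(T_{[n]}\le c_0 n\ell^{-1-1/\theta})\le 1/4$. To get it, count paths. A path with $k$ hops must have $k\ge n/\ell$, and there are at most $\binom{k\ell}{k}$-type choices of jump lengths, roughly $(C\ell)^k$. For the weight sum of a fixed path,
\[
\pr\Bigl(\sum_{i\le k}\go_i\le t\Bigr)\le \frac{(Ct)^{k\theta'}}{\Gamma(\cdot)},
\qquad \theta'=\min(\theta,1),
\]
via the small-ball estimate $\pr(\go\le x)\le x^\theta$ and a convolution bound. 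A union bound over $k\ge n/\ell$ then shows $T_{[n]}\ge c_0 n/(\ell\fa_\ell)$ with high probability. This is the standard first-moment / branching-random-walk lower bound, and the CMJ normalization $\fa_\ell$ is exactly the scale at which the expected count of such paths is $<1$ for small $c_0$.

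\textbf{Step 2: building $X'$ by Dijkstra hazard rescaling.} Run Dijkstra from $0$. At each vertex-discovery step, the residual hazards $\go_e^\theta-L_e^\theta$ of the boundary edges are i.i.d.\ $\Exp(1)$ given the history, by Lemma~\ref{lem:hazard-residual}. The next discovery time increment and the identity of the discovered edge are therefore determined by an exponential race, with total rate $\Lambda_j$ in hazard scale. Define the perturbed exploration by replacing each race with its time-rescaled version: multiplying the whole weight field by $(1+\eps)$ equals multiplying hazards by $(1+\eps)^\theta$. The resulting passage time is $X'=(1+\eps)T_{[n]}$ in law under a modified kernel.

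The TV distance between the two exploration laws is then bounded by Pinsker or Hellinger, summing per-step KL divergences. For an exponential race with rate multiplied by $(1+\eps)^\theta$, the KL divergence is $O(\eps^2)$, independent of the rate. The number of steps needed before the target is discovered is at most $n$ (the number of vertices), giving
\[
\dTV\le C\sqrt{n\eps^2}=C\gd.
\]
Meanwhile, under the coupling $X'-X=\eps X\ge\eps c_0 n\ell^{-1-1/\theta}=\Delta$ on the high-probability event from Step 1. Hence $p\le1/4$ and $\eta\le C\gd<1/2$, and Lemma~\ref{lem:var-cc} yields
\[
\var\ge c\,\Delta^2=c\,\gd^2 n\ell^{-2-2/\theta}.
\]

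\textbf{Main obstacle.} The delicate point is Step 2. Rescaling all weights is a deterministic map with huge TV cost, since there are $n\ell$ edges. To avoid this, we compare the laws of the \emph{explored sequence of discoveries} only, one kernel per discovered vertex. Each step must be a genuine exponential race, with the number of steps bounded by $n$ and not by the number of edges, and the stopped exploration must determine $T_{[n]}$. I would formalize this by realizing both explorations as Markov chains on histories and bounding TV by the product of Hellinger affinities.
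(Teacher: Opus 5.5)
Your proposal follows the paper's proof essentially step for step. The endpoint term comes from Lemma~\ref{lem:shifted-var-min} together with the law of total variance. The bulk term comes from a path-counting lower tail, then a Dijkstra hazard-rescaling whose per-discovery cost is $O(\eps^2)$ uniformly in the past, then Lemma~\ref{lem:var-cc} with $\eps\asymp n^{-1/2}$ and the deterministic coupling $X'=(1+\eps)X$. The paper differs only cosmetically: it computes the one-step Hellinger affinity $2\sqrt{1+\eps}/(2+\eps)$ instead of using KL plus Pinsker, and it couples with $(1+\eps)^{-1/\theta}X$.

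However, Step~1 as written fails for $\theta>1$, and the reduction to $\theta'=\min(\theta,1)$ is what breaks it. With $\theta'=1$, the convolution bound only gives $\pr(\sum_{i\le k}\go_i\le t)\lesssim(Ct/k)^k$. Take $t=c_0n\ell^{-1-1/\theta}$ and $k\ge n/\ell$. The union-bound summands are then of order $(2\ell)^k(Ct/k)^k\le(Cc_0\ell^{1-1/\theta})^k$, and these diverge as $\ell\to\infty$ no matter how small $c_0$ is. You need the full exponent $\theta$ for every $\theta>0$, which your own small-ball estimate actually delivers. Using the density bound $\theta x^{\theta-1}e^{-x^\theta}\le\theta x^{\theta-1}$ and the Dirichlet integral over the simplex,
\[
\pr\Bigl(\sum_{i\le k}\go_i\le t\Bigr)\le\frac{\Gamma(\theta+1)^k\,t^{k\theta}}{\Gamma(k\theta+1)}\le\Bigl(C_\theta\frac{t^\theta}{k^\theta}\Bigr)^k .
\]
Hence $(2\ell)^k(C_\theta t^\theta/k^\theta)^k\le(2C_\theta c_0^\theta)^k$, which sums over $k\ge n/\ell$ to $O(e^{-cn/\ell})$ once $c_0$ is small.

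There are also two harmless slips. First, multiplying the weights by $1+\eps$ divides the cumulative hazard by $(1+\eps)^\theta$ rather than multiplying it; the per-step KL is $O(\eps^2)$ either way. Second, Lemma~\ref{lem:var-cc} needs $\pr(|X-X'|\le\Delta)\le p$. So take $\Delta$ a constant factor below $\eps c_0n\ell^{-1-1/\theta}$ to get strict separation on the good event.
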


\begin{proof}
We first treat the finite segment and begin with the lower-tail estimate. Define the threshold
\begin{align*}
 t=t_n:=t_0\frac n{\ell^{1+1/\theta}},
\end{align*}
where $t_0>0$ will be chosen later. Since all edge weights are positive, on the event $\{T_{[n]}\le t\}$ there is a simple path from $0$ to $n$ with total weight at most $t$. Any such path has at most $n$ edges and, since every edge has Euclidean length at most $\ell$, it has at least $\lceil n/\ell\rceil$ edges.

Let $\go_1,\ldots,\go_k$ be i.i.d.~copies of $\go_e$. Since the density of $\go_e$ is $\theta x^{\theta-1}e^{-x^\theta}\le\theta x^{\theta-1}$, we have
\begin{align*}
 \pr\left(\sum_{i=1}^k\go_i\le t\right)
 \le \int_{\substack{x_i\ge0\\ x_1+\cdots+x_k\le t}}
 \prod_{i=1}^k \theta x_i^{\theta-1}\,\dd x_1\cdots\dd x_k 
 = \frac{\Gamma(\theta+1)^k}{\Gamma(k\theta+1)}t^{k\theta}
 \le \left(C_\theta\frac{t^\theta}{k^\theta}\right)^k.
\end{align*}
The number of $k$-step paths starting from $0$ is at most $(2\ell)^k$.
Therefore by a union bound,
\begin{align*}
 \pr(T_{[n]}\le t)
 \le
 \sum_{k=\lceil n/\ell\rceil}^{n}
 (2\ell)^k
 \left(C_\theta\frac{t^\theta}{k^\theta}\right)^k
 =
 \sum_{k=\lceil n/\ell\rceil}^{n}
 \left(
 2C_\theta\ell
 \left(\frac{t_0n/\ell^{1+1/\theta}}{k}\right)^\theta
 \right)^k
 \le
 \sum_{k=\lceil n/\ell\rceil}^{n}
 \left(2C_\theta t_0^\theta\right)^k.
\end{align*}
Choose $t_0>0$ so small that $C_\theta t_0^\theta\le1/4$. Then there exists
$c_0>0$ such that
\begin{align}\label{eq:lower-tail-var-lb}
 \pr\left(T_{[n]}\le t_0\frac n{\ell^{1+1/\theta}}\right)
 \le 2\exp\left(-c_0\frac n\ell\right).
\end{align}

Next we prove the Dijkstra hazard-rescaling comparison. Run Dijkstra's algorithm from $0$ until all vertices have been discovered. Let $A_j$ be the discovered set after $j$ discoveries beyond the initial vertex, so $A_0=\{0\}$ and $\abs{A_j}=j+1$. Let $D_j$ be the $j$-th discovery time, with $D_0=0$, and write
\begin{align*}
 \partial A_j:= \bigl\{\{u,v\}\in\cE_{[0,n]}^{(\ell)}:\ u\in A_j,\ v\notin A_j\bigr\}.
\end{align*}
For an active edge $e=\{u_e,v_e\}\in\partial A_j$ with $u_e\in A_j$ and
$v_e\notin A_j$, let $D(u_e)$ denote the discovery time of $u_e$. The current lower bound on this edge weight is $\go_e>D_j-D(u_e)$.
Since $H(t)=t^\theta$, Lemma~\ref{lem:hazard-residual} implies that, conditional on the exploration history, the residual hazards
\begin{align*}
 H(\go_e)-H(D_j-D(u_e)),
 \qquad e\in\partial A_j,
\end{align*}
are independent $\Exp(1)$ random variables.
For $q\ge D_j$, define
\begin{align*}
 A_{j,e}(q)
 :=H(q-D(u_e))-H(D_j-D(u_e)),
 \qquad
 A_j(q):=\sum_{e\in\partial A_j}A_{j,e}(q).
\end{align*}
In this hazard scale, the conditional law of the next winning edge and its discovery time is given by the Stieltjes measure
\begin{align*}
 e^{-A_j(q)}\,\dd A_{j,e}(q),
 \qquad e\in\partial A_j,
 \quad q\ge D_j.
\end{align*}
Expressing the conditional law as a Stieltjes measure avoids the singularity at $q=D_j$, where the standard density diverges for $\theta<1$.

For $0<\eps\le1/2$, let $\sfP_\eps$ be the probability measure obtained by scaling the cumulative hazard to
\begin{align*}
 H_\eps(t):=(1+\eps)H(t).
\end{align*}
Equivalently, the edge weights under $\sfP_\eps$ have the same distribution as $(1+\eps)^{-1/\theta}\go_e$. Under $\sfP_\eps$, the corresponding one-step measure becomes
\begin{align*}
 (1+\eps)e^{-(1+\eps)A_j(q)}\,\dd A_{j,e}(q).
\end{align*}
Conditional on the exploration history, the Hellinger affinity of a single Dijkstra step is therefore
\begin{align*}
\alpha_\eps
&:=
\sum_{e\in\partial A_j}\int_{D_j}^{\infty}
\sqrt{e^{-A_j(q)}(1+\eps)e^{-(1+\eps)A_j(q)}}\,\dd A_{j,e}(q)\\
&=\sqrt{1+\eps}\int_{D_j}^{\infty}
 e^{-(2+\eps)A_j(q)/2}\,\dd A_j(q)
=\sqrt{1+\eps}\int_0^\infty e^{-(2+\eps)a/2}\,\dd a
=\frac{2\sqrt{1+\eps}}{2+\eps}.
\end{align*}
This quantity is independent of the past.

Let $\sfP$ and $\sfP_\eps$ denote the laws of the full Dijkstra process over $n$ successive discoveries, and define
\begin{align*}
\gr(P,Q)
:=\int
\sqrt{
\frac{\dd P}{\dd\lambda}
\frac{\dd Q}{\dd\lambda}
}
\,\dd\lambda,
\end{align*}
where $\lambda$ is any common dominating measure. 
 If $\gr_j$ denotes the affinity of the exploration laws up to the $j$-th discovery, conditioning on the common history and applying the one-step estimate above yields
$
 \gr_{j+1}\ge\alpha_\eps\gr_j.
$
Induction therefore yields
\begin{align*}
 \gr(\sfP,\sfP_\eps)\ge\alpha_\eps^n.
\end{align*}
Moreover,
$
 1-\alpha_\eps^2
 =\frac{\eps^2}{(2+\eps)^2}
 \le C\eps^2.
$
Using $1-x^n\le n(1-x)$ for $x\in[0,1]$ and the standard relation between total variation and Hellinger affinity,
\begin{align*}
 \dTV(\sfP,\sfP_\eps)
 \le\sqrt{1-\gr(\sfP,\sfP_\eps)^2}
 \le\sqrt{1-\alpha_\eps^{2n}}
 \le C\eps\sqrt n.
\end{align*}

Let $\tau_n:=\min\{j:\ n\in A_j\}$. The map from the full Dijkstra exploration to the hitting time of $n$ sends $\sfP$ to $\cL(T_{[n]})$ and sends $\sfP_\eps$ to the law of $(1+\eps)^{-1/\theta}T_{[n]}$, because $H_\eps(t)=(1+\eps)t^\theta$ is exactly the law obtained by multiplying every edge weight by $(1+\eps)^{-1/\theta}$. Therefore
\begin{align}\label{eq:tv-var-lb-T}
 \dTV\left(
 \cL(T_{[n]}),
 \cL\left((1+\eps)^{-1/\theta}T_{[n]}\right)
 \right)
 \le C\eps\sqrt n.
\end{align}

Recall that $t_n=t_0n/\ell^{1+1/\theta}$. Since $n/\ell\to\infty$,
\eqref{eq:lower-tail-var-lb} implies that, for all sufficiently large $n$,
\begin{align}
 \pr(T_{[n]}<t_n)\le \frac18.
 \label{eq:tail-18-var-lb}
\end{align}
Let $\CTV$ be the constant in~\eqref{eq:tv-var-lb-T}. Choose
a fixed $\eps_0>0$ so small that
\begin{align*}
 \CTV\eps_0\le1/8,
 \text{ and set }
 \eps:=\eps_0 n^{-1/2}.
\end{align*}
For all sufficiently large $n$, $0<\eps\le1/2$. Define, on the original
probability space,
\begin{align*}
 T_{[n]}^{(\eps)}:=(1+\eps)^{-1/\theta}T_{[n]}.
\end{align*}
By~\eqref{eq:tv-var-lb-T},
\begin{align*}
 \dTV(\cL(T_{[n]}),\cL(T_{[n]}^{(\eps)}))
 \le \CTV\eps\sqrt n
 =\CTV\eps_0
 \le\frac18.
\end{align*}
For $0<\eps\le1/2$, we have $1-(1+\eps)^{-1/\theta}\ge c_\theta\eps$
for some $c_\theta>0$. Set
\begin{align*}
 \Delta_n:=\frac{c_\theta\eps t_n}{2}
 =c'_\theta\frac{\sqrt n}{\ell^{1+1/\theta}}.
\end{align*}
On the event $\{T_{[n]}\ge t_n\}$,
\begin{align*}
 \bigl|T_{[n]}-T_{[n]}^{(\eps)}\bigr|
 =\bigl(1-(1+\eps)^{-1/\theta}\bigr)T_{[n]}
 \ge c_\theta\eps t_n
 >\Delta_n.
\end{align*}
Therefore, by~\eqref{eq:tail-18-var-lb},
\begin{align*}
 \pr\bigl(\bigl|T_{[n]}-T_{[n]}^{(\eps)}\bigr|\le\Delta_n\bigr)
 \le \pr(T_{[n]}<t_n)
 \le\frac18.
\end{align*}
Applying Lemma~\ref{lem:var-cc} with $X=T_{[n]}$, $X'=T_{[n]}^{(\eps)}$, $\gD=\gD_n$, and
$p=\eta=1/8$, we get
\begin{align}\label{eq:line-bulk-var-lb}
 \var(T_{[n]})\ge c\Delta_n^2
 \ge c\frac n{\ell^{2+2/\theta}}.
\end{align}

It remains to prove the endpoint contribution. Let
\begin{align*}
 \cF_0
 &:=
 \gs\bigl(
 \go_e:
 e\in\cE_{[0,n]}^{(\ell)},\
 0\notin e
 \bigr),\text{ and }
 A_i
 := T_{[1,n]}(i,n),
 \quad
 1\le i\le\ell.
\end{align*}
The induced graph on $[1,n]$ is connected, so each $A_i$ is finite and $\cF_0$-measurable. Since all edge weights are strictly positive, every geodesic from $0$ to $n$ is simple. Thus,
\begin{align}\label{eq:line-endpoint-min}
 T_{[n]}
 =
 \min_{1\le i\le\ell}
 \bigl(
 A_i+\go_{0,i}
 \bigr).
\end{align}
Conditionally on $\cF_0$, the shifts $(A_i)_{1\le i\le\ell}$ are deterministic, while the variables $(\go_{\{0,i\}})_{1\le i\le\ell}$ are independent with law $\Exp(1)^{1/\theta}$. Lemma~\ref{lem:shifted-var-min} and the law of total variance therefore give
\begin{align}\label{eq:line-endpoint-var-lb}
 \var(T_{[n]})
 \ge
 \E\left[
 \var(T_{[n]}\mid\cF_0)
 \right]
 \ge c_\theta\ell^{-2/\theta}.
\end{align}
Combining~\eqref{eq:line-bulk-var-lb} and
\eqref{eq:line-endpoint-var-lb} proves~\eqref{eq:line-var-lb}.

The variance bound for the cycle passage time follows from the same argument.
For fixed $\sfu\in(0,1/2)$, any path from $0$ to $\sfu_n$ in $\dT_n^{(\ell)}$ uses at least $\sfu_n/\ell \asymp n/\ell$ edges. By the same path-counting argument (now summing over both orientations of the cycle, which only doubles the union bound), we obtain the analogous lower-tail bound
\begin{align}\label{eq:cycle-lower-tail}
	\pr\left(T_n\le c_1n\ell^{-1-1/\theta}\right)
 \le 2\exp\left(-c_2{n}/{\ell}\right),
\end{align}
for some constants $c_1, c_2>0$. Since Dijkstra's algorithm on the cycle still requires at most $n-1$ discoveries, the total variation estimate $\dTV(\sfP,\sfP_\eps) \le C\eps\sqrt n$ holds verbatim. Taking $\eps=\eps_0n^{-1/2}$ and applying Lemma~\ref{lem:var-cc} exactly as before gives the bulk bound
\begin{align}\label{eq:cycle-bulk-var-lb}
 \var(T_n)\ge c\,\frac{n}{\ell^{2+2/\theta}}.
\end{align}
Also, applying the endpoint argument at the source, where the degree is now $2\ell$, we obtain
\begin{align}\label{eq:cycle-endpoint-var-lb}
 \var(T_n)
 \ge
 c_\theta(2\ell)^{-2/\theta}
 \ge
 c\ell^{-2/\theta}.
\end{align}
Combining~\eqref{eq:cycle-bulk-var-lb} and~\eqref{eq:cycle-endpoint-var-lb} proves~\eqref{eq:cycle-var-lb}.
\end{proof}

%%%%%%%%%%%%%%%%%%%%%%%%%%%%%%%%%%%%%%%%%%%%%%%%%%%
\subsection{Variance upper bound for the cycle passage time}
\label{ssec:cycle-var-upper}
For the upper bound, we cut the cycle at the endpoints $0$ and $\sfu_n$, bounding $T_n$ by the minimum of the two restricted arc passage times,
\begin{align}\label{eq:two-arc-def}
 T_{n,\mathrm{short}}:=T^{(\ell)}_{[0,\sfu_n]}(0,\sfu_n)\equald T_{[\sfu_n]},
 \qquad
 T_{n,\mathrm{long}} := T^{(\ell)}_{[\sfu_n,n]}(\sfu_n,n)\equald T_{[n-\sfu_n]},
\end{align}
where we recall from~\eqref{eq:segment-Tm} that $T_{[m]}$ is the segment passage time of spatial length $m$, and where the vertex $n$ of the arc is identified with the vertex $0$ of the cycle.
Let $M_n := \min\{T_{n,\mathrm{short}}, T_{n,\mathrm{long}}\}$. The two arcs use disjoint edge sets, hence $T_{n,\mathrm{short}}$ and $T_{n,\mathrm{long}}$ are independent.

Applying Lemma~\ref{lem:block-cut} to control the boundary effects at $0$ and $\sfu_n$ yields
\begin{align}
0\le M_n-T_n
\le Z_0^{(\ell)}+Z_{\sfu_n}^{(\ell)},
\label{eq:cycle-two-arc-comparison}
\end{align}
where, for an interface $x$, we write
\begin{align}\label{eq:Z-detour-def}
 Z_x^{(\ell)}:=Y_{x,-}^{(\ell)}+Y_{x,+}^{(\ell)}
\end{align}
for the local detour time at $x$, with $Y_{x,\pm}^{(\ell)}$ as in the display preceding Lemma~\ref{lem:block-cut}. Each detour term is bounded by the sum of two flooding times in windows of size $\ell$, \ie\ by two copies of $Y^{(\ell)}$. Thus, Lemma~\ref{lem:complete-flooding} implies
\begin{align}\label{eq:cycle-endpoint-detour-L2}
\norm{Z_0^{(\ell)}+Z_{\sfu_n}^{(\ell)}}_2
\le C\frac{(\log\ell)^\gth}{\ell^{1/\theta}}.
\end{align}

Since $(a,b)\mapsto\min\{a,b\}$ is $1$-Lipschitz in each coordinate and the two arc times are independent, the Efron--Stein inequality gives $\var(M_n) \le \var(T_{n,\mathrm{short}}) + \var(T_{n,\mathrm{long}})$. Combining this with Lemma~\ref{lem:block-moments}, applied to segments of lengths $\sfu_n\le n$ and $n-\sfu_n\le n$, and with~\eqref{eq:cycle-endpoint-detour-L2}, gives
\begin{align*}
\sqrt{\var(T_n)}
\le \sqrt{\var(M_n)}
+2\norm{M_n-T_n}_2
&\le
\sqrt{\var(T_{[\sfu_n]})
+\var(T_{[n-\sfu_n]})}
+C\frac{(\log\ell)^\gth}{\ell^{1/\theta}}\\
&\le C\sqrt{
\frac{n(\log\ell)^{2\gth}}
{\ell^{1+2/\theta}}
}.
\end{align*}
Here the first inequality is the triangle inequality in $L^2$ applied to $T_n-\E T_n=(M_n-\E M_n)-\bigl((M_n-T_n)-\E(M_n-T_n)\bigr)$.
Together with~\eqref{eq:cycle-var-lb}, this proves~\eqref{eq:meso-var-ublb} in Theorem~\ref{thm:meso-main}.

The block decomposition, the fourth-moment estimate, and the variance lower bound are now in place. We combine them in the next section to prove the Gaussian limit on the finite segment.

%%%%%%%%%%%%%%%%%%%%%%%%%%%%%%%%%%%%%%%%%%%%%%%%%%%
\section{Mesoscopic CLT: proof of Theorem~\ref{thm:meso-main}~\eqref{meso-clt}}
\label{sec:meso-fluct}

We now combine the block decomposition in Subsection~\ref{ssec:meso-block}, the moment estimates in Lemma~\ref{lem:block-moments}, the variance lower bound in Proposition~\ref{prop:var-lb} and Lyapunov's central limit theorem applied to a triangular array of independent block passage times.
We begin by clarifying exactly how the Gaussian fluctuations rely on the variance lower bound. The two conditions on the block length below serve distinct purposes: the first ensures that the centered block-cut error is negligible, while the second guarantees Lyapunov's condition for the sum of independent blocks.

%%%%%%%%%%%%%%%%%%%%%%%%%%%%%%%%%%%%%%%%%%%%%%%%%%%
\begin{thm}\label{thm:clt-from-var-lb}
Here $n$ denotes the length of the segment; the statement will be applied with $n$ replaced by other lengths $m$ with $m\asymp n$.
Assume $1\ll\ell\ll n$ and
$\go_e\equald\Exp(1)^{1/\theta}$. Suppose that,
for some constants $c>0$, $\ga\ge0$, and $A\in\dR$,
\begin{align}
 \var(T_{[n]})\ge c\frac{n}{\ell^\ga(\log\ell)^A}
 \label{eq:var-lb}
\end{align}
for all sufficiently large $n$. Suppose further that there exists an integer sequence $b=b_{n,\ell}$
such that $b>2\ell$, $b=o(n)$, and
\begin{align}
 \frac{\ell^{\ga-2/\theta}(\log\ell)^{A+2\gth}}{b}\to0,
 \qquad
 \frac{b\ell^{2\ga-2-4/\theta}(\log\ell)^{2A+4\gth}}{n}\to0.
 \label{eq:b-conditions}
\end{align}
Then
\begin{align*}
 \frac{T_{[n]}-\E T_{[n]}}{\sqrt{\var(T_{[n]})}}
 \Rightarrow\N(0,1).
\end{align*}
\end{thm}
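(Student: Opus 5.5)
We will write $T_{[n]}-\E T_{[n]}$ as a centered sum of independent block variables plus a small error, and then apply Lyapunov's CLT to the block sum. Take the block length $b$ from~\eqref{eq:b-conditions} and form the block-cut sum $S_n^{(b)}$ of~\eqref{eq:block-cut-sum}. Set $R_n:=S_n^{(b)}-T_{[n]}$, so that
\begin{align*}
 T_{[n]}-\E T_{[n]}=\bigl(S_n^{(b)}-\E S_n^{(b)}\bigr)-\bigl(R_n-\E R_n\bigr).
\end{align*}
Write $\sigma_n^2:=\var(T_{[n]})$. It suffices to prove two things: $\norm{R_n-\E R_n}_2=o(\sigma_n)$, and the normalized block sum is asymptotically $\N(0,1)$ with variance asymptotic to $\sigma_n^2$. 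Slutsky's lemma then gives the result.

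\emph{Step 1 (the cut error).} By Lemma~\ref{lem:block-cut}, $0\le R_n\le\sum_{i=1}^{q-1}(Y_{i,-}^{(\ell)}+Y_{i,+}^{(\ell)})$. Each summand is a flooding time inside an interval of length $\ell$, which induces a complete graph, so Lemma~\ref{lem:complete-flooding} gives $\norm{Y_{i,\pm}^{(\ell)}}_2\le C(\log\ell)^{\gth}\ell^{-1/\theta}$. Minkowski's inequality over the $q\asymp n/b$ interfaces would only give $\norm{R_n}_2\lesssim (n/b)(\log\ell)^{\gth}\ell^{-1/\theta}$, which is too crude. Instead I will use independence of the interface terms. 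Because $b>2\ell$, the windows $(x_i-\ell,x_i+\ell)$ are disjoint, so the detour variables $Z_i:=Y_{i,-}^{(\ell)}+Y_{i,+}^{(\ell)}$ are independent across $i$.

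The difficulty is that $R_n$ is not itself a sum of independent terms; it is only dominated by one. My plan is to show $\var(R_n)\le C(n/b)(\log\ell)^{2\gth}\ell^{-2/\theta}$, which is exactly what the first condition in~\eqref{eq:b-conditions} requires against~\eqref{eq:var-lb}. I would try two routes, in this order.
\begin{enumerate}
 \item Apply Efron--Stein to $R_n$ directly, resampling the edges in each interface window and in each block bulk. Resampling a block bulk changes both $S_n^{(b)}$ and $T_{[n]}$, so the changes in $R_n$ are hard to localize; this route may not close.
 \item Construct a sequential rerouting whose cost decomposes as $R_n=\sum_i \rho_i$, with $0\le\rho_i\le Z_i$, and use the martingale-difference bound $\var(R_n)\le\sum_i\E[\rho_i^2]$ along a suitable filtration.
\end{enumerate}
This is the main obstacle. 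If neither route works cleanly, I will accept $\var(R_n)\le \E[R_n^2]$ and look for a decoupling bound that makes the cross terms $\E[\rho_i\rho_j]$ small; I expect the route the authors intend to be the claim that the centered error has $L^2$ norm $\lesssim \sqrt{n/b}\,(\log\ell)^{\gth}\ell^{-1/\theta}$. Granting that bound, we get $\norm{R_n-\E R_n}_2^2/\sigma_n^2\lesssim \ell^{\ga-2/\theta}(\log\ell)^{A+2\gth}/b\to0$.

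\emph{Step 2 (Lyapunov).} The summands $X_i:=T^{(\ell)}_{[x_{i-1},x_i]}(x_{i-1},x_i)-\E(\cdot)$ are independent, and each block has length at least $b>2\ell$. Lemma~\ref{lem:block-moments} therefore gives $\E X_i^4\le C b^2(\log\ell)^{4\gth}\ell^{-2-4/\theta}$. Step 1 and the triangle inequality give $\var(S_n^{(b)})=\sigma_n^2(1+o(1))$. The Lyapunov ratio is then
\begin{align*}
 \frac{\sum_i\E X_i^4}{\sigma_n^4}\lesssim \frac{(n/b)\,b^2(\log\ell)^{4\gth}\ell^{-2-4/\theta}}{n^2\ell^{-2\ga}(\log\ell)^{-2A}}
 =\frac{b\,\ell^{2\ga-2-4/\theta}(\log\ell)^{2A+4\gth}}{n}\to0,
\end{align*}
by the second condition in~\eqref{eq:b-conditions}. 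This yields $(S_n^{(b)}-\E S_n^{(b)})/\sqrt{\var S_n^{(b)}}\Rightarrow\N(0,1)$.

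\emph{Step 3.} Since $\var(S_n^{(b)})/\sigma_n^2\to1$ and the centered error is $o_{L^2}(\sigma_n)$, the normalized block sum and $(T_{[n]}-\E T_{[n]})/\sigma_n$ differ by $o_{\pr}(1)$. Slutsky's lemma transfers the Gaussian limit to $(T_{[n]}-\E T_{[n]})/\sigma_n$.
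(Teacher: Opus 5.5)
Your Steps~2 and~3 match the paper's Steps~2--4. Step~1, however, is the only nontrivial estimate, and it is not proved. You end by granting $\norm{R_n-\E R_n}_2\lesssim\sqrt{n/b}\,(\log\ell)^{\gth}\ell^{-1/\theta}$, which is exactly the bound the first condition in~\eqref{eq:b-conditions} is calibrated against.

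Neither of your routes closes as stated. You already flag Route~1. Route~2 relies on $\var(\sum_i\rho_i)\le\sum_i\E\rho_i^2$. That holds only if the $\rho_i$ are themselves martingale differences, which non-negative detour costs are not. After compensation you are left with $\sum_i(\E[\rho_i\mid\cG_{i-1}]-\E\rho_i)$, and nothing in your plan controls it. Moreover, a sequential rerouting of the global geodesic, as in the proof of Lemma~\ref{lem:block-cut}, produces pieces $\rho_i$ that depend on the whole weight field. So the independence of the $Z_i$ you point out gives no decorrelation.

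The missing idea is to decompose the cut error hierarchically, so that every piece is local. For $0\le s<t\le q$, let $\cT_{s,t}$ be the passage time across $[x_s,x_t]$, let $\cS_{s,t}$ be the block-cut sum over the blocks inside $[x_s,x_t]$, and put $h:=s+\lfloor (t-s)/2\rfloor$. Then exactly
\begin{align*}
 \cS_{s,t}-\cT_{s,t}=(\cS_{s,h}-\cT_{s,h})+(\cS_{h,t}-\cT_{h,t})+U_{s,h,t},
 \qquad
 U_{s,h,t}:=\cT_{s,h}+\cT_{h,t}-\cT_{s,t}\in\bigl[0,Z_h^{(\ell)}\bigr],
\end{align*}
where the last inclusion is the two-block version of Lemma~\ref{lem:block-cut} (valid since $b>2\ell$).

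The two half-errors are functions of disjoint edge sets. After centering they are independent and add in quadrature, while the centered $U_{s,h,t}$ is handled by the triangle inequality. Let $\eps_N$ be the maximal $L^2$ norm of the centered error over windows of $N$ consecutive blocks. This gives $\eps_1=0$ and
\begin{align*}
 \eps_N\le\bigl(\eps_{\lfloor N/2\rfloor}^2+\eps_{N-\lfloor N/2\rfloor}^2\bigr)^{1/2}+C\norm{Y^{(\ell)}}_2 .
\end{align*}
Iterating, for $2^{r-1}<q\le 2^r$,
\begin{align*}
 \eps_q\le C\norm{Y^{(\ell)}}_2\sum_{j<r}2^{j/2}\le C\sqrt q\,\norm{Y^{(\ell)}}_2,
\end{align*}
which is precisely the bound you granted.

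Equivalently, $R_n-\E R_n$ is a sum over $O(\log q)$ dyadic levels of centered interface terms that are independent within each level. Level $j$ contributes $O(2^{j/2}\norm{Y^{(\ell)}}_2)$, and the geometric sum is dominated by the finest level. This is the same midpoint-splitting recursion the paper already uses in Lemma~\ref{lem:block-moments}.
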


The first condition in~\eqref{eq:b-conditions} makes the centered block-cut error $S_n^{(b)}-T_{[n]}$ small compared to $\sqrt{\var(T_{[n]})}$. The second condition is Lyapunov's condition for the independent block variables, using the fourth-moment estimate in Lemma~\ref{lem:block-moments}. This is the point at which the sharpness of the variance lower bound directly determines the range of $\ell$ for which the present proof gives a CLT.

\begin{proof}[Proof of Theorem~\ref{thm:clt-from-var-lb}]
The proof has four steps. We first approximate $\oT_{[n]}$ in $L^2$ by the centered block-cut sum $\oS_n^{(b)}$. 
We then compare their variances, verify Lyapunov's condition for the independent block variables, and transfer the resulting Gaussian limit back to $\oT_{[n]}$.

\smallskip
\noindent\textbf{Step 1. Block decomposition and $L^2$ approximation.}
Choose $b$ satisfying~\eqref{eq:b-conditions}, and define $q,r,x_i$ as in
Subsection~\ref{ssec:meso-block}. For $0\le s<t\le q$, set
\begin{align*}
 \cT_{s,t}:=T^{(\ell)}_{[x_s,x_t]}(x_s,x_t),
 \qquad
 \cS_{s,t}:=
 \sum_{j=s+1}^{t}T^{(\ell)}_{[x_{j-1},x_j]}(x_{j-1},x_j),
\end{align*}
and define the centered block-cut error
\begin{align*}
 \Delta_{s,t}
 :=(\cS_{s,t}-\cT_{s,t})-
 \E(\cS_{s,t}-\cT_{s,t}).
\end{align*}
For $1\le N\le q$, let
\begin{align*}
 \eps_N:=\max_{\substack{0\le s<t\le q\\ t-s=N}}
 \norm{\Delta_{s,t}}_2.
\end{align*}
Since $\cS_{s,t}=\cT_{s,t}$ when $t-s=1$, we have $\eps_1=0$.

Fix $0\le s<t\le q$ with $N:=t-s\ge2$, and put
$
 h:=s+\lfloor N/2\rfloor
$.
Since $\cS_{s,t}=\cS_{s,h}+\cS_{h,t}$, we have 
\begin{align*}
 \cS_{s,t}-\cT_{s,t}
 = (\cS_{s,h}-\cT_{s,h})+(\cS_{h,t}-\cT_{h,t})+U_{s,h,t},
\end{align*}
where
$U_{s,h,t}:=\cT_{s,h}+\cT_{h,t}-\cT_{s,t}$.
By the two-block version of Lemma~\ref{lem:block-cut} at the interface
$x_h$, $0\le U_{s,h,t}\le Z_h^{(\ell)}$. Hence
\begin{align*}
 \norm{U_{s,h,t}-\E U_{s,h,t}}_2
 \le C\norm{Y^{(\ell)}}_2.
\end{align*}
The variables $\Delta_{s,h}$ and $\Delta_{h,t}$ are independent, since they
depend on disjoint edge sets. Therefore
\begin{align*}
 \norm{\Delta_{s,t}}_2
 \le
 \left(\norm{\Delta_{s,h}}_2^2+\norm{\Delta_{h,t}}_2^2\right)^{1/2}
 +C\norm{Y^{(\ell)}}_2.
\end{align*}
Taking the maximum over all $s<t$ with $t-s=N$ gives
\begin{align}
 \eps_N
 \le
 \left(\eps_{\lfloor N/2\rfloor}^2+
 \eps_{N-\lfloor N/2\rfloor}^2\right)^{1/2}
 +C\norm{Y^{(\ell)}}_2.
 \label{eq:eps-recur}
\end{align}

Put
\begin{align*}
 \mathsf e_r:=\max_{1\le N\le\min\{2^r,q\}}\eps_N.
\end{align*}
Then $\mathsf e_0=0$, and~\eqref{eq:eps-recur} implies, for $r\ge1$,
\begin{align*}
 \mathsf e_r\le \sqrt2\,\mathsf e_{r-1}+C\norm{Y^{(\ell)}}_2.
\end{align*}
Iterating, we get
\begin{align*}
 \mathsf e_r\le C2^{r/2}\norm{Y^{(\ell)}}_2.
\end{align*}
Choosing $r$ such that $2^{r-1}<q\le2^r$, we obtain
\begin{align*}
 \eps_q\le C\sqrt q\,\norm{Y^{(\ell)}}_2.
\end{align*}

Since $\cT_{0,q}=T_{[n]}$ and $\cS_{0,q}=S_n^{(b)}$, and since $\norm{Y^{(\ell)}}_2\le C(\log\ell)^{\gth}/\ell^{1/\theta}$ by Lemma~\ref{lem:complete-flooding}, we get
\begin{align}
 \norm{\oT_{[n]}-\oS_n^{(b)}}_2
 &=\norm{(S_n^{(b)}-T_{[n]})-\E(S_n^{(b)}-T_{[n]})}_2 \notag\\
 &=\norm{\Delta_{0,q}}_2
 \le C\sqrt q\,\norm{Y^{(\ell)}}_2
 \le C\sqrt{\frac nb}\frac{(\log\ell)^{\gth}}{\ell^{1/\theta}}.
 \label{eq:block-L2-approx}
\end{align}

\smallskip
\noindent\textbf{Step 2. Variance comparison.}
Using the variance lower bound~\eqref{eq:var-lb}, we get
\begin{align}
 \frac{\norm{\oT_{[n]}-\oS_n^{(b)}}_2}{\sqrt{\var(T_{[n]})}}
 \le C\sqrt{
 \frac{\ell^{\ga-2/\theta}(\log\ell)^{A+2\gth}}{b}}
 =o(1).
 \label{eq:block-error-negligible}
\end{align}
Thus,
\begin{align*}
 \abs{\sqrt{\var(S_n^{(b)})}-\sqrt{\var(T_{[n]})}}
 &=\abs{\norm{\oS_n^{(b)}}_2-\norm{\oT_{[n]}}_2}
 \le \norm{\oS_n^{(b)}-\oT_{[n]}}_2
 =o\left(\sqrt{\var(T_{[n]})}\right),
\end{align*}
and consequently
\begin{align}
 \var(S_n^{(b)})=\var(T_{[n]})(1+o(1)).
 \label{eq:block-var-asymp}
\end{align}

\smallskip
\noindent\textbf{Step 3. Lyapunov CLT for the block sum.}
We now check Lyapunov's condition. Write
\begin{align*}
 Q_i^{(b)}:=T^{(\ell)}_{[x_{i-1},x_i]}(x_{i-1},x_i),
 \quad 1\le i\le q-1,
 \qquad
 R_n^{(b)}:=T^{(\ell)}_{[x_{q-1},x_q]}(x_{q-1},x_q),
\end{align*}
for the $q-1$ full blocks and the last (longer) block, and define
\begin{align*}
 X_{i,n}:=Q_i^{(b)}-\E Q_i^{(b)},
 \qquad 1\le i\le q-1,
 \qquad
 X_{q,n}:=R_n^{(b)}-\E R_n^{(b)}.
\end{align*}
By Lemma~\ref{lem:block-cut}, $Q_1^{(b)},\ldots,Q_{q-1}^{(b)},R_n^{(b)}$ are independent, $Q_i^{(b)}\equald T_{[b]}$ and $R_n^{(b)}\equald T_{[b+r]}$.
By~\eqref{eq:block-cut-sum},
\begin{align*}
 \oS_n^{(b)}=\sum_{i=1}^q X_{i,n},
 \qquad
 s_n^2:=\var(S_n^{(b)})=\sum_{i=1}^q \E X_{i,n}^2.
\end{align*}
For all sufficiently large $n$, $q\ge2$ and $b+r\in[b,2b)$. Hence
Lemma~\ref{lem:block-moments} yields
\begin{align*}
 \sum_{i=1}^q \E\abs{X_{i,n}}^4
 &=(q-1)\E\abs{T_{[b]}-\E T_{[b]}}^4+
 \E\abs{T_{[b+r]}-\E T_{[b+r]}}^4\\
 &\le C\{(q-1)b^2+(b+r)^2\}
 \frac{(\log\ell)^{4\gth}}{\ell^{2+4/\theta}}
 \le Cnb\frac{(\log\ell)^{4\gth}}{\ell^{2+4/\theta}}.
\end{align*}
On the other hand,~\eqref{eq:block-var-asymp} and~\eqref{eq:var-lb} imply
\begin{align*}
 s_n^2\ge \frac c2\frac{n}{\ell^\ga(\log\ell)^A}
\end{align*}
for all large $n$. Therefore
\begin{align*}
 \frac1{s_n^4}\sum_{i=1}^q\E\abs{X_{i,n}}^4
 &\le C
 \frac{nb(\log\ell)^{4\gth}/\ell^{2+4/\theta}}
 {n^2/\{\ell^{2\ga}(\log\ell)^{2A}\}}
 =C\frac{b\ell^{2\ga-2-4/\theta}(\log\ell)^{2A+4\gth}}{n}
 =o(1),
\end{align*}
by the second condition in~\eqref{eq:b-conditions}. Lyapunov's theorem for
triangular arrays gives
\begin{align}
 {\oS_n^{(b)}}/{\sqrt{\var(S_n^{(b)})}}
 \Rightarrow\N(0,1).
 \label{eq:block-sum-clt}
\end{align}

\smallskip
\noindent\textbf{Step 4. Transfer from $S_n^{(b)}$ to $T_{[n]}$.}
Finally,~\eqref{eq:block-error-negligible} gives
\begin{align*}
 \frac{\oT_{[n]}-\oS_n^{(b)}}{\sqrt{\var(T_{[n]})}}\to0
\end{align*}
in $L^2$, hence in probability. Combining this with
\eqref{eq:block-var-asymp} and~\eqref{eq:block-sum-clt},
\begin{align*}
 \frac{\oT_{[n]}}{\sqrt{\var(T_{[n]})}}
 =
 \frac{\oS_n^{(b)}}{\sqrt{\var(S_n^{(b)})}}
 \sqrt{\frac{\var(S_n^{(b)})}{\var(T_{[n]})}}
 +\frac{\oT_{[n]}-\oS_n^{(b)}}{\sqrt{\var(T_{[n]})}}
 \Rightarrow\N(0,1).
\end{align*}
This proves the theorem.
\end{proof}

%%%%%%%%%%%%%%%%%%%%%%%%%%%%%%%%%%%%%%%%%%%%%%%%%%%
\noindent\textbf{Completion of the proof of Theorem~\ref{thm:meso-main}~\eqref{meso-clt}}.
We now apply Theorem~\ref{thm:clt-from-var-lb} to the finite-segment passage time $T_{[m]}$ for a segment length $m$ with $m\asymp n$. Setting $\ga=2+2/\theta$ and $A=0$, and using the lower bound from Proposition~\ref{prop:var-lb}, the block-size conditions~\eqref{eq:b-conditions} reduce to
\begin{align*}
 \ell^2(\log\ell)^{2\gth}\ll b
 \ll \frac{m}{\ell^2(\log\ell)^{4\gth}}.
\end{align*}
Such a sequence $b=b_{m,\ell}$ exists precisely when $\ell^4(\log\ell)^{6\gth}\ll m$; since $\ell\ll n^{1/4}/(\log n)^{3\gth/2}$ and $m\asymp n$, this holds. For this choice of $b$, Theorem~\ref{thm:clt-from-var-lb} yields the Gaussian limit
\begin{align}\label{eq:segment-clt}
 \frac{T_{[m]}-\E T_{[m]}}{\sqrt{\var(T_{[m]})}}
 \Rightarrow\N(0,1).
\end{align}

Applying~\eqref{eq:segment-clt} with $m=\sfu_n$ establishes the CLT for the short-arc passage time $T_{[\sfu_n]}$. Completing the proof of the cycle CLT stated in Theorem~\ref{thm:meso-main}~\eqref{meso-clt} additionally requires showing that the long arc contributes negligibly, \ie\ that its expected passage time is strictly larger than that of the short arc by an amount that dominates the fluctuations. Since this uses the first-order mean asymptotics, the proof is deferred to the end of Section~\ref{sec:cmj-mean}.

%%%%%%%%%%%%%%%%%%%%%%%%%%%%%%%%%%%%%%%%%%%%%%%%%%%%%%%%%%%%
\section{Mesoscopic mean analysis: proof of Theorem~\ref{thm:meso-main}~\eqref{meso-mean}}\label{sec:cmj-mean}

This section establishes the mean part of Theorem~\ref{thm:meso-main} in the range $\ell\log\ell \ll n$. While the fluctuation analysis in Section~\ref{sec:meso-fluct} relied on deterministic block cutting and a global Dijkstra perturbation, our focus here is on identifying the first-order time constant. In this regime, the local exploration behaves like a spatial CMJ branching random walk, whose front speed $\vstar$ naturally determines the mesoscopic time constant. 
The argument is divided into four main parts.

\begin{enumerate}[M1.]
 \item \emph{Limiting front speed.}
Proposition~\ref{prop:cmj-speed} identifies $\vstar$ as the right-front speed of the limiting spatial CMJ branching random walk and gives the corresponding inverse hitting-time asymptotic.
\item \emph{From the limiting CMJ process to the spread-out line.}
By restricting the limiting CMJ process to a finite set of directions, Lemma~\ref{lem:finite-direc-one-side-speed} produces a fast path moving along the $\ell$-spread-out tree. Proposition~\ref{prop:auxiliary-one-sided-line-crossing} then embeds this path onto the physical line by coupling their edge weights before the optimal path is identified.

\item\emph{Short windows and chaining.}
Proposition~\ref{prop:finite-graph-window-tools} collects the short-window mean estimate, showing that the passage time across a window of spatial length $h_\ell\ell$ with $h_\ell = o(\log \ell)$ scales precisely as $h_\ell/\vstar$ on the $\fa_\ell$ scale, and matches the ideal speed $\vstar$. To determine the total passage time over a macroscopic distance, we chain these short crossings together. The crucial geometric point is that we stop the exploration exactly upon first entrance into the next window; by the spatial Markov property the unrevealed edges ahead of the entrance point remain untouched (``fresh''), so the next crossing restarts independently. This precise local control bypasses the coarser global flooding bound and removes the extra logarithmic factor when $\theta<1$.

\item\emph{Lower bound and transferring to the cycle.}
To establish the lower bound, we compare the actual graph to an idealized tree model that ignores path overlaps. The physical intuition is simple: the tree provides an overly optimistic scenario, so if a remarkably fast path were to exist on the actual graph, an impossibly fast path would have to exist in the tree. Since the latter has negligible probability, the actual passage time cannot be faster than our estimate. Finally, since our original model is defined on a ring, we transfer this segment result to the fixed-target cycle passage time.
\end{enumerate}

All normalizations and limiting CMJ objects used below, including the normalization $\fa_\ell$ in~\eqref{eq:aell-def}, the offspring measure $\mu_\theta$ in~\eqref{eq:limiting-mu}, and the speed $\vstar$ in~\eqref{def:vstar}, are formally defined in Section~\ref{sec:local-cmj}.

%%%%%%%%%%%%%%%%%%%%%%%%%%%%%%%%%%%%%%%%%%%%%%%%%%%
\subsection{CMJ branching-random-walk estimates}\label{ssec:cmj-estimates}

We work with the spatially marked CMJ process defined in Section~\ref{sec:local-cmj}. If $v=(v_1,\ldots,v_k)\in\dN^k$, write $|v|=k$ and set
\begin{align*}
 v|_0:=\varnothing,
 \qquad
 v|_j:=(v_1,\ldots,v_j),
 \quad 1\le j\le k.
\end{align*}
For $v\ne\varnothing$, write $v^-$ for its parent. For $f\ge0$, define the front hitting time
\begin{align*}
 \tau(f):=\inf\{t\ge0:\sfM_t\ge f\}.
\end{align*}

Recall from~\eqref{def:vstar} that
\begin{align*}
 \gk_\theta(\beta)=\gf(\beta)^{1/\theta},
 \qquad
 \vstar=\vstar_\theta
 =
 \inf_{\beta>0}\frac{\gk_\theta(\beta)}{\beta}.
\end{align*}
The following lemma justifies the phrase ``the unique minimizer'' used in~\eqref{eq:beta_c}.

\begin{lem}[Existence and uniqueness of $\gbc$]\label{lem:betac-unique}
Let $\gf(\beta)=(\sinh\beta)/\beta=\E e^{\beta\xi}$ with $\xi\sim\Unif[-1,1]$ and
$\gk_\theta(\beta)=\gf(\beta)^{1/\theta}$. Then $\beta\mapsto\gk_\theta(\beta)/\beta$
attains its infimum over $(0,\infty)$ at a unique point $\gbc\in(0,\infty)$,
$\vstar=\gk_\theta(\gbc)/\gbc\in(0,\infty)$, and
\begin{align}\label{eq:betac-first-order}
 \gbc\gf'(\gbc)=\theta\gf(\gbc),
 \qquad
 \vstar=\gk_\theta'(\gbc).
\end{align}
\end{lem}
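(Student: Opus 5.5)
The plan is to work with $\log$ of the ratio. Set
\[
g(\beta):=\log\bigl(\gk_\theta(\beta)/\beta\bigr)=\tfrac1\theta\log\gf(\beta)-\log\beta, \qquad \beta>0.
\]
Minimizing $\gk_\theta(\beta)/\beta$ is the same as minimizing $g$. The argument has three parts: boundary behaviour, the critical-point equation, and uniqueness via monotonicity of a single auxiliary function.

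\textbf{Boundary behaviour.} As $\beta\downarrow0$ we have $\gf(\beta)\to1$, so $g(\beta)\to+\infty$. As $\beta\to\infty$ we have $\log\gf(\beta)=\beta-\log(2\beta)+o(1)$, so $g(\beta)\sim\beta/\theta\to+\infty$. Since $g$ is continuous on $(0,\infty)$, it attains its infimum at some interior point. That minimum value is finite, so $\vstar\in(0,\infty)$.

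\textbf{First-order condition.} Any minimizer satisfies $g'(\beta)=0$, that is,
\[
\frac{\gf'(\beta)}{\theta\gf(\beta)}=\frac1\beta,
\]
which is exactly $\beta\gf'(\beta)=\theta\gf(\beta)$. At such a point, $\gk_\theta'(\beta)=\frac1\theta\gf^{1/\theta-1}\gf'=\gk_\theta(\beta)\gf'(\beta)/(\theta\gf(\beta))=\gk_\theta(\beta)/\beta=\vstar$. This gives \eqref{eq:betac-first-order}.

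\textbf{Uniqueness.} Define $\psi(\beta):=\beta\gf'(\beta)/\gf(\beta)=\beta(\log\gf)'(\beta)$. Then $\beta g'(\beta)=\psi(\beta)/\theta-1$, so the critical points of $g$ are exactly the solutions of $\psi(\beta)=\theta$. It therefore suffices to show $\psi$ is strictly increasing from $\psi(0+)=0$ to $\psi(\infty)=\infty$.

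The limits are easy. Since $\log\gf(\beta)=\beta^2/6+O(\beta^4)$, we get $\psi(\beta)\to0$ as $\beta\downarrow0$. Since $(\log\gf)'(\beta)=\coth\beta-1/\beta\to1$, we get $\psi(\beta)\sim\beta\to\infty$.

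For strict monotonicity, write
\[
\psi'(\beta)=(\log\gf)'(\beta)+\beta(\log\gf)''(\beta).
\]
The function $\log\gf$ is the cumulant generating function of the non-degenerate $\xi\sim\Unif[-1,1]$. Hence $(\log\gf)''(\beta)=\var_\beta(\xi)>0$ under the exponentially tilted law. Also $(\log\gf)'(\beta)=\E_\beta\xi>0$ for $\beta>0$, because the tilted mean is increasing in $\beta$ and vanishes at $\beta=0$ by symmetry. So $\psi'>0$ on $(0,\infty)$.

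Consequently, $\psi(\beta)=\theta$ has exactly one root $\gbc$. At this root $g'$ changes sign from negative to positive, so $\gbc$ is the unique minimizer of $g$.

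There is no serious obstacle here. The only step needing care is the monotonicity of $\psi$, and the tilted-measure (cumulant) representation handles it cleanly.
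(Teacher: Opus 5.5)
Your proof is correct and has the same architecture as the paper's. Blow-up of the ratio at both ends gives an interior minimizer, and uniqueness comes from a strictly increasing auxiliary function that carries the sign of the derivative. The only difference is the choice of that function. The paper uses $\beta\gk_\theta'(\beta)-\gk_\theta(\beta)$, which equals $\gk_\theta(\beta)\bigl(\psi(\beta)/\theta-1\bigr)$ in your notation, and gets monotonicity from strict convexity of $\gk_\theta$. Your $\theta$-free $\psi=\beta(\log\gf)'$ needs the extra easy fact $(\log\gf)'>0$ on $(0,\infty)$. In return it shows that $\gbc=\psi^{-1}(\theta)$ is strictly increasing in $\theta$, and it makes the asymptotics quoted in the paper's remark immediate: $\gbc\sim\theta$ as $\theta\to\infty$ and $\gbc\sim\sqrt{3\theta}$ as $\theta\to0$.
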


\begin{proof}
The function $\log\gf$ is the logarithmic moment generating function of a
non-degenerate bounded random variable, hence finite, smooth and strictly convex
on $\dR$. Therefore $\gk_\theta=\exp\{(\log\gf)/\theta\}$ is smooth, positive and
strictly convex on $\dR$, with $\gk_\theta(0)=1>0$.
Put $\psi(\beta):=\beta\gk_\theta'(\beta)-\gk_\theta(\beta)$, so that
$(\gk_\theta(\beta)/\beta)'=\psi(\beta)/\beta^2$ for $\beta>0$. Since
$\psi'(\beta)=\beta\gk_\theta''(\beta)>0$ for $\beta>0$, the function $\psi$ is
strictly increasing on $(0,\infty)$; moreover $\psi(0^+)=-\gk_\theta(0)=-1<0$.
As $\beta\to\infty$ we have $\gf(\beta)\sim e^\beta/(2\beta)$, so
$\gk_\theta(\beta)/\beta\to\infty$, while $\gk_\theta(\beta)/\beta\to\infty$ as
$\beta\downarrow0$ because $\gk_\theta(0)=1$. Hence the infimum is attained in
$(0,\infty)$, and at any minimizer $\psi$ vanishes. Strict monotonicity of $\psi$
gives at most one zero, so the minimizer $\gbc$ is unique and
$\gbc\gk_\theta'(\gbc)=\gk_\theta(\gbc)$, \ie\ $\vstar=\gk_\theta(\gbc)/\gbc=\gk_\theta'(\gbc)$.
Finally, $\gk_\theta'=\frac1\theta\gf^{1/\theta-1}\gf'$, so
$\gbc\gk_\theta'(\gbc)=\gk_\theta(\gbc)$ is equivalent to $\gbc\gf'(\gbc)=\theta\gf(\gbc)$.
\end{proof}

For the $\ell$-discrete tree approximation, let
\begin{align}
 \cD_\ell:=\{-\ell,\ldots,-1,1,\ldots,\ell\}
\label{eq:Dell-def}
\end{align}
be the set of possible jump directions from a vertex in the $\ell$-spread-out line graph, so that $|\cD_\ell|=2\ell$. After division by $\ell$, it is the uniform grid in $[-1,1]$ with the origin removed. Birth times in this discrete tree are scaled by the normalization $\fa_\ell$ from~\eqref{eq:aell-def}. By Lemma~\ref{lem:local-offspring-pp}, the corresponding offspring-age point process converges to the CMJ reproduction point process with intensity $\mu_\theta$. (The normalization $\fa_\ell$ is built from the non-backtracking degree $2\ell-1$ while a tree vertex here has $2\ell$ children; since $2\ell/(2\ell-1)\to1$, this discrepancy is asymptotically immaterial and is tracked explicitly wherever it matters, e.g.\ in~\eqref{eq:bar-mu-density}.)

We now define the $\ell$-spread-out tree. Its vertices are finite
direction words
\begin{align*}
	 v=(z_1,\ldots,z_k), \quad z_i\in\cD_\ell.
\end{align*}
 For each particle $u$ and each direction $z\in\cD_\ell$, attach an independent edge weight
\begin{align*}
 \go_{u,z}\equald E_{u,z}^{1/\theta},
 \qquad E_{u,z}\sim\Exp(1),
\end{align*}
and declare the child $uz$ to be born at scaled time increment
$\fa_\ell\go_{u,z}$ and to have scaled displacement $z/\ell$.
Thus
\begin{align*}
\begin{aligned}
 \sft_\ell(\varnothing)&:=0,
 \qquad
 \sft_\ell(uz):=\sft_\ell(u)+\fa_\ell\go_{u,z},\\
 \sfp_\ell(\varnothing)&:=0,
 \qquad
 \sfp_\ell(uz):=\sfp_\ell(u)+{z}/{\ell}.
\end{aligned}
\end{align*}
Finally set
\begin{align*}
 \cN_t^{(\ell)}:=\{v:\sft_\ell(v)\le t\},
 \qquad
 \sfM_t^{(\ell)}:=\max_{v\in\cN_t^{(\ell)}}\sfp_\ell(v).
\end{align*}

%%%%%%%%%%%
\begin{prop}\label{prop:cmj-speed}
For the continuous CMJ branching random walk,
\begin{align}
\frac{\sfM_t}{t}\to \vstar
\text{ a.s.}
\label{eq:cmj-speed}
\end{align}
Consequently,
\begin{align}
\frac{\tau(f)}{f}\to \frac1{\vstar}
\text{ a.s. and in }L^1,
\text{ and }
\E\tau(f)=\frac{f}{\vstar}+o(f).
\label{eq:cmj-inverse-speed}
\end{align}
\end{prop}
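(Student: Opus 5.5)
The plan is to embed the marked CMJ process into a discrete-time branching random walk and then apply Biggins' theorem. First I would pass to the generational structure. Let $\cZ^{(k)}$ be the point process on $\dR^2$ of pairs $(\sft(v),\sfp(v))$ over $|v|=k$. This is a two-type-free branching random walk in $\dR^2$. Its one-generation intensity is $\mu_\theta(\dd s)\otimes\Unif(-1,1)(\dd x)$, and its Laplace transform is
\begin{align*}
 m(\ga,\beta):=\E\sum_{i}e^{-\ga\tau_{\varnothing,i}+\beta\xi_{\varnothing,i}}=\ga^{-\theta}\gf(\beta),
\end{align*}
by~\eqref{eq:mu-laplace}. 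The standard route to the front speed of a CMJ/branching random walk in continuous time is the following. For a line $x=vt$, one has $\sfM_t\ge vt$ eventually a.s. when the process restricted to the half-plane grows, and $\sfM_t<vt$ eventually when it dies out. By Biggins' theorem on the asymptotic shape of the support of a multidimensional branching random walk (equivalently, the Biggins--Kingman--Hammersley first-birth results~\cite{biggins95}), the rightmost position at time $t$ satisfies $\sfM_t/t\to\sup\{v:\inf_{\ga,\beta>0}m(\ga,\beta)e^{\ga t-\beta vt\cdot(1/t)}\ldots\}$. Concretely, the speed is $\inf_{\beta>0}\gk(\beta)/\beta$, where $\gk(\beta)$ is the tilted Malthusian parameter, i.e.\ the root of $m(\gk,\beta)=1$. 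Here that root is $\gk_\theta(\beta)=\gf(\beta)^{1/\theta}$. So I would verify the hypotheses of the relevant theorem (finite $m$ on an open set, and supercritical with no extinction since the reproduction PPP has infinite mass) and read off~\eqref{eq:cmj-speed}.

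To keep the argument self-contained, I would instead prove the two bounds directly. For the upper bound, fix $v>\vstar$. Take $\beta=\gbc$ and use a first-moment estimate. The expected number of individuals with $\sft(v')\le t$ and $\sfp(v')\ge vt$ is at most
\begin{align*}
 e^{\gk t-\beta vt}\,\E\sum_{v'}e^{-\gk\sft(v')+\beta\sfp(v')}\ind\{\sft(v')\le t\}.
\end{align*}
I would control this using the many-to-one formula and the fact that $W(\gk,\beta)=\sum_{|v'|=k}e^{-\gk\sft+\beta\sfp}$ has mean $1$ in every generation. The number of generations alive by time $t$ is $O(t)$ with exponentially small failure probability, because $\E Z_\theta(t)<\infty$ grows exponentially. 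The resulting bound is $\mathrm{poly}(t)e^{-(\beta v-\gk)t}$, which is summable along integer $t$, and monotonicity in $t$ then gives $\limsup\sfM_t/t\le\vstar$. For the lower bound, fix $v<\vstar$. I would restrict to a truncated process (finitely many children, bounded ages) whose tilted parameter is close to $\gk_\theta$. I would then use the discrete-time Biggins theorem for the embedded walk obtained by sampling in time blocks of length $L$, or alternatively a spine/second-moment argument with the additive martingale for $\beta<\gbc$, which is uniformly integrable and positive. Positivity together with non-extinction yields $\liminf\sfM_t/t\ge v$ a.s.

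The inverse statement follows quickly. Since $\sfM_t$ is nondecreasing and right-continuous, $\tau(f)/f\to1/\vstar$ a.s. For $L^1$ convergence I need uniform integrability of $\tau(f)/f$. I would get it by a subadditive domination: the root has an immediate child at displacement at least $1/2$, so $\tau(f)$ is stochastically dominated by a sum of $\lceil 2f\rceil$ i.i.d.\ waiting times, each the first birth with positive mark. These waiting times have exponential moments, so $\E(\tau(f)/f)^2$ is bounded. The statement $\E\tau(f)=f/\vstar+o(f)$ then follows.

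I expect the main obstacle to be the lower bound in~\eqref{eq:cmj-speed}. This is where the continuous-time CMJ structure must be reconciled with Biggins' generation-based theorem, including the uniform integrability of the additive martingale when $\theta<1$ and ages accumulate near $0$. I would first try to cite the continuous-time version in~\cite{biggins95} directly, and fall back on the time-block embedding only if needed.
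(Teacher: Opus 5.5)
Your proposal matches the paper's proof. The speed comes from Biggins' general branching-random-walk theorem with $\gk_\theta(\beta)=\gf(\beta)^{1/\theta}$, the hitting-time limit follows from monotonicity of $\sfM_t$, and uniform integrability follows from following the ray of first children with displacement at least a fixed $\gd$ (yours is $\gd=1/2$), which gives $\E\tau(f)^2\le Cf^2$.

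Two small corrections. First, the selected child must have mark at least $1/2$, not merely positive. Second, its waiting time has tail $\exp\{-t^\theta/(4\Gamma(\theta+1))\}$, so for $\theta<1$ it has moments of all orders but no exponential moments; only the second moment is needed, so the argument is unaffected.
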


\begin{proof}[Proof of Propostion~\ref{prop:cmj-speed}]
The reproduction point process is locally finite because
\[
\mu_\theta(0,t]={t^\theta}/{\Gamma(\theta+1)}<\infty
\text{ for all }t>0.
\]For each spatial tilt $\beta>0$ and time tilt $\gk>0$, the Laplace transform is
\begin{align*}
 \int_{-1}^{1}\int_0^\infty
 e^{\beta x-\gk s}\,\frac{dx}{2}\,\mu_\theta(\dd s)
 = \gf(\beta)\gk^{-\theta} < \infty.
\end{align*}
This ensures the standard integrability conditions for the general branching random walk. Thus, by the spreading speed theorems in~\cite[Corollary 2, Proposition 1]{biggins95}, the front speed is given by
\begin{align*}
 \inf_{\beta>0}\frac{\gk_\theta(\beta)}{\beta} = \vstar,
\end{align*}
where $\gk_\theta(\beta)=\gf(\beta)^{1/\theta}$. This matches exactly our definition in~\eqref{def:vstar}.
The almost sure convergence in~\eqref{eq:cmj-inverse-speed} is a direct consequence of~\eqref{eq:cmj-speed} and the monotonicity of $\sfM_t$. To establish the $L^1$ convergence, it suffices to prove that the family $\{\tau(f)/f\}_{f \ge 1}$ is uniformly integrable. 
Fix $\gd\in(0,1)$ and put $p_\gd:=(1-\gd)/2$. Starting from
the root, at each selected particle follow the first child whose displacement is at least $\gd$. If $V$ is the waiting time for this child, Poisson thinning gives
\begin{align*}
 \pr(V>t)
 =
 \exp(-p_\gd\mu_\theta(0,t])
 =
 \exp\left(
 -\tfrac{1-\gd}{2} \cdot
 \tfrac{t^\theta}{\Gamma(\theta+1)}
 \right),
 \quad t\ge0.
\end{align*}
Thus $V$ has moments of every order. The reproduction processes of distinct particles are independent, so the successive waiting times $V_1,V_2,\ldots$ along this ray are i.i.d.~If $m_f:=\lceil f/\gd\rceil$, then the $m_f$-th selected particle has position at least $f$, and hence $\tau(f) \le \sum_{j=1}^{m_f}V_j$.
Therefore, for $f\ge1$,
\begin{align*}
 \E\tau(f)^2
 &\le
 m_f\E V^2
 +
 m_f(m_f-1)(\E V)^2
 \le
 C f^2.
\end{align*}
Thus
$
 \sup_{f\ge1}\E\left({\tau(f)}/f\right)^2 <\infty
$. This gives uniform integrability and proves the $L^1$ convergence.
\end{proof}

To transfer the continuous CMJ speed limit to our discrete setting, we need to control the number of paths in the $\ell$-spread-out tree. The following lemma provides the required exponential moment bound via a weighted renewal argument. A proof is presented in Appendix~\ref{pf:finite-direction-renewal-bound}.

%%%%%%%%%%%
\begin{lem}[Finite-direction weighted renewal bound]
\label{lem:finite-direction-renewal-bound}
For every fixed $\beta\ge0$ and $\eta>0$, there are constants
$C_{\beta,\eta}<\infty$ and $\ell_0<\infty$ such that, for all
$\ell\ge\ell_0$ and all $t\ge0$,
\begin{align}
\E\sum_{v\in\cN_t^{(\ell)}}e^{\beta \sfp_\ell(v)}
\le
C_{\beta,\eta}
\exp\{(\gk_\theta(\beta)+\eta)t\}.
\label{eq:finite-tree-exp-moment}
\end{align}
Here $\gf(0)=1$ and hence $\gk_\theta(0)=1$.
\end{lem}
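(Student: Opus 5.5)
We prove~\eqref{eq:finite-tree-exp-moment} with a generation-by-generation computation of the weighted first moment, followed by a renewal-type summation. By the branching property and the independence of the edge weights, the expected weighted count of generation-$k$ particles born by time $t$ is a $k$-fold convolution:
\begin{align*}
 \E\sum_{v\in\cN_t^{(\ell)},\,|v|=k}e^{\beta\sfp_\ell(v)}
 =\bar\nu_\ell^{*k}([0,t]),
 \quad\text{where}\quad
 \bar\nu_\ell(\dd s):=\Bigl(\sum_{z\in\cD_\ell}e^{\beta z/\ell}\Bigr)\,\pr(\fa_\ell\go\in\dd s).
\end{align*}
Here $\bar\nu_\ell$ is the one-step intensity measure in scaled time, tilted by the spatial factor $e^{\beta z/\ell}$. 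Summing over $k\ge0$ gives the renewal measure $\sum_k\bar\nu_\ell^{*k}([0,t])$.

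We apply the standard exponential Chebyshev trick: for any $\gk>0$,
\begin{align*}
 \sum_{k\ge0}\bar\nu_\ell^{*k}([0,t])
 \le e^{\gk t}\sum_{k\ge0}\Bigl(\int_0^\infty e^{-\gk s}\,\bar\nu_\ell(\dd s)\Bigr)^k,
\end{align*}
which is finite as soon as $\hat\nu_\ell(\gk):=\int e^{-\gk s}\bar\nu_\ell(\dd s)<1$. Take $\gk:=\gk_\theta(\beta)+\eta$. The lemma then follows with $C_{\beta,\eta}:=(1-\sup_{\ell\ge\ell_0}\hat\nu_\ell(\gk))^{-1}$, provided $\hat\nu_\ell(\gk)$ is bounded away from $1$ uniformly in $\ell\ge\ell_0$.

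Since the spatial and temporal parts factor, $\hat\nu_\ell(\gk)$ splits into two pieces:
\begin{align*}
 \hat\nu_\ell(\gk)=\Bigl(\frac{1}{2\ell}\sum_{z\in\cD_\ell}e^{\beta z/\ell}\Bigr)\cdot\frac{2\ell}{2\ell-1}\cdot(2\ell-1)\E e^{-\gk\fa_\ell\go}.
\end{align*}
\begin{itemize}
 \item The first factor is a Riemann sum for $\frac12\int_{-1}^1e^{\beta x}\dd x=\gf(\beta)$. For fixed $\beta$ it converges to $\gf(\beta)$, since the removed origin costs only $O(1/\ell)$.
 \item The middle factor tends to $1$.
 \item By~\eqref{eq:aell-density}, the last factor equals $\int_0^\infty e^{-\gk u}\frac{u^{\theta-1}}{\Gamma(\theta)}e^{-(u/\fa_\ell)^\theta}\dd u\le\gk^{-\theta}$ exactly, for every $\ell$. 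Dropping $e^{-(u/\fa_\ell)^\theta}\le1$ gives this monotone bound without any limiting argument.
\end{itemize}
Hence $\limsup_\ell\hat\nu_\ell(\gk)\le\gf(\beta)(\gk_\theta(\beta)+\eta)^{-\theta}$. Because $\gk_\theta(\beta)^\theta=\gf(\beta)$, this limsup is $\bigl(1+\eta/\gk_\theta(\beta)\bigr)^{-\theta}<1$. Choosing $\ell_0$ so that $\hat\nu_\ell(\gk)\le\frac12\bigl(1+(1+\eta/\gk_\theta(\beta))^{-\theta}\bigr)<1$ for all $\ell\ge\ell_0$ closes the argument. The case $\beta=0$ is included, with $\gf(0)=1$.

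The only step needing care is the uniformity in $\ell$ of the strict inequality $\hat\nu_\ell(\gk)<1$, and it is mild. The Riemann-sum convergence for fixed $\beta$ is elementary: the error is $O(\beta e^{\beta}/\ell)$. The temporal factor is bounded exactly by $\gk^{-\theta}$, so no dominated-convergence issue arises even for $\theta<1$, where the density is singular at $0$. One should also justify the interchange of sum and expectation leading to the convolution formula (Tonelli, since all terms are nonnegative). This identity is simply the many-to-one formula along the tree, obtained by induction on the generation using independence of $(\go_{u,z})$ across particles.
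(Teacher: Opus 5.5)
Your proof is correct and follows the paper's argument: the many-to-one identity expresses the generation-$k$ contribution as the $k$-fold convolution $\nu_{\ell,\beta}^{*k}([0,t])$, exponential Chebyshev with a Laplace transform that stays strictly below one for large $\ell$ gives the bound for each generation, and a geometric series finishes the proof. The only difference is minor. You bound the temporal factor by $\gk^{-\theta}$ directly, by dropping $e^{-(u/\fa_\ell)^\theta}\le 1$, and you take the exponent to be exactly $\gk_\theta(\beta)+\eta$. The paper instead proves the full limit $\gf(\beta)q^{-\theta}$ via integration by parts and dominated convergence, and then picks some $q\in(\gk_\theta(\beta),\gk_\theta(\beta)+\eta)$; your one-sided bound is slightly cleaner and equally valid.
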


By Lemma~\ref{lem:finite-direction-renewal-bound}, the expected population of the $\ell$-spread-out tree up to time $t_\ell^\star = o(\log \ell)$ is subpolynomial in $\ell$. This permits a high-probability coupling between the limiting CMJ tree and the $\ell$-spread-out tree via a union bound over the births. The following lemma establishes this coupling: it exactly preserves unmarked genealogies and birth times, uniformly controls the spatial discretization error, and keeps the spatial positions bounded away from the crossing boundaries.

\begin{lem}[CMJ to $\ell$-spread-out coupling]
\label{lem:cmj-ell-tree-coupling}
Let $h_\ell\to\infty$, $h_\ell=o(\log\ell)$, and $t_\ell^\star=O(h_\ell)$.
Fix $0<\gd<1/2$, and set
\begin{align*}
 N_\ell:=\ell^\gd,
 \qquad
 r_\ell:={N_\ell}/{\ell}.
\end{align*}
Then the limiting CMJ tree and the $\ell$-spread-out tree can be coupled up to
time $t_\ell^\star$ so that there is a high probability event $\cE_\ell$ satisfying the following properties.
On $\cE_\ell$, there is an injective ancestry-preserving map
\begin{align*}
 \Phi_\ell:\cN_{t_\ell^\star}\to \cN_{t_\ell^\star}^{(\ell)}
\end{align*}
such that $\Phi_\ell(\varnothing)=\varnothing$ and, for every $u\in\cN_{t_\ell^\star}$,
$
 \sft_\ell(\Phi_\ell(u))=\sft(u),
$
and 
$
 |\sfp_\ell(\Phi_\ell(u))-\sfp(u)|\le r_\ell .
$
Moreover, on $\cE_\ell$,
\begin{align*}
 \operatorname{dist}
 \left(
 \sfp(u),\{0,h_\ell,h_\ell+1\}
 \right)>3r_\ell
\end{align*}
for every non-root $u\in\cN_{t_\ell^\star}$.
\end{lem}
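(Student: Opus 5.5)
We build the coupling generation by generation, one parent at a time, so that each CMJ child is matched to a distinct direction of the $\ell$-spread-out tree. Fix a particle $u\in\cN_{t_\ell^\star}$ already mapped to $\Phi_\ell(u)$. For each direction $z\in\cD_\ell$ attach to the child $\Phi_\ell(u)z$ the pair $(\fa_\ell\go_{\Phi_\ell(u),z},z/\ell)$. By \eqref{eq:aell-density} the marked point process $\sum_{z}\gd_{(\fa_\ell\go_{\Phi_\ell(u),z},z/\ell)}$ restricted to ages in $[0,t_\ell^\star]$ is a binomial process. Its intensity has density $\frac{2\ell}{2\ell-1}\frac{s^{\theta-1}}{\Gamma(\theta)}e^{-(s/\fa_\ell)^\theta}$ in age times the uniform grid in space. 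The target is the $\operatorname{PPP}(\mu_\theta\otimes\Unif(-1,1))$ on the same window.

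The first step is a quantitative total-variation bound between the unmarked age processes on $[0,t_\ell^\star]$. A Poisson approximation bound (Le Cam/Chen--Stein) gives $\sum_z \pr(\fa_\ell\go\le t^\star_\ell)^2\le C(t_\ell^\star)^{2\theta}/\ell$. The intensity error is controlled by $(t^\star_\ell/\fa_\ell)^\theta\cdot(t_\ell^\star)^\theta+ (t^\star_\ell)^\theta/\ell$, which is $O(\ell^{-1}\mathrm{polylog})$ since $t^\star_\ell=O(h_\ell)=o(\log\ell)$. We then couple the age processes maximally, so that they agree exactly, including birth times and hence $\sft_\ell(\Phi_\ell(ui))=\sft(ui)$, outside an event of probability $O(\ell^{-1+o(1)})$.

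Conditionally on the ages, the spatial marks are i.i.d.: uniform on the grid $\cD_\ell/\ell$ in the tree, and $\Unif(-1,1)$ in the CMJ process. We couple each grid mark with its continuous mark by rounding, so each step errs by at most $1/\ell$. The tree marks are sampled without replacement among the $2\ell$ directions, and a collision among $k$ children has probability $O(k^2/\ell)$; we absorb this into the failure event, which keeps $\Phi_\ell$ injective. Along any ancestral line of $u\in\cN_{t^\star_\ell}$ the generation is at most $|u|$, so $|\sfp_\ell(\Phi_\ell(u))-\sfp(u)|\le|u|/\ell$. We therefore must show that $\max_{u\in\cN_{t^\star_\ell}}|u|\le N_\ell=\ell^\gd$ with high probability. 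This is immediate, since $|u|\le Z_\theta(t^\star_\ell)$ and $\E Z_\theta(t)\le Ce^{t}$ by \eqref{eq:Minfty-def}, giving $e^{O(h_\ell)}=\ell^{o(1)}$.

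The global failure probability is bounded by a union bound over the parents born before $t_\ell^\star$. Using $\E Z_\theta(t_\ell^\star)\le \ell^{o(1)}$, together with Lemma~\ref{lem:finite-direction-renewal-bound} with $\beta=0$ on the tree side, this is $\ell^{o(1)}\cdot\ell^{-1+o(1)}\to0$. The second-moment factor $k^2$ for the collisions is also handled, since the offspring counts in the window have all moments. To keep the construction a valid coupling, we proceed sequentially, in the order of birth times, using the independence of reproduction processes across particles.

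The last property is the anti-concentration of $\sfp(u)$ near the three points $\{0,h_\ell,h_\ell+1\}$. For a non-root $u$, $\sfp(u)$ is the sum of $|u|\ge1$ independent $\Unif(-1,1)$ variables and so has density at most $1/2$. Moreover, the marks are independent of the genealogy and birth times, so $\pr(\operatorname{dist}(\sfp(u),\{0,h_\ell,h_\ell+1\})\le3r_\ell)\le 9r_\ell$. Summing over $u$ gives $\E Z_\theta(t^\star_\ell)\cdot 9\ell^{\gd-1}=\ell^{\gd-1+o(1)}\to0$, which uses $\gd<1$.

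The main obstacle is the first step: obtaining the total-variation coupling of the per-parent age processes uniformly over the subpolynomially many parents, with error $\ell^{-1+o(1)}$. The exponential correction $e^{-(s/\fa_\ell)^\theta}$ and the factor $2\ell/(2\ell-1)$ must both be absorbed on the window $[0,t_\ell^\star]$, and the singular density at $s=0$ for $\theta<1$ must be handled through cumulative intensities rather than densities.
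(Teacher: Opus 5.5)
Your proposal is correct and follows essentially the same route as the paper: a Bernoulli--Poisson bound plus an intensity total-variation bound for each parent's age process, sequential maximal coupling, a rounding map with the with/without-replacement comparison for the directions, the generation bound $|u|\le|\cN_{t_\ell^\star}|-1\le N_\ell$ for the spatial error, and a bounded-density anti-concentration estimate for the boundary event. The only difference is cosmetic: you sum failure probabilities against $\E|\cN_{t_\ell^\star}|=\ell^{o(1)}$ (a Wald-type bound) rather than working on the paper's population event $\{|\cN_{t_\ell^\star}|\le N_\ell,\ |\cN^{(\ell)}_{t_\ell^\star}|\le N_\ell\}$, which gives errors of order $\ell^{-1+o(1)}$ instead of $N_\ell^2/\ell$ and would in fact allow any $\gd\in(0,1)$.
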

A proof is presented in Appendix~\ref{pf:cmj-ell-tree-coupling}.
We now apply the local coupling to establish a one-sided crossing estimate. We first prove this speed bound for the limiting CMJ branching random walk by embedding a supercritical Galton--Watson tree of fast-advancing lineages. Lemma~\ref{lem:cmj-ell-tree-coupling} then maps this continuous path to the $\ell$-spread-out tree, preserving the necessary spatial bounds.

\begin{lem}\label{lem:finite-direc-one-side-speed}
Let
\begin{align*}
 \sfM_t^{(\ell),+}
 :=
 \max\left\{
 \sfp_\ell(v):
 v\in\cN_t^{(\ell)},\
 \sfp_\ell(v|_j)\ge0 \text{ for all }0\le j\le |v|
 \right\}.
\end{align*}
If $h_\ell\to\infty$ and $h_\ell=o(\log\ell)$, then, for every $\eps>0$,
\begin{align*}
 \pr\left(
 \sfM_{(1/\vstar+\eps)h_\ell}^{(\ell),+}\ge h_\ell
 \right)
 \to 1.
\end{align*}
Moreover, the hitting particle may be chosen as the first particle on its ancestral line entering
$[h_\ell,h_\ell+1]$.
\end{lem}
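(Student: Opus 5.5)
The plan is to prove the statement first for the continuous CMJ branching random walk, with the additional constraint that the whole ancestral line stays in $[0,\infty)$, and then to transport the resulting path to the $\ell$-spread-out tree using Lemma~\ref{lem:cmj-ell-tree-coupling}.

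\textbf{Step 1: a supercritical Galton--Watson tree of fast, right-moving lineages.} Fix $\eps>0$ and choose $v<\vstar$ close enough to $\vstar$ that $1/v<1/\vstar+\eps/2$.
By Proposition~\ref{prop:cmj-speed}, $\sfM_t/t\to\vstar$ a.s. Hence for a large fixed $L$ the event
\[
\{\text{some particle born by time } L/v \text{ has position }\ge L\}
\]
has probability close to one. On that event we keep, for each such particle, a lineage in the sense of a \emph{$(L,v)$-descendant}. To force the lineage to stay nonnegative, we also require that every intermediate ancestor have position $\ge -K$ relative to the starting particle, for a fixed $K$. We then argue that the expected number of $(L,v)$-descendants tends to infinity as $L\to\infty$, using the standard many-to-one/tilting argument at $\beta$ slightly below $\gbc$, where the tilted spine has drift exceeding $v$. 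From this, iterated blocks of this type form a supercritical Galton--Watson process. The block's offspring number is a.s.\ finite because the process is non-explosive.

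To handle the constraint of staying $\ge0$, we start the construction with a deterministic-cost initial segment. We follow first children with displacement at least $\gd$, as in the proof of Proposition~\ref{prop:cmj-speed}, for $\lceil K/\gd\rceil$ generations, which raises the position to $\ge K$ in time $O(1)$. After that, the per-block constraint of not dropping more than $K$ below the block's starting position keeps the whole path in $[0,\infty)$.

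\textbf{Step 2: survival with high probability.} The Galton--Watson process survives with probability $q_L>0$, but we need probability tending to one. We obtain this by running $k$ independent attempts in disjoint subtrees reached in time $O(1)$, for instance using distinct children of the root. As $h_\ell\to\infty$ we can let $k\to\infty$ slowly, so the failure probability is $(1-q_L)^k\to0$. On survival, after $\lceil h_\ell/L\rceil+1$ generations some lineage reaches position $\ge h_\ell$ by time
\[
h_\ell/v+O(L)+O(1)\le(1/\vstar+\eps)h_\ell
\]
for large $\ell$. Because the jumps lie in $(-1,1)$, the first ancestor on that lineage with position $\ge h_\ell$ lies in $[h_\ell,h_\ell+1)$. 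We take this ancestor as the hitting particle, which gives the ``moreover'' clause.

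\textbf{Step 3: transfer to the $\ell$-spread-out tree.} Apply Lemma~\ref{lem:cmj-ell-tree-coupling} with $t_\ell^\star=(1/\vstar+\eps)h_\ell=O(h_\ell)$. On $\cE_\ell$, the image under $\Phi_\ell$ of the selected lineage has the same birth times and positions within $r_\ell$ of the continuous ones. No non-root continuous position lies within $3r_\ell$ of $0$, $h_\ell$ or $h_\ell+1$. Therefore nonnegativity along the ancestral line, membership in $[h_\ell,h_\ell+1]$, and the first-entrance property are all preserved. The root maps to the root at position $0$. Summing the failure probabilities of $\cE_\ell^c$ and of Step 2 completes the argument.

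\textbf{Main obstacle.} The hardest part is Step 1: showing that the constrained count of $(L,v)$-descendants has mean $>1$, that is, that the barrier at $-K$ does not destroy growth at speed $v<\vstar$. I would first try a second-moment or spine argument under the $\beta$-tilted measure. Under that measure the spine is a random walk with positive drift and bounded increments, so it stays above $-K$ with probability bounded below uniformly in $L$.
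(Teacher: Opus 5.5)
Your proposal is correct and follows essentially the paper's route: a supercritical Galton--Watson process of constrained fast lineages in the limiting CMJ walk, with mean $>1$ coming from a spatial tilt at $\beta<\gbc$ (the tilted walk has positive drift and bounded increments, so it stays above the barrier with positive probability), boosted to probability tending to one by many independent root subtrees, followed by taking the first-entrance ancestor and transferring through Lemma~\ref{lem:cmj-ell-tree-coupling} with its $3r_\ell$ boundary margin. Two details need tightening. First, the selected block offspring must be pairwise incomparable for the iteration to be Galton--Watson (the paper fixes the generation, $|v|-|u|=k_{L_0}$, and a first-passage stopping line also works). Second, unboundedly many independent attempts cannot be reached in time $O(1)$, so the start-up time must be allowed to grow, e.g.\ like $\eta h_\ell$ (the paper uses the Poisson number of root children with position $\ge1/2$ born by time $\eta h$), or more generally at any rate $o(h_\ell)$.
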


The preceding lemma establishes a bound for the unrestricted tree, a proof of which is presented in Appendix~\ref{pf:finite-direc-one-side-speed}. To use it for FPP on the actual $\ell$-spread-out line, we must embed the successful tree lineage into the line geometry. Since the explored tree population up to time $O(h_\ell)$ is subpolynomial in $\ell$, spatial collisions are asymptotically negligible.
On this high-probability collision-free event, the standard covering-tree coupling exactly matches the tree weights to distinct edge weights on the line, embedding the lineage as a valid simple path.

\begin{prop}[Auxiliary one-sided line crossing]
\label{prop:auxiliary-one-sided-line-crossing}
Let $h_\ell\to\infty$ and $h_\ell=o(\log\ell)$, and put
\begin{align*}
 H_\ell:=\lfloor h_\ell\ell\rfloor .
\end{align*}
For every $\eps>0$,
\begin{align}\label{eq:aux-one-sided-line-crossing}
 \pr\left(
 \fa_\ell
 T^{(\ell)}_{[0,H_\ell+\ell]}
 \bigl(0,[H_\ell,H_\ell+\ell]\bigr)
 \le
 \left(\frac1{\vstar}+\eps\right)h_\ell
 \right)
 \to 1.
\end{align}
\end{prop}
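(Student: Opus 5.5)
The argument has three steps: a tree statement, an embedding of the tree into the line, and a transfer of edge weights. On the tree side, Lemma~\ref{lem:finite-direc-one-side-speed} supplies, with probability tending to one, a particle $v\in\cN^{(\ell)}_{t}$ with $t=(1/\vstar+\eps)h_\ell$. Its ancestral line stays in $[0,\infty)$ after scaling, it enters $[h_\ell,h_\ell+1]$ for the first time at $v$, and $\sft_\ell(v)\le t$. Unscaled, the ancestral positions $\ell\sfp_\ell(v|_j)$ are integers. All except the last lie in $[0,H_\ell)$, up to rounding at $h_\ell\ell$ versus $H_\ell$, which costs at most one jump of size $\le\ell$ and will be handled by a one-step adjustment. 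The last position lies in $[H_\ell,H_\ell+\ell]$, because each jump has size at most $\ell$ and the previous position is below $H_\ell$. The scaled birth time $\sft_\ell(v)$ equals $\fa_\ell$ times the sum of the tree weights $\go_{v|_{j-1},z_j}$ along the lineage.

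For the embedding, we use the standard covering-tree coupling. Map each tree particle $u$ to the vertex $\ell\sfp_\ell(u)$ of the line, and each tree edge $(u,uz)$ to the line edge $\{\ell\sfp_\ell(u),\ell\sfp_\ell(u)+z\}$. We reveal tree particles in order of birth time up to $t$, and assign each tree edge weight the weight of its image line edge when that line edge has not been used before, and a fresh independent copy otherwise. Both the tree law and the line law are preserved by this construction. On the event that no two explored tree edges map to the same line edge and the selected lineage visits distinct vertices, the lineage maps to a simple path in $[0,H_\ell+\ell]$ whose line weight equals its tree weight. That path witnesses
\[
\fa_\ell T^{(\ell)}_{[0,H_\ell+\ell]}\bigl(0,[H_\ell,H_\ell+\ell]\bigr)\le t.
\]
One correction is needed at the boundary: lineage positions may exceed $H_\ell+\ell$ only at the final step, and the first-entrance choice excludes this.

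The main obstacle is showing that this collision event has probability tending to one. I would bound its complement by a union bound, beginning with the expected number of pairs of explored particles (or tree edges) with coinciding images. By Lemma~\ref{lem:finite-direction-renewal-bound} with $\beta=0$, the expected population up to time $t=O(h_\ell)=o(\log\ell)$ is at most $Ce^{(1+\eta)t}=\ell^{o(1)}$. For two distinct lineages, once they branch, their positions differ by an independent displacement that, given everything else, is uniform over roughly $2\ell$ values. Summing over pairs with a common ancestor, a coincidence of endpoints, and hence of edges, therefore has probability $O(1/\ell)$ per pair. Then
\[
\E\#\{\text{collisions}\}\le \ell^{o(1)}\cdot \ell^{o(1)}/\ell\to0 .
\]
Some care is needed to make the conditioning rigorous, since the displacements are independent of the birth times in the tree, so the population count and the displacements can be conditioned separately. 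Lemma~\ref{lem:cmj-ell-tree-coupling} could alternatively be invoked to keep positions away from $0$ and $h_\ell$ boundaries. With the collision event controlled, \eqref{eq:aux-one-sided-line-crossing} follows.
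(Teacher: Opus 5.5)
Your proposal follows the paper's proof: the tree event from Lemma~\ref{lem:finite-direc-one-side-speed}, the $\beta=0$ population bound $\ell^{o(1)}$, a union bound over spatial collisions in which the last differing direction label is nearly uniform given the genealogy, and a first-use covering-tree coupling of tree weights to line weights. Two steps need tightening before the argument is complete.

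\textbf{The collision event.} The event you state, namely distinct line images of explored tree edges plus a self-avoiding selected lineage, is not the one your weight rule requires. Under first use, each particle queries all $2\ell$ of its edges when it is born. Write $x_j:=\ell\sfp_\ell(v|_j)$. The lineage edge $\{x_{j-1},x_j\}$ then receives its line weight exactly when no particle born earlier than $v|_{j-1}$ sits at $x_{j-1}$ or at $x_j$. An off-lineage particle occupying a lineage vertex is not excluded by your stated event. The event to control is therefore that no two particles of $\{\varnothing\}\cup\cN^{(\ell)}_t$ share a position. This also rules out backtracking and coincident edge images. Your later union bound over pairs of explored particles with coinciding images addresses exactly this event, so here you only need to state the right event.

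\textbf{The counting bound.} The estimate $\E\#\{\text{collisions}\}\le \ell^{o(1)}\cdot\ell^{o(1)}/\ell$ does not follow from Lemma~\ref{lem:finite-direction-renewal-bound} as written. The number of pairs is quadratic, so you need $\E|\cN^{(\ell)}_t|^2$ rather than $(\E|\cN^{(\ell)}_t|)^2$, and the lemma only gives a first moment. The per-pair bound $C/\ell$ also needs few siblings to have been labelled already, since labels are drawn without replacement.

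The paper handles both issues by truncation. Put $N_\ell=\ell^{\gd}$ with $\gd<1/2$. Markov's inequality gives $\pr(|\cN^{(\ell)}_t|>N_\ell)\le \ell^{o(1)-\gd}\to0$. On $\{|\cN^{(\ell)}_t|\le N_\ell\}$, conditionally on the unlabelled genealogy and birth ages, each label equals any predictable value with probability at most $1/(2\ell-N_\ell)$. Summing over at most $N_\ell^2$ pairs gives a total collision probability of $O(N_\ell^2/\ell)=O(\ell^{2\gd-1})\to0$.

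With these two repairs your argument goes through.
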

A proof is presented in Appendix~\ref{pf:auxiliary-one-sided-line-crossing}. Next, we establish the finite-window estimates necessary to transfer the CMJ front speed to the $\ell$-spread-out line. By analyzing the passage time across a window of size $O(h_\ell \ell)=o(n)$, we construct the fundamental building blocks required for the upper bound. For this purpose, we define the auxiliary scale
\begin{align}\label{eq:h-ell-def}
h_\ell:=\left\lfloor(\log\ell)^{1/3}\right\rfloor,
\end{align}
so that $\ll h_\ell\ll \log\ell$.

\begin{prop}[$\ell$-spread-out graph window tools]\label{prop:finite-graph-window-tools}
The following estimates hold.

\emph{(i) Greedy bounds.} For every fixed $p\ge1$, there is $C_p<\infty$ such
that, for all $\ell\ge2$ and all integers $r\ge0$,
\begin{align}\label{eq:greedy-upper-cmj}
\left\|
 \fa_\ell
 T_{[0,r+\ell]}(0,[r,r+\ell])
\right\|_p
&\le
C_p\left(\frac r\ell+1\right),
\end{align}
and, for all integers $r\ge\ell$,
\begin{align}\label{eq:greedy-endpoint-cmj}
\left\|
 \fa_\ell
 T_{[0,r]}(0,r)
\right\|_p
&\le
C_p\left(\frac r\ell+\log\ell+1\right).
\end{align}

\emph{(ii) One-sided short-window mean.} With $h_\ell$ as in
\eqref{eq:h-ell-def},
\begin{align}\label{eq:window-mean-upper}
 \fa_\ell
 \E \Big( T_{[0,\lfloor h_\ell\ell\rfloor+\ell]}
 \left(0,[\lfloor h_\ell\ell\rfloor,\lfloor h_\ell\ell\rfloor+\ell]\right) \Big)
 \le \frac{h_\ell}{\vstar}+o(h_\ell).
\end{align}

\emph{(iii) Freshness after a stopped window crossing.} Fix $x,H\in\dZ$ with $H\ge1$ and set
\begin{align*}
 I_x:=[x+H,x+H+\ell]\cap\dZ .
\end{align*}
Let $\pi^\star=\pi^\star(x,I_x)$ be the shortest path from $x$ to $I_x$ strictly inside $[x, x+H+\ell]$, stopped upon its first entrance at a terminal point $Y$. Then, conditionally on $\pi^\star$, its passage time, and $Y=y$,
\begin{align*}
 \bigl\{\go_{\{u,v\}}:\ 1\le |u-v|\le \ell,\ u\ge y,\ v\ge y\bigr\}
\end{align*}
are fresh, \ie~conditionally independent and identically distributed as $\Exp(1)^{1/\theta}$.
\end{prop}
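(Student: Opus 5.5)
For (i), I will use a greedy path built from fresh edges, as in the proof of the uniform integrability in Proposition~\ref{prop:cmj-speed}. From the current vertex $x$, restrict to the edges $\{x,x+z\}$ with $z\in[\ell/2,\ell]$ that have not been inspected; there are about $\ell/2$ of them. Take the minimum weight among these and jump along that edge. Since $\fa_\ell^\theta\asymp\ell$, the rescaled minimum $\fa_\ell\min_z\go_{x,x+z}$ has $\theta$-th power distributed as a multiple of $\Exp(c)$, with $c$ bounded below uniformly in $\ell$. Every step moves right by at least $\ell/2$, and only edges out of the current vertex toward new vertices are inspected. Hence the step costs are i.i.d.\ with all moments bounded.

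After at most $2r/\ell+2$ steps the path lands in $[r,r+\ell]$. Indeed, once within distance $\ell$ of $r$, we use the range $z\in[r-x,r-x+\ell]\cap[1,\ell]$, which still has $\gtrsim\ell/2$ choices. Minkowski's inequality then gives \eqref{eq:greedy-upper-cmj}.

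For \eqref{eq:greedy-endpoint-cmj}, apply this with $r-\ell$ to reach some $y\in[r-\ell,r]$. From $y$ to the point $r$ we stay inside the complete graph on $[r-\ell,r]$. Lemma~\ref{lem:complete-flooding} bounds this leg by $C\log\ell/\ell^{1/\theta}$, that is, $C\log\ell$ after multiplying by $\fa_\ell$, but only for fixed endpoints. To avoid conditioning issues I will bound the leg by the flooding time $Y^{(\ell)}$ of that window rooted at $r$, whose edges are disjoint from the greedy edges (a mild restriction of the greedy to the left of $r-\ell$ suffices). This costs $(\log\ell)^{\gth}$, which is too large when $\theta<1$.

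So instead I will run the two-point argument of Lemma~\ref{lem:complete-flooding} from $r$ within the window: grow the doubling tree to size $\ge\ell/16$ from $r$, using only edges inside $[r-\ell,r]$, at cost $O(\log\ell)$ in $L^p$. Then, from the greedy arrival point, hop into this tree using fresh edges toward $\gtrsim\ell$ targets at cost $O(1)$. To make the targets fresh, reserve in advance disjoint edge sets, for instance by parity splitting of the vertices.

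For (ii), combine Proposition~\ref{prop:auxiliary-one-sided-line-crossing} with uniform integrability. Let $X_\ell:=\fa_\ell T(0,[H_\ell,H_\ell+\ell])$. Then $\pr(X_\ell>(1/\vstar+\eps)h_\ell)\to0$, and by (i) with $p=2$ we have $\|X_\ell\|_2\le C(h_\ell+1)$. Cauchy--Schwarz gives
\[
\E X_\ell\le(1/\vstar+\eps)h_\ell+\|X_\ell\|_2\,\pr(\cdot)^{1/2}=(1/\vstar+\eps)h_\ell+o(h_\ell).
\]
Letting $\eps\downarrow0$ proves \eqref{eq:window-mean-upper}.

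For (iii), I will argue by a stopping-set (spatial Markov) argument. Fix a candidate path $\gamma$ from $x$ with all vertices except its last one $y$ lying in $[x,x+H)$. The event $\{\pi^\star=\gamma\}$ must be shown to be measurable with respect to the weights of edges having at least one endpoint $<y$. Every competing path to $I_x$ first enters $I_x$ at some point. A competitor that reaches $I_x$ earlier, say at $y'\ge y$, does so through a prefix whose vertices before entrance are $<x+H\le y$, so it uses only edges with an endpoint below $x+H\le y$. The subtle case is a competitor entering at $y'<y$: its edges have an endpoint below $y'$. Consequently the defining minimization involves only edges with at least one endpoint $<y$, because entrance edges join a vertex $<x+H$ to one in $I_x$. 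Edges with both endpoints $\ge y$ are therefore independent of $(\pi^\star,Y,T)$, and they remain i.i.d.\ with the original law.

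The main obstacle I expect is \eqref{eq:greedy-endpoint-cmj} when $\theta<1$: getting $\log\ell$ rather than $(\log\ell)^{1/\theta}$ requires carefully arranging disjoint fresh edge sets for the final rendezvous.
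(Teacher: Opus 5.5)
Your arguments for \eqref{eq:greedy-upper-cmj}, for (ii) and for (iii) are the paper's:
\begin{itemize}
\item for \eqref{eq:greedy-upper-cmj}, the forward greedy with step range $[\lceil\ell/2\rceil,\ell]$ over disjoint forward edge sets;
\item for (ii), Cauchy--Schwarz combining Proposition~\ref{prop:auxiliary-one-sided-line-crossing} with the $L^2$ greedy bound;
\item for (iii), the observation that every stopped candidate path uses only edges with an endpoint below $x+H\le y$. Your case split on where a competitor enters is therefore unnecessary.
\end{itemize}
In the greedy, no change of step range near the target is needed. From $x<r$, a step of length at most $\ell$ lands below $r+\ell$, so it cannot overshoot $[r,r+\ell]$. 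Your modified range $[r-x,\ell]$ can also consist of a single integer when $r-x$ is close to $\ell$, so the claim that it still offers $\gtrsim\ell/2$ choices is false.

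The only real divergence is \eqref{eq:greedy-endpoint-cmj}, and there the obstacle you anticipate does not exist. Before its first entrance $U_\tau$ into $[r-\ell,r]$, the greedy sits at positions $<r-\ell$, so every edge it reveals has an endpoint $<r-\ell$. Thus $(\tau,U_\tau,G_\tau)$ is measurable with respect to the weights of edges not contained in $[r-\ell,r]$. These are independent of the weights of the induced $K_{\ell+1}$ on $[r-\ell,r]$. Conditionally on $U_\tau=u$, the second leg is therefore a two-point passage time in a fresh $K_{\ell+1}$. The two-point bound of Lemma~\ref{lem:complete-flooding} is uniform over pairs $u\ne v$, so it applies directly to the random start and yields $C_p\log\ell$ after multiplying by $\fa_\ell$. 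This is what the paper does; no rendezvous or flooding bound is needed.

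Your rendezvous can be made to work, but parity splitting is not what makes the last hop fresh. If $y$ lies in the class used for the tree, its edges to earlier frontiers have been inspected during the growth. What is genuinely fresh is the set of edges from an unreached $y$ to the final frontier. Alternatively, you can exclude $y$ from the growth, as the lemma excludes its target. Justifying either option uses exactly the independence of the window from $y$, and at that point you have simply reproved the two-point bound with target $y$.
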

A proof is given in Appendix~\ref{pf:finite-graph-window-tools}. Now, we are ready to prove the asymptotic mean of the first-passage time $T_{[n]}$ result presented in Theorem~\ref{thm:meso-main}~\eqref{meso-mean}.

%%%%%%%%%%%%%%%%%%%%%%%%%%%%%%%%%%%%%%%%%%%%%%%%%%%
\subsection{Proof of Theorem~\ref{thm:meso-main}~\eqref{meso-mean}}

Recall that $T_{[n]} = T_{[0,n]}(0,n)$ is restricted to the finite segment $[0,n]$, and $\ell \to \infty$ and $\ell \log \ell = o(n)$. Since $\ell\log\ell=o(n)$, we have $n/\ell\to\infty$ and $\log\ell=o(n/\ell)$. We first prove the lower bound, upper bound and convergence in probability for $T_{[n]}$ separately, and then transfer the result to the cycle case.

\smallskip
\noindent\textbf{Lower bound.}
Fix $\eps\in(0,1/\vstar)$ and set
\[
 t_n^-:=\left(\frac1{\vstar}-\eps\right)\frac n\ell .
\]
On the event $\{\fa_\ell T_{[n]}\le t_n^-\}$, there exists a simple path from $0$ to $n$ inside $[0,n]$ with scaled cost at most $t_n^-$. Every such path is determined by its sequence of increments $(z_1,\ldots,z_k)\in\cD_\ell^k$ and is therefore identified with a unique vertex $v=(z_1,\ldots,z_k)$ of the $\ell$-spread-out tree, with $\sfp_\ell(v)=n/\ell$. Since the path is simple, its $k$ edges are distinct, so their weights are i.i.d.~copies of $\go$; consequently the scaled cost of the path has exactly the law of the birth time $\sft_\ell(v)$ of the corresponding tree vertex. (The two families of weights are \emph{not} coupled — different paths in the line share edges, whereas different tree lineages do not — but only the one-dimensional marginals enter the union bound below.) Hence, by a union bound over tree vertices,
\begin{align*}
\pr\left(\fa_\ell T_{[n]}\le t_n^-\right)
&\le
\E\sum_{v\in\cN_{t_n^-}^{(\ell)}}
\ind\left\{\sfp_\ell(v)\ge \frac n\ell\right\}.
\end{align*}
By Markov's inequality and Lemma~\ref{lem:finite-direction-renewal-bound}, for
every $\beta>0$ and $\eta>0$,
\begin{align*}
\E\sum_{v\in\cN_{t_n^-}^{(\ell)}}
\ind\left\{\sfp_\ell(v)\ge \frac n\ell\right\}
&\le
e^{-\beta n/\ell}
\E\sum_{v\in\cN_{t_n^-}^{(\ell)}}e^{\beta \sfp_\ell(v)}
\le
C_{\beta,\eta}
\exp\left\{
(\gk_\theta(\beta)+\eta)t_n^-
-\beta\frac n\ell
\right\}.
\end{align*}
Choose $\beta=\gbc$ and then choose $\eta>0$ small enough.
Since
$
 \gk_\theta(\gbc) = \gbc\vstar,
$
there is $c_\eps>0$ such that, for all large $\ell$,
\begin{align}
\pr\left(\fa_\ell T_{[n]}\le t_n^-\right)
\le
C e^{-c_\eps n/\ell}
=o(1).
\label{eq:global-meso-lower-tail}
\end{align}
Therefore
\begin{align*}
\fa_\ell\E T_{[n]}
&\ge
t_n^-\,
\pr\left(\fa_\ell T_{[n]}> t_n^-\right)
\ge
\left(\frac1{\vstar}-\eps\right)\frac n\ell(1-o(1)).
\end{align*}
Letting $\eps\downarrow0$ gives
\[
 \liminf_{n\to\infty}
 \frac{\fa_\ell\ell}{n}\E T_{[n]}
 \ge
 \frac1{\vstar}.
\]

\smallskip
\noindent\textbf{Upper bound.}
Let
\[
 H_\ell:=\lfloor h_\ell\ell\rfloor,
 \qquad
 J_\ell
 :=
 \left\lfloor\frac{n-\ell}{H_\ell+\ell}\right\rfloor .
\]
Since $\ell\log\ell=o(n)$ and $h_\ell=(\log\ell)^{1/3}+O(1)$, we have $h_\ell\to\infty$ and $h_\ell=o(n/\ell)$.

We construct the global path by concatenating $J_\ell$ independent short-window crossings. Initialize $X_0=0$. For each $0\le j < J_\ell$, conditional on the current position $X_j$, the path explores the interval $[X_j, X_j+H_\ell+\ell]$ until its first entrance into the target window $[X_j+H_\ell, X_j+H_\ell+\ell]$. We designate this initial entrance vertex as $X_{j+1}$, and we let $W_{j+1}$ denote the passage time of this $(j+1)$-st segment. Since $J_\ell(H_\ell+\ell)\le n-\ell$, all these intermediate crossings are strictly contained within $[0,n]$.

Let $\cG_j$ be the sigma-field generated by the first $j$ segments, including their terminal points and passage times. By Proposition~\ref{prop:finite-graph-window-tools} \textup{(iii)}, conditionally on $\cG_j$, the unexplored environment to the right of $X_j$ is entirely fresh. Hence, by translation invariance and the bound~\eqref{eq:window-mean-upper}, there exists a deterministic sequence $\gd_\ell\downarrow0$ such that, for every $0\le j<J_\ell$,
\begin{align}
\fa_\ell
\E\left[
 W_{j+1}\mid\cG_j
\right]
\le
\frac{h_\ell}{\vstar}+\gd_\ell h_\ell .
\label{eq:stopped-window-conditional-mean}
\end{align}
Summing over all $J_\ell$ segments yields
\begin{align*}
\fa_\ell
\E\left[\sum_{j=1}^{J_\ell}W_j\right]
\le
J_\ell\left(\frac{h_\ell}{\vstar}+\gd_\ell h_\ell\right)
\le
\frac{n}{\ell\vstar}+o(n/\ell),
\end{align*}
where we used $J_\ell h_\ell\le n/\ell$ and $\gd_\ell\to0$.

It remains to bound the passage time across the final segment of length $n-X_{J_\ell}$.
Since $J_\ell H_\ell\le X_{J_\ell}\le J_\ell(H_\ell+\ell)$, the remaining distance satisfies
\begin{align}
 \frac{n-X_{J_\ell}}{\ell}
 \le
 \frac{n-J_\ell H_\ell}{\ell}
 \le
 C\left(
 \frac{n}{h_\ell\ell}+h_\ell+1
 \right).
\label{eq:remaining-distance-global-meso}
\end{align}
Conditionally on $\cG_{J_\ell}$, the edge weights to the right of $X_{J_\ell}$ are fresh. Applying the endpoint bound~\eqref{eq:greedy-endpoint-cmj} to this terminal interval gives
\begin{align*}
\fa_\ell
\E\left[
 T_{[X_{J_\ell},n]}(X_{J_\ell},n)
 \,\middle|\,\cG_{J_\ell}
\right]
\le
C\left(\frac{n-X_{J_\ell}}{\ell}+\log\ell+1\right)
\le
C\left(
 \frac{n}{h_\ell\ell}+h_\ell+\log\ell+1
\right).
\end{align*}
The right-hand side is $o(n/\ell)$ as
$
 h_\ell\to\infty,
 h_\ell=o(n/\ell),
 \log\ell=o(n/\ell).
$
Taking expectations, we conclude that the terminal connection cost is asymptotically negligible:
\begin{align}
\fa_\ell
\E T_{[X_{J_\ell},n]}(X_{J_\ell},n)
=o(n/\ell).
\label{eq:terminal-global-meso-small}
\end{align}

Bounding the total passage time by the sum of the intermediate crossings and the final segment,
\[
 T_{[n]}
 \le
 \sum_{j=1}^{J_\ell}W_j
 +
 T_{[X_{J_\ell},n]}(X_{J_\ell},n).
\]
Consequently,
\[
 \limsup_{n\to\infty}
 \frac{\fa_\ell\ell}{n}\E T_{[n]}
 \le
 \frac1{\vstar}.
\]
Combining this with the previously established lower bound, we obtain
\[
 \E T_{[n]}
 =
 \frac{n}{\fa_\ell\ell\vstar}(1+o(1)).
\]

\smallskip
\noindent\textbf{Convergence in probability.}
 By the greedy $L^2$ estimate in Proposition~\ref{prop:finite-graph-window-tools}~\textup{(i)},
\begin{align*}
 \norm{\fa_\ell W_j}_2\le Ch_\ell.
\end{align*}
By Proposition~\ref{prop:finite-graph-window-tools}~\textup{(iii)} and translation
invariance of the weight field, conditionally on $\cG_j$ the pair consisting of the
$(j+1)$-st window crossing time and its terminal displacement has a law that does not
depend on $\cG_j$; hence the successive pairs are i.i.d., and thus
\begin{align*}
 \var\left(
 \fa_\ell\sum_{j=1}^{J_\ell}W_j
 \right)
 &\le
 C J_\ell h_\ell^2
 \le
 C\frac{n h_\ell}{\ell}.
\end{align*}
Since $\ell h_\ell=o(n)$,
\begin{align*}
 \frac{\fa_\ell\sum_{j=1}^{J_\ell}W_j
 -\E\bigl[\fa_\ell\sum_{j=1}^{J_\ell}W_j\bigr]}
 {n/\ell}
 \stackrel{\pr}{\to}0.
\end{align*}
Moreover,~\eqref{eq:terminal-global-meso-small} and Markov's inequality give
\begin{align*}
 \frac{\fa_\ell T_{[X_{J_\ell},n]}(X_{J_\ell},n)}{n/\ell}
 \stackrel{\pr}{\to}0.
\end{align*}
By the minimality of the first-passage time, for every $\eps>0$,
\begin{align*}
 \pr\left(
 \frac{\fa_\ell\ell}{n}T_{[n]}
 >
 \frac1{\vstar}+\eps
 \right)
 \to0.
\end{align*}
Together with the lower-tail estimate~\eqref{eq:global-meso-lower-tail}, this proves
\begin{align}\label{eq:meso-seg-lln-proof}
 \frac{\fa_\ell\ell}{n}T_{[n]}
 \stackrel{\pr}{\to}
 \frac1{\vstar}.
\end{align}

Finally, we record that the argument above used only $\ell\to\infty$ and $\ell\log\ell=o(n)$, where $n$ was the \emph{length of the segment}. Hence, for any sequence of segment lengths $m=m_n\to\infty$ with $\ell\log\ell=o(m)$,
\begin{align}\label{eq:meso-seg-lln-general}
 \frac{\fa_\ell\ell}{m}T_{[m]}
 \stackrel{\pr}{\to}
 \frac1{\vstar},
 \qquad
 \E T_{[m]}
 =
 \frac{m}{\fa_\ell\ell\vstar}(1+o(1)).
\end{align}
We will use~\eqref{eq:meso-seg-lln-general} with $m=\sfu_n$ and with $m=n-\sfu_n$.\qed

\begin{proof}[\textbf{Transfer from line to cycle in Theorem~\ref{thm:meso-main}}]
We first transfer the mean result. Since the $\ell$-spread-out graph induced on the arc $[0,\sfu_n]=\{0,1,\ldots,\sfu_n\}$ is a subgraph of $\dT_n^{(\ell)}$ (recall $\sfu_n\in\dN$ and $\sfu<1/2$, so $\sfu_n\le n/2$ and the arc carries no wrap-around edges),
\begin{align}\label{eq:cycle-seg-upper}
	T_n\le T_{n,\mathrm{short}}\equald T_{[\sfu_n]},
\end{align}
with $T_{n,\mathrm{short}}$ as in~\eqref{eq:two-arc-def}.
For the reverse lower-tail estimate, fix $\eps\in(0,1/\vstar)$ and let
\begin{align*}
 t_{n,\sfu}^-:=
 \left(\frac1{\vstar}-\eps\right)\frac{\sfu_n}{\ell}.
\end{align*}
Every simple cycle path from $0$ to $\sfu_n$ has a lift to $\dZ$ whose increments belong to $\cD_\ell=\{-\ell,\ldots,-1,1,\ldots,\ell\}$ and whose endpoint is $\sfu_n+kn$ for some $k\in\dZ$. Since $\sfu\in(0,1/2)$,
\begin{align*}
 |\sfu_n+kn|\ge \sfu_n
 \text{ for every }k\in\dZ
\end{align*}
for all large $n$. The same tree-overcounting argument as in the proof of~\eqref{eq:global-meso-lower-tail}, now applied to both spatial signs, gives
\begin{align}
 \pr\left(
 \fa_\ell T_n
 \le t_{n,\sfu}^-
 \right)
 &\le
 \E\sum_{v\in\cN_{t_{n,\sfu}^-}^{(\ell)}}
 \left[
 \ind\left\{\sfp_\ell(v)\ge\frac{\sfu_n}{\ell}\right\}
 +
 \ind\left\{\sfp_\ell(v)\le-\frac{\sfu_n}{\ell}\right\}
 \right]
 \notag\\
 &\le
 C\exp\left\{-c_\eps\frac{\sfu_n}{\ell}\right\}
 =o(1).
 \label{eq:cycle-meso-lower-tail}
\end{align}
Combining~\eqref{eq:cycle-seg-upper},~\eqref{eq:meso-seg-lln-general} with $m=\sfu_n$, and~\eqref{eq:cycle-meso-lower-tail} proves the cycle law of large numbers in~\eqref{eq:meso-mean}. The same lower tail gives
\begin{align*}
 \liminf_{n\to\infty}
 \frac{\fa_\ell\ell}{\sfu_n}\E T_n
 \ge\frac1{\vstar},
\end{align*}
whereas~\eqref{eq:cycle-seg-upper} and~\eqref{eq:meso-seg-lln-general} give the matching upper bound. This proves the cycle mean assertion in~\eqref{eq:meso-mean}.

We next transfer the CLT. Recall $T_{n,\mathrm{short}}\equald T_{[\sfu_n]}$ and $T_{n,\mathrm{long}}\equald T_{[n-\sfu_n]}$ from~\eqref{eq:two-arc-def}; they are independent, since they depend on disjoint edge sets. By cutting the cycle at the interfaces $0$ and $\sfu_n$, the same local rerouting argument as in Lemma~\ref{lem:block-cut} gives the deterministic bound
\begin{align*}
 0 \le \min\{T_{n,\mathrm{short}}, T_{n,\mathrm{long}}\} - T_n \le Z_0^{(\ell)} + Z_{\sfu_n}^{(\ell)},
\end{align*}
with $Z_x^{(\ell)}$ the local detour cost~\eqref{eq:Z-detour-def}. By Lemma~\ref{lem:complete-flooding}, $\norm{Z_0^{(\ell)} + Z_{\sfu_n}^{(\ell)}}_2 \le C(\log\ell)^{\gth}/\ell^{1/\theta}$.

Let $X_n := T_{n,\mathrm{long}} - T_{n,\mathrm{short}}$ and let $\bar{X}_n := X_n - \E X_n$ be its centred version. Using $\fa_\ell \asymp \ell^{1/\theta}$ and~\eqref{eq:meso-seg-lln-general} applied with $m=\sfu_n$ and with $m=n-\sfu_n$, we have
\begin{align*}
 \E X_n = \frac{(n-2\sfu_n)}{\fa_\ell\ell\vstar}(1+o(1))
 = \frac{(1-2\sfu)n}{\fa_\ell\ell\vstar}(1+o(1)) \ge c\frac{n}{\ell^{1+1/\theta}},
\end{align*}
where $c>0$ because $\sfu<1/2$.
Let $R_n := (T_{n,\mathrm{short}} - T_{n,\mathrm{long}})_+ = (-X_n)_+$. Since $R_n > 0$ requires $-\bar{X}_n > \E X_n$, we have $R_n \le |\bar{X}_n| \ind\{|\bar{X}_n| > \E X_n\}$, so that, by Cauchy--Schwarz and Markov,
$\norm{R_n}_2^2\le\bigl(\E\bar X_n^4\bigr)^{1/2}\pr(|\bar X_n|>\E X_n)^{1/2}\le \E\bar X_n^4/(\E X_n)^2$.
Applying the fourth-moment bound~\eqref{eq:block-fourth-bound} to both arcs therefore yields
\begin{align*}
 \norm{R_n}_2
 \le \frac{\left( \E|\bar X_n|^4 \right)^{1/2}}{\E X_n}
 \le C\frac{(\log\ell)^{2\gth}}{\ell^{1/\theta}}
 \to 0.
\end{align*}
Since $0\le T_{n,\mathrm{short}} - T_n \le R_n + Z_0^{(\ell)} + Z_{\sfu_n}^{(\ell)}$, combining the above bounds gives
\begin{align*}
 \norm{ \left(T_n-\E T_n\right) - \left(T_{[\sfu_n]}-\E T_{[\sfu_n]}\right) }_2
 \le C\frac{(\log\ell)^{2\gth}}{\ell^{1/\theta}}.
\end{align*}
Under the condition $\ell^4(\log\ell)^{6\gth} \ll n$, the segment variance lower bound $\var(T_{[\sfu_n]})^{1/2} \ge c \sqrt{n}/\ell^{1+1/\theta}$ ensures that this $L^2$-difference is $o\bigl(\sqrt{\var(T_{[\sfu_n]})}\bigr)$; indeed the ratio is $C\ell(\log\ell)^{2\gth}/\sqrt n\to0$. Slutsky's theorem and the $L^2$ reverse triangle inequality then transfer the variance equivalence and the CLT~\eqref{eq:segment-clt} from $T_{[\sfu_n]}$ to $T_n$.
\end{proof}
%%%%%%%%%%%%%%%%%%%%%%%%%%%%%%%%%%%%%%%%%%%%%%%%%%%

%%%%%%%%%%%%%%%%%%%%%%%%%%%%%%%%%%%%%%%%%%%%%%%%%%%%%%%%%
\section{Macroscopic exploration and Cox bridge hazards}\label{sec:macro-cox}

\subsection{Setup and proof roadmap}\label{sec:macro-setup}

In this and the next section, we prove Theorem~\ref{thm:Macro}. We first establish the limit for an independent uniform target and then transfer it to the deterministic target $\sfu_n$ using Lemma~\ref{lem:macro-target-uniformity}. The time normalization $\fa_\ell$, the limiting offspring measure $\mu_\theta$, and the CMJ martingale limit $M_\infty$ were defined in Section~\ref{sec:local-cmj}. The proof unfolds in four parts.

\begin{enumerate}[C1.]
 \item\emph{Exact two-source collision reduction.}
We simultaneously grow two Dijkstra explorations, one from the source and one from the target. Proposition~\ref{prop:half} identifies the precise time at which the two clusters collide
$
 \sfS_{\col}=\frac12\fa_\ell T_n^{\unif}.
$
Proposition~\ref{prop:censor} then shows that, strictly before this collision, all potential bridge edges connecting the two clusters can be censored without altering either one-source exploration. This geometric reduction yields two disjoint, growing clusters while leaving the connecting bridge weights completely unrevealed.

\item\emph{Exact Cox representation.}
The unrevealed bridge weights are handled via deferred decisions in the cumulative hazard scale $H(t)=t^\theta$. Proposition~\ref{prop:cox} leverages their conditionally independent residual laws to establish the exact survival identity
\begin{align*}
 \pr(\sfS_{\col}>s)=\E e^{-\gL_s},
\end{align*}
where $\gL_s$ is the total hazard accumulated by all active bridges. This step effectively reduces the passage-time problem to determining the asymptotic behavior of $\gL_s$ at the critical scale $s=\frac12\log n+O(1)$.

\item\emph{CMJ approximation and removal of the spatial geometry.}
Proposition~\ref{prop:cmj-package} couples each censored exploration to an unrestricted finite-$n$ CMJ proposal process, showing that the rejected ``ghost'' particles (which attempt to visit already-occupied sites) are stochastically negligible. This yields the uniform population growth, stable-age, and Fourier decay estimates valid throughout the collision window. However, population sizes alone are insufficient to determine $\gL_s$, as bridges only form between valid spatial pairs. Proposition~\ref{prop:pair} uses the Fourier decay to prove that the two clusters become uniformly mixed in space. Consequently, the actual bridge density and the endpoint ages of the bridges are perfectly captured by the corresponding non-spatial CMJ limits.

\item\emph{Integrated hazard and theorem assembly.}
Lemma~\ref{lem:integrated} consolidates these CMJ growth, spatial mixing, and bridge-age estimates to show that, for every fixed $y\in\dR$,
\begin{align*}
 \gL_{\frac12\log n+y}
 \Rightarrow
 \theta M_\infty^{(1)}M_\infty^{(2)}e^{2y}.
\end{align*}
Combining this limit with the exact Cox identity from~\emph{(C2)} and the half-distance identity from~\emph{(C1)} directly yields the macroscopic fluctuation limit.
\end{enumerate}

Throughout this section, we assume
\begin{align}\label{eq:macro-ell-assumption}
 \frac{\ell}{n}\to \gl\in(0,1/2).
\end{align}
When $\gl=1/2$, the cycle $\dT_n^{(\ell)}$ is essentially the complete graph $K_n$; we omit this boundary case.

The assumption~\eqref{eq:macro-ell-assumption} is used only when the exact bridge-hazard representation is evaluated asymptotically at the $O(1)$ fluctuation scale.
 However, the deterministic half-distance identity and the Cox representation derived below are non-asymptotic statements that hold for any fixed $n$ as long as $1\le \ell\le n/2$. We shall exploit these identities again in Subsection~\ref{subsec:Macro-mean} to establish the right-crossover first-order law in the wider range $n/\log n\ll\ell\le n/2$.
Set
\begin{align*}
 \cD_n:=\{z\in\dT_n\setminus\{0\}:d_n(z,0)\le \ell\}\subseteq\dT_n.
\end{align*}
This is the set of all possible single-step displacements on the cycle; it is the image of $\cD_\ell$ from~\eqref{eq:Dell-def} under the projection $\dZ\to\dT_n$, and, because $\gl<1/2$, this projection is injective on $\cD_\ell$ for all large $n$. By translation invariance, $\cD_n$ determines both the vertex degree and the spatial increment distribution for the exploration process on the graph of fixed size $n$. To distinguish this pre-limit growth from the limiting Crump--Mode--Jagers process as $n \to \infty$, we refer to it as the \emph{finite-$n$ CMJ exploration}. Since $\gl < 1/2$, for all sufficiently large $n$ we have
\begin{align*}
 |\cD_n|=2\ell,
 \qquad
 \frac{|\cD_n|}{2\gl n}\to1,
 \qquad
 \frac{|\cD_n|}{2\ell-1}\to1.
\end{align*}
The edge-weight distribution follows~\eqref{eq:edge-law}.
Thus, the cumulative hazard and hazard density are
\begin{align*}
 H(t):=-\log\bar F(t)=t^\theta,
 \qquad
 h(t):=H'(t)=\theta t^{\theta-1},
 \quad t>0.
\end{align*}
Let the target $U_n$ be drawn uniformly from $\dT_n \setminus \{0\}$, independently of the edge weights, and define
\begin{align}\label{eq:macro-source-target}
 T_n^{\unif}
 :=T_{\dT_n}^{(\ell)}(0,U_n).
\end{align}
For the deterministic target in Theorem~\ref{thm:Macro}, write
\begin{align*}
 T_n^{\mathrm{fix}}:=T_n
 :=T_{\dT_n}^{(\ell)}(0,\sfu_n).
\end{align*}
As our estimates bound the passage time uniformly over all possible choices of $U_n$, the conditioning on $U_n$ is dropped from the notation.

The limiting reproduction measure is
\begin{align*}
 \mu(\dd s):=\mu_\theta(\dd s)
 =
 \frac{s^{\theta-1}}{\Gamma(\theta)}\,\dd s,
 \quad s>0,
\end{align*}
as in~\eqref{eq:limiting-mu}. We write $W_1,W_2$ for independent copies of the CMJ martingale limit $M_\infty$ defined in~\eqref{eq:Minfty-def}.

\begin{rem}[The assumptions $\ell/n\to\gl$ and $\ell=\lfloor\gl n\rfloor$]
The proof only uses the following facts:
\begin{align*}
 \frac{\ell}{n}\to\gl,
 \qquad
 \frac{|\cD_n|}{2\gl n}\to1,
 \qquad
 \frac{|\cD_n|}{2\ell-1}\to1,
 \qquad
 \frac{2\gl n}{2\ell-1}\to1.
\end{align*}
Thus $\ell=\lfloor\gl n\rfloor$ is only a special case. Theorem~\ref{thm:Macro} is stated under the weaker assumption $\ell/n\to\gl$.
\end{rem}

\begin{rem}[Random and deterministic targets]
Theorem~\ref{thm:Macro} is stated for the deterministic target $\sfu_n$. In the proof below, we first work with the independent uniform target $U_n$, since this removes irrelevant arithmetic dependence on a fixed endpoint.
Lemma~\ref{lem:macro-target-uniformity} shows that the estimates are uniform over the second root and transfers the random-target convergence to $\sfu_n$.
Notably, the limiting distribution $S_\infty$ is independent of the position $\sfu$.
\end{rem}

\begin{rem}[Asymptotic negligibility of fixed-hop shortcuts]
Finite-hop paths are asymptotically negligible at the $(\log n)/\fa_\ell $ leading passage-time scale. For any fixed $m \ge 1$ and $u \ne 0$, there are $O(n^{m-1})$ simple paths of length $m$. Since the sum of $m$ independent weights satisfies $\pr(\go_1 + \dots + \go_m \le t) \le C_m t^{m\theta}$, a union bound limits the probability of any $m$-hop shortcut completing within $(\log n+y)/\fa_\ell$ by
\begin{align*}
C_m n^{m-1} \left(\frac{\log n + |y|}{\fa_\ell}\right)^{m\theta} 
= O\left(\frac{(\log n)^{m\theta}}{n}\right) 
= o(1).
\end{align*}
Thus, macroscopic passage times are strictly driven by the collision of large growing clusters, not by any fixed-hop shortcuts.
\end{rem}

%%%%%%%%%%%%%%%%%%%%%%%%%%%%%%%%%%%%%%%%%%%%%%%%%%%%%%%%%%%%%%%%%%%%%%%%%%%%%%%%%%%%%%%%%%%%%%%%%%%%%%%%%%
\subsection{Two-source collision time and censored exploration}\label{sec:collision}

The first step reduces the point-to-point passage time to a two-source collision problem. We simultaneously grow two exploration clusters: cluster $1$ rooted at the source $0$, and cluster $2$ rooted at the random target $U_n$.

\begin{defn}[Two-source balls and collision time]
For $v\in\dT_n$ and $t\ge0$, define the one-source passage times and
passage-time balls by
\begin{align*}
 \sfT_1(v)&:=T_{\dT_n}^{(\ell)}(0,v),
 &
 \cA_1(t)&:=\{v\in\dT_n:\sfT_1(v)\le t\},\\
 \sfT_2(v)&:=T_{\dT_n}^{(\ell)}(U_n,v),
 &
 \cA_2(t)&:=\{v\in\dT_n:\sfT_2(v)\le t\}.
\end{align*}
The \emph{original-time collision time} is
\begin{align}\label{eq:tau-col}
\tau_{\col}:=\inf\Big\{t\ge0:
&\text{ there exist }u\in\cA_1(t),\ v\in\cA_2(t)
\text{ with }\{u,v\}\in\cE_n^{(\ell)}
\text{ and} \notag\\
&\go_{u,v}
\le (t-\sfT_1(u))+(t-\sfT_2(v))
\Big\}.
\end{align}
Its rescaled version is
\begin{align*}
 \sfS_{\col}:=\fa_\ell\tau_{\col}.
\end{align*}
\end{defn}

For each fixed $n$, let $\Omega_*$ be the full-measure event where all edge weights are strictly positive, all geodesics are unique, and no geodesic midpoint lands exactly on a vertex. Explicitly, for the unique geodesic $0=v_0, \ldots, v_m=U_n$, the partial passage times
\begin{align*}
 a_j:=\sum_{r=0}^{j-1}\go_{v_r,v_{r+1}},
 \quad 0\le j\le m,
\end{align*}
satisfy $a_j \neq T_n^{\unif}/2$ for all $j$. Since the edge weights are independent and continuous on a finite graph, any path ties or exact midpoint collisions correspond to a finite union of lower-dimensional hyperplanes, which has zero Lebesgue measure. Thus $\pr(\Omega_*)=1$, and all subsequent deterministic arguments are implicitly carried out on $\Omega_*$.

With the collision time and the tie-breaking event $\Omega_*$ properly set up, we can now formally connect the two-source exploration to the global passage time $T_n^{\unif}$.

%%%%%%%%%%%%%%%%%%%%%%%%%%%%%%%%%%%%%%%%%%%%%%%%%%%%%%%%%%%%%%%%
\begin{prop}[Half-distance identity]\label{prop:half}
Conditional on the full-measure event $\Omega_*$, the collision times satisfy
\begin{align*}
 \tau_{\col}=\frac{1}{2}T_n^{\unif},
 \text{ and }
 \sfS_{\col}=\frac{1}{2}\fa_\ell T_n^{\unif}.
\end{align*}
In particular, these identities hold almost surely.
\end{prop}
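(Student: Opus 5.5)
We prove the two inequalities $\tau_{\col}\le T_n^{\unif}/2$ and $\tau_{\col}\ge T_n^{\unif}/2$ separately, working on $\Omega_*$ throughout. The second identity then follows by multiplying by $\fa_\ell$. Write $T:=T_n^{\unif}$.

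\emph{Upper bound.} Let $0=v_0,\ldots,v_m=U_n$ be the unique geodesic, with partial sums $a_j$ as in the definition of $\Omega_*$. Since $a_0=0<T/2<T=a_m$ and no $a_j$ equals $T/2$, there is a unique $j$ with $a_j<T/2<a_{j+1}$. Put $u=v_j$ and $v=v_{j+1}$. Subpaths of geodesics are geodesics, so $\sfT_1(u)=a_j<T/2$ and $\sfT_2(v)=T-a_{j+1}<T/2$. Hence, at $t=T/2$, we have $u\in\cA_1(t)$ and $v\in\cA_2(t)$. The edge $\{u,v\}$ lies in $\cE_n^{(\ell)}$, and
\begin{align*}
 (t-\sfT_1(u))+(t-\sfT_2(v))=T-a_j-(T-a_{j+1})=a_{j+1}-a_j=\go_{u,v}.
\end{align*}
So the condition in~\eqref{eq:tau-col} holds at $t=T/2$, and $\tau_{\col}\le T/2$.

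\emph{Lower bound.} Suppose that at some time $t$ there are $u\in\cA_1(t)$ and $v\in\cA_2(t)$ with $\{u,v\}\in\cE_n^{(\ell)}$ and $\go_{u,v}\le 2t-\sfT_1(u)-\sfT_2(v)$. Concatenate a geodesic from $0$ to $u$, the edge $\{u,v\}$, and a geodesic from $v$ to $U_n$. After loop erasure, which does not increase the weight, this is a path from $0$ to $U_n$, so
\begin{align*}
 T\le \sfT_1(u)+\go_{u,v}+\sfT_2(v)\le 2t.
\end{align*}
It remains to check that the infimum in~\eqref{eq:tau-col} is attained. The set of admissible $t$ is a finite union over the edges $\{u,v\}$ of half-lines $[\max\{\sfT_1(u),\sfT_2(v),(\sfT_1(u)+\sfT_2(v)+\go_{u,v})/2\},\infty)$. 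These are closed, so the infimum is a minimum. Applying the inequality at $t=\tau_{\col}$ gives $\tau_{\col}\ge T/2$.

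\emph{Where care is needed.} The only delicate point is the role of $\Omega_*$. It guarantees that a genuine crossing edge $\{u,v\}$ exists with both endpoints strictly inside the respective balls at time $T/2$. Without it, the midpoint could fall on a vertex, and one would have to treat a degenerate "edge" with $u=v$; uniqueness of the geodesic likewise removes ambiguity in choosing the geodesic. Since $\pr(\Omega_*)=1$, both identities hold almost surely.
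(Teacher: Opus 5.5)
Your proof is correct and follows the paper's argument. The lower bound turns any collision edge at time $t$ into a path from $0$ to $U_n$ of weight at most $2t$, and the upper bound uses the geodesic edge that straddles the midpoint $T_n^{\unif}/2$. The attainment-of-the-infimum step is harmless but unnecessary: showing that every admissible $t$ satisfies $t\ge T_n^{\unif}/2$ already bounds the infimum from below. Also, the upper bound does not actually need midpoint exclusion, since taking $j$ with $a_j\le T_n^{\unif}/2\le a_{j+1}$ works equally well.
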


\begin{proof}
Work on $\Omega_*$. If $t < T_n^{\unif}/2$, any edge $\{u,v\}$ satisfying the collision condition~\eqref{eq:tau-col} would yield a path from $0$ to $U_n$ of total weight at most $\sfT_1(u) + \omega_{u,v} + \sfT_2(v) \le 2t < T_n^{\unif}$, which contradicts the definition of $T_n^{\unif}$. Thus, $\tau_{\col} \ge T_n^{\unif}/2$.

Conversely, let $0=v_0, \ldots, v_m=U_n$ be a geodesic. By the midpoint exclusion in $\Omega_*$, there exists a unique edge $\{v_j, v_{j+1}\}$ along this geodesic strictly containing the midpoint $T_n^{\unif}/2$. Setting $u=v_j$ and $v=v_{j+1}$, the subpath optimality guarantees that the passage times from $0$ to $u$ and from $v$ to $U_n$ along this geodesic are exactly $\sfT_1(u)$ and $\sfT_2(v)$, respectively. Consequently, the passage time from $0$ to $v$ along the geodesic is exactly $T_n^{\unif} - \sfT_2(v)$. Hence, 
\[
 \go_{u,v} = \left(\frac{T_n^{\unif}}{2} - \sfT_1(u)\right) + \left(\frac{T_n^{\unif}}{2} - \sfT_2(v)\right), 
 \]
which satisfies the collision condition exactly at $t = T_n^{\unif}/2$. Thus, $\tau_{\col} \le T_n^{\unif}/2$.
\end{proof}

\begin{rem}[Why the half-distance identity is useful]
Proposition~\ref{prop:half} is an exact statement for each fixed finite graph. It removes the global optimization over all source-to-target paths and replaces it with the first time at which two one-source explorations can be joined. The factor $1/2$ in the macroscopic centering enters only through this deterministic identity.
\end{rem}

To compute the exact distribution of the collision time, the unrevealed cross-edges between the two clusters should remain conditionally independent. To retain this independence, we replace the usual simultaneous Dijkstra exploration by the following censored exploration.

\begin{defn}[Censored two-source exploration]
Initially,
\begin{align*}
 \widehat\cA_1(0)=\{0\},
 \quad
 \widehat \sfT_1(0)=0,
 \quad
 \widehat\cA_2(0)=\{U_n\},
 \quad
 \widehat \sfT_2(U_n)=0,
\end{align*}
and every other vertex is unlabelled.

Suppose that $q$ is an exploration time immediately after a vertex addition. For $i\in\{1,2\}$, define the active oriented boundary of label $i$ by
\begin{align*}
 \widehat\partial_i(q)
 :=
 \big\{(a,b):
 a\in\widehat\cA_i(q),\
 b\notin\widehat\cA_1(q)\cup\widehat\cA_2(q),\
 \{a,b\}\in\cE_n^{(\ell)}
 \big\}.
\end{align*}
If an unlabelled vertex remains, the next vertex-addition time is
\begin{align*}
 q^+
 :=
 \min_{i\in\{1,2\}}
 \min_{(a,b)\in\widehat\partial_i(q)}
 \big\{\widehat\sfT_i(a)+\go_{a,b}\big\}.
\end{align*}
On $\Omega_*$, this minimum has a unique minimizing triple $(i,a,b)$.
At time $q^+$, assign label $i$ to $b$ and set
\begin{align*}
 \widehat\sfT_i(b):=q^+,
 \qquad
 \widehat\cA_i(q^+)
 :=\widehat\cA_i(q)\cup\{b\},
 \qquad
 \widehat\cA_{3-i}(q^+)
 :=\widehat\cA_{3-i}(q).
\end{align*}
The labelled sets remain unchanged between successive vertex-addition times.

An edge is called a \emph{bridge} once its endpoints carry different labels. From that time onward, the bridge is censored: its weight is not tested against any further threshold, and the edge is never used for a subsequent vertex addition. The exploration is run without stopping at the first collision. After all vertices are labelled, the process is held fixed.

In rescaled time, define for $s\ge0$
\begin{align*}
 \cC_i(s):=\widehat\cA_i(s/\fa_\ell ),
 \qquad
 N_i(s):=|\cC_i(s)|,
 \quad i=1,2,
\end{align*}
and let the bridge set and its cardinality be
\begin{align*}
	\cB_s:=\big\{\{u,v\}\in\cE_n^{(\ell)}:u\in\cC_1(s),\ v\in\cC_2(s)\big\},
 \qquad
 B_s:=|\cB_s|.
\end{align*}
\end{defn}

The following proposition verifies that this censoring mechanism perfectly couples with the original exploration strictly before the collision time.

%%%%%%%%%%%%%%%%%%%%%%%%%%%%%%%%%%%%%%%%%%%%%%%%%%%%%%%%%%%%%%%%%%%%%%%%
\begin{prop}[Censoring before collision]\label{prop:censor}
Fix $t\ge0$. On $\{\tau_{\col}>t\}$, the following statements hold.
\begin{enumeratei}
\item $\cA_1(t)\cap\cA_2(t)=\eset$.
\item If $u\in\cA_1(t)$, $v\in\cA_2(t)$, and
$\{u,v\}\in\cE_n^{(\ell)}$, then the edge $\{u,v\}$ lies on no geodesic
from $0$ to a vertex of $\cA_1(t)$ and on no geodesic from $U_n$ to a
vertex of $\cA_2(t)$.
\item For $i=1,2$,
\begin{align*}
 \widehat\cA_i(q)=\cA_i(q),
 \qquad 0\le q\le t.
\end{align*}
Consequently, $\widehat \sfT_i(v)=\sfT_i(v)$ for every $v\in\cA_i(t)$.
\end{enumeratei}
\end{prop}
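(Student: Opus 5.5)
Throughout we work on $\Omega_*$ and on $\{\tau_{\col}>t\}$, and prove the three items in order, since (iii) rests on (i) and (ii).

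\emph{Item (i).} Suppose $w\in\cA_1(t)\cap\cA_2(t)$. Then $\sfT_1(w)+\sfT_2(w)\le 2t$, so $T_n^{\unif}\le 2t$. Proposition~\ref{prop:half} then gives $\tau_{\col}=T_n^{\unif}/2\le t$, which is a contradiction.

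\emph{Item (ii).} Take $u\in\cA_1(t)$ and $v\in\cA_2(t)$ adjacent, and suppose the edge $\{u,v\}$ lies on a geodesic from $0$ to some $w\in\cA_1(t)$. By subpath optimality, that geodesic reaches one endpoint first, and the other endpoint has passage time equal to the first plus $\go_{u,v}$, at most $\sfT_1(w)\le t$.
\begin{itemize}
\item If the order is $u$ then $v$, then $\sfT_1(v)=\sfT_1(u)+\go_{u,v}\le t$. Hence $v\in\cA_1(t)\cap\cA_2(t)$, contradicting (i).
\item If the order is $v$ then $u$, then $v\in\cA_1(t)$ again, which is the same contradiction.
\end{itemize}
The symmetric argument handles geodesics from $U_n$. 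A cleaner alternative is to note that any traversal gives $\sfT_1(u)+\go_{u,v}\le t$ or the analogous inequality, so $\go_{u,v}\le (t-\sfT_1(u))+(t-\sfT_2(v))$, which means $\tau_{\col}\le t$ directly.

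\emph{Item (iii).} We would run an induction over the successive vertex-addition times $0=q_0<q_1<\dots$ of the censored process that are $\le t$, together with the vertex-addition times of the two ordinary Dijkstra explorations from $0$ and from $U_n$ (merged by time). The inductive hypothesis is $\widehat\cA_i(q)=\cA_i(q)$ and $\widehat\sfT_i=\sfT_i$ on these sets for all $q$ up to the current event time $q_k$. For the inductive step, consider the next ordinary Dijkstra discovery by cluster $i$, say vertex $b$ at time $q'\le t$ via its geodesic last edge $(a,b)$ with $a\in\cA_i(q_k)$.
\begin{itemize}
\item We check that $b$ is unlabelled in the censored process. It is not in $\widehat\cA_i$ by induction. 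It is not in $\widehat\cA_{3-i}(q_k)=\cA_{3-i}(q_k)$, since otherwise $b\in\cA_1(t)\cap\cA_2(t)$, contradicting (i).
\item Hence $(a,b)\in\widehat\partial_i(q_k)$, so the censored candidate time is $\le q'$.
\item Conversely, any censored candidate $(j,a',b')$ with $\widehat\sfT_j(a')+\go_{a',b'}<q'$ corresponds to a genuine path reaching $b'$ from root $j$ before time $q'$, hence before the next Dijkstra event, unless $b'$ is already in $\cA_j$. That case is excluded because $b'$ is unlabelled and the clusters agree up to $q_k$.
\end{itemize}
So the minimizers coincide, and uniqueness on $\Omega_*$ fixes the label.

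The main obstacle is showing that censoring bridges never removes an edge the true exploration needs. Censored edges have both endpoints labelled with different labels, hence lie in $\cA_1(t)$ and $\cA_2(t)$ respectively, and (ii) shows they lie on no geodesic to $\cA_i(t)$. Therefore every true Dijkstra step up to time $t$ uses an uncensored edge. Also, the censored process never adds a vertex the true process does not, since censoring only removes options. Combining the two directions closes the induction up to time $t$, and $\widehat\sfT_i=\sfT_i$ on $\cA_i(t)$ follows.
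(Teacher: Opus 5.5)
Your proof is correct and follows essentially the paper's route: (i) and (ii) are argued exactly as in the paper. One caveat on (ii): keep your case split, because the ``cleaner alternative'' only gives the collision inequality on $\{u,v\}$ directly when $u$ precedes $v$, and the reverse orientation still has to go through (i). For (iii), the paper gets $\widehat\cA_i\subseteq\cA_i$ because censored paths are genuine paths. It gets the reverse inclusion from the first vertex on a geodesic that the censored process misses, which must already carry label $3-i$ and so contradicts (ii). Your synchronized induction over discovery times encodes the same observation, using (i) directly in place of (ii).
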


\begin{proof}
If $w\in\cA_1(t)\cap\cA_2(t)$, then
$
 T_n^{\unif}\le \sfT_1(w)+\sfT_2(w)\le2t.
$
By Proposition~\ref{prop:half}, this gives $\tau_{\col}=T_n^{\unif}/2\le t$, a contradiction. This proves (i).

For (ii), suppose an edge $\{u,v\}$ with $u\in\cA_1(t)$ and $v\in\cA_2(t)$ lies on a geodesic from $0$ to some $z\in\cA_1(t)$. Since subpaths of a geodesic are themselves geodesics, we must have $\sfT_1(v) \le \sfT_1(z) \le t$. This implies $v \in \cA_1(t) \cap \cA_2(t)$, which contradicts (i). By symmetry, the same holds for geodesics from $U_n$.

It remains to prove (iii). Every censored path is a path in the original
graph, hence $\widehat\cA_i(q)\subseteq\cA_i(q)$ for $0\le q\le t$. Assume
that the reverse inclusion fails, and let $q_\ast\le t$ be the first
uncensored discovery time at which it fails. Then for some $i$ and $z$,
\begin{align*}
 \sfT_i(z)=q_\ast,
 \qquad
 z\notin\widehat\cA_i(q_\ast).
\end{align*}
Choose an uncensored geodesic from the source of label $i$ to $z$, and write
it as $v_0,v_1,\ldots,v_m=z$, where $v_0=0$ if $i=1$ and $v_0=U_n$ if
$i=2$. Set
\begin{align*}
 b_r:=\sum_{h=0}^{r-1}\go_{v_h,v_{h+1}},
 \qquad 0\le r\le m.
\end{align*}
Let $r$ be the first index such that $v_r\notin\widehat\cA_i(b_r)$. Then $r\ge1$ and $v_{r-1}\in\widehat\cA_i(b_{r-1})$. If $v_r$ had remained unlabelled strictly before time $b_r$, the active edge $(v_{r-1},v_r)$ would have offered a candidate arrival time of $b_r$. Consequently, the censored exploration would have assigned label $i$ to $v_r$ no later than time $b_r$. Since $v_r\notin\widehat\cA_i(b_r)$, it must have already received the opposite label $3-i$ by time $b_r$.

This implies that the edge $\{v_{r-1}, v_r\}$ is a cross-edge connecting $\cA_i(t)$ and $\cA_{3-i}(t)$. Since this edge lies on the minimal path to $z \in \cA_i(t)$, it is part of a geodesic from the source of label $i$ to a vertex of $\cA_i(t)$, which contradicts (ii). 
Thus, $\cA_i(q)\subseteq\widehat\cA_i(q)$ for all $q\le t$. Equality of hitting times follows from the equality of all level sets up to time $t$.
\end{proof}

\begin{rem}[Exactness of the censored exploration prior to collision]
The censored process is introduced strictly to preserve the freshness of bridge residuals. Proposition~\ref{prop:censor} shows that, conditionally on the event that no collision has occurred by time $t$, this modification does not change either one-source ball up to time $t$. Thus, the censoring yields a tractable filtration without modifying the underlying passage-time events.
\end{rem}

\subsection{Bridge residuals and Cox hazard representation}\label{sec:cox}

We now convert the censored bridge process into an exact conditional survival formula. The relevant memoryless variable is $H(\go_e)=\go_e^\theta$. Thus Lemma~\ref{lem:hazard-residual} applies in the cumulative-hazard scale, while the censoring ensures that no bridge residual is tested after its activation.

Let $\cF_t^{\rm orig}$ be the filtration generated by the censored exploration up to original time $t$, including the labelled sets, discovery times, selected-edge weights, and all lower bounds recorded for non-selected edges, but no information about a bridge weight beyond the lower bound recorded at its activation. For the rescaled time, we set 
\begin{align*}
 \cF_s:=\cF_{s/\fa_\ell}^{\rm orig}.
\end{align*}

\begin{defn}[Bridge thresholds, residuals, and cumulative hazard]
Let $e=\{u,v\}$ be a bridge, where $u$ has label $1$ and $v$ has label $2$. 
Its original activation time and the lower bound recorded at activation are
\begin{align*}
 \gt_e^{\act}
 &:=
 \max\{\widehat\sfT_1(u),\widehat\sfT_2(v)\},\\
 L_e^{\act}
 &:=
 \big(\gt_e^{\act}-\widehat\sfT_1(u)\big)
 +
 \big(\gt_e^{\act}-\widehat\sfT_2(v)\big).
\end{align*}
The corresponding residual in the cumulative-hazard scale is
\begin{align*}
 R_e
 := H(\go_e)-H(L_e^{\act}) 
 = \go_e^\theta-(L_e^{\act})^\theta.
\end{align*}

For $q\ge\gt_e^{\act}$, define the collision threshold
\begin{align*}
 L_e(q)
 :=
 \big(q-\widehat\sfT_1(u)\big)
 +
 \big(q-\widehat\sfT_2(v)\big).
\end{align*}
In particular, $L_e(\gt_e^{\act})=L_e^{\act}$.

For $s\ge\fa_\ell\gt_e^{\act}$, define the rescaled bridge age by
\begin{align}\label{eq:Ae-def}
 A_e(s)
 :=
 \big(s-\fa_\ell\widehat\sfT_1(u)\big)
 +
 \big(s-\fa_\ell\widehat\sfT_2(v)\big).
\end{align}
Then $ L_e(s/\fa_\ell)=\frac{A_e(s)}{\fa_\ell}$.

The cumulative bridge hazard at rescaled time $s$ is
\begin{align}\label{eq:Lambda-def}
 \gL_s
 &:= \sum_{e\in\cB_s}
 \left[ H\left(\frac{A_e(s)}{\fa_\ell}\right) - H(L_e^{\act}) \right]
 =\sum_{e\in\cB_s}\left[\left(\frac{A_e(s)}{\fa_\ell }\right)^\theta
 -(L_e^{\act})^\theta\right]
 \notag\\
 &=\int_0^s \frac{2\theta}{\fa_\ell ^\theta}
 \sum_{e\in\cB_q} A_e(q)^{\theta-1}\,\dd q.
\end{align}
\end{defn}

The identity in~\eqref{eq:Lambda-def} follows from the chain rule.
Away from the finitely many vertex-addition times, $\frac{\dd}{\dd q}A_e(q)=2$.

\begin{prop}[Bridge residuals and Cox hazard]\label{prop:cox}
The following statements hold.
\begin{enumeratei}
\item \label{cox:residuals} \textup{(Independent residuals.)} For every deterministic $t\ge0$, conditionally on $\cF_t^{\rm orig}$,
the family
\begin{align*}
 \{R_e:e\in\cB_{\fa_\ell t}\}
\end{align*}
is independent and each variable has the $\Exp(1)$ law.

\item \label{cox:event} \textup{(Collision event.)} For every deterministic $s\ge0$, with $t=s/\fa_\ell $, on $\Omega_*$,
\begin{align}\label{eq:background-collision-event}
 \{\sfS_{\col}>s\}
 =
 \bigcap_{e\in\cB_s}
 \left\{
 R_e> L_e(t)^\theta-(L_e^{\act})^\theta
 \right\}.
\end{align}

\item \label{cox:survival} \textup{(Cox survival.)} For every $s\ge0$,
\begin{align*}
 \pr(\sfS_{\col}>s\mid\cF_s)=e^{-\gL_s}.
\end{align*}
Consequently,
\begin{align*}
 \pr(\sfS_{\col}>s)=\E e^{-\gL_s}.
\end{align*}

\item \label{cox:marked} \textup{(Marked form.)}
Let $e_{\col}$ denote the a.s.\ unique bridge realising the collision, and put $\cB_\infty:=\bigcup_{s\ge0}\cB_s$.
For $e\in\cB_\infty$, define its individual cumulative hazard by
\begin{align*}
 \gL_e(s)
 :=
 \begin{cases}
 0,
 &s<\fa_\ell\gt_e^{\act},\\
 \left(A_e(s)/\fa_\ell\right)^\theta-(L_e^{\act})^\theta,
 &s\ge\fa_\ell\gt_e^{\act}.
 \end{cases}
\end{align*}
Then $\gL_s=\sum_{e\in\cB_\infty}\gL_e(s)$.
For every non-negative predictable field $(\Psi(e,s))_{e,s}$, \ie~$s\mapsto\Psi(e,s)$ is left-continuous and adapted to
$(\cF_s)_{s\ge0}$,
\begin{align}\label{eq:cox-marked}
 \E\bigl[\Psi(e_{\col},\sfS_{\col})\bigr]
 =
 \E\int_0^\infty
 e^{-\gL_s}\,
 \frac{2\theta}{\fa_\ell^\theta}
 \sum_{e\in\cB_s}A_e(s)^{\theta-1}\Psi(e,s)
 \,\dd s .
\end{align}
In words, conditionally on the censored exploration, the collision occurs on the
bridge $e$ in $[s,s+\dd s)$ with rate
$\frac{2\theta}{\fa_\ell^\theta}A_e(s)^{\theta-1}\,\dd s$, thinned by the
survival factor $e^{-\gL_s}$.
\end{enumeratei}
\end{prop}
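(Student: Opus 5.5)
The plan is to treat the censored exploration as a marked point process driven by fresh exponential clocks in the hazard scale, using Lemma~\ref{lem:hazard-residual} as the basic input.

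\emph{Part~(i).} Fix $t$ and condition on $\cF_t^{\rm orig}$. The censored exploration reveals the following information.
\begin{itemize}
\item[] Selected edges are revealed exactly.
\item[] Non-selected boundary edges, including bridges while both endpoints are not yet labelled differently, are revealed only through events of the form $\{\go_e>L\}$ with $L$ the current threshold.
\item[] Once an edge becomes a bridge at time $\gt_e^{\act}$, it is never tested again, so its only recorded information is $\{\go_e>L_e^{\act}\}$.
\end{itemize}
Before activation, a bridge edge $\{u,v\}$ is active from exactly one side, say from $u$ after $\widehat\sfT_1(u)$. Its threshold at activation is therefore $\gt_e^{\act}-\widehat\sfT_1(u)$, which coincides with $L_e^{\act}$ because the other summand vanishes at $\gt_e^{\act}=\widehat\sfT_2(v)$.

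I would formalize this by a discrete induction over the successive vertex-addition times. The event $\cF_t^{\rm orig}$ is a union over finitely many combinatorial histories, and on each history it has the product form: a set of exact values for selected edges, times $\{\go_e>L_e\}$ for the others, with the $L_e$ determined by the selected values. Conditional independence and the $\Exp(1)$ law of the residuals $R_e$ then follow from Lemma~\ref{lem:hazard-residual}. The subtle point is that $L_e^{\act}$ is itself random, a function of the history. I would handle this by the standard argument that the joint density factorizes as a function of the history times $\prod_e e^{-\go_e^\theta}$ restricted to $\go_e>L_e$.

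\emph{Part~(ii).} By Proposition~\ref{prop:censor}(iii), before collision the censored sets coincide with the true balls. Hence $\{\sfS_{\col}>s\}$ holds iff no bridge $\{u,v\}$ with $u\in\cA_1(t)$, $v\in\cA_2(t)$ satisfies $\go_{u,v}\le L_e(t)$. In hazard scale this is exactly $R_e>L_e(t)^\theta-(L_e^{\act})^\theta$. The reverse inclusion needs care, since after a collision the censored sets may differ from the true balls. I would argue via the first collision time $\tau$: up to $\tau$ the two pictures agree, and at $\tau$ the colliding bridge belongs to $\cB$ and violates its inequality.

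\emph{Part~(iii).} On a fixed history the thresholds $L_e(t)$ are $\cF_s$-measurable. Part~(i) then gives conditional survival $\prod_e e^{-(L_e(t)^\theta-(L_e^{\act})^\theta)}=e^{-\gL_s}$, and taking expectations yields the unconditional formula.

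\emph{Part~(iv).} I would first prove~\eqref{eq:cox-marked} for elementary predictable $\Psi=\ind_{e=e_0}\ind_{(s_1,s_2]}(s)\,Z$ with $Z\in\cF_{s_1}$. Conditioning on $\cF_{s_1}$ and applying part~(i) at time $s_1$, the surviving residuals are independent exponentials. The event of a first collision on $e_0$ in $(s_1,s_2]$ then has conditional probability $\int_{s_1}^{s_2}e^{-(\gL_s-\gL_{s_1})}\,\dd\gL_{e_0}(s)$, which gives the identity after multiplying by $e^{-\gL_{s_1}}$. The exploration keeps evolving after $s_1$, but bridge residuals are never consulted, and new bridges created later carry fresh residuals by part~(i). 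A monotone class argument extends the identity to all non-negative predictable fields.

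The main obstacle is the rigorous justification of part~(i), namely the conditional independence given a history with random stopping thresholds. I would handle it with the explicit density factorization over the finite sequence of Dijkstra steps.
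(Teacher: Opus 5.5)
\textbf{Parts (i)--(iii).} These follow the paper's route. Your density factorization over combinatorial histories is an explicit form of the paper's inductive memorylessness argument. Like that argument, it actually yields the stronger claim that all unrevealed active residuals are i.i.d.\ $\Exp(1)$ and untouched edges are fresh. Your two-inclusion argument for (ii) is the paper's identity $\tau_{\mathrm{br}}=\tau_{\col}$.

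\textbf{Part (iv): the conditioning is on the wrong $\sigma$-field.} For $\Psi=\ind_{e=e_0}\ind_{(s_1,s_2]}(s)Z$ you condition on $\cF_{s_1}$. You then assert that, given survival to $s_1$, a first collision on $e_0$ in $(s_1,s_2]$ has probability $\int_{s_1}^{s_2}e^{-(\gL_s-\gL_{s_1})}\,\dd\gL_{e_0}(s)$. This expression is not $\cF_{s_1}$-measurable, for two reasons:
\begin{itemize}
\item for $s>s_1$, $\gL_s$ contains the hazard of bridges activated during $(s_1,s_2]$;
\item if $e_0$ activates after $s_1$, then $\gL_{e_0}(\cdot)$ is itself unknown at time $s_1$.
\end{itemize}
So it cannot be a conditional probability given $\cF_{s_1}$. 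The correct statement would be a conditional expectation over the exploration in $(s_1,s_2]$. Justifying that requires the joint conditional law of the surviving residuals of $\cB_{s_1}$, the post-$s_1$ exploration, and the residuals of later bridges. Part (i) at time $s_1$ is a statement about bridge residuals alone and does not provide this.

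\textbf{The repair.} Condition on the right endpoint instead. Part (i) at the deterministic time $s_2/\fa_\ell$ gives that, given $\cF_{s_2}$, the residuals $R_e$, $e\in\cB_{s_2}$, are i.i.d.\ $\Exp(1)$. Moreover, every $\gL_e(\cdot)$ on $[0,s_2]$ is $\cF_{s_2}$-measurable. By (ii), on $[0,s_2]$ the pair $(\sfS_{\col},e_{\col})$ is the minimum and argmin of the conditionally independent clocks $\tau_e=\inf\{s:R_e\le\gL_e(s)\}$. Their hazards are continuous, so there are no ties, and the conditional law is $e^{-\gL_s}\,\dd\gL_{e_0}(s)$ on $(s_1,s_2]$. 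Multiplying by $Z\in\cF_{s_1}\subseteq\cF_{s_2}$ and taking expectations gives the identity.

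\textbf{Comparison with the paper.} The paper does this once and for all by conditioning on the whole censored exploration $\cF_\infty$. Bridge residuals are never tested after activation, so they remain i.i.d.\ $\Exp(1)$ given $\cF_\infty$. This gives (iv) at once for every $\cF_\infty$-measurable field. Your monotone-class extension is therefore unnecessary, and predictability of $\Psi$ is not actually used.
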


\begin{proof}
We first prove~\eqref{cox:residuals} by induction on the vertex-addition steps of the censored exploration. Let $L_h$ denote the current lower bound of any unrevealed active edge $h$. We claim that, conditionally on the exploration history, the hazard residuals $H(\go_h) - H(L_h)$ across all unrevealed active edges and bridges are independent $\Exp(1)$ variables, while untouched edges remain completely fresh.

The base case at time zero is trivial. Assume the claim holds up to a given step, and condition on the next winning edge. By the memoryless property of the hazard scale Lemma~\ref{lem:hazard-residual}, updating the lower bounds for the unselected active edges preserves the independence and the $\text{Exp}(1)$ law of their residuals. The winning edge is then fully revealed and discarded from the active set, while any newly incident edges draw fresh, independent weights. Crucially, if an unselected active edge becomes a bridge (due to its endpoint receiving the opposite label), it is permanently censored; its residual is thus safely frozen at its activation threshold $L_e^{\act}$ without undergoing further tests.

This inductive step guarantees the conditional statement holds throughout the exploration. Evaluating this process up to time $t$ and restricting the result to the bridge set $\cB_{\fa_\ell t}$ proves~\eqref{cox:residuals}.

We next identify the bridge event with the original collision event. On $\Omega_*$, define
\begin{align*}
 \tau_{\mathrm{br}}
 :=
 \inf\{q\ge0:\exists e\in\cB_{\fa_\ell q}\text{ such that }\go_e\le L_e(q)\}.
\end{align*}
We claim that
\begin{align}\label{eq:proof-bridge-equals-collision}
 \tau_{\mathrm{br}}=\tau_{\col}
 \qquad\text{on }\Omega_* .
\end{align}
Indeed, if $\tau_{\mathrm{br}}<\tau_{\col}$, choose $q$ with $\tau_{\mathrm{br}}<q<\tau_{\col}$. By Proposition~\ref{prop:censor}, the censored and uncensored explorations agree up to time $q$. A bridge satisfying $\go_e\le L_e(q)$ would then satisfy the original collision inequality before $\tau_{\col}$, a contradiction. Thus $\tau_{\mathrm{br}}\ge\tau_{\col}$.

Conversely, consider the exact collision time $q=\tau_{\col}$. By applying Proposition~\ref{prop:censor} strictly before $q$, the edge producing the original collision is already present as a bridge in the censored exploration by time $q$, and it satisfies the bridge inequality at time $q$. Hence $\tau_{\mathrm{br}}\le q=\tau_{\col}$. This proves~\eqref{eq:proof-bridge-equals-collision}.

Now fix deterministic $s\ge0$ and put $t=s/\fa_\ell $. Since bridges remain in the bridge set after activation,
\begin{align*}
 \{\sfS_{\col}>s\}
 =
 \{\tau_{\mathrm{br}}>t\}
 =
 \bigcap_{e\in\cB_s}\{\go_e>L_e(t)\}.
\end{align*}
For an activated bridge $e$, this is equivalent to
\begin{align*}
 \go_e^\theta-(L_e^{\act})^\theta
 >
 L_e(t)^\theta-(L_e^{\act})^\theta,
\end{align*}
which is exactly~\eqref{eq:background-collision-event}. This proves~\eqref{cox:event}.

Finally, condition on $\cF_s$. By~\eqref{cox:residuals}, the variables $\{R_e:e\in\cB_s\}$ are independent $\Exp(1)$ variables, while by~\eqref{cox:event}
\begin{align*}
 \{\sfS_{\col}>s\}
 =
 \bigcap_{e\in\cB_s}
 \left\{
 R_e>L_e(t)^\theta-(L_e^{\act})^\theta
 \right\}.
\end{align*}
Therefore
\begin{align*}
 \pr(\sfS_{\col}>s\mid\cF_s)
 &=
 \prod_{e\in\cB_s}
 \exp\left\{
 -\left[L_e(t)^\theta-(L_e^{\act})^\theta\right]
 \right\} 
 =
 \exp\left\{
 -\sum_{e\in\cB_s}
 \left[L_e(t)^\theta-(L_e^{\act})^\theta\right]
 \right\}
 =
 e^{-\gL_s},
\end{align*}
where the last equality is the definition~\eqref{eq:Lambda-def}. Taking expectations gives
\begin{align*}
 \pr(\sfS_{\col}>s)=\E e^{-\gL_s}.
\end{align*}
This proves~\eqref{cox:survival}.

Finally, we prove~\eqref{eq:cox-marked}. Let $\cF_\infty :=\gs\left(\bigcup_{s\ge0}\cF_s\right)$.
Continue the residual-clock induction used in the proof of~\eqref{cox:residuals} through all vertex-addition steps of the censored exploration. Since bridge residuals are never tested after activation, conditionally on $\cF_\infty$, the finite family $(R_e)_{e\in\cB_\infty}$ is independent and each member has the $\Exp(1)$ law.

Since $A_e(\fa_\ell\gt_e^{\act}) = \fa_\ell L_e^{\act}$, the function $\gL_e$ is continuous, non-decreasing, and absolutely continuous on $[0,\infty)$, with $\gL_e(s)\to\infty$ as $s\to\infty$. Moreover, for almost every $s$,
\begin{align*}
 \dd\gL_e(s)
 =
 \ind_{\{e\in\cB_s\}}
 \frac{2\theta}{\fa_\ell^\theta}
 A_e(s)^{\theta-1}\,\dd s.
\end{align*}

For $e\in\cB_\infty$, put $\tau_e:=\inf\{s\ge0:R_e\le\gL_e(s)\}$.
Conditionally on $\cF_\infty$,
\begin{align*}
 \pr(\tau_e>s\mid\cF_\infty)
 =e^{-\gL_e(s)},
\end{align*}
so the conditional law of $\tau_e$ is the Stieltjes measure $e^{-\gL_e(s)}\,\dd\gL_e(s)$.
The variables $(\tau_e)_{e\in\cB_\infty}$ are conditionally independent, and by~\eqref{cox:event} their minimum is $\sfS_{\col}$ with mark $e_{\col}$. Hence,
\begin{align*}
 \pr\bigl(
 e_{\col}=e,\ \sfS_{\col}\in\dd s
 \bigm|\cF_\infty
 \bigr)
 =
 \prod_{f\ne e}e^{-\gL_f(s)}
 e^{-\gL_e(s)}\,\dd\gL_e(s)
 =
 e^{-\gL_s}\,\dd\gL_e(s).
\end{align*}
Multiplying by $\Psi(e,s)$, summing over $e$, integrating in $s$, and taking expectations yields~\eqref{eq:cox-marked}. 
\end{proof}

\begin{rem}[Reduction achieved by the Cox representation]
Up to this point, all identities hold exactly for any finite network size $n$. The exact survival formula 
\begin{align*}
 \pr(\sfS_{\col}>s)=\E e^{-\gL_s}
\end{align*}
rigorously reduces the main theorem to evaluating the asymptotic behavior of the random hazard $\gL_s$ at the critical scale $s = \frac{1}{2}\log n + y$. The remainder of the proof focuses entirely on the large-$n$ limit of the censored clusters and the resulting bridge-age sum in~\eqref{eq:Lambda-def}.
\end{rem}

%%%%%%%%%%%%%%%%%%%%%%%%%%%%%%%%%%%%%%%%%%%%%%%%%%%%%%%%%%%%%
\section{Macroscopic CMJ approximation: proof of Theorem~\ref{thm:Macro} and~\ref{thm:macro-cross}}
\label{sec:macro-cmj}

This section executes steps~\emph{(C3)} and~\emph{(C4)} of our macroscopic roadmap. Following the exact Cox reduction in Proposition~\ref{prop:cox}, our remaining task is to determine the asymptotic behavior of the cumulative bridge hazard $\gL_s$ at the critical collision times $s=\frac12\log n+O(1)$. The argument proceeds in three parts.

Recall the limiting objects defined in Section~\ref{sec:local-cmj}. We write $W_1, W_2$ for independent copies of the limiting CMJ martingale $M_\infty$, and let $W_{1,n}, W_{2,n}$ denote the corresponding finite-$n$ approximations constructed below.

For $\eps\in(0,1/2)$ and $y\in\dR$, write
\begin{align}\label{eq:s0-s1-def}
 \fs_0=\fs_0(n,\eps):=\eps\log n,
 \qquad
 \fs_1=\fs_1(n,y):=\frac12\log n+y.
\end{align}
We use this notation for the macroscopic collision window throughout this section.

\smallskip
\noindent\emph{(C3a) Proposal CMJ processes and ghost control.}
We first couple the exploration to two independent, unrestricted finite-$n$ CMJ branching random walks, which we call the \emph{proposal processes}. The censored Dijkstra clusters are recovered by retaining a proposed birth exactly when its parent is retained and its spatial label is unoccupied; every other proposal and all of its descendants are ghosts. Proposition~\ref{prop:cmj-package} demonstrates that these rejected proposals---which we track as ``ghost'' particles---are stochastically negligible. This allows us to transfer three crucial uniform estimates from the ideal proposal processes to the accepted physical clusters: exponential population growth, convergence to the stable-age distribution, and the uniform decay of non-zero Fourier modes. (See Subsection~\ref{ssec:finite-cmj}).

\smallskip
\noindent\emph{(C3b) Spatial mixing and bridge statistics.}
Population sizes alone cannot determine the collision hazard, as bridges only form between vertices separated by distance at most $\ell$. Proposition~\ref{prop:pair} translates the Fourier decay into an interval discrepancy bound, proving that the bridge density satisfies
\begin{align*}
 \frac{B_s}{N_1(s)N_2(s)}
 \stackrel{\pr}{\to}
 2\gl
\end{align*}
uniformly across the collision window. Applying the same spatial mixing argument to age-weighted empirical measures shows that the endpoint ages of available bridges are asymptotically independent and follow the stable-age law. In particular, on the collision window,
\begin{align*}
 \int_{\fs_0}^{\fs_1}\frac1n
 \abs{
 \sum_{e\in\cB_s}A_e(s)^{\theta-1}
 -\Gamma(\theta+1)B_s
 }\,\dd s
 \stackrel{\pr}{\to}0.
\end{align*}
When $0<\theta<1$, the singularity at age zero is carefully handled by an integrated truncation argument rather than a direct pointwise estimate. (See Subsection~\ref{ssec:mix-bridge}).

\smallskip
\noindent\emph{(C4) Integrated hazard and the collision limit.}
Lemma~\ref{lem:integrated} first shows that early-time contributions to the hazard are negligible. It then substitutes the asymptotic bridge density and age statistics into the exact Cox hazard formula to deduce that, for every fixed $y\in\dR$,
\begin{align*}
 \gL_{\frac12\log n+y}
 -
 \theta W_{1,n}W_{2,n}e^{2y}
 \stackrel{\pr}{\to}
 0.
\end{align*}
Combined with the joint weak convergence $(W_{1,n},W_{2,n})\Rightarrow(W_1,W_2)$, this establishes the limiting Cox intensity and completes the proof of Theorem~\ref{thm:Macro}. (See Subsections~\ref{ssec:intensity} and~\ref{ssec:proof-Macro-main}).

\subsection{Finite-$n$ CMJ process and proposal construction}\label{ssec:finite-cmj}

This subsection establishes the one-source estimates governing the individual cluster growths. We couple the exploration with an unrestricted finite-$n$ CMJ branching random walk, which we call the \emph{proposal process}. The censored Dijkstra clusters are recovered by retaining a proposed birth exactly when its parent is retained and its spatial label is unoccupied; all other proposals and their descendants are ghosts. We first record the finite-$n$ CMJ estimates needed for the proposal process, then add the spatial Fourier estimate, and finally show that the suppressed ``ghost'' particles are asymptotically negligible within the collision window. To maintain the flow, these auxiliary statements are recorded here where they are needed, while their proofs are deferred to Appendix~\ref{app:macro-cmj-inputs}.

We use the local scaling from Section~\ref{sec:local-cmj}. Recall the displacement set $\cD_n$ from Section~\ref{sec:macro-cox}. Define
\begin{align*}
 \bar\mu_n(\dd s):=|\cD_n|\pr(\fa_\ell \go\in\dd s)
\end{align*}
for the age marginal of one finite-$n$ reproduction point process. Since $\fa_\ell^\theta=(2\ell-1)\Gamma(\theta+1)$ and $\Gamma(\theta+1)=\theta\Gamma(\theta)$, its exact density is
\begin{align}\label{eq:bar-mu-density}
 \bar\mu_n(\dd s)
 =
 \frac{|\cD_n|}{2\ell-1}
 \frac{s^{\theta-1}}{\Gamma(\theta)}
 e^{-(s/\fa_\ell)^\theta}\,\dd s.
\end{align}
For the spatial proposal process, a particle at label $x\in\dT_n$ has one potential child at $x+z$ for each $z\in\cD_n$, with birth age $\fa_\ell\go_z$. Thus one marked reproduction point process is
\begin{align*}
 \Xi_n:=\sum_{z\in\cD_n}\gd_{(\fa_\ell\go_z,z)}.
\end{align*}
Its age marginal is $\bar\mu_n$.

Although $\bar\mu_n$ converges to the limiting measure $\mu_\theta$, it is not exactly equal to it. Since the process is observed up to times of order $\log n$, we use the exact finite-$n$ Malthusian normalization before passing to the limit. Define
\begin{align*}
 \fm_n(q):=\int_0^\infty e^{-qs}\bar\mu_n(\dd s),
 \quad q>0.
\end{align*}
Let 
\begin{align}\label{def:alpha_n}
    \alpha_n\text{ be the unique solution of the equation $\fm_n(x)=1$}.
\end{align}
Put
\begin{align}\label{eq:prelimit-spine-law}
 \nu_n(\dd s):=e^{-\alpha_ns}\bar\mu_n(\dd s),
 \qquad
 \theta_n:=\int_0^\infty s\,\nu_n(\dd s).
\end{align}
The equation $\fm_n(\alpha_n)=1$ makes $\nu_n$ a probability measure. It is the Malthusian tilted ancestral-age law, while $\theta_n$ is its mean. The factor $\alpha_n\theta_n$ below is the standard conversion between the mean-one intrinsic martingale and the population normalization.

Let $Z_n^{\op}$ be one proposal CMJ process. A proposal particle $u$ has birth time $\tau_u$, generation $|u|$, and spatial label $X_u\in\dT_n$. Define its generation martingale by
\begin{align}\label{eq:prelimit-generation-martingale}
 \cW_{n,r}:=\sum_{|u|=r}e^{-\alpha_n\tau_u},
 \quad r\ge0.
\end{align}

\begin{lem}[Finite-$n$ normalization and intrinsic martingale]
\label{lem:finite-n-cmj-martingale}
As $n\to\infty$, $\alpha_n=1+O(n^{-1})$ and $\theta_n=\theta+O(n^{-1})$. The densities of $\nu_n$ converge in $L^1(\dd s)$ to the $\Gamma(\theta,1)$ density. Moreover, for some $\rho>0$ and $n_0<\infty$,
$\sup_{n\ge n_0}\int_0^\infty e^{\rho s}\nu_n(\dd s)<\infty$.

The martingale in~\eqref{eq:prelimit-generation-martingale} converges in $L^2$ to a mean-one limit $\cW_{n,\infty}$. There exist $C<\infty$, $q\in(0,1)$, and $n_0<\infty$ such that
\begin{align*}
 \sup_{n\ge n_0}
 \E\abs{\cW_{n,\infty}-\cW_{n,r}}^2
 \le Cq^r,
 \quad r\ge0.
\end{align*}
Finally,
\begin{align*}
 \frac{\cW_{n,\infty}}{\alpha_n\theta_n}
 \Rightarrow M_\infty,
\end{align*}
and the same convergence holds jointly for finitely many independent copies.
\end{lem}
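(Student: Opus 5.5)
The plan is to argue in three stages: explicit asymptotics of the finite-$n$ Laplace transform, a generation-by-generation $L^2$ computation for $\cW_{n,r}$, and a finite-generation weak-convergence argument closed up by the uniform $L^2$ tail bound.

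\emph{Normalization.} By~\eqref{eq:bar-mu-density},
\begin{align*}
 \fm_n(q)=\frac{|\cD_n|}{2\ell-1}\int_0^\infty e^{-qs}\frac{s^{\theta-1}}{\Gamma(\theta)}e^{-(s/\fa_\ell)^\theta}\,\dd s .
\end{align*}
Two facts control this. First, $|\cD_n|/(2\ell-1)=2\ell/(2\ell-1)=1+O(n^{-1})$. Second, $0\le1-e^{-(s/\fa_\ell)^\theta}\le s^\theta/\fa_\ell^\theta$ with $\fa_\ell^\theta\asymp n$. Together with~\eqref{eq:mu-laplace} they give $\fm_n(q)=q^{-\theta}+O(n^{-1})$. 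The same bound holds for $\fm_n'(q)$, uniformly for $q$ in a compact subset of $(0,\infty)$. Since $\fm_n$ is strictly decreasing and $\frac{\dd}{\dd q}q^{-\theta}=-\theta\neq0$ at $q=1$, the implicit-function estimate yields $\alpha_n=1+O(n^{-1})$.

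The density of $\nu_n$ is $e^{-\alpha_ns}$ times the display above. It therefore converges pointwise to $s^{\theta-1}e^{-s}/\Gamma(\theta)$ and is dominated by $Cs^{\theta-1}e^{-s/2}$. Dominated convergence then gives $L^1$ convergence. The difference is in fact $O(n^{-1})$ in $L^1$ with weight $(1+s)$, which yields $\theta_n=\theta+O(n^{-1})$. The same domination gives $\sup_n\int e^{s/4}\nu_n(\dd s)<\infty$, so one may take $\rho=1/4$.

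\emph{Martingale and $L^2$ rate.} The martingale property of $\cW_{n,r}$ follows from the branching property and $\fm_n(\alpha_n)=1$. Conditionally on generation $r$, the increment $\cW_{n,r+1}-\cW_{n,r}$ is a sum of independent centred terms $e^{-\alpha_n\tau_u}(\Xi_u-1)$, where $\Xi_u:=\sum_{z\in\cD_n}e^{-\alpha_n\fa_\ell\go_{u,z}}$. Since the weights are i.i.d.,
\begin{align*}
 \var(\Xi_u)\le\fm_n(2\alpha_n)\le C,
 \qquad
 \E\sum_{|u|=r}e^{-2\alpha_n\tau_u}=\fm_n(2\alpha_n)^r .
\end{align*}
Put $q:=\sup_{n\ge n_0}\fm_n(2\alpha_n)$. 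By the first stage, $\fm_n(2\alpha_n)\to2^{-\theta}$, so $q<1$ after enlarging $n_0$. Orthogonality of martingale increments gives $\E|\cW_{n,\infty}-\cW_{n,r}|^2\le C\sum_{k\ge r}q^k\le Cq^r$. In particular $\cW_{n,r}$ converges in $L^2$, and the limit $\cW_{n,\infty}$ has mean one.

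\emph{Identification of the limit.} For the limiting CMJ process, let $\cW_r:=\sum_{|u|=r}e^{-\sft(u)}$. The same computation, with $\mu_\theta$ in place of $\bar\mu_n$, shows $\cW_r\to\cW_\infty$ in $L^2$ at rate $q^r$. Moreover $\cW_\infty/\theta=M_\infty$; this is the standard relation between the intrinsic martingale and the population normalization, since the ancestral-age mean is $\int se^{-s}\mu_\theta(\dd s)=\theta$ and $\alpha=1$ (Nerman~\cite{nerman81}). As a check, $\E\cW_\infty/\theta=1/\theta=\E M_\infty$.

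Given this, it suffices to prove $\cW_{n,r}\Rightarrow\cW_r$ for each fixed $r$, because the uniform bound $Cq^r$ lets one interchange the limits $r\to\infty$ and $n\to\infty$. The factor $\alpha_n\theta_n\to\theta$ is then absorbed by Slutsky's theorem. For fixed $r$, iterate Lemma~\ref{lem:local-offspring-pp} over the first $r$ generations: finitely many independent point processes converge jointly to independent $\operatorname{PPP}(\mu_\theta)$'s. The remaining issue is that $\cW_{n,r}$ is not a compactly supported functional. I would split each exponential sum into ages in $[\eta,K]$ and the rest. The part with ages in $[\eta,K]$ converges by vague convergence together with continuity. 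The contribution of ages outside $[\eta,K]$ has $L^1$ norm at most $\int_{[0,\eta)\cup(K,\infty)}e^{-\alpha_ns}\bar\mu_n(\dd s)$ per generation step, which is small uniformly in $n$ by the domination from the first stage. The replacement of $\alpha_n$ by $1$ costs $O(n^{-1})$.

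Joint convergence for finitely many independent copies is immediate from independence. I expect this fixed-generation convergence, specifically the uniform control of small ages near the singularity $s^{\theta-1}$ when $\theta<1$, to be the main technical point. The domination $s^{\theta-1}$ is integrable, so I would try the truncation at level $\eta$ first.
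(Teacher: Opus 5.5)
Your proposal is correct and follows essentially the same route as the paper. Both arguments obtain $\alpha_n,\theta_n$ from the $O(n^{-1})$ Laplace-transform expansion, get the $L^2$ rate from orthogonal martingale increments with $\fm_n(2\alpha_n)\to2^{-\theta}<1$, and identify the limit through fixed-generation truncation, Lemma~\ref{lem:local-offspring-pp}, and the uniform $Cq^r$ tail, with $\cW_\infty/\theta=M_\infty$. The only minor differences are that you derive $\theta_n$ from weighted $L^1$ convergence instead of $\theta_n=-\fm_n'(\alpha_n)$, and that you also truncate small ages, which makes explicit the step needed to apply vague convergence on $(0,\infty)$ that the paper leaves implicit.
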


The preceding lemma identifies the random growth factor, but the population size and bridge hazards also depend on the empirical age distribution. The classical random-characteristic theory is insufficient here because the reproduction law varies with $n$ and the observation time grows with $n$. Therefore we establish a uniform random-characteristic estimate over the collision window. For a characteristic $\phi:[0,\infty)\to\dR$, write
\begin{align*}
 Z_n^\phi(t):=\sum_{u:\tau_u\le t}\phi(t-\tau_u).
\end{align*}

\begin{lem}[Uniform finite-$n$ random characteristics]
\label{lem:uniform-triangular-nerman}
Fix $\eps\in(0,1/2)$ and $y\in\dR$.
If $\phi$ is continuous and $|\phi(a)|\le Ce^{\eta a}$ for some $\eta<1$, or if $\phi$ is bounded and Riemann-integrable, then
\begin{align}\label{eq:uniform-triangular-nerman-window}
 \sup_{\fs_0\le t\le \fs_1}
 \abs{
 e^{-\alpha_nt}Z_n^\phi(t)
 -\frac{\cW_{n,\infty}}{\theta_n}
 \int_0^\infty e^{-\alpha_na}\phi(a)\,\dd a
 }
 \stackrel{\pr}{\to}0.
\end{align}
\end{lem}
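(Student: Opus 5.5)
The plan is a Nerman-type argument made uniform in $n$ and in $t$ over the collision window. The first step is a truncation in generation: fix $k=k_n$ growing slowly, say $k\asymp C\log\log n$, or a large fixed $k$ sent to infinity at the end. Decompose $Z_n^\phi(t)$ according to the ancestor $v$ of each particle with $|v|=r_t$, where $r_t$ is chosen so that all particles born after time $t-c$ descend from generation-$r_t$ individuals born before $t-c$.

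Cleaner is Nerman's split at a fixed lag $c$:
\begin{align*}
 Z_n^\phi(t)=\sum_{x\in I(t-c)}Z_{n,x}^\phi\bigl(t-\tau_x\bigr)+R_c(t),
\end{align*}
where $I(t-c)$ is the coming generation, namely individuals born after $t-c$ whose parent was born before $t-c$. Here $Z^\phi_{n,x}$ are i.i.d.\ copies given $\cF_{I(t-c)}$, and $R_c$ collects the characteristic evaluated at large ages, $a>c$. By the growth bound $|\phi(a)|\le Ce^{\eta a}$ with $\eta<1$, we get $\E e^{-\alpha_n t}|R_c(t)|\le C\int_c^\infty e^{-(1-\eta)a}\,\dd a$ uniformly in $n$. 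This uses the uniform exponential moment of $\nu_n$ from Lemma~\ref{lem:finite-n-cmj-martingale} and $\alpha_n=1+O(n^{-1})$.

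The key steps are as follows.

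(1) A uniform renewal theorem for the mean. Put $m_n^\phi(t):=e^{-\alpha_nt}\E Z_n^\phi(t)$. It solves the renewal equation driven by $\nu_n$, and I will show
\begin{align*}
 \sup_{t\ge c}\Bigl|m_n^\phi(t)-\theta_n^{-1}\int e^{-\alpha_na}\phi(a)\,\dd a\Bigr|\to0
\end{align*}
as $c\to\infty$ uniformly in $n\ge n_0$. This follows from the $L^1$ convergence of the densities of $\nu_n$ to $\Gamma(\theta,1)$, which is spread out, so that a coupling renewal argument yields convergence rates uniform in $n$. I will also need the second-moment version $\E e^{-2\alpha_nt}Z_n^\phi(t)^2\le C$.

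(2) Conditional law of large numbers. Given $\cF_{I(t-c)}$, the main term equals $\sum_x e^{-\alpha_n\tau_x}m_n^\phi(t-\tau_x)$ plus a centered sum of conditionally independent terms. The variance of that sum is bounded by $C\sum_x e^{-2\alpha_n\tau_x}\le C e^{-\alpha_n(t-c)}\sum_x e^{-\alpha_n\tau_x}$, and this tends to $0$ since $t\ge\fs_0=\eps\log n\to\infty$.

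(3) Identification with $\cW_{n,\infty}$. The sum $\sum_{x\in I(t-c)}e^{-\alpha_n\tau_x}$ is the intrinsic martingale at an optional line. Its $L^2$ distance to $\cW_{n,\infty}$ is controlled uniformly in $n$ by the geometric bound $Cq^r$ of Lemma~\ref{lem:finite-n-cmj-martingale}, since every member of $I(t-c)$ has generation at least $r$ with high probability once $t-c\gg r$. Combining with step (1) yields pointwise convergence in probability for each $t$ in the window.

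(4) Uniformity over $t\in[\fs_0,\fs_1]$. Take a mesh of spacing $\delta$, which gives $O(\log n/\delta)$ points. For positive, monotone-sandwiched characteristics, monotonicity of $Z^{\phi}$ between mesh points in the indicator case $\phi=\ind_{[0,b]}$ gives the bracketing. For general $\phi$, approximate by step functions, using Riemann integrability or the exponential bound. Pointwise convergence at the mesh points is not enough by a plain union bound, so I will instead obtain quantitative bounds: the errors in (2) and (3) are $O(e^{-c'\fs_0})$ in $L^2$, which is polynomially small in $n$ and survives a union bound over $O(\log n)$ points.

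The main obstacle is step (1), the renewal theorem uniform in $n$ with a rate, since the equation is a triangular array. I would first try a direct Fourier/Laplace argument. Write $\hat\nu_n(i\xi)\to(1-i\xi)^{-\theta}$ uniformly, note that $|1-\hat\nu_n(i\xi)|$ is bounded away from $0$ on $|\xi|\ge\delta$ uniformly (non-lattice, with a uniform density), and use the uniform exponential moment $\rho$ to shift the contour. This yields an exponentially fast, $n$-uniform approach to the limit constant for smooth $\phi$. The result is then extended to the stated classes by sandwiching.
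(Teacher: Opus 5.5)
You have the right ingredients: the coming-generation split, a renewal theorem for $m_n^\phi$ uniform in $n$, conditional second moments, and comparison with $\cW_{n,\infty}$. Replacing the paper's unit-window maximal inequality by a mesh plus monotone bracketing would also be a legitimate alternative. But with a \emph{fixed} lag $c$, step~(4) does not close.

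For $x\in I(t-c)$ you have $\tau_x>t-c$, so the conditional mean $e^{-\alpha_n\tau_x}m_n^\phi(t-\tau_x)$ is evaluated at arguments $t-\tau_x<c$, and it vanishes if $\tau_x>t$. Step~(1) therefore applies only to members born shortly after $t-c$. You must control separately the weight of the late-born members,
$L_c(t):=\sum_{x\in I(t-c)}e^{-\alpha_n\tau_x}\ind\{\tau_x>t-c/2\}$, which your proposal never mentions. Its mean is $\int_{[0,t-c]}\cR_n(\dd u)\,\nu_n\bigl((t-c/2-u,\infty)\bigr)$. This is a fixed quantity of order $e^{-O(c)}$ that does not decay in $n$. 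It is neither polynomially small nor of the form (deterministic $o(1)$)$\times O_\pr(1)$. Hence Markov plus a union bound over the $\asymp(\log n)/\delta$ mesh points gives nothing for fixed $c$. Bounding $\sup_t L_c(t)$ directly would itself be a uniform law of large numbers for a (random) characteristic, which is the kind of statement you are trying to prove.

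The same objection applies to two further terms:
\begin{itemize}
\item $e^{-\alpha_nt}R_c(t)$ for $\phi\equiv1$, which is the monotone building block your bracketing reduces everything to;
\item the envelope bound $\sup_t e^{-\alpha_nt}Z_n^{\phi_\zeta}(t)=O_\pr(1)$, $\phi_\zeta(a):=e^{\zeta a}$, which you need to pass from step functions to exponentially bounded $\phi$.
\end{itemize}
You correctly note that the errors in (2)--(3) are polynomially small, but that does not cover these $t$-dependent errors.

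The repair is to let the lag grow with $n$, e.g.\ $c=c_n=\fs_0/3$. Then $\E L_{c_n}(t)$ and $\E e^{-\alpha_nt}R_{c_n}(t)$ are $O(n^{-c'})$. The conditional variance $Ce^{-\alpha_n(t-c_n)}$ stays polynomially small, and your union bound goes through.

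The paper does this more economically. It uses the single line $\cI_n(r)$ at the time $r=\fs_0/3$ for every $t\in[\fs_0,\fs_1]$. As a result:
\begin{itemize}
\item $\cW_n(r)$ and the late-born weight $\sum_{u\in\cI_n(r)}e^{-\alpha_n\tau_u}\ind\{\tau_u>2r\}$ (mean $O((1+r)e^{-\rho r})$) do not depend on $t$;
\item the renewal error enters only as the deterministic factor $\sup_{v\ge r}|h_{n,\phi}(v)-c_{n,\phi}|$ times $\cW_n(r)$, so your step~(1) needs no rate;
\item the only $t$-dependent term is the fluctuation, handled by \eqref{eq:finite-n-unit-window} over $O(\log n)$ unit windows against $\E\sum_{u\in\cI_n(r)}e^{-2\alpha_n\tau_u}\le e^{-\alpha_nr}$.
\end{itemize}

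Two smaller corrections.
\begin{itemize}
\item In step~(3), it is false that all members of $I(t-c)$ have generation at least $r$ with high probability. The root alone has $|\cD_n|\asymp n$ potential children, almost all born at ages of order $\fa_\ell\gg\fs_1$, and all of them lie in $I(t-c)$. Use instead the exact identity $\E|\cW_{n,\infty}-\cW_n(s)|^2=\var(\cW_{n,\infty})\,\E\sum_{x\in I(s)}e^{-2\alpha_n\tau_x}\le Ce^{-\alpha_ns}$, as in Lemma~\ref{lem:finite-n-coming-generation}.
\item $Z_n^{\ind_{[0,b]}}(t)$ is not monotone in $t$. Bracket it through the difference of monotone total-population counts $Z_n^{1}(t)-Z_n^{1}(t-b)$, up to endpoint conventions.
\end{itemize}
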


The marginal age distribution alone is insufficient to count admissible bridges, so we also track spatial labels. For every fixed non-zero Fourier mode, the spatial multiplier has limiting modulus strictly smaller than one. The many-to-two estimate in Appendix~\ref{app:macro-cmj-inputs} therefore gives an $L^2$ growth exponent $\rho_k<1$, so the mode is negligible relative to the population scale $e^s$, uniformly over the collision window.

\begin{lem}[Proposal Fourier decay]
\label{lem:fourier-many-to-two}
Fix $k\in\dZ\setminus\{0\}$ and let $g$ be bounded and Riemann-integrable. For one proposal process, put
\begin{align*}
 Y_{n,k,g}^{\op}(s)
 :=\sum_{u:\tau_u\le s}g(s-\tau_u)e^{2\pi \mathrm{i}kX_u/n}.
\end{align*}
For every fixed $\eps\in(0,1/2)$ and $y\in\dR$,
\begin{align}\label{eq:proposal-fourier-decay}
 \sup_{\fs_0\le s\le \fs_1}
 e^{-s}\abs{Y_{n,k,g}^{\op}(s)}
 \stackrel{\pr}{\to}0.
\end{align}
\end{lem}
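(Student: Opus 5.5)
The plan is to prove Lemma~\ref{lem:fourier-many-to-two} by a second-moment (many-to-two) estimate followed by a chaining argument in $s$ over the window $[\fs_0,\fs_1]$. Write $\chi_k(x):=e^{2\pi\mathrm{i}kX/n}$ and let $\hat\Xi_{n,k}(q):=\int e^{-qs}\sum_{z\in\cD_n}e^{2\pi\mathrm{i}kz/n}\,\pr(\fa_\ell\go_z\in\dd s)$ be the Fourier-twisted Laplace transform of one reproduction process. Since the displacements are uniform on $\cD_n$ independently of the ages, it factorizes as $\fm_n(q)\,\hat\gs_{n}(k)$ with $\hat\gs_n(k):=|\cD_n|^{-1}\sum_{z\in\cD_n}e^{2\pi\mathrm{i}kz/n}$. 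A Riemann-sum computation gives $\hat\gs_n(k)\to \sin(2\pi k\gl)/(2\pi k\gl)$, whose modulus is strictly less than one for $k\ne0$ (here we use $\gl<1/2$; when $2k\gl\in\dZ$ it is even $0$). Fix $\varrho_k$ with $|\sin(2\pi k\gl)/(2\pi k\gl)|<\varrho_k<1$.

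The first step is the first moment. The many-to-one formula gives $\E Y_{n,k,g}^{\op}(s)=\sum_{r\ge0}\hat\gs_n(k)^r(\bar\mu_n^{*r}*g)(s)$. This is a renewal equation with defective kernel $\hat\gs_n(k)\bar\mu_n$, whose Malthusian root $\ga_{n,k}$ satisfies $\fm_n(\ga_{n,k})=|\hat\gs_n(k)|$-type bounds. Because $\fm_n(q)\approx q^{-\theta}$, this yields a growth exponent $\rho_k':=|\hat\gs(k)|^{1/\theta}<1$, so $|\E Y|\le C e^{\rho_k' s}$ uniformly in $n\ge n_0$.

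The second step is the second moment. The many-to-two formula writes $\E|Y|^2$ as a sum over pairs of individuals with a most recent common ancestor $w$ born at time $\tau_w$. Two distinct children of $w$ are born at ages $a,a'$ with displacements $z\ne z'$. The phases along the two descendant lines are conditionally independent given $w$, and the phase of $w$ itself cancels in $Y\bar Y$. This gives
\[
 \E|Y(s)|^2\le \int_0^s \E\bigl[\#\{w:\tau_w\in\dd t\}\bigr]\;
 \E\Bigl|\sum_{z}e^{2\pi\mathrm{i}kz/n}\ind\{\fa_\ell\go_z\le s-t\}\,\bar m_k(s-t-\fa_\ell\go_z)\Bigr|^2 +\text{diagonal},
\]
where $\bar m_k(u):=\sup|\E Y(u)|\le Ce^{\rho_k'u}$. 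The first factor is $O(e^{t})$ by Lemma~\ref{lem:finite-n-cmj-martingale}, and the squared sum is $O(e^{2\rho_k'(s-t)})$, since the number of offspring by a given age has all moments, uniformly in $n$, by~\eqref{eq:bar-mu-density}. The diagonal term is $O(e^{s})$. Integrating gives $\E|Y(s)|^2\le C e^{\max(1,2\rho_k')s}\,s$, that is, an $L^2$ growth exponent $\rho_k:=\max(1/2,\rho_k')+o(1)<1$ for the modulus. Consequently $e^{-s}\|Y(s)\|_2\le C\sqrt{s}\,e^{-(1-\rho_k)s}$, which is at most $n^{-c\eps}$ on the window. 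This pointwise bound is the key place where $|\hat\gs(k)|<1$ enters, and I expect verifying the uniformity of the renewal bound in $n$ to be the main technical obstacle. For $\theta<1$, the singular density near $0$ forces the use of the Laplace transform $\fm_n$ rather than density bounds.

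The third step upgrades to the supremum over $s\in[\fs_0,\fs_1]$. Take a grid of mesh $\gd=n^{-c\eps/4}$, which has $O(n^{c\eps/2}\log n)$ points. A union bound with Chebyshev over the grid gives a total probability $o(1)$ only if the exponent is chosen carefully. If it is not, I would instead use the fourth moment or a grid of mesh $\gd$ fixed and small. For fixed $\gd$ the grid has $O(\log n)$ points, and the pointwise bound of $n^{-c\eps}$ suffices. Between grid points, first reduce to $g$ monotone or a step function by Riemann integrability (sandwiching $g$ between step functions). Then $|Y(s)-Y(s_j)|$ is bounded by the population born in $[s_j,s]$ plus the $g$-oscillation times the population. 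By Lemma~\ref{lem:uniform-triangular-nerman}, applied to $\phi=\ind_{[0,\gd]}$ and to $\phi\equiv1$, this is $O_\pr(\gd+\text{osc})\,e^{s}$ uniformly on the window. Letting $\gd\downarrow0$ after $n\to\infty$ gives~\eqref{eq:proposal-fourier-decay}.
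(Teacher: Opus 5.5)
Your proposal is correct. Its first two steps are essentially the paper's argument.

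The paper also treats the first moment as a renewal equation with the defective kernel $\psi_{k,n}\bar\mu_n$, where $\psi_{k,n}\to\sin(2\pi k\gl)/(2\pi k\gl)$. It obtains the second moment from the root-decomposition recursion $S(t)\le Ce^{2\rho_k t}+\int_{[0,t]}S(t-s)\bar\mu_n(\dd s)$, which is the recursive form of your most-recent-common-ancestor many-to-two sum. It simply fixes $\rho_k\in(1/2,1)$ from the start, so that $\fm_n(2\rho_k)<1$ and the iteration closes without your extra factor $s$.

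The genuine difference is the passage to the supremum over $[\fs_0,\fs_1]$.
\begin{itemize}
\item The paper proves a unit-window maximal bound $\E\sup_{j\le t\le j+1}|Y(t)|^2\le Ce^{2\rho_k(j+1)}$. It does this with a Rademacher maximal inequality for functions of bounded variation, which relies on the normalized mean $e^{-\rho_k t}\E Y(t)$ having uniformly bounded total variation. It then sums over $O(\fs_1)$ unit windows and applies Markov's inequality.
\item You use only the pointwise second-moment bound on a grid of fixed mesh $\gd$, via Chebyshev and a union bound over $O(\gd^{-1}\log n)$ points. You control the oscillation between grid points by counting births in windows of length $\gd$, shifted by the finitely many jump points of a step function $g$. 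Lemma~\ref{lem:uniform-triangular-nerman}, applied to indicator characteristics, shows this count is $O_{\pr}(\gd)e^{s}$ uniformly on the window.
\end{itemize}
Your route is legitimate. That lemma is proved independently, and the paper itself invokes it for the final approximation of general $g$. Your route avoids the symmetrization and bounded-variation machinery entirely. The cost is that you get only convergence in probability, not the paper's quantitative maximal $L^2$ estimate; convergence in probability is all the lemma claims.

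Two small points to tighten:
\begin{itemize}
\item The first-moment exponent must be chosen strictly above $|\psi_k|^{1/\theta}$. At the Malthusian root of the defective kernel itself, the series $\sum_r(|\psi_{k,n}|\fm_n(\rho))^r$ diverges.
\item Your ``diagonal'' term must also absorb ancestor--descendant pairs. Their contribution is at most $C\E\sum_{u:\tau_u\le s}e^{\rho'(s-\tau_u)}=O(e^{s})$, since $\rho'<1$.
\end{itemize}
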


We couple the censored graph exploration with two independent unrestricted CMJ processes (the proposal processes). A proposal is retained if its parent is retained and its spatial label is unoccupied; otherwise, it is a ghost, and all its descendants are ghosts. Whenever a retained particle proposes along a physical edge that has not appeared before, we use the corresponding fresh graph weight. If the edge has already appeared, or if the parent is a ghost, we use an independent auxiliary weight instead.

Because the choice between a physical and an auxiliary weight is predictable from the exploration history, every proposed child receives a fresh weight with law $\go$, independently of the past. Thus, each unrestricted proposal tree has exact reproduction law $\Xi_n$. By induction on the chronological birth times, the retained labels, parent relations, and birth times precisely match those of the two-source censored Dijkstra exploration.

The auxiliary variables are not part of the censored filtration. Since this filtration only records the active exploration and the lower bounds on unrevealed edges, the residual weights of bridges remain perfectly concealed. This ensures the exact Cox representation required in Proposition~\ref{prop:cox}.

The ghost estimate is based on a predictable collision bound. Call a ghost \emph{primary} if its parent is retained but its proposed label is already occupied, and put $Z_{\tot,n}^{\op}:=Z_{1,n}^{\op}+Z_{2,n}^{\op}$. A one-child-deleted construction gives, for every non-negative measurable function $F$,
\begin{align*}
 \E\sum_{v\text{ primary}}F(\tau_v)
 \le
 \frac1{|\cD_n|}
 \E\int_{(0,\infty)}
 F(r)Z_{\tot,n}^{\op}(r-)
 \,\dd Z_{\tot,n}^{\op}(r).
\end{align*}
Combining this bound with the proposal-process moment estimates controls the descendants of all primary ghosts and yields the following lemma; see Appendix~\ref{app:macro-cmj-inputs} for the details.

\begin{lem}[Ghost suppression]
\label{lem:ghost-compensator}
Fix $\eps\in(0,1/2)$ and $y\in\dR$.
For a characteristic satisfying $|g(a)|\le Ce^{\eta a}$ for some $\eta<1$, let $G_{n,g}(\fs_1)$ denote the supremum, over $s\in[0,\fs_1]$, of the total absolute $g$-characteristic of all ghost particles in the two proposal processes at time $s$. Then
\begin{align}\label{eq:ghost-characteristic-negligible}
 \sup_{n\ge n_0}\E G_{n,g}(\fs_1)<\infty,
 \qquad
 e^{-\fs_0}G_{n,g}(\fs_1)
 \stackrel{\pr}{\to}0.
\end{align}
\end{lem}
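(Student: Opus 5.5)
The plan is to first bound the expected total characteristic of \emph{primary} ghosts using the predictable collision bound stated just before the lemma, and then add the descendants of each primary ghost via the proposal-process moment estimates (Lemma~\ref{lem:finite-n-cmj-martingale} and Lemma~\ref{lem:uniform-triangular-nerman}). A ghost is either primary or a descendant of a unique primary ghost $v$. Given $\tau_v$, the subtree rooted at $v$ is an unrestricted proposal CMJ with reproduction law $\Xi_n$, because ghost offspring use auxiliary weights that are fresh. Hence
\begin{align*}
 \E G_{n,g}(\fs_1)\le \E\sum_{v\ \mathrm{primary},\ \tau_v\le \fs_1}\ \sup_{0\le r\le \fs_1-\tau_v}\Psi_g(r),
 \qquad
 \Psi_g(r):=\E\Bigl[\sup_{r'\le r}\textstyle\sum_{u:\tau_u\le r'}|g|(r'-\tau_u)\Bigr],
\end{align*}
where the inner expectation refers to one proposal process started at time zero. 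To bound $\Psi_g$, I would use the renewal (many-to-one) identity $\E Z_n^{|g|}(r)=\sum_k (|g|*\bar\mu_n^{*k})(r)$, combined with $\alpha_n=1+O(n^{-1})$ and $|g(a)|\le Ce^{\eta a}$ with $\eta<1$. This gives $\E Z_n^{|g|}(r)\le Ce^{\alpha_n r}$ uniformly in $n\ge n_0$. For the supremum over $r'\le r$, I would dominate by $|g|$-characteristics with $|g|$ replaced by the monotone envelope $\sup_{a'\le a}|g(a')|\le Ce^{\eta a}$. So $\Psi_g(r)\le Ce^{r}$, using $\alpha_n r=r+O(\log n/n)$ on the window.

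Next, the predictable collision bound with $F(r)=Ce^{\fs_1-r}\ind\{r\le \fs_1\}$ reduces the problem to
\begin{align*}
 \frac{C e^{\fs_1}}{|\cD_n|}\,\E\int_0^{\fs_1}e^{-r}Z_{\tot,n}^{\op}(r-)\,\dd Z_{\tot,n}^{\op}(r).
\end{align*}
I would estimate this integral using the second-moment bounds. Specifically, $\E Z^{\op}(r)^2\le Ce^{2r}$ uniformly for $n\ge n_0$, which follows from the $L^2$ martingale control in Lemma~\ref{lem:finite-n-cmj-martingale} via a standard many-to-two computation. Independence of the two proposal trees handles the cross terms. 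The integrand then has expectation of order $e^{-r}\cdot e^{2r}$, so the integral is at most $Ce^{\fs_1}$. Altogether, the bound is $Ce^{2\fs_1}/|\cD_n|=Ce^{2y}n/(2\gl n)\le C$. This gives $\sup_n\E G_{n,g}(\fs_1)<\infty$, and Markov's inequality together with $e^{-\fs_0}=n^{-\eps}\to0$ gives the second claim.

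The main obstacle is making the Stieltjes integral estimate rigorous: one must control $\E\int e^{-r}Z(r-)\,\dd Z(r)$ by a second-moment quantity without losing a factor of $\log n$. I would try first to write $\int_0^{t}e^{-r}Z(r-)\,\dd Z(r)\le \tfrac12 e^{-t}Z(t)^2+\tfrac12\int_0^t e^{-r}Z(r)^2\,\dd r$ by integration by parts. Then $\E Z(r)^2\le Ce^{2r}$ bounds both terms by $Ce^{t}$. The uniform-in-$n$ second-moment bound $\E Z^{\op}(r)^2\le Ce^{2r}$ itself must be derived from the exponential tilt estimate $\sup_n\int e^{\rho s}\nu_n(\dd s)<\infty$ in Lemma~\ref{lem:finite-n-cmj-martingale}, via the renewal equation for second moments. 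I expect this to be the most delicate routine step.
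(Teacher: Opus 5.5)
Your proposal is correct and follows the paper's proof essentially step for step. You decompose ghosts into primary ghosts and their fresh proposal subtrees, bound each subtree's windowed supremum by $Ce^{\alpha_n(\fs_1-\tau_v)}$ via the monotone envelope (which should be $Ce^{\eta_+ a}$ to cover $\eta<0$), apply the compensator bound with $F(r)=e^{\alpha_n(\fs_1-r)}\ind\{r\le\fs_1\}$, and handle the Stieltjes integral through $2Z_{\tot,n}^{\op}(r-)\,\dd Z_{\tot,n}^{\op}(r)\le \dd\bigl(Z_{\tot,n}^{\op}(r)^2\bigr)$ and integration by parts, ending with $Ce^{2\fs_1}/|\cD_n|=O(1)$ and Markov's inequality.

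The one correction concerns the source of $\E Z_n^{\op}(t)^2\le Ce^{2\alpha_n t}$. In the paper this comes from the root decomposition (Lemma~\ref{lem:finite-n-characteristic-moments} with $\phi\equiv1$). Its key inputs are the uniform mean bound from the local renewal estimate and the defective kernel mass $\fm_n(2\alpha_n)\le q<1$, not the exponential moment $\sup_n\int e^{\rho s}\nu_n(\dd s)<\infty$ that you cite.
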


\begin{prop}[CMJ approximation]\label{prop:cmj-package}
Fix $\eps\in(0,1/2)$ and $y\in\dR$.
There exist non-negative random variables $W_{1,n},W_{2,n}$, defined on the probability space carrying the coupling above, such that
\begin{align}\label{eq:Wn-joint-conv}
 (W_{1,n},W_{2,n})\Rightarrow(W_1,W_2),
\end{align}
and, for $i=1,2$,
\begin{align}\label{eq:cmj-growth-package}
 \sup_{\fs_0\le s\le \fs_1}
 \abs{e^{-s}N_i(s)-W_{i,n}}
 \stackrel{\pr}{\to}0.
\end{align}
Moreover, if
\begin{align*}
 A_v^{(i)}(s)
 :=s-\fa_\ell\widehat\sfT_i(v),
 \qquad
 v\in\cC_i(s),
\end{align*}
then, for every continuous $g$ satisfying $|g(a)|\le Ce^{\eta a}$ for some $\eta<1$, and also for every bounded Riemann-integrable $g$,
\begin{align}\label{eq:cmj-age-package}
 \sup_{\fs_0\le s\le \fs_1}
 \abs{
 \frac1{N_i(s)}
 \sum_{v\in\cC_i(s)}g\bigl(A_v^{(i)}(s)\bigr)
 -\int_0^\infty g(a)e^{-a}\,\dd a
 }
 \stackrel{\pr}{\to}0.
\end{align}
For every fixed $k\in\dZ\setminus\{0\}$ and every bounded Riemann-integrable $g$,
\begin{align}\label{eq:cmj-fourier-package}
 \sup_{\fs_0\le s\le \fs_1}
 \abs{
 \frac1{N_i(s)}
 \sum_{v\in\cC_i(s)}
 g\bigl(A_v^{(i)}(s)\bigr)e^{2\pi \mathrm{i}kv/n}
 }
 \stackrel{\pr}{\to}0.
\end{align}
\end{prop}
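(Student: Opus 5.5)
We will derive Proposition~\ref{prop:cmj-package} by transferring the three proposal-process estimates (Lemmas~\ref{lem:finite-n-cmj-martingale}, \ref{lem:uniform-triangular-nerman}, \ref{lem:fourier-many-to-two}) to the retained clusters through the ghost bound, Lemma~\ref{lem:ghost-compensator}. First, define $W_{i,n}:=\cW_{i,n,\infty}/(\alpha_n\theta_n)$, where $\cW_{i,n,\infty}$ is the intrinsic martingale limit of the $i$-th proposal process. The two proposal processes are independent by construction, so Lemma~\ref{lem:finite-n-cmj-martingale} gives \eqref{eq:Wn-joint-conv} directly. Next, recall that on the coupling the retained particles of proposal tree $i$ are exactly $\cC_i(s)$. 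Their birth times coincide with $\fa_\ell\widehat\sfT_i(v)$, and their labels are the vertices $v$. Hence for any characteristic $g$,
\begin{align*}
 \sum_{v\in\cC_i(s)}g\bigl(A_v^{(i)}(s)\bigr)=Z_{i,n}^{g}(s)-\sum_{u\ \mathrm{ghost},\ \tau_u\le s}g(s-\tau_u),
\end{align*}
and the same identity holds with the Fourier-weighted sums. Since the phase has modulus one, the ghost term is bounded in absolute value by $G_{n,|g|}(\fs_1)$ in every case.

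For \eqref{eq:cmj-growth-package}, we take $g\equiv1$. Lemma~\ref{lem:uniform-triangular-nerman} gives, uniformly on $[\fs_0,\fs_1]$,
\begin{align*}
e^{-\alpha_n s}Z_{i,n}^{\op}(s)=\cW_{i,n,\infty}/(\alpha_n\theta_n)+o_\pr(1).
\end{align*}
By Lemma~\ref{lem:finite-n-cmj-martingale}, $\alpha_n=1+O(n^{-1})$, so $|e^{-s}-e^{-\alpha_n s}|\le e^{-s}\,O(\log n/n)$ on the window. Since $W_{i,n}$ is tight, replacing $\alpha_n$ by $1$ costs only $o_\pr(1)$. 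For the ghosts, Lemma~\ref{lem:ghost-compensator} gives $e^{-s}G\le e^{-\fs_0}G\to0$ in probability, uniformly for $s\ge\fs_0$.

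For \eqref{eq:cmj-age-package}, we apply the same decomposition to $g$. Lemma~\ref{lem:uniform-triangular-nerman} gives
\begin{align*}
e^{-s}Z^g_{i,n}(s)\to \frac{\cW_{i,n,\infty}}{\theta_n}\int_0^\infty e^{-a}g(a)\,\dd a,
\end{align*}
using $\alpha_n\to1$ together with dominated convergence, which the growth bound $|g|\le Ce^{\eta a}$ with $\eta<1$ justifies. Because $\alpha_n\to1$, the right side equals $W_{i,n}\int_0^\infty g(a)e^{-a}\,\dd a+o(1)$. We then divide by $e^{-s}N_i(s)=W_{i,n}+o_\pr(1)$. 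This requires $W_{i,n}$ to be bounded away from zero in probability, which holds because $M_\infty>0$ a.s. and $W_{i,n}\Rightarrow M_\infty$. Ratio continuity then yields \eqref{eq:cmj-age-package}.

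For \eqref{eq:cmj-fourier-package}, Lemma~\ref{lem:fourier-many-to-two} kills the proposal Fourier sum at scale $e^{s}$. The ghost part is again $o_\pr(e^{s})$ uniformly. Dividing by $N_i(s)$, which is of order $W_{i,n}e^{s}$ and bounded below in probability, gives the claim.

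The main obstacle is bookkeeping in the ghost step for bounded Riemann-integrable $g$ in \eqref{eq:cmj-age-package} and \eqref{eq:cmj-fourier-package}. Such $g$ satisfy $|g|\le C=Ce^{0\cdot a}$, so Lemma~\ref{lem:ghost-compensator} still applies with $\eta=0$. We must also check that the supremum over $s$ of the ghost characteristic is exactly what that lemma controls, so that all estimates hold uniformly over the window.
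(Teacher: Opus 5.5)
Your proposal is correct and follows the paper's proof essentially step for step. The shared steps are: $W_{i,n}=\cW_{i,n,\infty}/(\alpha_n\theta_n)$ built from the two independent proposal processes; Lemmas~\ref{lem:uniform-triangular-nerman} and~\ref{lem:fourier-many-to-two} for the proposal-level growth, age and Fourier estimates, together with $e^{(\alpha_n-1)s}\to1$ on the window; and Lemma~\ref{lem:ghost-compensator} with $e^{-s}G_{n,|g|}(\fs_1)\le e^{-\fs_0}G_{n,|g|}(\fs_1)$ to pass to the retained clusters. The only difference is cosmetic: you divide the ghost-corrected sums directly by $N_i(s)$, whereas the paper first forms proposal ratios and then uses $Z_{i,n}^{\op}(s)/N_i(s)\to1$; the two orderings are equivalent.
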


\begin{proof}
Take two independent copies $Z_{i,n}^{\op}$, $i=1,2$, of the proposal process. Let $\cW_{i,n,\infty}$ be their intrinsic martingale limits and define
\begin{align*}
 W_{i,n}^{\op}
 :=\frac{\cW_{i,n,\infty}}{\alpha_n\theta_n}.
\end{align*}
Applying Lemma~\ref{lem:uniform-triangular-nerman} with the counting characteristic $\phi\equiv1$ gives
\begin{align}\label{eq:proposal-growth-patch}
 \sup_{\fs_0\le s\le \fs_1}
 \abs{e^{-\alpha_ns}Z_{i,n}^{\op}(s)-W_{i,n}^{\op}}
 \stackrel{\pr}{\to}0.
\end{align}
Lemma~\ref{lem:finite-n-cmj-martingale} gives
\begin{align}\label{eq:proposal-W-convergence-patch}
 (W_{1,n}^{\op},W_{2,n}^{\op})
 \Rightarrow(W_1,W_2).
\end{align}
Since $M_\infty>0$ almost surely, the normalized proposal populations are bounded away from zero in probability, uniformly on $[\fs_0,\fs_1]$.

Applying Lemma~\ref{lem:uniform-triangular-nerman} to a general age characteristic $g$ and dividing by~\eqref{eq:proposal-growth-patch} yields
\begin{align*}
 \sup_{\fs_0\le s\le \fs_1}
 \abs{
 \frac1{Z_{i,n}^{\op}(s)}
 \sum_{\tau_u\le s}g(s-\tau_u)
 -\alpha_n\int_0^\infty e^{-\alpha_na}g(a)\,\dd a
 }
 \stackrel{\pr}{\to}0.
\end{align*}
The deterministic term converges to $\int_0^\infty g(a)e^{-a}\,\dd a$. Moreover,
\begin{align*}
 \sup_{s\le \fs_1}\abs{e^{(\alpha_n-1)s}-1}\to0
\end{align*}
because $\alpha_n=1+O(n^{-1})$ and $\fs_1=O(\log n)$. Thus, the proposal processes satisfy the required population and stable-age estimates under the $e^{-s}$ normalization. The corresponding Fourier estimate follows by combining Lemma~\ref{lem:fourier-many-to-two} with~\eqref{eq:proposal-growth-patch}.

By construction, the accepted processes coincide exactly with the censored Dijkstra clusters. For each characteristic used above, the discrepancy between the corresponding proposal and accepted sums, uniformly over $s\le\fs_1$, is bounded by the ghost characteristic $G_{n,|g|}(\fs_1)$. Since
\begin{align*}
 \sup_{\fs_0\le s\le\fs_1}
 e^{-s}G_{n,|g|}(\fs_1)
 \le
 e^{-\fs_0}G_{n,|g|}(\fs_1)
 \stackrel{\pr}{\to}0
\end{align*}
by Lemma~\ref{lem:ghost-compensator}, the population estimate transfers first. In particular, $\inf_{\fs_0\le s\le\fs_1}e^{-s}N_i(s)$ is bounded away from zero in probability, and
\begin{align*}
 \sup_{\fs_0\le s\le\fs_1}
 \abs{\frac{Z_{i,n}^{\op}(s)}{N_i(s)}-1}
 \stackrel{\pr}{\to}0.
\end{align*}
Dividing the corresponding numerator discrepancies by $N_i(s)$ and using this denominator comparison transfers the stable-age and Fourier estimates. Setting $W_{i,n}:=W_{i,n}^{\op}$ proves~\eqref{eq:Wn-joint-conv}--\eqref{eq:cmj-fourier-package}.
\end{proof}

While Proposition~\ref{prop:cmj-package} controls the growth and spatial distribution of each cluster separately, the collision hazard $\gL_s$ depends on the edges connecting them. The following subsection bridges this gap by combining these one-source estimates. We show that the number of available bridge pairs is asymptotic to $2\gl N_1(s)N_2(s)$, and their joint empirical age distribution factors into the product of two independent stable-age laws.

%%%%%%%%%%%%%%%%%%
\subsection{Spatial mixing and bridge statistics}\label{ssec:mix-bridge}

The bridge hazard depends on pairs of vertices rather than on individual clusters. We therefore embed the discrete cycle into the continuum torus $\dS^1=[0,1)$ and use the Fourier decay from Proposition~\ref{prop:cmj-package} to show that the two empirical cluster measures are spatially mixed at every fixed Fourier mode. This yields the asymptotic density of geometric bridge pairs and, after weighting by ages, the correct average contribution of the singular kernel $A_e(s)^{\theta-1}$.

Embed $\dT_n$ into $\dT=[0,1)$ by $v\mapsto v/n$. For $i=1,2$, let
\begin{align*}
 \pi_i^s:=\frac1{N_i(s)}\sum_{v\in\cC_i(s)}\gd_{v/n},
 \qquad
 \pihat_i^s(k):=\int_{\dT}e^{2\pi \mathrm{i}kz}\,\pi_i^s(\dd z).
\end{align*}
For $e=\{u,v\}\in\cB_s$ with $u\in\cC_1(s)$ and $v\in\cC_2(s)$, write
\begin{align*}
 A_e(s):=A_u^{(1)}(s)+A_v^{(2)}(s).
\end{align*}

\begin{prop}[Pair density]\label{prop:pair}
Fix $\eps\in(0,1/2)$ and $y\in\dR$. Then
\begin{align*}
 \sup_{\fs_0\le s\le \fs_1}
 \abs{\frac{B_s}{N_1(s)N_2(s)}-2\gl}
 \stackrel{\pr}{\to}0.
\end{align*}
Moreover,
\begin{align}\label{eq:B-positive}
 \pr\left(\inf_{\fs_0\le s\le \fs_1}B_s=0\right)\to0,
 \qquad
 \inf_{\fs_0\le s\le \fs_1}B_s\stackrel{\pr}{\to}\infty.
\end{align}
If $g_1,g_2$ are bounded Riemann-integrable functions on $[0,\infty)$, then
\begin{align*}
 \sup_{\fs_0\le s\le \fs_1}
 \abs{\frac1{B_s}\sum_{\{u,v\}\in\cB_s}
 g_1\bigl(A_u^{(1)}(s)\bigr)g_2\bigl(A_v^{(2)}(s)\bigr)
 -\prod_{i=1}^2\int_0^\infty g_i(a)e^{-a}\,\dd a}
 \stackrel{\pr}{\to}0,
\end{align*}
where the ratio may be defined arbitrarily on $\{B_s=0\}$. Finally,
\begin{align}\label{eq:age-kernel-average}
 \int_{\fs_0}^{\fs_1}\frac1n
 \abs{\sum_{e\in\cB_s}A_e(s)^{\theta-1}-\Gamma(\theta+1)B_s}\,\dd s
 \stackrel{\pr}{\to}0.
\end{align}
\end{prop}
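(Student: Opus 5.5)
The plan is to write every bridge count as a double sum against a spatial kernel and expand that kernel in Fourier modes. For $x\in\dT=[0,1)$ let $K_n(x):=\ind\{1\le d_n(nx,0)\le\ell\}$, and let $K(x):=\ind\{\|x\|_{\dT}\le\gl\}$ be its continuum limit. Since $\cC_1(s)\cap\cC_2(s)=\eset$ (labels are exclusive), we have
\begin{align*}
 \frac{B_s}{N_1(s)N_2(s)}=\iint K_n(x-z)\,\pi_1^s(\dd x)\,\pi_2^s(\dd z).
\end{align*}
For the age-weighted version, we replace $\pi_i^s$ by the signed measures $\pi_{i,g_i}^s:=N_i(s)^{-1}\sum_{v\in\cC_i(s)}g_i(A^{(i)}_v(s))\gd_{v/n}$. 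Their Fourier coefficients at $k\neq0$ tend to $0$ uniformly on $[\fs_0,\fs_1]$ by \eqref{eq:cmj-fourier-package}, and their total masses tend to $\int g_i e^{-a}\dd a$ by \eqref{eq:cmj-age-package}.

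\textbf{Step 1: reduce to interval discrepancy.} Using $|K_n-K|\le \ind\{\|x\|\le 1/n\}+\ind\{|\|x\|-\gl|\le |\ell/n-\gl|+1/n\}$, it suffices to control, for each $z$, the $\pi_1^s$-mass (or $|\pi_{1,g}^s|$-mass, since $g$ is bounded) of intervals of length $\delta$ around $z\pm\gl$. The tool is the Erd\H{o}s--Tur\'an inequality~\cite[Theorem~2.5]{kn74}:
\begin{align*}
 \sup_I|\pi_1^s(I)-|I||\le \frac{C}{K}+C\sum_{1\le|k|\le K}\frac{|\pihat_1^s(k)|}{|k|}.
\end{align*}
For fixed $K$, the sum tends to $0$ uniformly in $s$ by \eqref{eq:cmj-fourier-package} with $g\equiv1$. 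Letting $K\to\infty$ afterwards shows that $\pi_1^s$ is uniformly close to Lebesgue measure on intervals. Therefore $\pi_1^s(z+[-\gl,\gl])\to2\gl$ uniformly in $z$ and $s$, and the thin boundary strips carry vanishing mass. Integrating against $\pi_2^s$ gives $B_s/(N_1N_2)\to2\gl$.

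For signed weights $g_1$ we apply the same argument to $g_1^\pm$ separately; Erd\H{o}s--Tur\'an holds for positive measures after normalizing by their mass. This gives $\pi^s_{1,g_1}(z+[-\gl,\gl])\to 2\gl\int g_1e^{-a}$ uniformly in $z$, and integrating against $\pi^s_{2,g_2}$ yields the product formula after dividing by $B_s/(N_1N_2)\to2\gl$. The claim \eqref{eq:B-positive} then follows from \eqref{eq:cmj-growth-package}: $N_1N_2\ge c e^{2\fs_0}\to\infty$ with high probability, because $W_{i,n}$ is bounded away from $0$ in probability.

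\textbf{Step 2: the singular kernel in \eqref{eq:age-kernel-average}.} If $\theta\ge1$, then $a\mapsto (a_1+a_2)^{\theta-1}$ is continuous but unbounded. We truncate at ages $\le R$, apply Step 1 on a Riemann-sum grid of product functions $g_1\otimes g_2$, and control the tail $\max(A_u,A_v)>R$ by bounding the bridge count with $N_1N_2$ and using \eqref{eq:cmj-age-package} with $g(a)=e^{\eta a}$ to make the old-age contribution small. We also use $B_s\le N_1N_2\le Ce^{2s}$, so that $\int_{\fs_0}^{\fs_1}e^{2s}\dd s/n=O(e^{2y})$. The approximation error per unit of $B_s$ is $o(1)$ uniformly, and multiplying by $\int B_s/n$, which is tight, gives the integrated claim.

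If $\theta<1$, the kernel blows up near $A_e=0$. I expect this to be the main obstacle. We split at $A_e\ge\delta$, where the kernel is bounded and Step 1 applies as in the case $\theta\ge1$, and $A_e<\delta$. For the small-age part we bound the contribution crudely by pairs whose younger endpoint has age $<\delta$ and use the integrated form. In $\int_{\fs_0}^{\fs_1}\sum_{e}A_e(s)^{\theta-1}\ind\{A_e<\delta\}\dd s$, each bridge stays in this region for rescaled time at most $\delta/2$, because $A_e$ grows at rate $2$ after activation. Its integral is therefore at most $\int_0^{\delta}a^{\theta-1}\dd a/2=C\delta^\theta$. The number of bridges ever activated by time $\fs_1$ is at most $\sup_{s\le \fs_1}N_1N_2\le Ce^{2\fs_1}\asymp n$ with high probability. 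Dividing by $n$, the small-age part is $O_\pr(\delta^\theta)$, and the matching piece $\Gamma(\theta+1)$-truncation of the limit is similarly $O(\delta^\theta)$. Letting $n\to\infty$ and then $\delta\downarrow0$ gives \eqref{eq:age-kernel-average}.
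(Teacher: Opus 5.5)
Your proposal is correct and follows essentially the same route as the paper. It uses Erd\H{o}s--Tur\'an together with the Fourier decay \eqref{eq:cmj-fourier-package} for the pair density, and the finite-measure version together with \eqref{eq:cmj-age-package} for the age-weighted pair statistics; large ages are handled by an $e^{\eta a}$-moment bound, and for $0<\theta<1$ small ages are handled by the per-bridge integrated bound $\int A_e(s)^{\theta-1}\ind\{A_e(s)\le\delta\}\,\dd s\le\delta^\theta/(2\theta)$ multiplied by $B_{\fs_1}/n=O_{\pr}(1)$.

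The only difference is cosmetic. You apply the discrepancy bound to $\pi_1^s$ alone, uniformly over translated intervals, and then integrate against $\pi_2^s$, so you need Fourier decay for only one cluster. The paper instead applies it directly to the convolution $\pi_1^s*\widetilde\pi_2^s$ with coefficients $\pihat_1^s(k)\overline{\pihat_2^s(k)}$. Both are valid.
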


\begin{proof}
Let
\begin{align*}
 I_n:=\{z\in\dT:d_{\dT}(z,0)\le\ell/n\}.
\end{align*}
Since $\gl<1/2$, the set $I_n$ is an interval in $\dT$ of length $|I_n|=2\gl+o(1)$.
If $\widetilde\pi_2^s(A):=\pi_2^s(-A)$, then
\begin{align*}
 \frac{B_s}{N_1(s)N_2(s)}=\pi_1^s*\widetilde\pi_2^s(I_n).
\end{align*}
Applying the Erd\H{o}s--Tur\'an inequality for interval discrepancy \cite[Theorem~2.5]{kn74} (after a rotation of $\dT$) gives an absolute constant $C_{\mathrm{ET}}>0$ such that, for every $m\ge1$,
\begin{align*}
 \sup_{\fs_0\le s\le \fs_1}\abs{\pi_1^s*\widetilde\pi_2^s(I_n)-|I_n|}
 \le
 C_{\mathrm{ET}}\left(
 \frac1{m+1}
 +
 \sum_{k=1}^m
 \frac{\sup_{\fs_0\le s\le \fs_1}|
 \pihat_1^s(k)\overline{\pihat_2^s(k)}|}{k}
 \right).
\end{align*}
Choose $m$ large and fixed. The estimate~\eqref{eq:cmj-fourier-package}, applied with $g\equiv1$, implies that the finite Fourier sum vanishes in probability. Taking $m\to\infty$ then yields the pair-density estimate. Since $2\gl>0$ and $N_1(s)N_2(s)\to\infty$ in probability uniformly on $[\fs_0,\fs_1]$,~\eqref{eq:B-positive} follows.

For bridge-age sampling, first take non-negative bounded Riemann-integrable $g_i$ and define
\begin{align*}
 \pi_{i,g_i}^s
 :=
 \frac1{N_i(s)}\sum_{v\in\cC_i(s)}
 g_i\bigl(A_v^{(i)}(s)\bigr)\gd_{v/n},
 \qquad i=1,2.
\end{align*}
Applying the finite-measure form of the Erd\H{o}s--Tur\'an argument (obtained by normalising the total mass of $\pi_{i,g_i}^s$, which by~\eqref{eq:cmj-age-package} converges in probability to $\int_0^\infty g_i(a)e^{-a}\dd a$, uniformly on $[\fs_0,\fs_1]$) to $\pi_{1,g_1}^s*\widetilde\pi_{2,g_2}^s$, the zero Fourier mode is controlled by~\eqref{eq:cmj-age-package} and the non-zero modes by~\eqref{eq:cmj-fourier-package}. Hence
\begin{align*}
\sup_{\fs_0\le s\le \fs_1}
\Bigg|
\frac1{N_1(s)N_2(s)}
\sum_{\{u,v\}\in\cB_s}
 g_1\bigl(A_u^{(1)}(s)\bigr)g_2\bigl(A_v^{(2)}(s)\bigr) -2\gl\prod_{i=1}^2\int_0^\infty g_i(a)e^{-a}\,\dd a
\Bigg|\stackrel{\pr}{\to}0.
\end{align*}
Dividing by $B_s/(N_1(s)N_2(s))$ and using the pair-density estimate together with~\eqref{eq:B-positive} proves the product statement. The result extends to general bounded Riemann-integrable $g_i$ by decomposing them into positive and negative parts.

Approximating from above and below by finite linear combinations of rectangle indicators extends this to any bounded Riemann-integrable function $\varphi$ with compact support on $[0,\infty)^2$, yielding
\begin{align}\label{eq:bridge-two-age-test}
 \sup_{\fs_0\le s\le \fs_1}
 \abs{\frac1{B_s}\sum_{\{u,v\}\in\cB_s}
 \varphi\bigl(A_u^{(1)}(s),A_v^{(2)}(s)\bigr)
 -\int_0^\infty\int_0^\infty\varphi(a,b)e^{-a}e^{-b}\,\dd a\,\dd b}
 \stackrel{\pr}{\to}0.
\end{align}
Also,
\begin{align}\label{eq:bridge-integral-tight}
 \int_{\fs_0}^{\fs_1}\frac{B_s}{n}\,\dd s=O_{\pr}(1),
 \qquad
 \frac{B_{\fs_1}}{n}=O_{\pr}(1),
\end{align}
by the pair-density and growth estimates.

Let $E_1,E_2$ be independent $\Exp(1)$ variables and put $X:=E_1+E_2$. For $0<r<R<\infty$, set
\begin{align*}
 \varphi_{r,R}(a,b):=(a+b)^{\theta-1}\ind\{r\le a+b\le R\}.
\end{align*}
Multiplying~\eqref{eq:bridge-two-age-test} by $B_s/n$ and integrating gives
\begin{align}\label{eq:truncated-age-kernel}
 \int_{\fs_0}^{\fs_1}\frac1n
 \abs{\sum_{e\in\cB_s}A_e(s)^{\theta-1}\ind\{r\le A_e(s)\le R\}
 -B_s\E\big[X^{\theta-1}\ind\{r\le X\le R\}\big]}
 \,\dd s
 \stackrel{\pr}{\to}0.
\end{align}

It remains to uniformly bound the tail contributions. For the upper tail, if $0<\theta<1$, then $x^{\theta-1}\ind\{x>R\}\le R^{\theta-1}$ and~\eqref{eq:bridge-integral-tight} applies. If $\theta\ge1$, fix $\eta\in(0,1)$ and use
\begin{align*}
 x^{\theta-1}\ind\{x>R\}
 \le C_{\theta,\eta}e^{-\eta R/2}e^{\eta x}.
\end{align*}
Since $\cB_s\subseteq\cC_1(s)\times\cC_2(s)$,
\begin{align*}
 \sum_{e\in\cB_s}e^{\eta A_e(s)}
 \le
 \Biggl(\sum_{u\in\cC_1(s)}e^{\eta A_u^{(1)}(s)}\Biggr)
 \Biggl(\sum_{v\in\cC_2(s)}e^{\eta A_v^{(2)}(s)}\Biggr).
\end{align*}
Applying~\eqref{eq:cmj-age-package} to $g(a)=e^{\eta a}$ together with~\eqref{eq:cmj-growth-package} shows that the integral of the right-hand side, scaled by $1/n$, is $O_{\pr}(1)$. Thus the upper tail is $O_{\pr}(e^{-\eta R/2})$.

For the lower tail, if $\theta>1$, then $A_e(s)^{\theta-1}\ind\{A_e(s)\le r\} \le r^{\theta-1}$, so~\eqref{eq:bridge-integral-tight} gives an $O_{\pr}(r^{\theta-1})$ contribution. If $\theta=1$, applying~\eqref{eq:bridge-two-age-test} to $\varphi_r(a,b):=\ind\{a+b\le r\}$ and using~\eqref{eq:bridge-integral-tight} gives
\begin{align*}
 \int_{\fs_0}^{\fs_1}\frac1n
 \sum_{e\in\cB_s}\ind\{A_e(s)\le r\}\,\dd s
 =
 O_{\pr}\bigl(\pr(X\le r)\bigr)+o_{\pr}(1).
\end{align*}
Both bounds vanish as $r\downarrow0$.

It remains to treat $0<\theta<1$. Let $\gs_e$ be the rescaled activation time of bridge $e$. For $s\ge\gs_e$, the bridge age satisfies $A_e(s)=A_e(\gs_e)+2(s-\gs_e)\ge 2(s-\gs_e)$. Since the map $x\mapsto x^{\theta-1}$ is strictly decreasing for $\theta<1$, substituting $u=s-\gs_e$ bounds the integral for each bridge uniformly for $0<r\le1$ by
\begin{align*}
 \int_{\max(\fs_0, \gs_e)}^{\fs_1}A_e(s)^{\theta-1}\ind\{A_e(s)\le r\}\,\dd s
 \le \int_0^{r/2}(2u)^{\theta-1}\,\dd u
 =\frac{r^\theta}{2\theta}.
\end{align*}
Hence, using the monotonicity of $\cB_s$ and~\eqref{eq:bridge-integral-tight}, extending the sum over the final set $\cB_{\fs_1}$ yields
\begin{align*}
 \int_{\fs_0}^{\fs_1}\frac1n\sum_{e\in\cB_s}
 A_e(s)^{\theta-1}\ind\{A_e(s)\le r\}\,\dd s
 \le \frac{r^\theta}{2\theta}\frac{B_{\fs_1}}{n} = O_{\pr}(r^\theta).
\end{align*}
Since $X^{\theta-1}$ is integrable, its omitted lower and upper tails vanish in expectation:
\begin{align*}
 \E\bigl[X^{\theta-1}\ind\{X<r\}\bigr]
 +
 \E\bigl[X^{\theta-1}\ind\{X>R\}\bigr]
 \to0
\end{align*}
as $r\downarrow0$ and $R\to\infty$. Letting first $n\to\infty$, then $R\to\infty$, and finally $r\downarrow0$ in~\eqref{eq:truncated-age-kernel} proves~\eqref{eq:age-kernel-average}, since $\E X^{\theta-1}=\Gamma(\theta+1)$.
\end{proof}

\begin{rem}[The singularity at age zero]
The last estimate in Proposition~\ref{prop:pair} is the only place where the possible singularity of $x^{\theta-1}$ for $0<\theta<1$ enters the macroscopic proof. The bridge-age average is first proved for truncated bounded kernels and then extended by controlling the small-age and large-age tails uniformly over the time window. With these tail bounds established, the integrated hazard can be approximated by assuming each bridge contributes the mean $\Gamma(\theta+1)$.
\end{rem}

%%%%%%%%%%%%%%%%%%%%
\subsection{Integrated bridge-intensity asymptotics and proof of the theorem}\label{ssec:intensity} 

We now combine the exact Cox formula with the CMJ and mixing estimates. The initial accumulation of the hazard, up to $\fs_0=\eps\log n$, is asymptotically negligible because the sizes of both exploration clusters remain $o(n^{1/2})$. On the remaining interval, Proposition~\ref{prop:pair} replaces the age-weighted bridge sum by $\Gamma(\theta+1)B_s$. This matches the stable-age constant from~\eqref{eq:age-pair-gamma}: the endpoint ages of a typical bridge asymptotically behave as independent $\Exp(1)$ variables, so their sum follows a $\Gamma(2,1)$ distribution, yielding
\begin{align*}
 \E(A_1+A_2)^{\theta-1}=\Gamma(\theta+1).
\end{align*}
The pair-density estimate then replaces $B_s$ by $2\gl N_1(s)N_2(s)$, and the CMJ growth estimate evaluates the integral of $N_1(s)N_2(s)$ to yield the explicit limit $\theta W_1W_2e^{2y}$.

\begin{lem}[Integrated bridge-hazard asymptotics]\label{lem:integrated}
Fix $\eps\in(0,1/2)$ and $y\in\dR$. Then
\begin{align}\label{eq:early-hazard-negligible}
 \gL_{\fs_0}\stackrel{\pr}{\to}0.
\end{align}
Moreover,
\begin{align}\label{eq:hazard-reduction}
 \gL_{\fs_1}-\gL_{\fs_0}
 -\frac{2\theta\Gamma(\theta+1)}{\fa_\ell ^\theta}
 \int_{\fs_0}^{\fs_1}B_s\,\dd s
 \stackrel{\pr}{\to}0.
\end{align}
Consequently, with the finite-$n$ limits $W_{1,n},W_{2,n}$ from Proposition~\ref{prop:cmj-package},
\begin{align}\label{eq:integrated-Wn}
 \gL_{\fs_1}-\theta W_{1,n}W_{2,n}e^{2y}
 \stackrel{\pr}{\to}0.
\end{align}
\end{lem}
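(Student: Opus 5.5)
The plan is to treat the three claims of Lemma~\ref{lem:integrated} in order, working throughout from the integral form~\eqref{eq:Lambda-def},
\begin{align*}
 \gL_s=\int_0^s \frac{2\theta}{\fa_\ell^\theta}\sum_{e\in\cB_q}A_e(q)^{\theta-1}\,\dd q,
\end{align*}
and the normalization $\fa_\ell^\theta=(2\ell-1)\Gamma(\theta+1)\sim 2\gl n\Gamma(\theta+1)$.

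\textbf{Early hazard~\eqref{eq:early-hazard-negligible}.} Proposition~\ref{prop:cmj-package} only controls the window $[\fs_0,\fs_1]$, so the early interval needs a separate argument. I would use monotonicity of $\gL$ and bound the hazard crudely. Since $\cB_q\subseteq\cC_1(q)\times\cC_2(q)$, it is enough to control $\int_0^{\fs_0}\sum_{u,v}(A^{(1)}_u(q)+A^{(2)}_v(q))^{\theta-1}\,\dd q$, divided by $n$.
\begin{enumerate}[i)]
 \item If $\theta\ge1$, bound $x^{\theta-1}\le C e^{\eta x}$. This gives a product of two exponential characteristics. Each is at most its proposal-process version, whose expectation is $O(e^{q})$ by the finite-$n$ renewal bounds behind Lemma~\ref{lem:uniform-triangular-nerman} (monotone coupling: accepted $\subseteq$ proposal).
 \item If $0<\theta<1$, use the per-bridge integration trick from the proof of Proposition~\ref{prop:pair}: each bridge contributes at most $\int_0^{\fs_0}(2u)^{\theta-1}\,\dd u\le C\fs_0^\theta$. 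In addition, $B_{\fs_0}\le N_1(\fs_0)N_2(\fs_0)$.
\end{enumerate}
In either case the expected contribution is $O\bigl((\log n)^{C}e^{2\fs_0}/n\bigr)=O\bigl(n^{2\eps-1}(\log n)^C\bigr)\to0$, since $\eps<1/2$. Markov's inequality then gives the claim. The independence of the two proposal processes makes the expectation of the product factorize.

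\textbf{Reduction~\eqref{eq:hazard-reduction}.} This is immediate from~\eqref{eq:age-kernel-average}: the difference is bounded by
\begin{align*}
 \frac{2\theta}{\fa_\ell^\theta}\int_{\fs_0}^{\fs_1}\Bigl|\sum_{e\in\cB_s}A_e(s)^{\theta-1}-\Gamma(\theta+1)B_s\Bigr|\,\dd s,
\end{align*}
and $n/\fa_\ell^\theta=O(1)$.

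\textbf{Limit~\eqref{eq:integrated-Wn}.} First replace $B_s$ by $2\gl N_1(s)N_2(s)$ using Proposition~\ref{prop:pair}. The error is $o_\pr(1)\cdot n^{-1}\int_{\fs_0}^{\fs_1}N_1N_2\,\dd s$, and this integral is $O_\pr(1)$ by~\eqref{eq:cmj-growth-package}. Next replace $N_1(s)N_2(s)$ by $W_{1,n}W_{2,n}e^{2s}$. The uniform bound
\begin{align*}
 \sup_{[\fs_0,\fs_1]}\bigl|e^{-2s}N_1N_2-W_{1,n}W_{2,n}\bigr|=o_\pr(1),
\end{align*}
which holds because $W_{i,n}$ is tight, gives an error $o_\pr(1)\int_{\fs_0}^{\fs_1}e^{2s}\,\dd s/n=o_\pr(1)\,e^{2y}$. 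Finally,
\begin{align*}
 \frac{2\theta\Gamma(\theta+1)\cdot 2\gl}{\fa_\ell^\theta}\int_{\fs_0}^{\fs_1}e^{2s}\,\dd s
 =\frac{2\gl n\,\theta\,\Gamma(\theta+1)}{(2\ell-1)\Gamma(\theta+1)}\bigl(e^{2y}-n^{2\eps-1}\bigr)
 \to\theta e^{2y}.
\end{align*}
Combining this with the first two claims gives~\eqref{eq:integrated-Wn}.

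\textbf{Main obstacle.} I expect the early-time bound to be the hardest step, particularly for $\theta<1$ and for very small $q$. There the clusters are tiny and not yet in the CMJ asymptotic regime, so it must be handled by expectation bounds on the proposal processes rather than by the window estimates.
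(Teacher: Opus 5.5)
Your proposal is correct and follows the paper's proof. You obtain \eqref{eq:hazard-reduction} directly from \eqref{eq:age-kernel-average} using $\fa_\ell^\theta\asymp n$. You obtain \eqref{eq:integrated-Wn} by replacing $B_s$ with $2\gl N_1(s)N_2(s)$ via Proposition~\ref{prop:pair}, then replacing $N_1N_2$ with $W_{1,n}W_{2,n}e^{2s}$, and computing the constant, which is exactly what the paper does.

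For \eqref{eq:early-hazard-negligible} the paper is simpler than your case split. By the sum form of \eqref{eq:Lambda-def}, every bridge in $\cB_{\fs_0}$ contributes at most $(A_e(\fs_0)/\fa_\ell)^\theta\le(2\fs_0/\fa_\ell)^\theta$ for every $\theta>0$, so $\gL_{\fs_0}\le N_1(\fs_0)N_2(\fs_0)(2\fs_0/\fa_\ell)^\theta$. Moreover, $N_i(\fs_0)=O_{\pr}(e^{\fs_0})$ already follows from Proposition~\ref{prop:cmj-package}, because $\fs_0$ is the left endpoint of the window. So the exponential-characteristic bound for $\theta\ge1$ and the separate expectation bounds on the proposal processes are valid but unnecessary, and the step you flagged as the main obstacle is in fact elementary.
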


\begin{proof}
Each bridge activated by rescaled time $\fs_0$ contributes at most $(2\fs_0/\fa_\ell )^\theta$ to the cumulative hazard. Thus
\begin{align*}
 0\le\gL_{\fs_0}
 \le N_1(\fs_0)N_2(\fs_0)\left(\frac{2\fs_0}{\fa_\ell }\right)^\theta.
\end{align*}
By Proposition~\ref{prop:cmj-package}, $N_i(\fs_0)=O_{\pr}(e^{\fs_0})$, while $\fa_\ell ^\theta\asymp n$. Hence
\begin{align*}
 \gL_{\fs_0}=O_{\pr}(n^{2\eps-1}\fs_0^\theta)=o_{\pr}(1),
\end{align*}
which proves~\eqref{eq:early-hazard-negligible}. By the hazard identity~\eqref{eq:Lambda-def},
\begin{align*}
 \gL_{\fs_1}-\gL_{\fs_0}
 =
 \frac{2\theta}{\fa_\ell^\theta}
 \int_{\fs_0}^{\fs_1}
 \sum_{e\in\cB_s}A_e(s)^{\theta-1}\,\dd s.
\end{align*}
Since $\fa_\ell^\theta\asymp n$, Proposition~\ref{prop:pair} gives
\begin{align*}
\abs{\gL_{\fs_1}-\gL_{\fs_0}
 -\frac{2\theta\Gamma(\theta+1)}{\fa_\ell^\theta}
 \int_{\fs_0}^{\fs_1}B_s\,\dd s}
\le
 \frac{C}{n}\int_{\fs_0}^{\fs_1}
 \abs{
 \sum_{e\in\cB_s}A_e(s)^{\theta-1}
 -\Gamma(\theta+1)B_s
 }\,\dd s
 \stackrel{\pr}{\to}0.
\end{align*}
This proves~\eqref{eq:hazard-reduction}. Combining this with~\eqref{eq:early-hazard-negligible}, we obtain
\begin{align}\label{eq:hazard-after-reduction}
 \gL_{\fs_1}
 =
 \frac{2\theta\Gamma(\theta+1)}{\fa_\ell^\theta}
 \int_{\fs_0}^{\fs_1}B_s\,\dd s
 +o_{\pr}(1).
\end{align}
By Propositions~\ref{prop:pair} and~\ref{prop:cmj-package},
\begin{align*}
&\frac1n\int_{\fs_0}^{\fs_1}
\left|B_s-2\gl\,N_1(s)N_2(s)\right|\,\dd s \\
&\quad\le
\sup_{\fs_0\le s\le\fs_1}
\left|\frac{B_s}{N_1(s)N_2(s)}-2\gl\right|
\cdot
\frac1n\int_{\fs_0}^{\fs_1}N_1(s)N_2(s)\,\dd s
=o_{\pr}(1).
\end{align*}
Since $2\gl n\Gamma(\theta+1)/\fa_\ell^\theta\to 1$, the right-hand side of~\eqref{eq:hazard-after-reduction} is
\begin{align*}
 \frac{2\theta}{n}(1+o(1))
 \int_{\fs_0}^{\fs_1}N_1(s)N_2(s)\,\dd s
 +o_{\pr}(1).
\end{align*}
The CMJ growth estimate gives, uniformly on $[\fs_0,\fs_1]$,
\begin{align*}
 e^{-2s}N_1(s)N_2(s)=W_{1,n}W_{2,n}+o_{\pr}(1).
\end{align*}
Since $e^{2\fs_1}=ne^{2y}$ and $e^{2\fs_0}=n^{2\eps}$, integrating this uniform approximation yields
\begin{align*}
 \frac1n\int_{\fs_0}^{\fs_1} N_1(s)N_2(s)\,\dd s 
 &= \frac{1}{n} \int_{\fs_0}^{\fs_1} \bigl( W_{1,n}W_{2,n} + o_{\pr}(1) \bigr) e^{2s}\,\dd s \\
 &= \frac{W_{1,n}W_{2,n}}{2n}\bigl(e^{2\fs_1}-e^{2\fs_0}\bigr) + o_{\pr}(1)\frac{e^{2\fs_1}}{2n} \\
 &= \frac12 W_{1,n}W_{2,n} \bigl(e^{2y} - n^{2\eps-1}\bigr) + o_{\pr}(1)
 = \frac12 W_{1,n}W_{2,n} e^{2y} + o_{\pr}(1).
\end{align*}
Substituting this back into~\eqref{eq:hazard-after-reduction}, we obtain
\begin{align*}
 \gL_{\fs_1}=\theta W_{1,n}W_{2,n}e^{2y}+o_{\pr}(1),
\end{align*}
which proves~\eqref{eq:integrated-Wn}.
\end{proof}

\begin{lem}[Deterministic target]
\label{lem:macro-target-uniformity}
For every fixed $y\in\dR$,
\begin{align*}
 \pr\left(
 \fa_\ell T_{\dT_n}^{(\ell)}(0,\sfu_n)-\log n>y
 \right)
 \to
 \E\exp\{-\theta W_1W_2e^y\}.
\end{align*}
\end{lem}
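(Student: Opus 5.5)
Since $\sfu_n$ is just a particular value of the second root, the plan is to rerun the macroscopic argument with $U_n$ replaced by the deterministic vertex $\sfu_n$. We check at each step that nothing used the randomness of the target. The reduction is exact. Propositions~\ref{prop:half}, \ref{prop:censor} and~\ref{prop:cox} are deterministic or conditional statements valid for \emph{any} second root $v\ne0$ and any $1\le\ell\le n/2$. Hence, with $\sfS_{\col}$ now built from the roots $0$ and $\sfu_n$,
\begin{align*}
 \pr\left(\fa_\ell T_{\dT_n}^{(\ell)}(0,\sfu_n)-\log n>y\right)
 =\pr\left(\sfS_{\col}>\tfrac12\log n+\tfrac y2\right)
 =\E e^{-\gL_{\fs_1(n,y/2)}} .
\end{align*}
It therefore suffices to show that~\eqref{eq:integrated-Wn} holds with $y$ replaced by $y/2$, i.e.\ $\gL_{\fs_1(n,y/2)}-\theta W_{1,n}W_{2,n}e^{y}\stackrel{\pr}{\to}0$, for the fixed-target exploration. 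Since $e^{-\gL}\in[0,1]$, the map $(w_1,w_2)\mapsto e^{-\theta w_1w_2e^y}$ is bounded and continuous on $[0,\infty)^2$, and $(W_{1,n},W_{2,n})\Rightarrow(W_1,W_2)$ by~\eqref{eq:Wn-joint-conv}, bounded convergence then gives the limit $\E\exp\{-\theta W_1W_2e^y\}$.

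To obtain the hazard asymptotics I would verify that Proposition~\ref{prop:cmj-package}, Proposition~\ref{prop:pair} and Lemma~\ref{lem:integrated} hold uniformly over the position $v$ of the second root.

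\emph{Proposal processes.} The proposal process rooted at $v$ is the translate by $v$ of the one rooted at $0$, because the reproduction law $\Xi_n$ is translation invariant on $\dT_n$. Hence Lemmas~\ref{lem:finite-n-cmj-martingale} and~\ref{lem:uniform-triangular-nerman} are literally unchanged. In Lemma~\ref{lem:fourier-many-to-two} the mode-$k$ sum only acquires the unimodular factor $e^{2\pi\mathrm ikv/n}$, so~\eqref{eq:proposal-fourier-decay} is unaffected.

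\emph{Ghost suppression.} The primary-ghost bound involves only $Z_{\tot,n}^{\op}(r-)/|\cD_n|$ and does not see the root positions, so Lemma~\ref{lem:ghost-compensator} holds uniformly in $v$. Consequently Proposition~\ref{prop:cmj-package} holds with constants independent of $v$.

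\emph{Pair density.} The Erd\H{o}s--Tur\'an bound in Proposition~\ref{prop:pair} uses only $|\pihat_1^s(k)\overline{\pihat_2^s(k)}|$, so phases again drop out. The early-hazard bound $\gL_{\fs_0}\le N_1N_2(2\fs_0/\fa_\ell)^\theta$ is also root-independent. The latter covers the one new feature of a deterministic target: if $d_n(0,\sfu_n)\le\ell$, which happens since $\sfu<1/2$ may be $\le\gl$, the edge $\{0,\sfu_n\}$ is a bridge from time $0$. Its contribution is part of $\gL_{\fs_0}$ and is $O((\log n/n)^{\theta})$.

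The main obstacle is the bookkeeping in this uniformity check. Each of Lemmas~\ref{lem:finite-n-cmj-martingale}--\ref{lem:ghost-compensator}, whose proofs sit in Appendix~\ref{app:macro-cmj-inputs}, must depend on the second root only through a translation. In particular, the coupling of the second proposal tree to the censored exploration must be started at $\sfu_n$ with identical fresh-weight rules. The key point is that the one-child-deleted collision bound counts occupied labels without reference to where the roots sit. Once this is confirmed, the chain Proposition~\ref{prop:cmj-package} $\Rightarrow$ Proposition~\ref{prop:pair} $\Rightarrow$ Lemma~\ref{lem:integrated} goes through verbatim, which yields the stated limit.
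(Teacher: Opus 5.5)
This is correct and is essentially the paper's own argument: the half-distance and Cox identities hold for any second root, the growth, stable-age and ghost estimates are translation invariant, rooting at $\sfu_n$ only multiplies the $k$-th Fourier characteristic by the unimodular factor $e^{2\pi\mathrm{i}k\sfu_n/n}$, and the Erd\H{o}s--Tur\'an bound sees only moduli, so Proposition~\ref{prop:pair} and Lemma~\ref{lem:integrated} carry over unchanged. Two cosmetic remarks: the root-to-root bridge contributes $(2\fs_0/\fa_\ell)^\theta\asymp(\log n)^\theta/n$ to $\gL_{\fs_0}$ (since $\fa_\ell^\theta\asymp n$), not $O((\log n/n)^\theta)$; and this situation is not special to deterministic targets, since $d_n(0,U_n)\le\ell$ already occurs with probability about $2\gl$.
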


\begin{proof}
We run the two-source exploration with roots at $0$ and $\sfu_n$. The population growth, stable-age, and ghost estimates are translation-invariant. Rooting the second proposal process at $\sfu_n$ merely multiplies its $k$-th spatial Fourier characteristic by the phase factor $e^{2\pi i k \sfu_n/n}$. Because this factor has modulus one, the Fourier second-moment bounds are identical. Since the Erd\H{o}s--Tur\'an bound in Proposition~\ref{prop:pair} depends only on these moduli, the pair-density and integrated-hazard estimates (Proposition~\ref{prop:pair} and Lemma~\ref{lem:integrated}) apply unchanged. The exact half-distance and Cox identities remain valid, so evaluating the Cox formula yields the identical limit.
\end{proof}

We now transfer the collision-time limit to the original passage time. This requires no new estimates: the Cox representation yields the tail of $\sfS_{\col}$, Lemma~\ref{lem:integrated} identifies the limiting Laplace exponent, and Proposition~\ref{prop:half} relates $\sfS_{\col}$ to $\fa_\ell T_n/2$.

\subsection{Proof of Theorem~\ref{thm:Macro}}\label{ssec:proof-Macro-main}
\begin{proof}[Proof of Theorem~\ref{thm:Macro}]
Write $W_1,W_2$ for two independent copies of $M_\infty$. We first treat the independent uniform target. Fix $\eps\in(0,1/2)$ and $y\in\dR$. By Proposition~\ref{prop:cox},
\begin{align*}
 \pr\bigl(\sfS_{\col}>\fs_1\bigr)
 =\E e^{-\gL_{\fs_1}}.
\end{align*}
Lemma~\ref{lem:integrated} and~\eqref{eq:Wn-joint-conv} give
\begin{align*}
 \gL_{\fs_1}
 \Rightarrow
 \theta W_1W_2e^{2y}.
\end{align*}
Since the map $x\mapsto e^{-x}$ is bounded and continuous,
\begin{align}\label{eq:macro-half-scale-tail}
 \pr\Bigl(
 \sfS_{\col}-\frac12\log n>y
 \Bigr)
 \to
 \E\exp\{-\theta W_1W_2e^{2y}\}.
\end{align}
For the uniform target, Proposition~\ref{prop:half} identifies $\sfS_{\col}=\fa_\ell T_n^{\unif}/2$. Substituting $y/2$ for $y$ in~\eqref{eq:macro-half-scale-tail} yields
\begin{align*}
 \pr\Bigl(
 \fa_\ell T_n^{\unif}-\log n>y
 \Bigr)
 \to
 \E\exp\{-\theta W_1W_2e^y\}.
\end{align*}
Lemma~\ref{lem:macro-target-uniformity} extends this identical limit to the deterministic-target variable $T_n^{\mathrm{fix}}=T_n$.

Let $E_3\sim\Exp(1)$ be independent of $(W_1,W_2)$ and put $G_3:=-\log E_3$. Conditional on $(W_1,W_2)$,
\begin{align*}
&\pr\left(
 -\log W_1-\log W_2-G_3-\log\theta>y
 \,\middle|\,W_1,W_2
\right)
\\
&\qquad=
\pr\left(E_3>\theta W_1W_2e^y\,\middle|\,W_1,W_2\right)
=
\exp\{-\theta W_1W_2e^y\}.
\end{align*}
This proves~\eqref{eq:macro-limit-representation}.

The weak convergence established above implies that $\fa_\ell T_n^\star/\log n \stackrel{\pr}{\to}1$ for $\star\in\{\mathrm{fix},\unif\}$. Since $\fa_\ell = \bigl(2\gl\Gamma(\theta+1)n\bigr)^{1/\theta}(1+o(1))$, we obtain
\begin{align*}
 \frac{n^{1/\theta}}{\log n}T_n^\star
 \stackrel{\pr}{\to}
 \bigl(2\gl\Gamma(\theta+1)\bigr)^{-1/\theta}.
\end{align*}

When $\theta=1$, the limiting CMJ process is the Yule process and $W_1,W_2$ are independent $\Exp(1)$ variables. Thus, setting $G_i:=-\log W_i$ for $i=1,2$ yields $S_\infty\equald G_1+G_2-G_3$.
\end{proof}

%%%%%%%%%%%%%%%%%%%%%%%%%%%%%%%%%%%%%%%%%%%%%%%%%%%%%%%%%%
\subsection{Macroscopic first-order crossover regime}\label{subsec:Macro-mean}

Theorem~\ref{thm:Macro} identifies the exact macroscopic first-order constant when $\ell/n\to\gl\in(0,1/2)$. In the larger window $n/\log n\ll\ell\le n/2$, the argument below gives the logarithmic order and the universal lower constant. The proof uses only path counting and the moment estimate; no triangular-array spatial-mixing statement is required.

\begin{lem}[Mittag--Leffler bound]\label{lem:ML-bound}
For every $\theta>0$ and every $\eta>0$, there exists
$C_{\theta,\eta}<\infty$ such that, for all $r\ge0$,
\begin{align}\label{eq:ML-bound}
 \sum_{k\ge0}
 \frac{r^{k\theta}}{\Gamma(k\theta+1)}
 \le
 C_{\theta,\eta}e^{(1+\eta)r}.
\end{align}
\end{lem}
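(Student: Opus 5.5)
The series on the left of \eqref{eq:ML-bound} is the Mittag--Leffler function $E_\theta(r^\theta)$. It is also $\E Z_\theta(r)$ for the CMJ process of Section~\ref{sec:local-cmj}, which has Malthusian parameter $1$. The plan is to compare the series termwise with an exponential, tilting by a parameter slightly larger than $1$; this avoids any asymptotics of Mittag--Leffler functions.

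Fix $\eta>0$ and put $\ga:=1+\eta$. The basic input is the Gamma integral
\begin{align*}
 \int_0^\infty s^{k\theta}e^{-\ga s}\,\dd s=\frac{\Gamma(k\theta+1)}{\ga^{k\theta+1}}.
\end{align*}
Since $s\mapsto s^{k\theta}$ is non-decreasing, for every $r\ge0$ we have
\begin{align*}
 r^{k\theta}\int_r^\infty e^{-\ga s}\,\dd s\le\int_r^\infty s^{k\theta}e^{-\ga s}\,\dd s\le\frac{\Gamma(k\theta+1)}{\ga^{k\theta+1}}.
\end{align*}
The left-hand integral equals $e^{-\ga r}/\ga$, so this gives
\begin{align*}
 \frac{r^{k\theta}}{\Gamma(k\theta+1)}\le \ga^{-k\theta}e^{\ga r}.
\end{align*}

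Summing over $k\ge0$ yields
\begin{align*}
 \sum_{k\ge0}\frac{r^{k\theta}}{\Gamma(k\theta+1)}\le e^{(1+\eta)r}\sum_{k\ge0}(1+\eta)^{-k\theta}.
\end{align*}
The geometric series converges because $(1+\eta)^{-\theta}<1$. We may therefore take $C_{\theta,\eta}:=\bigl(1-(1+\eta)^{-\theta}\bigr)^{-1}$, which is independent of $r$. This proves \eqref{eq:ML-bound} for all $r\ge0$, including $r=0$.

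There is no real obstacle in this argument. The one point to check is that the pointwise monotonicity step is valid uniformly in $k$. It is, because $s^{k\theta}\ge r^{k\theta}$ holds for all $s\ge r$ and every $k\ge0$, with $k=0$ trivial. As an alternative, one could use the renewal equation $m=1+\mu_\theta*m$ for $m(r)=\E Z_\theta(r)$ together with a Laplace-transform bound at $\ga=1+\eta$, but the direct termwise comparison above is shorter.
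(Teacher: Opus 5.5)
Your proof is correct, and it takes a genuinely different route from the paper's.

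\textbf{The paper's route.} It splits into two cases. For $0\le r\le 1$, the series is bounded by $\sum_k 1/\Gamma(k\theta+1)<\infty$ via Stirling. For $r\ge1$, it groups indices by $m_k=\lfloor k\theta\rfloor$, each value being taken by at most $\lceil 1/\theta\rceil+1$ indices. Since $\Gamma$ is increasing on $[2,\infty)$, this gives $\Gamma(k\theta+1)\ge m_k!$, and also $r^{k\theta}\le r^{m_k+1}$. Comparing with the exponential series then yields the intermediate bound $C_\theta(1+r)e^{r}$, which is finally absorbed into $C_\eta e^{(1+\eta)r}$.

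\textbf{Your route.} Put $\ga=1+\eta$. Your termwise inequality $r^{k\theta}/\Gamma(k\theta+1)\le \ga^{-k\theta}e^{\ga r}$ is exactly the exponential Markov bound
$\mu_\theta^{*k}([0,r])\le e^{\ga r}\int_0^\infty e^{-\ga s}\mu_\theta^{*k}(\dd s)=e^{\ga r}\ga^{-k\theta}$
for the convolution powers of the CMJ reproduction measure, because $\mu_\theta^{*k}([0,r])=r^{k\theta}/\Gamma(k\theta+1)$. This is the same device the paper uses in Lemma~\ref{lem:finite-direction-renewal-bound} and for $\E|\cN_{t_\ell^\star}|$ in the proof of Lemma~\ref{lem:cmj-ell-tree-coupling}. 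It needs no case split, no Stirling estimate and no monotonicity of $\Gamma$. It also gives the explicit constant $\bigl(1-(1+\eta)^{-\theta}\bigr)^{-1}$.

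\textbf{Comparison.} The paper's grouping buys an $\eta$-free bound with only a polynomial loss, but your method recovers that too. Take $\eta=1/r$ for $r\ge1$ and use $1-(1+1/r)^{-\theta}\ge \theta\, 2^{-\theta-1}/r$. This gives $\sum_k r^{k\theta}/\Gamma(k\theta+1)\le e\,2^{\theta+1}\theta^{-1}\, r e^{r}$, which has the same form as the paper's $C_\theta(1+r)e^r$.
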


\begin{proof}
For $0\le r\le1$, the left-hand side is bounded by $\sum_{k\ge0}\frac1{\Gamma(k\theta+1)}<\infty,$ where convergence follows from Stirling's formula. 
Now let $r\ge1$ and put $m_k:=\lfloor k\theta\rfloor$. For each $m\ge0$, the number of indices $k$ satisfying $m_k=m$ is at most $N_\theta:=\lceil1/\theta \rceil+1$.
If $m_k\ge1$, then $k\theta+1\ge m_k+1\ge2$. Since the Gamma function is
increasing on $[2,\infty)$,
\begin{align*}
 \Gamma(k\theta+1)\ge\Gamma(m_k+1)=m_k!,
 \quad
 r^{k\theta}\le r^{m_k+1}.
\end{align*}
For the finitely many indices with $m_k=0$, we have $0\le k\theta<1$, and therefore their total contribution is at most $C_\theta r$. Grouping the remaining terms according to $m_k$ gives
\begin{align*}
 \sum_{k\ge0}
 \frac{r^{k\theta}}{\Gamma(k\theta+1)}
 \le
 C_\theta r
 +N_\theta r\sum_{m\ge1}\frac{r^m}{m!} 
 \le
 C_\theta(1+r)e^r.
\end{align*}
Finally, $(1+r)e^r\le C_\eta e^{(1+\eta)r}$ for $r\ge0$, proving
\eqref{eq:ML-bound}.
\end{proof}

\begin{lem}[Uniform $L^p$ bound]\label{lem:macro-cycle-Lp}
For every fixed $p>1$, there exists $C_p<\infty$ such that, for all
$2\le\ell\le n/2$ and all $x\in\dT_n\setminus\{0\}$,
\begin{align*}
 \norm{ \fa_\ell T_n^{(\ell)}(0,x)}_p
 \le
 C_p\left(
 \frac{d_n(0,x)}{\ell}
 +\log\ell+1
 \right)
 \le
 C_p\left(
 \frac{n}{\ell}
 +\log\ell+1
 \right).
\end{align*}
\end{lem}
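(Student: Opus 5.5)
We will reduce the statement to the greedy bounds of Proposition~\ref{prop:finite-graph-window-tools}~(i), applied on an arc of the cycle. By rotation and reflection invariance of $\dT_n^{(\ell)}$ and its weight field, we may assume $x\in\{1,\ldots,\lfloor n/2\rfloor\}$, so that $d_n(0,x)=x$. The arc $\{0,1,\ldots,x\}$ has length at most $n/2$. Since $\ell\le n/2$, two vertices of this arc at arc-distance at most $\ell$ are joined by an edge of $\cE_n^{(\ell)}$, because their cycle distance is at most their arc distance. Hence the $\ell$-spread-out segment graph on $[0,x]$ is a subgraph of $\dT_n^{(\ell)}$, with the same weights on shared edges. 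Deleting edges only increases passage times, so
\begin{align*}
 T_n^{(\ell)}(0,x)\le T^{(\ell)}_{[0,x]}(0,x)\equald T_{[x]} .
\end{align*}
The second displayed inequality in the lemma is trivial, since $d_n(0,x)\le n/2\le n$.

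We then split into two cases. If $x\ge\ell$, the endpoint greedy bound~\eqref{eq:greedy-endpoint-cmj} with $r=x$ gives $\norm{\fa_\ell T_{[x]}}_p\le C_p(x/\ell+\log\ell+1)$, which is the claim. If $1\le x<\ell$, then $0$ and $x$ are adjacent, and $T_n^{(\ell)}(0,x)$ is at most the passage time inside the complete graph induced on $\{0,1,\ldots,\ell\}$. This graph is a $K_{\ell+1}$: it spans a window of length $\ell\le n/2$, so all of its pairs are edges. Lemma~\ref{lem:complete-flooding} with $m=\ell+1$ gives $\norm{T(0,x)}_p\le C_p\log\ell/\ell^{1/\theta}$. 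Since $\fa_\ell\asymp\ell^{1/\theta}$ uniformly in $\ell\ge2$, this is at most $C_p(\log\ell+1)$.

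The only point requiring care is the uniformity of the constant over the whole range $2\le\ell\le n/2$, in particular near $\ell=n/2$. There the graph is nearly complete and the arc comparison must not create wrap-around issues. The arc argument needs only $x\le n/2$ and $\ell\le n/2$, and the constants in Proposition~\ref{prop:finite-graph-window-tools}~(i) and Lemma~\ref{lem:complete-flooding} are uniform in $\ell$ and in the segment length. So no further input is needed, and the value of $p>1$ enters only through these cited constants.
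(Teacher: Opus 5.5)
Your proposal is correct and follows the paper's proof. If $d_n(0,x)\ge\ell$, you restrict to the shortest arc and apply the endpoint greedy bound~\eqref{eq:greedy-endpoint-cmj}; if $d_n(0,x)<\ell$, you restrict to a $K_{\ell+1}$ window containing $0$ and $x$ and apply the two-point bound of Lemma~\ref{lem:complete-flooding} together with $\fa_\ell\asymp\ell^{1/\theta}$.
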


\begin{proof}
Let $m=d_n(0,x)\le n/2$, and choose a shortest oriented arc from $0$ to $x$. If $m\ge\ell$, restrict the optimization to the induced $\ell$-spread-out line subgraph on this arc. 
The endpoint estimate~\eqref{eq:greedy-endpoint-cmj} gives
\begin{align*}
 \norm{\fa_\ell T_n^{(\ell)}(0,x)}_p
 \le
 C_p\left(
 \frac m\ell+\log\ell+1
 \right).
\end{align*}
If $m<\ell$, choose an interval of $\ell+1$ consecutive cycle vertices containing both $0$ and $x$, so the induced graph is the complete graph $K_{\ell+1}$. 
Restricting the optimization to this complete subgraph and applying~\eqref{eq:complete-flooding}, together
with $\fa_\ell\asymp\ell^{1/\theta}$, gives
$
 \norm{\fa_\ell T_n^{(\ell)}(0,x)}_p
 \le C_p(\log\ell+1)$.
\end{proof}

\begin{prop}[Uniform right-crossover bounds]
\label{prop:macro-order-uniform}
Assume Assumption~\ref{ass:weight} and
\begin{align*}
 \liminf_{n\to\infty} \frac{\ell\log n}{n}>0, 
 \qquad
 \ell\le\frac n2.
\end{align*}
Then for every $\eps>0$,
\begin{align}\label{eq:macro-order-lower}
\sup_{x\in\dT_n\setminus\{0\}}
\pr\left(
 \frac{\fa_\ell T_n^{(\ell)}(0,x)}{\log n}\le1-\eps
\right)
\to0.
\end{align}
Moreover, for every fixed $p>1$,
\begin{align}\label{eq:macro-order-Lp}
\limsup_{n\to\infty}
\sup_{x\in\dT_n\setminus\{0\}}
\E\left[
 \left(
 \frac{\fa_\ell T_n^{(\ell)}(0,x)}{\log n}
 \right)^p
\right]
<\infty .
\end{align}
In particular, $\fa_\ell T_n/\log n$ is tight and bounded away from zero in probability, uniformly in the target, and
\begin{align}\label{eq:macro-order-mean-bounds}
 1
 \le
 \liminf_{n\to\infty}
 \inf_{x\in\dT_n\setminus\{0\}}
 \frac{\fa_\ell\E T_n^{(\ell)}(0,x)}{\log n}
 \le
 \limsup_{n\to\infty}
 \sup_{x\in\dT_n\setminus\{0\}}
 \frac{\fa_\ell\E T_n^{(\ell)}(0,x)}{\log n}
 <\infty.
\end{align}
\end{prop}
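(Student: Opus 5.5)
The plan is to prove the lower bound \eqref{eq:macro-order-lower} by a first-moment path count, and the upper bound \eqref{eq:macro-order-Lp} by combining Lemma~\ref{lem:macro-cycle-Lp} with the hypothesis $\ell\gtrsim n/\log n$. The mean bounds \eqref{eq:macro-order-mean-bounds} will then follow from these two.

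\emph{Lower bound.} Fix $x\ne0$ and set $t:=(1-\eps)\log n/\fa_\ell$. On $\{T_n^{(\ell)}(0,x)\le t\}$ there is a simple path from $0$ to $x$ of some length $k\ge1$ with weight at most $t$. A simple path has distinct edges, so the union bound runs over sequences of displacements in $\cD_n$ with $|\cD_n|\le 2\ell$. Requiring the endpoint to be the fixed vertex $x$ fixes the last step, so there are at most $(2\ell)^{k-1}$ such paths. The density bound from the proof of Proposition~\ref{prop:var-lb} gives
\begin{align*}
\pr(\go_1+\cdots+\go_k\le t)\le \frac{\Gamma(\theta+1)^k t^{k\theta}}{\Gamma(k\theta+1)}.
\end{align*}
Using $(2\ell)\Gamma(\theta+1)t^\theta=\frac{2\ell}{2\ell-1}\bigl((1-\eps)\log n\bigr)^\theta$, we get, with $r:=(1+o(1))(1-\eps)\log n$,
\begin{align*}
\pr\bigl(\fa_\ell T_n^{(\ell)}(0,x)\le (1-\eps)\log n\bigr)
\le \frac{1}{2\ell}\sum_{k\ge1}\frac{r^{k\theta}}{\Gamma(k\theta+1)}
\le \frac{C_{\theta,\eta}}{2\ell}\,e^{(1+\eta)r},
\end{align*}
where the last step is Lemma~\ref{lem:ML-bound}. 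Choosing $\eta$ with $(1+\eta)(1-\eps)<1-\eps/2$ makes the right side at most $C n^{1-\eps/2}/\ell$. Since $\ell\ge c n/\log n$, this is $O\bigl((\log n) n^{-\eps/2}\bigr)\to0$, and the bound is uniform in $x$. The delicate point is to keep the factor $1/\ell$ coming from the fixed target, rather than counting all endpoints. This is the only place where the lower restriction on $\ell$ enters, and it is what I expect to be the crux: the Mittag--Leffler bound has to absorb the entire sum over path lengths $k$ without losing a polynomial factor in $n$.

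\emph{Upper bound.} Lemma~\ref{lem:macro-cycle-Lp} gives, uniformly in $x$,
\begin{align*}
\norm{\fa_\ell T_n^{(\ell)}(0,x)}_p\le C_p(n/\ell+\log\ell+1).
\end{align*}
Under $\liminf \ell\log n/n>0$ we have $n/\ell\le C\log n$, and also $\log\ell\le\log n$. So the norm is $O(\log n)$, which proves \eqref{eq:macro-order-Lp}. Tightness then follows from Markov's inequality.

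\emph{Mean bounds.} The upper half of \eqref{eq:macro-order-mean-bounds} follows from \eqref{eq:macro-order-Lp} with $p>1$ via Jensen. For the lower half, write
\begin{align*}
\fa_\ell\E T\ge(1-\eps)\log n\,\bigl(1-\pr(\fa_\ell T\le(1-\eps)\log n)\bigr),
\end{align*}
apply \eqref{eq:macro-order-lower} uniformly in $x$, and then let $\eps\downarrow0$. Finally, the $p>1$ moment bound gives uniform integrability, which is needed for Theorem~\ref{thm:macro-cross}.
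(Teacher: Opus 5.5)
Your proposal is correct and follows the paper's proof essentially step for step. The lower tail \eqref{eq:macro-order-lower} comes from the same fixed-endpoint count of at most $(2\ell)^{k-1}$ simple $k$-edge paths, the Dirichlet-integral bound and Lemma~\ref{lem:ML-bound}. The moment bound \eqref{eq:macro-order-Lp} comes from Lemma~\ref{lem:macro-cycle-Lp} together with $n/\ell=O(\log n)$ and $\log\ell\le\log n$, and \eqref{eq:macro-order-mean-bounds} then follows from non-negativity plus H\"older/Jensen.

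Two small points. The case $\eps\ge1$ is trivial and should be set aside before you choose $\eta$. Also, the hypothesis $\liminf \ell\log n/n>0$ enters the upper bound too, not only the lower tail as your remark suggests.
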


\begin{proof}
Fix $\eps\in(0,1)$ since~\eqref{eq:macro-order-lower} is immediate for $\eps\ge1$. 
Fix $x\ne0$ and $t\ge0$, and put $s:=\fa_\ell t$. Since the edge weights are positive almost surely, the event $\{T_n^{(\ell)}(0,x)\le t\}$ is realized by a simple path. The number of simple $k$-edge paths from $0$ to the vertex $x$ is at most $(2\ell)^{k-1}$.

Let $\go_1,\ldots,\go_k$ be independent copies of $\Exp(1)^{1/\theta}$. Since their density is $\theta x^{\theta-1}e^{-x^\theta}\ind_{\{x>0\}}$, the bound $e^{-x^\theta}\le1$ yields
\begin{align*}
 \pr\left(\sum_{j=1}^k\go_j\le t\right)
 \le
 \int_{\substack{x_j\ge0\\x_1+\cdots+x_k\le t}}
 \prod_{j=1}^k\theta x_j^{\theta-1}\,
 \dd x_1\cdots\dd x_k 
 =
 \frac{\Gamma(\theta+1)^k}{\Gamma(k\theta+1)}t^{k\theta}.
\end{align*}
Thus, using $\fa_\ell^\theta=(2\ell-1)\Gamma(\theta+1)$,
\begin{align}\label{eq:lower-tail-ub}
 \pr\bigl(T_n^{(\ell)}(0,x)\le t\bigr)
 &\le
 \frac1{2\ell}
 \sum_{k\ge1}
 \frac{
 \left(
 \tfrac{2\ell}{2\ell-1}s^\theta
 \right)^k
 }{\Gamma(k\theta+1)}.
\end{align}
Choose $\eta>0$ so small that $(1+\eta)^2(1-\eps)<1$. Since $c_\ell:=\left(\frac{2\ell}{2\ell-1}\right)^{1/\theta}\to 1,$ we have $c_\ell\le1+\eta$ for all sufficiently large $n$. Applying Lemma~\ref{lem:ML-bound} with $r=c_\ell s$ to~\eqref{eq:lower-tail-ub} gives
\begin{align*}
 \sup_{x\ne0}\pr\bigl(T_n^{(\ell)}(0,x)\le t\bigr)
 \le
 \frac{C_{\theta,\eta}}{\ell}
 \exp\big((1+\eta)c_\ell s\big).
\end{align*}
Taking $s=(1-\eps)\log n$, we obtain uniformly in $x\ne0$,
\begin{align*}
 \pr\left(
 \frac{\fa_\ell T_n^{(\ell)}(0,x)}{\log n}\le1-\eps
 \right)
 \le
 C_{\theta,\eta}\frac{n^{(1+\eta)^2(1-\eps)}}{\ell}
 \to0,
\end{align*}
since $\ell\ge cn/\log n$ for all sufficiently large $n$. This proves~\eqref{eq:macro-order-lower}.

Since $n/\ell=O(\log n)$ by hypothesis, and $\log\ell\le\log n$, Lemma~\ref{lem:macro-cycle-Lp} yields~\eqref{eq:macro-order-Lp}. Tightness follows from Markov's inequality. The lower-tail estimate shows that the family is bounded away from zero in probability.
Finally, non-negativity and~\eqref{eq:macro-order-lower} imply, for every $\eps\in(0,1)$,
\begin{align*}
 \liminf_{n\to\infty}
 \inf_{x\ne0}
 \frac{\fa_\ell\E T_n^{(\ell)}(0,x)}{\log n}
 \ge
 (1-\eps)
 \liminf_{n\to\infty}
 \inf_{x\ne0}
 \pr\left(
 \frac{\fa_\ell T_n^{(\ell)}(0,x)}{\log n}>1-\eps
 \right)
 =1-\eps.
\end{align*}
Letting $\eps\downarrow0$ gives the first inequality in~\eqref{eq:macro-order-mean-bounds}; the last inequality follows from~\eqref{eq:macro-order-Lp} and H\"older's inequality.
\end{proof}

\subsection{Proof of Theorem~\ref{thm:macro-cross}}
Part~\eqref{cross:meso} is Theorem~\ref{thm:meso-main}\;\eqref{meso-mean}. Part~\eqref{cross:right}) follows from Proposition~\ref{prop:macro-order-uniform} and $\fa_\ell\asymp\ell^{1/\theta}$. For part~\eqref{cross:macro}, Theorem~\ref{thm:Macro} gives convergence in probability, while~\eqref{eq:macro-order-Lp} gives uniform integrability and hence convergence of the means.\qed

%%%%%%%%%%%%%%%%%%%%%%%%%%%%%%%%%%%%%%%%%%%%%%%%%%%
\section{Hop-count analysis: proof of Theorem~\ref{thm:hop-count-asymptotics}}\label{sec:hop-count-fluct}

In this section we study the number of edges in the geodesic. Since the
edge-weight distribution is continuous and the graph is finite, geodesics are
almost surely unique (see $\Omega_*$ in Section~\ref{sec:collision}).
For the relevant passage time $T_n$, we write
\begin{align*}
 H_n:=|\pi_n^\star|
\end{align*}
for the number of edges in its unique geodesic $\pi_n^\star$. The first-order behavior is different in the two regimes. In the mesoscopic regime, the geodesic follows the critical spatial CMJ direction and has order $\sfu_n/\ell\asymp n/\ell$. 
On the other hand, in the macroscopic regime, the geodesic is obtained by joining two CMJ ancestral lines at the first collision bridge and has order $\log n$. We identify the leading constants in both regimes. The corresponding second-order hop-count limits are left as open problems and discussed in Section~\ref{sec:open}.

%%%%%%%%%%%%%%%%%%%%%%%%%%%%%%%%%%%%%%%%%%%%%%%%%%%
\subsection{Mesoscopic expected hop-count}\label{ssec:meso-hop}

Recall that $\gf(\beta)=\E e^{\beta\xi}$ for $\xi\sim\Unif[-1,1]$ and
\begin{align*}
 \gk_\theta(\beta)=\gf(\beta)^{1/\theta},
 \qquad
 \vstar=\inf_{\beta>0}\frac{\gk_\theta(\beta)}{\beta},
\end{align*}
and that the unique minimizer $\gbc$ satisfies $ \gbc\gf'(\gbc)=\theta\gf(\gbc)$.

\begin{lem}
\label{lem:meso-ld-bound}
For every $\eps>0$, there exist $\gd,c>0$, $C<\infty$, and $\ell_0<\infty$ such that, for all $\ell\ge\ell_0$ and $f\ge1$,
\begin{align}\label{eq:bivariate-hop-tree-bound}
\E\sum_{v\in
\cN_{\left(1/\vstar+\gd\right)f}^{(\ell)}}
\ind\left\{
 |\sfp_\ell(v)|\ge f,\ 
 \left|
 \frac{|v|}{f}-\frac{\gbc}{\theta}
 \right|>\eps
\right\}
\le Ce^{-cf}.
\end{align}
\end{lem}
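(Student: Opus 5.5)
The plan is a three-parameter exponential Chebyshev (Chernoff) bound on the $\ell$-spread-out tree. We tilt in space by $\beta$, in time by $\kappa$, and in generation by $\lambda$, and choose the tilts near $(\gbc,\gk_\theta(\gbc),0)$ so that the first-order terms cancel.

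\textbf{Step 1 (one-generation transform).} The first ingredient is an exact one-generation many-to-one computation. For $\beta\in\dR$, $\kappa>0$ and $\lambda\in\dR$, independence across generations gives
\begin{align*}
 \E\sum_{|v|=k}e^{\beta\sfp_\ell(v)+\lambda|v|-\kappa\sft_\ell(v)}
 =\Bigl(e^{\lambda}\sum_{z\in\cD_\ell}e^{\beta z/\ell}\,\E e^{-\kappa\fa_\ell\go}\Bigr)^k .
\end{align*}
By~\eqref{eq:aell-density} and $e^{-(u/\fa_\ell)^\theta}\le1$, we have $(2\ell-1)\E e^{-\kappa\fa_\ell\go}\le\kappa^{-\theta}$ exactly. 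Moreover, $\frac1{2\ell}\sum_{z\in\cD_\ell}e^{\beta z/\ell}\to\gf(\beta)$ as a Riemann sum. Hence, for $(\beta,\lambda)$ in a compact set and all $\ell\ge\ell_0$, the bracket is at most $(1+\eta)e^\lambda\gf(\beta)\kappa^{-\theta}$. Taking $\kappa=\kappa(\beta,\lambda):=(1+\eta)^{2/\theta}(e^\lambda\gf(\beta))^{1/\theta}$ makes the ratio at most $(1+\eta)^{-1}<1$. Summing over $k\ge0$ then gives
\[
 \E\sum_v e^{\beta\sfp_\ell(v)+\lambda|v|-\kappa\sft_\ell(v)}\le C_\eta ,
\]
uniformly in $\ell\ge\ell_0$.

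\textbf{Step 2 (Chebyshev bound, one of four cases).} Treat separately the cases $\sfp_\ell(v)\ge f$ and $\sfp_\ell(v)\le-f$; the second follows from the first by the symmetry $z\mapsto-z$ of $\cD_\ell$. Within each, treat $|v|>(\gbc/\theta+\eps)f$ and $|v|<(\gbc/\theta-\eps)f$ separately. Consider the case $\sfp_\ell(v)\ge f$, $|v|>(\gbc/\theta+\eps)f$, and write $t=(1/\vstar+\gd)f$. On the event, with $\beta>0$, $\lambda>0$, and $\sft_\ell(v)\le t$,
\[
 \ind\le e^{\beta(\sfp_\ell(v)-f)+\lambda(|v|-(\gbc/\theta+\eps)f)+\kappa(t-\sft_\ell(v))}.
\]
With Step 1, the left side of~\eqref{eq:bivariate-hop-tree-bound} restricted to this case is at most $C_\eta e^{fE}$, where
\[
 E:=-\beta+\kappa(1/\vstar+\gd)-\lambda(\gbc/\theta+\eps).
\]
The opposite case $|v|<(\gbc/\theta-\eps)f$ uses $\lambda<0$ with the reversed inequality.

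\textbf{Step 3 (choice of tilts).} Set $\beta=\gbc$. At $\lambda=0$ and $\eta=\gd=0$, Lemma~\ref{lem:betac-unique} gives $\kappa=\gk_\theta(\gbc)=\gbc\vstar$, so $E=0$. The derivative is $\partial_\lambda\kappa=\kappa/\theta$, hence $\partial_\lambda\bigl(\kappa/\vstar\bigr)=\gbc/\theta$. This exactly cancels the $-\lambda\gbc/\theta$ term. This cancellation is the content of the hop-count constant, and it is the point to verify carefully. Consequently,
\[
 E=-\lambda\eps+O(\lambda^2)+O(\eta)+O(\gd),
\]
with constants uniform for $|\lambda|\le1$. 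Choose $|\lambda|$ small so that $E\le-\lambda\eps/2$ from the first two terms. Then choose $\eta,\gd$ small enough (depending on $\eps,\lambda$) that $E\le-\lambda\eps/4=:-c$. The $\lambda<0$ case is symmetric, with $E=\lambda\eps+O(\lambda^2)+\dots$ Summing the four cases gives $Ce^{-cf}$.

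\textbf{Main obstacle.} The main difficulty is the uniformity in $\ell$ of the one-generation transform in Step 1. The exact density bound handles the time part cleanly, and the spatial Riemann sum converges uniformly for bounded $\beta$. One also has to check that the extra factor $2\ell/(2\ell-1)$ is absorbed into $(1+\eta)$, and that $\ell_0$ may depend on $\eta$, which is permitted by the statement.
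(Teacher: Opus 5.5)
Your argument is correct and is essentially the paper's proof. Both run a Chernoff bound on the $\ell$-spread-out tree at spatial tilt $\gbc$, using the limit $\gf(\gbc)\kappa^{-\theta}$ of the one-step transform, and both perturb the time tilt off $\gk_\theta(\gbc)=\gbc\vstar$ so that its first-order effect vanishes exactly at generation density $\gbc/\theta$. The only difference is bookkeeping: your explicit generation tilt $e^{\lambda|v|}$, with $\kappa$ slaved to $\lambda$, replaces the paper's truncated geometric sums $\sum_{m\ge(\gbc/\theta+\eps)f}K_\ell(q_+)^m$ with $K_\ell(q_+)<1$ and $\sum_{m\le(\gbc/\theta-\eps)f}K_\ell(q_-)^m$ with $K_\ell(q_-)>1$, which has the minor advantage that only the exact upper bound $(2\ell-1)\E e^{-\kappa\fa_\ell\go}\le\kappa^{-\theta}$ is needed.
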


\begin{proof}
By symmetry, it suffices to consider lineages with $\sfp_\ell(v)\ge f$. For $q>0$, define the single-step moment generating function
\begin{align*}
 K_\ell(q) := \sum_{z\in\cD_\ell} e^{\gbc z/\ell}\E e^{-q\fa_\ell\go_z}.
\end{align*}
By~\eqref{eq:finite-weighted-laplace}, $K_\ell(q)\to \gf(\gbc)q^{-\theta}$ for each fixed $q>0$. Setting $q_\star:=\gk_\theta(\gbc)=\gbc\vstar$ and $r_\star:=\gbc/\theta$, we define the asymptotic rate function
\begin{align*}
 \Phi(q,r) := -\gbc+\frac q{\vstar} + r\log\bigl(\gf(\gbc)q^{-\theta}\bigr).
\end{align*}
Because $\gf(\gbc)q_\star^{-\theta}=1$, we have $\Phi(q_\star,r)=0$ for all $r$, with derivative $\partial_q\Phi(q_\star,r) = \frac{1-\theta r/\gbc}{\vstar}$.

For the upper-generation tail, since $\partial_q\Phi(q_\star,r_\star+\eps) < 0$, we may choose $q_+>q_\star$ such that $\Phi(q_+,r_\star+\eps)<0$ and $\gf(\gbc)q_+^{-\theta}<1$. By continuity, for $\gd>0$ chosen sufficiently small and all sufficiently large $\ell$, we guarantee $K_\ell(q_+)<1$ and 
\begin{align*}
 -\gbc+q_+\left(\frac1{\vstar}+\gd\right) +(r_\star+\eps)\log K_\ell(q_+) \le -2c
\end{align*}
for some $c>0$. The branching property yields $\E\sum_{|v|=m}e^{\gbc \sfp_\ell(v)-q\sft_\ell(v)} = K_\ell(q)^m$. Applying an exponential Markov inequality over the relevant generations gives
\begin{align*}
 \E\sum_{v\in \cN_{\left(1/\vstar+\gd\right)f}^{(\ell)}}
 \ind\left\{ \sfp_\ell(v)\ge f,\ |v|\ge (r_\star+\eps)f \right\}
 &\le
 e^{-\gbc f+q_+(1/\vstar+\gd)f}
 \sum_{m\ge (r_\star+\eps)f} K_\ell(q_+)^m \\
 &\le Ce^{-cf}.
\end{align*}

Similarly, for the lower-generation tail (assuming $r_\star-\eps>0$), $\partial_q\Phi(q_\star,r_\star-\eps) > 0$ allows us to select $q_-<q_\star$ such that $\Phi(q_-,r_\star-\eps)<0$ and $\gf(\gbc)q_-^{-\theta}>1$. For small $\gd>0$ and large $\ell$, we obtain $K_\ell(q_-)>1$ with an analogous rate strictly bounded by $-2c$. Summing the exponential moment $K_\ell(q_-)^m$ over $m \le (r_\star-\eps)f$ symmetrically yields the $Ce^{-cf}$ bound. If $r_\star-\eps\le0$, the lower-generation event is trivially empty.
Same bounds apply to the reflected event $\{\sfp_\ell(v)\le -f\}$ by symmetry, proving~\eqref{eq:bivariate-hop-tree-bound}.
\end{proof}

\begin{proof}[Proof of Theorem~\ref{thm:hop-count-asymptotics}~\eqref{hop:meso}]
Set $f_n:=\sfu_n/\ell$. Then $f_n\to\infty$ and $\log\ell=o(f_n)$. Fix $\eps>0$, and let $\gd$ be given by Lemma~\ref{lem:meso-ld-bound}. 
Equation~\eqref{eq:meso-mean} in Theorem~\ref{thm:meso-main} gives
\begin{align}\label{eq:ld-ub-Tn}
 \pr\left(
 \fa_\ell T_n>
 (1/\vstar+\gd)f_n
 \right)
 \to 0.
\end{align}

On the complementary event, we lift the simple cycle geodesic $\pi_n^\star$ to $\dZ$. As in the proof of~\eqref{eq:global-meso-lower-tail}, a lifted simple path with increments $(z_1,\dots,z_k)\in\cD_\ell^k$ is identified with the tree vertex $v=(z_1,\dots,z_k)$; the path is simple, so its edge weights are i.i.d.~and its scaled cost has exactly the law of $\sft_\ell(v)$, while its hop-count equals $|v|$ and its scaled displacement equals $\sfp_\ell(v)$. Combining a union bound over the tree vertices with~\eqref{eq:ld-ub-Tn} and Lemma~\ref{lem:meso-ld-bound}, we obtain
\begin{align*}
&\pr\left(
 \left|\frac{H_n}{f_n}-\frac{\gbc}{\theta}\right|>\eps
\right) \\
&\quad\le
\pr\left(
 \fa_\ell T_n>
 \left(\frac1{\vstar}+\gd\right)f_n
\right)
+
\E\sum_{v\in \cN_{\left(1/\vstar+\gd\right)f_n}^{(\ell)}}
\ind\left\{
 |\sfp_\ell(v)|\ge f_n,\ 
 \left|\frac{|v|}{f_n}-\frac{\gbc}{\theta}\right|>\eps
\right\} \\
&\quad\le o(1)+Ce^{-cf_n}
\to 0.
\end{align*}

It remains to pass to expectations. Choose $q>1$. Since $2\ell\E e^{-q\fa_\ell\go}\to q^{-\theta}<1,$ there are $c_0,c_1>0$ such that, for all large $\ell$,
\begin{align*}
 e^{qc_0}\,2\ell\E e^{-q\fa_\ell\go}\le e^{-c_1}.
\end{align*}
If $H_n\ge k$ and $\fa_\ell T_n\le c_0k$, the first $k$ edges of $\pi_n^\star$ form a simple path of scaled weight at most $c_0k$. There are at most $(2\ell)^k$ such sequences of directions; hence, a direct union bound and the exponential Markov inequality yield
\begin{align*}
 \pr(H_n\ge k)
 &\le
 e^{qc_0k}
 \left(2\ell\E e^{-q\fa_\ell\go}\right)^k
 +\pr(\fa_\ell T_n>c_0k) \\
 &\le
 e^{-c_1k}+\frac{Cf_n^2}{k^2}.
\end{align*}
Indeed, $T_n\le T_{[\sfu_n]}$ by~\eqref{eq:cycle-seg-upper}, and Proposition~\ref{prop:finite-graph-window-tools}~\textup{(i)}, applied with $r=\sfu_n$, gives
\begin{align*}
 \norm{\fa_\ell T_n}_2
 \le
 C(f_n+\log\ell+1)
 \le Cf_n.
\end{align*}
Thus, for $Y_n:=H_n/f_n$ and $M\ge1$, the identity $\E\bigl[Y_n\ind\{Y_n>M\}\bigr] = M\pr(Y_n>M) +\int_M^\infty\pr(Y_n>x)\,\dd x $
yields, for all large $n$,
\begin{align*}
 \E\bigl[Y_n\ind\{Y_n>M\}\bigr]
 &\le
 \left(M+\frac1{c_1f_n}\right)e^{-c_1Mf_n}
 +\frac{C}{M}.
\end{align*}
The right-hand side tends to zero as $M\to\infty$, uniformly for all large $n$. Hence $Y_n$ is uniformly integrable. Together with the convergence in probability proved above, this establishes the mean asymptotics in~\eqref{eq:meso-hop-mean}.
\end{proof}

\begin{rem}[Mesoscopic hop-count CLT]
\label{rem:meso-hop-clt-open}
Theorem~\ref{thm:hop-count-asymptotics}~\eqref{hop:meso} identifies the first-order constant, but we do not prove a mesoscopic hop-count CLT here. A natural conjectural form is
\begin{align*}
 \frac{H_n^{(\ell)}(0,\sfu_n)-\E H_n^{(\ell)}(0,\sfu_n)}{\sqrt{\sfu_n/\ell}}
 \Rightarrow
 \N(0,\gs_{\mathrm{meso\text{-}hop}}^2)
\end{align*}
for some $\gs_{\mathrm{meso\text{-}hop}}^2\in(0,\infty)$.
\end{rem}

%%%%%%%%%%%%%%%%%%%%%%%%%%%%%%%%%%%%%%%%%%%%%%%%%%%
\subsection{Macroscopic hop-count law of large numbers}
\label{ssec:macro-hop-lln}

Throughout this subsection, assume
\begin{align*}
 \frac{\ell}{n}\to\gl\in(0,1/2),
 \qquad
 \go_e^\theta\sim\Exp(1),
\end{align*}
and use the two-source censored exploration from Sections~\ref{sec:macro-cox} and~\ref{sec:macro-cmj}. For $i\in\{1,2\}$ and $v\in\cC_i(s)$, let $\Gen_i(v)$ be the generation of $v$ in the censored Dijkstra tree rooted at source $i$.

Under the change of measure, the successive birth ages along an ancestral line have law
\begin{align*}
 \nu(\dd a)
 :=e^{-a}\mu_\theta(\dd a)
 =\frac{a^{\theta-1}e^{-a}}{\Gamma(\theta)}\,\dd a.
\end{align*}
Thus, $\nu$ is the $\Gamma(\theta,1)$ distribution with mean $\theta$. The only additional estimate needed for the hop-count is that, at time $s$, almost all of the bridge hazard is carried by endpoints whose generations are tightly concentrated around $s/\theta$. 

\begin{lem}[Bridge-generation concentration]
\label{lem:macro-bridge-gen-conc}
Fix $\eps\in(0,1/2)$ and $y\in\dR$. Recall the definition of $\fs_0, \fs_1$ in~\eqref{eq:s0-s1-def}. For $\gd\in(0,1/\theta)$ and $i\in\{1,2\}$, define
\begin{align*}
 \Bad_{i,n}(s;\gd)
 :=
 \left\{
 v\in\cC_i(s):
 \left|\Gen_i(v)-\frac{s}{\theta}\right|>\gd s
 \right\}.
\end{align*}
Then
\begin{align}\label{eq:macro-bridge-gen-conc}
\int_{\fs_0}^{\fs_1}\frac1n
\sum_{\substack{e=\{u,v\}\in\cB_s\\
 u\in\cC_1(s),\ v\in\cC_2(s)}}
A_e(s)^{\theta-1}
\ind\big\{
 u\in\Bad_{1,n}(s;\gd)
\text{ or }
 v\in\Bad_{2,n}(s;\gd)
\big\}
\,\dd s
\stackrel{\pr}{\to}0.
\end{align}
\end{lem}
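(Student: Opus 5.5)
The plan is to show that particles with atypical generation form a vanishing fraction of each cluster, uniformly over the collision window, in a way that survives the age weighting. The bridge sum in \eqref{eq:macro-bridge-gen-conc} is then bounded by products of one-cluster sums, exactly as in the tail estimates of Proposition~\ref{prop:pair}. By symmetry it suffices to treat the event $u\in\Bad_{1,n}(s;\gd)$.

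\emph{Step 1 (reduction to proposal particles).} Every retained particle in the censored cluster $\cC_1(s)$ is a proposal particle with the same birth time and the same generation. Hence $\Bad_{1,n}(s;\gd)$ is contained in the set of proposal particles $u$ with $\tau_u\le s$ and $\bigl||u|-s/\theta\bigr|>\gd s$. Consequently, for any nonnegative characteristic $g$, the bad part of the accepted cluster is dominated by the corresponding bad part of $Z_{1,n}^{\op}$; ghosts need no separate control.

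\emph{Step 2 (many-to-one and uniform large deviations).} For $g(a)=e^{\eta a}$ with $\eta<1$, and also for $g\equiv1$, I would apply the many-to-one formula with the Malthusian tilt $\alpha_n$. This gives
\begin{align*}
 \E\sum_{\tau_u\le s}g(s-\tau_u)\ind\Bigl\{\Bigl||u|-\frac s\theta\Bigr|>\gd s\Bigr\}
 =\sum_{r\ge0}\E\Bigl[e^{\alpha_nS_r}g(s-S_r)\ind\{S_r\le s\}\Bigr]\ind\Bigl\{\Bigl|r-\frac s\theta\Bigr|>\gd s\Bigr\},
\end{align*}
where $S_r$ is a random walk with step law $\nu_n$. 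Splitting according to the age $a=s-S_r$ and using $e^{\alpha_n S_r}=e^{\alpha_n s}e^{-\alpha_n a}$, with the weight $e^{-(\alpha_n-\eta)a}$ integrable, the right-hand side is bounded by $Ce^{s}$ times the probability that the renewal counting process of $\nu_n$ at times in $[s-K,s]$ deviates from its mean by more than $\gd s/2$, plus an age tail that is $e^{-cK}$ small. Lemma~\ref{lem:finite-n-cmj-martingale} supplies, uniformly in $n\ge n_0$, an exponential moment of $\nu_n$, mean $\theta_n=\theta+O(n^{-1})$, and $L^1$ convergence to $\Gamma(\theta,1)$; this also rules out mass near zero via the Laplace transform. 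A Cramér/Chernoff bound on $S_r$ therefore gives the deviation probability $\le Ce^{-c_\gd s}$ uniformly in $n$. Choosing $K=\gd' s$, I expect a bound of the form $\E[\text{bad }g\text{-sum at time }s]\le Ce^{(1-c)s}$. I expect this uniform-in-$n$ large-deviation estimate for the renewal process to be the main obstacle, in particular uniformity in $n$ and handling the age weight at the same time. Since $s\ge\fs_0=\eps\log n$, the gain $e^{-cs}\le n^{-c\eps}$ is enough, so only crude constants are needed.

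\emph{Step 3 (bridge sum).} For $\theta\ge1$, I would bound $A_e^{\theta-1}\le C e^{\eta A_u^{(1)}}e^{\eta A_v^{(2)}}$ and use $\cB_s\subseteq\cC_1(s)\times\cC_2(s)$. The integrand in \eqref{eq:macro-bridge-gen-conc} is then at most $\frac Cn$ times (the bad $e^{\eta a}$-sum of cluster $1$) times (the full $e^{\eta a}$-sum of cluster $2$). The second factor is $O_\pr(e^{s})$ uniformly on the window by \eqref{eq:cmj-growth-package}--\eqref{eq:cmj-age-package}. The first factor is $O_\pr(e^{(1-c)s})$ uniformly in $s$: I would discretize $[\fs_0,\fs_1]$ into unit intervals, use monotonicity of the counts in $s$ together with the expectation bound and a union bound, at the cost of a $\log n$ factor. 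Integrating $e^{(2-c)s}/n$ up to $\fs_1$ gives $O_\pr(n^{-c/2}\log n)\to0$.

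For $0<\theta<1$, I would split at $A_e(s)\le r$. On $\{A_e\le r\}$, the per-bridge integral bound $r^\theta/(2\theta)$ from the proof of Proposition~\ref{prop:pair} gives $O_\pr(r^\theta)$ via \eqref{eq:bridge-integral-tight}. On $\{A_e>r\}$, the kernel is at most $r^{\theta-1}$, so the contribution is at most $r^{\theta-1}n^{-1}\int N_1^{\Bad}(s)N_2(s)\,\dd s=r^{\theta-1}O_\pr(n^{-c/2}\log n)$ by the counting case $g\equiv1$ of Step 2. Letting $n\to\infty$ and then $r\downarrow0$ concludes.
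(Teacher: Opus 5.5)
Your proposal is correct and follows essentially the paper's route: you dominate the accepted bad set by the proposal bad set, apply the many-to-one formula with the tilt $\alpha_n$ together with uniform Chernoff bounds in both generation ranges (plus an age split), and bound the bridge sum by products using $\cB_s\subseteq\cC_1(s)\times\cC_2(s)$. The differences are minor: you push the weight $e^{\eta a}$ through the large-deviation estimate so as to dominate $A_e(s)^{\theta-1}$ directly, whereas the paper needs only the unweighted bad count, truncates the kernel to $[r,R]$ and reuses the tail bounds from the proof of Proposition~\ref{prop:pair}; and your unit-interval union bound is unnecessary (and, since the bad set is not monotone in $s$, it would require slightly enlarging the bad set on each interval), because the other cluster's factor is already controlled by a supremum over the window, so Tonelli applied to $\E\int_{\fs_0}^{\fs_1}e^{s}(\cdots)\,\dd s$ suffices, as in the paper.
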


\begin{proof}
Recall the finite-$n$ offspring-age measure $\bar\mu_n$ and the law $\nu_n(\dd a):=e^{-\alpha_n a}\bar\mu_n(\dd a)$ from~\eqref{eq:prelimit-spine-law}. By Lemma~\ref{lem:finite-n-cmj-martingale}, $\alpha_n=1+O(n^{-1})$, $\theta_n:=\int_0^\infty a\,\nu_n(\dd a)\to\theta$, and the measures $\nu_n$ share a common exponential moment.

For the $i$-th proposal process, define the corresponding bad set by
\begin{align*}
 \Bad_{i,n}^{\op}(s;\gd)
 :=
 \left\{
 u\in\sT:\tau_u\le s,\
 \left||u|-\frac{s}{\theta}\right|>\gd s
 \right\}.
\end{align*}
The superscript $\op$ distinguishes the proposal process from the accepted cluster. Since the expected birth-time measure in generation $m$ is $\bar\mu_n^{*m}$, we have
\begin{align}
 e^{-\alpha_n s}\E\abs{\Bad_{i,n}^{\op}(s;\gd)}
 =
 \sum_{\substack{m\ge0:\\ |m-s/\theta|>\gd s}}
 \int_{[0,s]}
 e^{-\alpha_n(s-u)}\nu_n^{*m}(\dd u).
\label{eq:proposal-bad-generation-mean}
\end{align}
We uniformly bound the right-hand side using Chernoff's inequality. Let $X_{n,1}, X_{n,2}, \ldots$ be i.i.d.~with law $\nu_n$, and set $S_{n,m}:=\sum_{j=1}^mX_{n,j}$. 

For the upper-generation range, let $c_+:=1/\theta+\gd$. Since $\E X_{n,1} = \theta_n \to \theta > 1/c_+$, there exist $t_+>0$ and $q_+\in(0,1)$ such that $e^{t_+/c_+}\E e^{-t_+ X_{n,1}} \le q_+$ uniformly for all large $n$. Hence, for $m\ge c_+s$,
\begin{align*}
 \pr(S_{n,m}\le s)
 \le e^{t_+s}\bigl(\E e^{-t_+X_{n,1}}\bigr)^m
 \le q_+^m.
\end{align*}
Summing over $m\ge c_+s$ yields an $O(e^{-cs})$ bound.

Similarly, for the lower-generation range, let $c_-:=1/\theta-\gd$ and choose $\eta\in(0,\gd\theta)$. Since $(1-\eta)/c_- > \theta$, there exist $t_->0$ and $b>0$ such that $e^{-t_-(1-\eta)} (\E e^{t_- X_{n,1}})^{c_-} \le e^{-b}$ uniformly for all large $n$. Splitting the integration domain in~\eqref{eq:proposal-bad-generation-mean} at $u=(1-\eta)s$, we obtain
\begin{align*}
 \sum_{m\le c_-s}
 \int_{[0,(1-\eta)s]}
 e^{-\alpha_n(s-u)}\nu_n^{*m}(\dd u)
 &\le
 (c_-s+1)e^{-\alpha_n\eta s},
 \\
 \sum_{m\le c_-s}
 \int_{((1-\eta)s,s]}
 e^{-\alpha_n(s-u)}\nu_n^{*m}(\dd u)
 &\le
 \sum_{m\le c_-s}
 \pr\bigl(S_{n,m}\ge(1-\eta)s\bigr)
 \le
 (c_-s+1)e^{-bs}.
\end{align*}
Combining both generation ranges, there are constants $c,C>0$ such that
\begin{align}
 e^{-\alpha_n s}\E\abs{\Bad_{i,n}^{\op}(s;\gd)}
 \le Ce^{-cs},
 \qquad s\ge1,
\label{eq:proposal-bad-generation-bound}
\end{align}
uniformly for all large $n$.

Since every accepted vertex is a proposal particle with the identical generation, $\abs{\Bad_{i,n}(s;\gd)} \le \abs{\Bad_{i,n}^{\op}(s;\gd)}$.
Put
\begin{align*}
 K_{i,n}
 :=
 \left(
 \inf_{\fs_0\le s\le \fs_1}e^{-s}N_i(s)
 \right)^{-1}.
\end{align*}
By~\eqref{eq:cmj-growth-package},~\eqref{eq:Wn-joint-conv}, and the strict positivity of $M_\infty$, $K_{i,n}=O_{\pr}(1)$. Moreover, $\alpha_n=1+O(n^{-1})$ and $\fs_1=O(\log n)$ imply $\sup_{\fs_0\le s\le \fs_1}e^{(\alpha_n-1)s}=1+o(1)$. Therefore,
\begin{align*}
\int_{\fs_0}^{\fs_1}
 \frac{\abs{\Bad_{i,n}(s;\gd)}}{N_i(s)}
 \,\dd s 
\le
 \bigl(1+o(1)\bigr)K_{i,n}
 \int_{\fs_0}^{\fs_1}
 e^{-\alpha_ns}\abs{\Bad_{i,n}^{\op}(s;\gd)}
 \,\dd s.
\end{align*}
By Tonelli's theorem and~\eqref{eq:proposal-bad-generation-bound}, the expectation of the integral on the right is bounded by $C\int_{\fs_0}^{\infty}e^{-cs}\,\dd s \to 0$. Since $K_{i,n}=O_{\pr}(1)$, Markov's inequality implies
\begin{align}
 \int_{\fs_0}^{\fs_1}
 \frac{\abs{\Bad_{i,n}(s;\gd)}}{N_i(s)}
 \,\dd s
 \stackrel{\pr}{\to}0.
\label{eq:macro-generation-fraction}
\end{align}

Let $\fB_s^{\mathrm{bad}}(\gd)$ be the number of bridges with at least one bad endpoint. We can bound
\begin{align*}
 \fB_s^{\mathrm{bad}}(\gd)
 \le
 \abs{\Bad_{1,n}(s;\gd)}N_2(s)
 +
 \abs{\Bad_{2,n}(s;\gd)}N_1(s).
\end{align*}
Moreover,~\eqref{eq:cmj-growth-package} gives $\sup_{\fs_0\le s\le \fs_1} \frac{N_1(s)N_2(s)}{n} =O_{\pr}(1)$, so~\eqref{eq:macro-generation-fraction} yields
\begin{align}
 \int_{\fs_0}^{\fs_1}
 \frac{\fB_s^{\mathrm{bad}}(\gd)}{n}\,\dd s
 \stackrel{\pr}{\to}0.
\label{eq:macro-bad-bridge-count}
\end{align}

Fix $0<r<R<\infty$. On $\{r\le A_e(s)\le R\}$, the kernel $A_e(s)^{\theta-1}$ is bounded, so~\eqref{eq:macro-bad-bridge-count} guarantees
\begin{align*}
 \int_{\fs_0}^{\fs_1}\frac1n
 \sum_{\substack{e\in\cB_s:\\ e\text{ has a bad endpoint}}}
 A_e(s)^{\theta-1}
 \ind\{r\le A_e(s)\le R\}
 \,\dd s
 \stackrel{\pr}{\to}0.
\end{align*}
The small- and large-age tail components are controlled exactly as in the proof of Proposition~\ref{prop:pair}. Letting first $n\to\infty$, then $R\to\infty$, and finally $r\downarrow0$ completes the proof of~\eqref{eq:macro-bridge-gen-conc}.
\end{proof}

With this generation concentration, we are now ready to prove the macroscopic hop-count limit.

\begin{proof}[Proof of Theorem~\ref{thm:hop-count-asymptotics}\;\eqref{hop:macro}]
Throughout this proof, we write $H_n:=H_n^{(\ell)}(0,U_n)$.
Let
\begin{align*}
 e_{\col}=\{V_{1,n},V_{2,n}\},
 \qquad
 V_{i,n}\in\cC_i(\sfS_{\col}),
\end{align*}
be the first collision bridge. By Proposition~\ref{prop:half} and the identification~\eqref{eq:proof-bridge-equals-collision} in the proof of Proposition~\ref{prop:cox}, $e_{\col}$ is the unique edge of the geodesic containing its midpoint. On $\Omega_*$, both endpoints are therefore discovered strictly before the collision time. Proposition~\ref{prop:censor} then identifies the two censored-tree paths with the corresponding geodesic subpaths, yielding
\begin{align}
 H_n
 =
 \Gen_1(V_{1,n})+
 \Gen_2(V_{2,n})+1.
\label{eq:macro-hop-generation-sum}
\end{align}

Fix $\eps\in(0,1/2)$, $\gd\in(0,1/\theta)$, and $A>0$, and put
\begin{align*}
 I_A
 :=
 \left[
 \frac12\log n-A,
 \frac12\log n+A
 \right].
\end{align*}
For all sufficiently large $n$, $I_A\subseteq[\fs_0,\fs_1]$ when Lemma~\ref{lem:macro-bridge-gen-conc} is applied with $y=A$. For $e=\{u,v\}$ with $u\in\cC_1(s)$ and $v\in\cC_2(s)$, set
\begin{align*}
 \Psi(e,s)
 :=
 \ind\{s\in I_A\} \cdot
 \ind\big\{
 u\in\Bad_{1,n}(s;\gd)
 \text{ or }
 v\in\Bad_{2,n}(s;\gd)
 \big\}.
\end{align*}
Taking the left-continuous version of $\Psi$ ensures predictability without affecting the Lebesgue integral, since the collision time is almost surely not a vertex-addition time. Applying the marked Cox representation, Proposition~\ref{prop:cox}~\eqref{cox:marked}, to this $\Psi$ gives
\begin{align}
\notag
&\pr\left(
 \sfS_{\col}\in I_A,
 \ V_{1,n}\in\Bad_{1,n}(\sfS_{\col};\gd)
 \text{ or }
 V_{2,n}\in\Bad_{2,n}(\sfS_{\col};\gd)
\right)
\\
&\qquad=
\E\int_0^\infty
 e^{-\gL_s}
 \frac{2\theta}{\fa_\ell^\theta}
 \sum_{e\in\cB_s}
 A_e(s)^{\theta-1}\Psi(e,s)
 \,\dd s.
\label{eq:collision-endpoint-generation-good}
\end{align}
The random integral on the right converges to zero in probability by Lemma~\ref{lem:macro-bridge-gen-conc}, since $\fa_\ell^\theta\asymp n$ and $e^{-\gL_s}\le1$. Moreover, pathwise,
\begin{align*}
 0
 \le
 \int_0^\infty
 e^{-\gL_s}
 \frac{2\theta}{\fa_\ell^\theta}
 \sum_{e\in\cB_s}A_e(s)^{\theta-1}\Psi(e,s)\,\dd s
 \le
 \int_0^\infty e^{-\gL_s}\,\dd\gL_s
 \le1.
\end{align*}
Hence the random integrals are uniformly integrable, and their convergence to zero in probability implies convergence to zero in $L^1$. Therefore the probability in~\eqref{eq:collision-endpoint-generation-good} converges to zero.

The random-target collision limit~\eqref{eq:macro-half-scale-tail} states that $\sfS_{\col}-\frac12\log n=O_{\pr}(1)$. Letting first $n\to\infty$ in~\eqref{eq:collision-endpoint-generation-good} and then $A\to\infty$, we deduce that for each fixed $\gd\in(0,1/\theta)$, with probability tending to one,
\begin{align*}
 \left|
 \Gen_i(V_{i,n})-
 \frac{\sfS_{\col}}{\theta}
 \right|
 \le
 \gd\sfS_{\col},
 \qquad i\in\{1,2\}.
\end{align*}
Using~\eqref{eq:macro-hop-generation-sum},
\begin{align*}
 \abs{
 \frac{H_n}{\log n}-\frac1\theta
 }
 \le
 \frac{2\gd\sfS_{\col}+1}{\log n}
 +
 \frac1\theta
 \abs{
 \frac{2\sfS_{\col}}{\log n}-1
 }.
\end{align*}
Since $2\sfS_{\col}/\log n\stackrel{\pr}{\to}1$, the second term converges to zero in probability and the first is $\gd+o_{\pr}(1)$. Letting $\gd\downarrow0$ proves
\begin{align}\label{eq:macro-hop-probability}
 \frac{H_n}{\log n}
 \stackrel{\pr}{\to}
 \frac1\theta.
\end{align}

To establish uniform integrability, fix a sufficiently large $q>0$ and a sufficiently small $c_0>0$. Since $\go^\theta\sim\Exp(1)$,
\begin{align*}
 \E e^{-q\fa_\ell\go}
 =
 \int_0^\infty
 e^{-q\fa_\ell t}
 \theta t^{\theta-1}e^{-t^\theta}
 \,\dd t 
 \le
 \int_0^\infty
 e^{-q\fa_\ell t}
 \theta t^{\theta-1}
 \,\dd t 
 =
 \frac{\Gamma(\theta+1)}
 {(q\fa_\ell)^\theta}.
\end{align*}
We can thus select $q$ and $c_0$ such that, for some $\rho\in(0,1)$ and all large $n$,
\begin{align}
 e^{qc_0}(2\ell)\E e^{-q\fa_\ell\go}
 \le\rho.
\label{eq:macro-hop-path-laplace}
\end{align}

For any integer $k\ge1$, on the event $\big\{ H_n\ge k,\ \fa_\ell T_n^{\unif}\le c_0k \big\}$, the first $k$ edges of the geodesic form a simple path of rescaled weight at most $c_0k$. Since there are at most $(2\ell)^k$ such paths, a union bound combined with the exponential Markov inequality and~\eqref{eq:macro-hop-path-laplace} yields
\begin{align}\label{eq:macro-hop-path-tail}
 \pr\big(
 H_n\ge k,\
 \fa_\ell T_n^{\unif}\le c_0k
 \big)
 \le
 (2\ell)^k
 e^{qc_0k}
 \left(\E e^{-q\fa_\ell\go}\right)^k 
 \le
 \rho^k.
\end{align}

Since $\ell/n\to\gl\in(0,1/2)$, applying~\eqref{eq:macro-order-Lp} with $p=2$, followed by conditioning on $U_n$, gives
\begin{align}
 \E \bigl(\fa_\ell T_n^{\unif}\bigr)^2
 \le
 C(\log n)^2.
\label{eq:macro-random-target-L2}
\end{align}
Combining~\eqref{eq:macro-hop-path-tail},~\eqref{eq:macro-random-target-L2}, and Markov's inequality, we obtain constants $c,C>0$ such that
\begin{align}
 \pr(H_n\ge k)
 \le
 e^{-ck} + C\frac{(\log n)^2}{k^2},
 \quad k\ge1.
\label{eq:macro-hop-uniform-tail}
\end{align}

Setting $Y_n:=H_n/\log n$ and applying~\eqref{eq:macro-hop-uniform-tail} with $k=\lceil x\log n\rceil$ yields, for all $x\ge1$ and large $n$ (after adjusting the constants $c,C$),
\begin{align*}
 \pr(Y_n>x)
 \le
 e^{-cx\log n}
 +
 \frac{C}{x^2}.
\end{align*}
Therefore, for $M\ge1$,
\begin{align*}
 \E\left[
 Y_n\ind\{Y_n>M\}
 \right]
 &=
 M\pr(Y_n>M)
 +
 \int_M^\infty\pr(Y_n>x)\,\dd x 
 \\
 &\le
 M e^{-cM\log n}
 +
 \frac{C}{M}
 +
 \frac{e^{-cM\log n}}{c\log n}
 +
 \frac{C}{M}.
\end{align*}
The right-hand side vanishes as $M\to\infty$ uniformly over all sufficiently large $n$. The finitely many remaining indices do not affect uniform integrability. Thus the family $\{H_n/\log n\}_{n\ge1}$ is uniformly integrable. Combining this with~\eqref{eq:macro-hop-probability} yields
\begin{align*}
 \E\abs{
 \frac{H_n}{\log n}-\frac1\theta
 }
 \to0,
\end{align*}
which implies the expectation convergence in~\eqref{eq:macro-hop-lln-main}. This completes the proof.
\end{proof}

%%%%%%%%%%%%%%%%%%%%%%%%%%%%%%%%%%%%%%%%%%%%%%%%%%%%%%%%%
\section{Discussion and open problems}\label{sec:open}

We conclude with several open problems concerning sharp fluctuations, the spatial-to-mean-field crossover, and the structural robustness of the CMJ mechanisms.

\begin{enumeraten}
	\item \textbf{Numerical support for mesoscopic conjectures}
For $\theta=1$ and $\sfu_n=\lfloor n/3\rfloor$, we simulated the first-passage time $T_n$ using Dijkstra's algorithm. For the variance experiment, we set $\ell=\lfloor n^\ga\rfloor$ for $\ga\in\{0.25, 0.30, 0.35, 0.40\}$ and $2^{14}\le n\le 2^{22}$, drawing $750$--$6000$ independent samples per parameter pair. If the variance scaled strictly as $n/\ell^3$, the ratio $R_n:=\ell^3\cdot \var(T_n)/n$ would be approximately constant. Instead, Figure~\ref{fig:meso-variance-numerics} exhibits a slow decay across all trajectories, and the data points nearly collapse when plotted against $\ell$. This suggests
\begin{align*}
 \var(T_n)\asymp {n}{\ell^{-3+o(1)}}.
\end{align*}

\begin{figure}[htbp]
 \centering
 \includegraphics[width=0.96\linewidth]
 {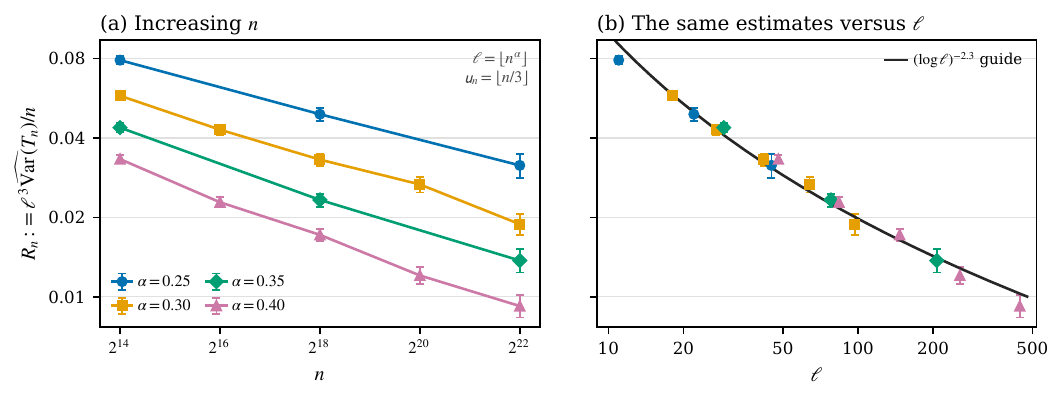}
 \caption{Variance scaling for FPP with $\Exp(1)$ weights.
 Panel~\textup{(a)} shows $R_n$ along four rays $\ell=\lfloor n^\ga\rfloor$ as $n$ increases.
 Panel~\textup{(b)} replots exactly the same estimates against $\ell$, using the same colors and markers. Their approximate collapse suggests that the remaining correction is mainly a function of $\ell$. The black curve is the finite-range guide $C(\log\ell)^{-2.3}$, not an asymptotic fit. Vertical error bars are cellwise $95\%$ bootstrap intervals.}
 \label{fig:meso-variance-numerics}
\end{figure}

For the distributional experiment, we set $\ga=0.60$, $n=2^{22}$, and $\ell=9410$ with $1500$ independent samples (yielding $n/\ell\simeq 446$). The empirical skewness ($0.026$), excess kurtosis ($0.012$), and Kolmogorov--Smirnov distance to a fitted normal ($0.016$) all fall strictly within the 95\% Gaussian reference bounds. The histogram closely tracks the standard normal density, and all Q--Q points in Figure~\ref{fig:meso-clt-numerics} lie within the simultaneous $95\%$ Gaussian envelope. For $\ga=0.70$, the largest simulation yields only $n/\ell\simeq 97$ blocks, rendering this regime numerically inconclusive. Combined with Remark~\ref{rem:meso-clt-window}, this  provides exploratory evidence for the conjecture
\begin{align*}
 \frac{T_n-\E T_n}{\sqrt{\var(T_n)}}
 \Rightarrow \N(0,1)
 \text{ whenever }\ell\log n=o(n),
\end{align*}
covering the near-linear window $\ell\ll n/\log n=n^{1-o(1)}$.

\begin{figure}[htbp]
 \centering
 \includegraphics[width=0.96\linewidth]
 {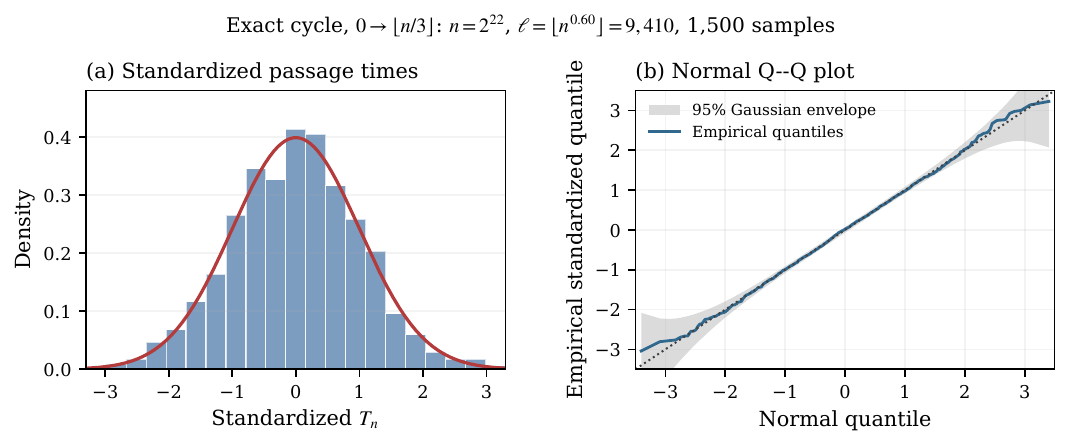}
 \caption{Standardized first-passage times for
 $\ell=\lfloor n^{0.60}\rfloor$, $n=2^{22}$, and $1500$ samples.
 In panel~\textup{(a)}, the bars are the empirical density and the red curve is the standard Gaussian density. Panel~\textup{(b)} is the normal Q--Q plot with a simultaneous $95\%$ Gaussian envelope.}
 \label{fig:meso-clt-numerics}
\end{figure}

 \item \textbf{The mesoscopic--macroscopic crossover.}
Theorem~\ref{thm:macro-cross} locates the first-order transition at $\ell\asymp {n}/{\log n}$, but does not identify the critical law. One can analyze $
 \ell=\frac{n}{\log n}\cdot c_n$ when $c_n$ tends to $0$, a positive constant, or $\infty$, and determine the interpolation between a spatial-front law and a two-source collision law.

 \item \textbf{Hop-count central limit theorems.}
Theorem~\ref{thm:hop-count-asymptotics} identifies the first-order hop-count constants in both regimes, but no second-order result is proved here. In the macroscopic regime one expects, in analogy with~\cite{bhh10a,bh12}, a Gaussian limit on the $\sqrt{\log n}$ scale,
\begin{align*}
 \frac{H_n^{(\ell)}(0,U_n)-\theta^{-1}\log n}{\sqrt{\log n}}
 \Rightarrow
 \N\bigl(0,\gs_{\mathrm{macro\text{-}hop}}^2\bigr),
\end{align*}
which should follow from the generation concentration of Lemma~\ref{lem:macro-bridge-gen-conc} together with a local CLT for the number of generations of the two spines. In the mesoscopic regime one expects, for some $\gs_{\mathrm{meso\text{-}hop}}^2\in(0,\infty)$,
\begin{align*}
 \frac{H_n^{(\ell)}(0,\sfu_n)-\E H_n^{(\ell)}(0,\sfu_n)}
 {\sqrt{\sfu_n/\ell}}
 \Rightarrow
 \N\bigl(0,\gs_{\mathrm{meso\text{-}hop}}^2\bigr).
\end{align*}
A joint CLT with the passage time and other geodesic observables is also natural. The main obstacle in the mesoscopic case is that the block decomposition controls passage-time errors but does not directly control the identity or the generation of the minimizing path.

\item \textbf{Lower-tail universality beyond the Weibull law.}
Assumption~\ref{ass:weight} fixes the exact Weibull law $\go\sim \Exp(1)^{1/\theta}$; this exactness is used twice, in the hazard-scale memorylessness of Lemma~\ref{lem:hazard-residual} and in the exact Cox representation of Proposition~\ref{prop:cox}. It is natural to conjecture that all the results hold whenever
$
 \pr(\go\le t)=t^{\theta+o(1)}
$
as $t\downarrow0$, with $\fa_\ell$ replaced by the appropriate quantile normalization, since only the behavior of $F$ near the origin should matter.

\item \textbf{Universality across dense spatial graphs.}
One may ask whether the pair $(\vstar,S_\infty)$ depends on the edge-weight law only through $\theta$ and the constant in the previous item, and whether the macroscopic limit $S_\infty$ is universal across a wider class of dense spatial graphs, in the spirit of~\cite{bhh17}.

 \item \textbf{General dispersal kernels.}
Here every pair at distance at most $\ell$ is joined, so that the rescaled displacement is uniform on $[-1,1]$ and $\gf(\beta)=(\sinh\beta)/\beta$. For a general dispersal density $\rho$ on $[-1,1]$, where a pair at distance $j\le\ell$ is joined with probability $\rho(j/\ell)/\ell$, the same scheme should give $\gf(\beta)=\int e^{\beta x}\rho(x)\,\dd x$ and hence a $\rho$-dependent speed $\vstar$; heavy-tailed kernels, for which $\gf$ is infinite, should instead produce a genuinely different regime.

 \item \textbf{Multi-source and multi-species growth.}
Starting from finitely many seeds, possibly with type-dependent edge laws or dispersal kernels, one may study limiting type proportions, coexistence, and competition interfaces. Mesoscopic interfaces should be governed by directional spatial-CMJ speeds, whereas macroscopic competition should reduce to interacting CMJ clusters and their first bridge hazards. A joint limit for the growth clusters and the induced random Voronoi partition would connect these two mechanisms.

 \item \textbf{Random-shortcut extensions.}
One may add random long edges as in the Newman--Watts model, or randomly rewire local edges as in the Watts--Strogatz model~\cite{wattsstrogatz98,newmanwatts99,kv16}. A non-degenerate interpolation with the deterministic spread-out model requires a joint scaling of $\ell$, the shortcut density or rewiring probability, and possibly the shortcut-weight scale. Determining the correct scaling window and the resulting phase diagram is open.
	
 \item \textbf{Extensions to higher dimensions.}
The whole analysis is carried out on the one-dimensional cycle. On the $\ell$-spread-out torus $(\dZ/n\dZ)^d$ with $d\ge2$, the local Dijkstra exploration should again converge to a spatial CMJ branching random walk, now with $\Unif([-1,1]^d)$ displacements, so that the mesoscopic time constant is the associated multi-dimensional front speed. Both the exact order of $\var(T_n)$ and the nature of the fluctuations (Gaussian versus KPZ-type) are open for $d\ge2$; even the analogue of the variance upper bound of Lemma~\ref{lem:block-moments} requires a substitute for the one-dimensional block decomposition.	
\end{enumeraten}

\noindent\textbf{Acknowledgements.}
The authors thank Greg Terlov for many helpful conversations during the early stage of this project. They are also grateful to Tianyi Huang, Hansen Liu, and Yongzheng Yang for their contributions regarding simulations for the project in the Spring of 2023, which was recognized as a runner-up for the Illinois Mathematics Lab Research Award.

\appendix

%%%%%%%%%%%%%%%%%%%%%%%%%%%%%%%%%%%%%%%%%%%%%%%%%%%
\section{Proofs for the CMJ to \texorpdfstring{$\ell$}{l}-spread out coupling}\label{app:cmj-to-lspread}

This section contains the proof of Lemmas and Propositions used in Section~\ref{sec:cmj-mean}.
%%%%%%%%%%%%%%%%%%%%%%%%%%%%%%%%%%%%%%%%%%%%%%%%%%%
\subsection{Proof of Lemma~\ref{lem:finite-direction-renewal-bound}}\label{pf:finite-direction-renewal-bound}
Define the intensity measure
\begin{align*}
\nu_{\ell,\beta}(B)
:= \sum_{z\in\cD_\ell}e^{\beta z/\ell}
\pr(\fa_\ell\go_z\in B).
\end{align*}
We first prove its Laplace transform asymptotics: for any fixed $q>0$,
\begin{align}\label{eq:finite-weighted-laplace}
\int_0^\infty e^{-qs}\nu_{\ell,\beta}(\dd s)
\to
\gf(\beta)q^{-\theta} 
\text{ as } \ell \to \infty.
\end{align}
By integration by parts, the integral equals $q\int_0^\infty e^{-qs}\nu_{\ell,\beta}([0,s])\,\dd s$. Recall $\fa_\ell^\theta = (2\ell-1)\Gamma(\theta+1)$. Using the elementary inequality $1-e^{-x}\le x$ for $x\ge 0$, we have
\begin{align*}
\nu_{\ell,\beta}([0,s])
&=
\sum_{z\in\cD_\ell}e^{\beta z/\ell}
\left(
1-\exp\left\{
-\frac{s^\theta}{(2\ell-1)\Gamma(\theta+1)}
\right\}
\right) \\
&\le
\left(\frac{1}{2\ell-1}\sum_{z\in\cD_\ell}e^{\beta z/\ell}\right)
\frac{s^\theta}{\Gamma(\theta+1)}
\le C_\beta s^\theta.
\end{align*}
As $\ell \to \infty$, the prefactor $\frac{1}{2\ell-1}\sum_{z\in\cD_\ell}e^{\beta z/\ell}$ converges as a Riemann sum to $\int_{-1}^1 e^{\beta x}dx/2 = \gf(\beta)$. Thus, by the dominated convergence theorem (using the integrable dominating function $q C_\beta e^{-qs} s^\theta$),
\begin{align*}
 q\int_0^\infty e^{-qs}\nu_{\ell,\beta}([0,s])\,\dd s
 \to
 q\int_0^\infty e^{-qs} \gf(\beta)\frac{s^\theta}{\Gamma(\theta+1)}\,\dd s
 = \gf(\beta)q^{-\theta},
\end{align*}
which proves~\eqref{eq:finite-weighted-laplace}.

Next, choose $q \in (\gk_\theta(\beta),\,\gk_\theta(\beta)+\eta)$. Since $\gk_\theta(\beta) = \gf(\beta)^{1/\theta}$, we have $\gf(\beta)q^{-\theta}<1$. By~\eqref{eq:finite-weighted-laplace}, there exists $r<1$ such that for all sufficiently large $\ell$,
\begin{align*}
\int_0^\infty e^{-qs}\nu_{\ell,\beta}(\dd s) \le r.
\end{align*}

Now we bound the expected weighted population. By independence of the edge weights along ancestral lineages, the contribution from generation $k$ is exactly the $k$-fold convolution of $\nu_{\ell,\beta}$:
\begin{align*}
\E\sum_{|v|=k}
e^{\beta \sfp_\ell(v)}\ind_{\{\sft_\ell(v)\le t\}} 
=
\nu_{\ell,\beta}^{*k}([0,t]).
\end{align*}
Applying the exponential Markov inequality and the multiplicativity of Laplace transforms,
\begin{align*}
 \nu_{\ell,\beta}^{*k}([0,t])
 \le e^{qt}
 \int_0^\infty e^{-qs}\nu_{\ell,\beta}^{*k}(\dd s) 
 = e^{qt}
 \left(
 \int_0^\infty e^{-qs}\nu_{\ell,\beta}(\dd s)
 \right)^k
 \le e^{qt}r^k.
\end{align*}
Summing over all generations $k \ge 0$ yields
\begin{align*}
 \E\sum_{v\in\cN_t^{(\ell)}}e^{\beta \sfp_\ell(v)}
 = \sum_{k\ge0}\nu_{\ell,\beta}^{*k}([0,t])
 \le \frac{e^{qt}}{1-r}
 \le C_{\beta,\eta}e^{(\gk_\theta(\beta)+\eta)t},
\end{align*}
which concludes the proof of~\eqref{eq:finite-tree-exp-moment}.\qed
%%%%%%%%%%%%%%%%%%%%%%%%%%%%%%%%%%%%%%%%%%%%%%%%%%%

%%%%%%%%%%%%%%%%%%%%%%%%%%%%%%%%%%%%%%%%%%%%%%%%%%%
\subsection{Proof of Lemma~\ref{lem:cmj-ell-tree-coupling}}\label{pf:cmj-ell-tree-coupling}
We first restrict to a high-probability event on which both trees have at most $N_\ell$ particles. Since $t_\ell^\star=O(h_\ell)=o(\log\ell)$, Lemma~\ref{lem:finite-direction-renewal-bound}
with $\beta=0$ gives, for every fixed $\eta_0>0$,
\begin{align*}
 \E|\cN_{t_\ell^\star}^{(\ell)}|
 \le
 C e^{(1+\eta_0)t_\ell^\star}
 =
 \ell^{o(1)}.
\end{align*}
 Similarly, for the limiting CMJ tree, for every fixed $q>1$,
\begin{align*}
 \E|\cN_{t_\ell^\star}|
 =
 \sum_{k\ge0}\mu_\theta^{*k}([0,t_\ell^\star])
 \le
 C_q e^{qt_\ell^\star}
 =
 \ell^{o(1)}.
\end{align*}
Hence, with
\begin{align}\label{eq:E-pop-def}
 \cE_\ell^{\mathrm{pop}}
 :=
 \left\{
 |\cN_{t_\ell^\star}|\le N_\ell,\,
 |\cN_{t_\ell^\star}^{(\ell)}|\le N_\ell
 \right\},
\end{align}
Markov's inequality yields
\begin{align}\label{eq:E-pop-prob}
 \pr\big((\cE_\ell^{\mathrm{pop}})^c\big)=o(1).
\end{align}

\noindent
\textbf{Step 1 -- Coupling the unmarked trees.}
For $0<t\le t_\ell^\star$, let
\begin{align*}
 \Xi_{\ell,t}^{0}
 :=
 \sum_{z\in\cD_\ell}
 \ind_{\{\fa_\ell\go_z\le t\}}
 \gd_{\fa_\ell\go_z}
\end{align*}
be the unmarked offspring-age point process of one $\ell$-spread-out parent, restricted to ages at most $t$.
 Put
\begin{align*}
 F_\ell(s)
 :=
 \pr(\fa_\ell\go\le s)
 =
 1-\exp\left(
 -\frac{s^\theta}{(2\ell-1)\Gamma(\theta+1)}
 \right),
 \quad 0\le s\le t,
\end{align*}
and define
\begin{align*}
 \bar\mu_{\ell,t}([0,s])
 :=
 2\ell F_\ell(s),
 \quad 0\le s\le t.
\end{align*}
For each $z \in \cD_\ell$, the summand in $\Xi_{\ell,t}^{0}$ is a Bernoulli point process with mass probability $F_\ell(t)$. By coupling each Bernoulli count with an independent $\operatorname{Poisson}(F_\ell(t))$ count and matching the point locations identically when both are 1, the total variation error for each component is bounded by $F_\ell(t)^2$. Combining these independent components yields:
\begin{align}\label{eq:bernoulli-poisson-age}
 \dTV
 \left(
 \cL(\Xi_{\ell,t}^{0}),
 \operatorname{PPP}(\bar\mu_{\ell,t})
 \right)
 &\le
 \sum_{z\in\cD_\ell}F_\ell(t)^2
 \le
 C\frac{t^{2\theta}}{\ell}
 \le
 C\frac{(t_\ell^\star)^{2\theta}}{\ell},
\end{align}
uniformly for $0 < t \le t_\ell^\star$, where the second inequality uses $1-e^{-x} \le x$.

Next compare $\bar\mu_{\ell,t}$ with the limiting reproduction measure $\mu_\theta|_{[0,t]}$, where $\mu_\theta(\dd s)=\frac{s^{\theta-1}}{\Gamma(\theta)}\,\dd s$ defined in~\eqref{eq:limiting-mu}.
Using $\Gamma(\theta+1)=\theta\Gamma(\theta)$, for $0<s\le t$,
\begin{align*}
 \frac{d\bar\mu_{\ell,t}}{d\mu_\theta}(s)
 =
 \frac{2\ell}{2\ell-1}
 \exp\left(
 -\frac{s^\theta}{(2\ell-1)\Gamma(\theta+1)}
 \right),
\end{align*}
and hence
\begin{align*}
 \left|
 \frac{d\bar\mu_{\ell,t}}{d\mu_\theta}(s)-1
 \right|
 \le
 \frac{C}{\ell}(1+s^\theta).
\end{align*}
Integrating over $[0,t]$ gives, uniformly for $0<t\le t_\ell^\star$,
\begin{align}\label{eq:finite-intensity-tv}
 \dTV\left(
 \bar\mu_{\ell,t},\mu_\theta|_{[0,t]}
 \right)
 \le
 C\frac{(t_\ell^\star)^\theta+(t_\ell^\star)^{2\theta}}{\ell}.
\end{align}
Since total variation between Poisson point processes is bounded by the total variation of their intensity measures,~\eqref{eq:bernoulli-poisson-age} and~\eqref{eq:finite-intensity-tv} imply that, uniformly for $0<t\le t_\ell^\star$,
\begin{align}\label{eq:age-tv-uniform}
 \dTV
 \left(
 \cL(\Xi_{\ell,t}^{0}),
 \operatorname{PPP}(\mu_\theta|_{[0,t]})
 \right)
 \le
 C\frac{(t_\ell^\star)^\theta+(t_\ell^\star)^{2\theta}}{\ell}.
\end{align}

We now explore the two trees recursively. Whenever a limiting parent $u$ has already been matched to a finite parent $u'$ and $\sft(u)\le t_\ell^\star$, we couple the two offspring-age point processes up to the remaining time $t_\ell^\star-\sft(u)$ by a maximal coupling. If the coupling succeeds, the children born before time $t_\ell^\star$ are paired in increasing order of their ages. 
Let $\cE_\ell^{\mathrm{age}}$ be the event that all these age couplings succeed during the exploration up to time $t_\ell^\star$. On $\cE_\ell^{\mathrm{pop}}$, there are at most $N_\ell$ matched parents to explore. Since $t_\ell^\star=o(\log\ell)$ and $\gd<1/2$, by~\eqref{eq:age-tv-uniform},
\begin{align}\label{eq:E-age-error}
 \pr\left(
 (\cE_\ell^{\mathrm{age}})^c
 \cap
 \cE_\ell^{\mathrm{pop}}
 \right)
 \le
 C N_\ell
 \frac{(t_\ell^\star)^\theta+(t_\ell^\star)^{2\theta}}{\ell}
 =
 o(1).
\end{align}
On $\cE_\ell^{\mathrm{age}}$, the unmarked genealogies and all birth times agree up to time $t_\ell^\star$.\medskip

\noindent
\textbf{Step 2 -- Coupling the spatial marks.} 
Choose a measurable map $R_\ell:[-1,1]\to\cD_\ell$ such that $R_\ell(\xi)$ is uniform on $\cD_\ell$ when $\xi\sim\Unif(-1,1)$, and
\begin{align}\label{eq:rounding-map-error}
 \left|
 \frac{R_\ell(\xi)}{\ell}-\xi
 \right|
 \le
 \frac1\ell.
\end{align}

Work on $\cE_\ell^{\mathrm{age}}$. For a matched parent $u$, let
$K_u$ be the number of its children born before time $t_\ell^\star$, and list them as
\[
 w_1(u),\ldots,w_{K_u}(u)
\]
in increasing order of birth age. If $\xi_{u,j}\sim\Unif(-1,1)$ is the limiting displacement on the edge $u\to w_j(u)$, then
\begin{align}\label{eq:limiting-tuple}
 \big(
 R_\ell(\xi_{u,1}),\ldots,R_\ell(\xi_{u,K_u})
 \big)
\end{align}
has the uniform \emph{with}-replacement law on $\cD_\ell^{K_u}$. 
On the finite $\ell$-spread-out side, conditional on the same age data, the corresponding direction tuple has the uniform \emph{without}-replacement law.

For $K\le |\cD_\ell|$, if $U_1,\ldots,U_K$ are i.i.d.~uniform on $\cD_\ell$ and $A_K:=\{U_1,\ldots,U_K\text{ are all distinct}\}$, then the without-replacement law is the law of $(U_1,\ldots,U_K)$ conditioned on
$A_K$. Hence
\begin{align}
 &\dTV
 \left(
 \text{uniform with-replacement }K\text{-tuple},
 \text{uniform without-replacement }K\text{-tuple}
 \right) \notag\\
 &\qquad\le
 \pr(A_K^c)
 \le
 \sum_{1\le i<j\le K}\pr(U_i=U_j)
 \le
 C\frac{K^2}{\ell}.
\label{eq:with-without-replacement-tv}
\end{align}

Let $\cF_{t_\ell^\star}^{\mathrm{tree}}$ be the $\gs$-algebra generated by the matched unmarked genealogy and birth times. Conditional on $\cF_{t_\ell^\star}^{\mathrm{tree}}$, couple the two direction tuples maximally, parent by parent. Denote the finite directions by $(Y_{u,1},\ldots,Y_{u,K_u})$, and let
\begin{align}\label{eq:E-mark-def}
 \cE_\ell^{\mathrm{mark}}
 :=
 \bigcap_{u\in\cN_{t_\ell^\star}}
 \bigcap_{j=1}^{K_u}
 \{Y_{u,j}=R_\ell(\xi_{u,j})\}.
\end{align}
By~\eqref{eq:with-without-replacement-tv},
\begin{align*}
 \pr\left(
 (\cE_\ell^{\mathrm{mark}})^c
 \mid
 \cF_{t_\ell^\star}^{\mathrm{tree}}
 \right)
 \le
 \frac{C}{\ell}
 \sum_{u\in\cN_{t_\ell^\star}}K_u^2.
\end{align*}
On $\cE_\ell^{\mathrm{pop}}$,
\[
 \sum_{u\in\cN_{t_\ell^\star}}K_u
 =
 |\cN_{t_\ell^\star}|-1
 \le
 N_\ell,
\]
so $\sum_uK_u^2\le N_\ell^2$. Therefore, since $\gd<1/2$, we obtain
\begin{align}\label{eq:total-mark-error}
 \pr\left(
 (\cE_\ell^{\mathrm{mark}})^c
 \cap
 \cE_\ell^{\mathrm{pop}}
 \cap
 \cE_\ell^{\mathrm{age}}
 \right)
 \le
 C\frac{N_\ell^2}{\ell}
 =
 o(1).
\end{align}

On $\cE_\ell^{\mathrm{age}}\cap\cE_\ell^{\mathrm{mark}}$, define
$\Phi_\ell$ recursively by $\Phi_\ell(\varnothing)=\varnothing$ and
\begin{align}\label{eq:Phi-recursive-definition}
 \Phi_\ell(w_j(u))
 :=
 \Phi_\ell(u)\,Y_{u,j}
 =
 \Phi_\ell(u)\,R_\ell(\xi_{u,j}).
\end{align}
The map preserves ancestry by construction. It is injective because images of vertices in an ancestor-descendant relation have different word lengths, while two vertices whose paths split at a common ancestor receive distinct finite directions at the splitting step.

The age coupling gives
\[
 \sft_\ell(\Phi_\ell(u))=\sft(u),
 \quad u\in\cN_{t_\ell^\star}.
\]
Also, by~\eqref{eq:rounding-map-error}, each edge contributes spatial error at most $1/\ell$. Hence
\begin{align*}
 |\sfp_\ell(\Phi_\ell(u))-\sfp(u)|
 \le
 \frac{|u|}{\ell},
 \quad u\in\cN_{t_\ell^\star}.
\end{align*}
On $\cE_\ell^{\mathrm{pop}}$, every vertex in $\cN_{t_\ell^\star}$ has generation at most $|\cN_{t_\ell^\star}|-1\le N_\ell$. Therefore
\begin{align}\label{eq:Phi-space-error}
 |\sfp_\ell(\Phi_\ell(u))-\sfp(u)|
 \le
 \frac{N_\ell}{\ell}
 =
 r_\ell,
 \quad u\in\cN_{t_\ell^\star},
\end{align}
on $\cE_\ell^{\mathrm{pop}}\cap\cE_\ell^{\mathrm{age}}\cap
\cE_\ell^{\mathrm{mark}}$.

\medskip
\noindent
\textbf{Step 3 -- Boundary stability.}
Set
\begin{align}\label{eq:E-bdry-def}
 \cE_\ell^{\mathrm{bdry}}
 :=
 \left\{
 \operatorname{dist}
 \left(
 \sfp(u),\{0,h_\ell,h_\ell+1\}
 \right)>3r_\ell
 \text{ for every non-root }u\in\cN_{t_\ell^\star}
 \right\}.
\end{align}
Let
\[
 Z_\ell
 :=
 \bigcup_{x\in\{0,h_\ell,h_\ell+1\}}
 [x-3r_\ell,x+3r_\ell].
\]
Then $|Z_\ell|\le 18r_\ell$. Conditional on the unmarked limiting tree, each
non-root position $\sfp(u)$ is a sum of at least one independent $\Unif(-1,1)$
increment, and hence has a density bounded by an absolute constant
$C_{\max}$. Thus
\[
 \pr\big(\sfp(u)\in Z_\ell\mid \text{unmarked tree}\big)
 \le
 C_{\max}|Z_\ell|
 \le
 C r_\ell.
\]
Using this conditional bound and then $\cE_\ell^{\mathrm{pop}}$, we obtain
\begin{align}
 \pr\left(
 (\cE_\ell^{\mathrm{bdry}})^c
 \cap
 \cE_\ell^{\mathrm{pop}}
 \right)
 &\le
 \E\left[
 \ind_{\cE_\ell^{\mathrm{pop}}}
 \sum_{u\in\cN_{t_\ell^\star}\setminus\{\varnothing\}}
 \ind_{\{\sfp(u)\in Z_\ell\}}
 \right]\le
 C r_\ell
 \E\left[
 \ind_{\cE_\ell^{\mathrm{pop}}}
 |\cN_{t_\ell^\star}|
 \right]
 \le
 C\frac{N_\ell^2}{\ell}
 =
 o(1).
\label{eq:E-bdry-error}
\end{align}

Finally put
\[
 \cE_\ell
 :=
 \cE_\ell^{\mathrm{pop}}
 \cap
 \cE_\ell^{\mathrm{age}}
 \cap
 \cE_\ell^{\mathrm{mark}}
 \cap
 \cE_\ell^{\mathrm{bdry}}.
\]
Combining~\eqref{eq:E-pop-prob},~\eqref{eq:E-age-error},~\eqref{eq:total-mark-error}, and~\eqref{eq:E-bdry-error} gives $\pr(\cE_\ell^c)=o(1)$. 
On $\cE_\ell$, the map $\Phi_\ell$ has all the properties stated above, and the proof is complete.\qed
%%%%%%%%%%%%%%%%%%%%%%%%%%%%%%%%%%%%%%%%%%%%%%%%%%%

%%%%%%%%%%%%%%%%%%%%%%%%%%%%%%%%%%%%%%%%%%%%%%%%%%%
\subsection{Proof of Lemma~\ref{lem:finite-direc-one-side-speed}}\label{pf:finite-direc-one-side-speed}
We first prove the one-sided estimate for the limiting CMJ branching random walk. Put
\begin{align*}
 \sfM_t^+
 :=
 \max\left\{
 \sfp(v):
 v\in\cN_t,\
 \sfp(v|_j)\ge0 \text{ for all }0\le j\le |v|
 \right\}.
\end{align*}
We claim that, for every $\ga>0$,
\begin{align}
 \pr\left(\sfM_{(1/\vstar+\ga)h}^+\ge h\right)
 \to 1,
 \qquad h\to\infty,
\label{eq:limiting-one-sided-claim}
\end{align}
and that the crossing particle may be chosen as the first particle on its ancestral line entering $[h,h+1]$.

Fix $\ga>0$. Choose $s<\vstar$ and $\eta\in(0,\ga)$ such that
\begin{align}
 s\left(\frac1{\vstar}+\ga-\eta\right)>1.
\label{eq:s-choice-one-sided}
\end{align}
Since $\gk_\theta'(\gbc)=\vstar$, choose $\beta\in(0,\gbc)$ sufficiently close to $\gbc$ and then $\rho>0$ such that
\begin{align}
 s<x<x+\rho<\frac{\gk_\theta(\beta)}{\beta},
 \qquad
 x:=\gk_\theta'(\beta).
\label{eq:beta-rho-choice}
\end{align}

For $L>0$, set
\begin{align*}
 k_L:=\left\lfloor \frac{\gk_\theta(\beta)}{\theta}\,L\right\rfloor .
\end{align*}
Let $S_j:=\xi_1+\cdots+\xi_j$, where $\xi_i$ are i.i.d.~$\Unif[-1,1]$, and let $\cF_k^\xi:=\gs(\xi_1,\ldots,\xi_k)$.
For each $k$, define the $\beta$-tilted law on $\cF_k^\xi$ by
\begin{align}
 \frac{\dd\widehat{\pr}_{\beta,k}}{\dd\pr}
 = \frac{e^{\beta S_k}}{\gf(\beta)^k}.
\label{eq:finite-dimensional-tilt}
\end{align}
Equivalently, under the infinite tilted product law $\widehat{\pr}_{\beta}$, the increments have density
\begin{align*}
 \widehat\nu_\beta(\dd x)
 :=
 \frac{e^{\beta x}}{\gf(\beta)}
 \frac{\ind_{[-1,1]}(x)}{2}\,\dd x .
\end{align*}
Under $\widehat{\pr}_{\beta,k_L}$,
\begin{align}
 \frac{S_{k_L}}{L}
 \to
 \frac{\gk_\theta(\beta)}{\theta}
 \frac{\gf'(\beta)}{\gf(\beta)}
 = \gk_\theta'(\beta)
 = x
\label{eq:tilted-walk-lln}
\end{align}
in probability. Moreover the tilted increments are bounded and have positive mean. Hence
\begin{align*}
 \widehat{\pr}_\beta\left(\inf_{j\ge0}S_j\ge0\right)>0.
\end{align*}
Together with~\eqref{eq:tilted-walk-lln}, this gives constants $c_1>0$ and $t_1>0$ such that, for all $L\ge t_1$,
\begin{align*}
 \widehat{\pr}_{\beta,k_L}(A_L)\ge c_1,
\end{align*}
where
\begin{align*}
 A_L:=
 \left\{
 \min_{0\le j\le k_L}S_j\ge0,\
 sL\le S_{k_L}\le (x+\rho)L
 \right\}.
\end{align*}
By~\eqref{eq:finite-dimensional-tilt},
\begin{align*}
 \pr(A_L)
 = \widehat{\E}_{\beta,k_L}
 \left[
 \gf(\beta)^{k_L}e^{-\beta S_{k_L}}
 \ind_{A_L}
 \right] 
 \ge c_1\gf(\beta)^{k_L}e^{-\beta(x+\rho)L}.
\end{align*}

Define the number of good generation-$k_L$ descendants by
\begin{align*}
 Z_L:=
 \sum_{|v|=k_L}
 \ind\left\{
 \sft(v)\le L,\
 \min_{0\le j\le k_L}\sfp(v|_j)\ge0,\
 sL\le \sfp(v)\le (x+\rho)L
 \right\}.
\end{align*}
Since the time part and the space part are independent, by the iterated integrals for convolutions,
\begin{align*}
 \E Z_L
 = \mu_\theta^{*k_L}([0,L])\,\pr(A_L) 
 \ge c_1 \frac{L^{k_L\theta}}{\Gamma(k_L\theta+1)}
 \gf(\beta)^{k_L}
 e^{-\beta(x+\rho)L}.
\end{align*}
Since $k_L\theta=\gk_\theta(\beta)L+O(1)$ and $\gk_\theta(\beta)=\gf(\beta)^{1/\theta}$, Stirling's formula gives
\begin{align*}
 \frac{L^{k_L\theta}}{\Gamma(k_L\theta+1)}
 \gf(\beta)^{k_L}
 = \exp\{\gk_\theta(\beta)L+o(L)\}.
\end{align*}
Consequently,
\begin{align}
 \E Z_L
 \ge
 c_1
 \exp\left\{
 L\left(\gk_\theta(\beta)-\beta(x+\rho)+o(1)\right)
 \right\}.
\label{eq:ZT-exponential-growth}
\end{align}
By~\eqref{eq:beta-rho-choice}, choose $L_0$ so large that
\begin{align*}
 \fm_0:=\E Z_{L_0}>1.
\end{align*}

For this fixed $L_0$, define the selected offspring set of a particle $u$ by
\begin{align*}
\mathsf{Selected}(u):=
 \left\{
 v\succeq u:
 \begin{array}{l}
 |v|-|u|=k_{L_0},\quad \sft(v)-\sft(u)\le L_0,\\[2mm]
 \min_{u\preceq w\preceq v}\{\sfp(w)-\sfp(u)\}\ge0,\\[1mm]
 sL_0\le \sfp(v)-\sfp(u)\le (x+\rho)L_0
 \end{array}
 \right\}.
\end{align*}
Set $Z_{L_0}(u):=|\mathsf{Selected}(u)|$. For any finite collection of
pairwise incomparable particles $u_1,\ldots,u_k$, the branching property implies that
\begin{align*}
 Z_{L_0}(u_1),Z_{L_0}(u_2),\ldots,Z_{L_0}(u_k)
\end{align*}
are independent and each has the same law as $Z_{L_0}$. More explicitly, define
\begin{align*}
 \cZ_0(u):=\{u\},\qquad
 \cZ_{r+1}(u):=
 \bigcup_{w\in\cZ_r(u)}\mathsf{Selected}(w),
 \qquad r\ge0.
\end{align*}
Then $\{|\cZ_r(u)|:r\ge0\}$ is a Galton--Watson process with offspring
distribution $Z_{L_0}$. Indeed, by induction, the particles in
$\cZ_r(u)$ are pairwise incomparable. Therefore the descendant subtrees rooted at distinct particles in $\cZ_r(u)$ are independent. Since $\E Z_{L_0}=\fm_0>1$, this Galton--Watson process is supercritical. Let $\gz_0>0$ be its survival probability.

By time $\eta h$, the root has
\begin{align*}
 N_h
 :=
 \#\{i:\tau_{\varnothing,i}\le \eta h,\ \xi_{\varnothing,i}\ge1/2\}
 \sim
 \operatorname{Poisson}
 \left(
 \frac14\,\mu_\theta(0,\eta h]
 \right)
\end{align*}
children with position at least $1/2$. Conditional on these children, the auxiliary Galton--Watson processes initiated from them are independent, and each survives with probability $\gz_0$. Hence, by Poisson thinning,
\begin{align}\label{eq:no-surviving-child}
 \pr(\text{no child has an infinite selected line of descent})
 =
 \exp\left(
 -\frac{\gz_0}{4}\mu_\theta(0,\eta h]
 \right)
 = o(1).
\end{align}

On the complementary event, choose a child $u_0$ whose selected Galton--Watson process survives. Define
\begin{align*}
 r_h:=
 \left\lfloor
 \frac{(1/\vstar+\ga-\eta)h}{L_0}
 \right\rfloor .
\end{align*}
Since the selected process from $u_0$ survives, there exist particles $u_0\prec u_1\prec\cdots\prec u_{r_h}$ such that $u_i\in\mathsf{Selected}(u_{i-1})$ for every $1\le i\le r_h$.
By the definition of $\mathsf{Selected}(\cdot)$,
\begin{align*}
 \sft(u_i)-\sft(u_{i-1})\le L_0,\qquad
 \sfp(u_i)-\sfp(u_{i-1})\ge sL_0,
 \quad 1\le i\le r_h.
\end{align*}
In addition, every ancestor between $u_{i-1}$ and $u_i$ has position at least $\sfp(u_{i-1})$. Hence the ancestral path from $u_0$ to $u_{r_h}$ stays above $\sfp(u_0)\ge1/2$, and
\begin{align*}
 \sfp(u_{r_h})
 \ge \sfp(u_0)+r_hsL_0
 \ge \frac12+r_hsL_0 .
\end{align*}
Using~\eqref{eq:s-choice-one-sided},
\begin{align*}
 \frac12+r_hsL_0
 \ge \frac12+s\left((1/\vstar+\ga-\eta)h-L_0\right)
 > h
\end{align*}
for all large $h$. Also,
\begin{align*}
 \sft(u_{r_h})
 \le \eta h+r_hL_0
 \le \left(\frac1{\vstar}+\ga\right)h .
\end{align*}

Let $v$ be the first ancestor of $u_{r_h}$ whose position is at least $h$. The ancestral path from the root to $u_{r_h}$ stays nonnegative, and $\sfp(u_{r_h})>h$, so such a vertex $v$ exists. By the choice of $v$,
\begin{align*}
 \sfp(w)<h
 \quad
 \text{for every ancestor }w\prec v .
\end{align*}
If $v^{-}$ denotes the parent of $v$, then $\sfp(v^{-})<h$ and
$\sfp(v)-\sfp(v^{-})\le1$. Hence
\begin{align*}
 \sfp(v)\in[h,h+1],
 \qquad
 \sft(v)\le \sft(u_{r_h})
 \le
 \left(\frac1{\vstar}+\ga\right)h .
\end{align*}
This proves~\eqref{eq:limiting-one-sided-claim}.

It remains to transfer the limiting one-sided estimate to the $\ell$-spread-out
tree. Put
\begin{align*}
 t_\ell^\star:=
 \left(\frac1{\vstar}+\frac{\eps}{2}\right)h_\ell.
\end{align*}
Define the limiting good event
\begin{align*}
 \cG_\ell:=
 \left\{
 \exists v\in\cN_{t_\ell^\star}:
 \begin{array}{l}
 \sfp(v|_i)\ge0,\quad 0\le i\le |v|,\\
 \sfp(v)\in[h_\ell,h_\ell+1],\\
 \sfp(v|_i)<h_\ell,\quad 0\le i<|v|
 \end{array}
 \right\}.
\end{align*}
By~\eqref{eq:limiting-one-sided-claim}, with $\ga=\eps/2$,
\begin{align*}
 \pr(\cG_\ell)=1-o(1).
\end{align*}
Let $\cG_\ell^{(\ell)}$ be the corresponding event for the $\ell$-spread-out tree, defined by replacing $\sfp(\cdot)$ with $\sfp_\ell(\cdot)$.

Apply Lemma~\ref{lem:cmj-ell-tree-coupling} with this $t_\ell^\star$.
We claim that
\begin{align}\label{eq:limit-good-witness-ell-good}
 \cG_\ell\cap\cE_\ell
 \subseteq
 \cG_\ell^{(\ell)}.
\end{align}
On $\cG_\ell\cap\cE_\ell$, choose a limiting lineage $\{ \varnothing=v|_{0}\prec v|_{1}\prec\cdots\prec v|_{k}=v \}$ satisfying $\cG_\ell$, and define the corresponding $\ell$-spread-out lineage as $\widehat v_i := \Phi_\ell(v|_i)$ for $0\le i\le k$.
 By the coupling properties, $\sft_\ell(\widehat v_k) = \sft(v) \le t_\ell^\star$, which implies $\widehat v_k \in \cN_{t_\ell^\star}^{(\ell)}$. Furthermore, the coupling guarantees $|\sfp_\ell(\widehat v_i)-\sfp(v|_i)|\le r_\ell$ for all $0 \le i \le k$.

For any non-root vertex, the boundary-stability property on $\cE_\ell$ ensures that the position $\sfp(v|_i)$ strictly maintains a distance greater than $3r_\ell$ from the boundary set $\{0, h_\ell, h_\ell+1\}$. Combined with the uniform coupling error bound $r_\ell$, this margin robustly preserves all defining inequalities of $\cG_\ell$ under the map $\Phi_\ell$:
\begin{itemize}
 \item \textbf{Non-negativity:} Since $\sfp(v|_i) > 3r_\ell$ for $i \ge 1$ and $\sfp_\ell(\widehat v_0)=0$, we have $\sfp_\ell(\widehat v_i) > 2r_\ell > 0$ for all $i$.
 \item \textbf{Target interval:} Since $h_\ell+3r_\ell < \sfp(v) < h_\ell+1-3r_\ell$, it follows that $\sfp_\ell(\widehat v_k) \in (h_\ell+2r_\ell, h_\ell+1-2r_\ell) \subset (h_\ell, h_\ell+1)$.
 \item \textbf{Pre-terminal bound:} Since $\sfp(v|_i) \le h_\ell-3r_\ell$ for $i < k$, we have $\sfp_\ell(\widehat v_i) \le h_\ell-2r_\ell < h_\ell$.
\end{itemize}

Thus, the mapped lineage $\widehat v_k$ satisfies $\cG_\ell^{(\ell)}$, proving~\eqref{eq:limit-good-witness-ell-good}. Consequently,
\begin{align*}
 \pr(\cG_\ell^{(\ell)})
 \ge \pr(\cG_\ell\cap\cE_\ell)
 \ge 1-o(1).
\end{align*} 

Since
\begin{align*}
 t_\ell^\star
 = \left(\frac1{\vstar}+\frac{\eps}{2}\right)h_\ell
 < \left(\frac1{\vstar}+\eps\right)h_\ell,
\end{align*}
this proves the $\ell$-spread-out one-sided estimate, including the first-entrance assertion.\qed
%%%%%%%%%%%%%%%%%%%%%%%%%%%%%%%%%%%%%%%%%%%%%%%%%%%

%%%%%%%%%%%%%%%%%%%%%%%%%%%%%%%%%%%%%%%%%%%%%%%%%%%
\subsection{Proof of Proposition~\ref{prop:auxiliary-one-sided-line-crossing}}\label{pf:auxiliary-one-sided-line-crossing}
Set
\begin{align*}
 A:=\frac1{\vstar}+\frac{\eps}{2}.
\end{align*}
Define the $\ell$-spread-out tree good event
\begin{align*}
 \cG_\ell:=
 \left\{
 \exists\,\varnothing=v(0)\prec v(1)\prec\cdots\prec v(m):
 \begin{array}{l}
 \sft_\ell(v(m))\le Ah_\ell,\\
 \sfp_\ell(v(j))\ge0,\quad 0\le j\le m,\\
 \sfp_\ell(v(m))\in[h_\ell,h_\ell+1],\\
 \sfp_\ell(v(j))<h_\ell,\quad 0\le j<m
 \end{array}
 \right\}.
\end{align*}
By Lemma~\ref{lem:finite-direc-one-side-speed},
\begin{align}\label{eq:aux-tree-good}
 \pr(\cG_\ell)=1-o(1).
\end{align}

Set 
\begin{align*}
 \cV_\ell:=\cN_{Ah_\ell}^{(\ell)}\setminus\{\varnothing\}.
\end{align*}
Apply Lemma~\ref{lem:finite-direction-renewal-bound} with $\beta=0$, for which $\gk_\theta(0)=\gf(0)^{1/\theta}=1$, and with an arbitrary fixed $\eta_0>0$. Since $\cV_\ell \subset \cN_{Ah_\ell}^{(\ell)}$, substituting $t=Ah_\ell$ yields
\begin{align*}
 \E \abs{\cV_\ell}
 \le \E \big|\cN_{Ah_\ell}^{(\ell)}\big|
 \le C \exp\left\{ (1+\eta_0)Ah_\ell \right\}
 = \exp\left\{ o(\log\ell) \right\}
 = \ell^{o(1)}.
\end{align*}

Choose $\gd\in(0,1/2)$ and put
\begin{align*}
 N_\ell:=\ell^\gd.
\end{align*}
Then
\begin{align}\label{eq:aux-explored-size-tail}
 \pr(\abs{\cV_\ell}>N_\ell)
 \le
 \frac{\E \abs{\cV_\ell}}{N_\ell}
 = o(1).
\end{align}

For an $\ell$-spread-out tree vertex $v=(z_1,\ldots,z_k)$, define its unscaled spatial position on $\dZ$ by
\begin{align*}
 S(\varnothing):=0,
 \qquad
 S(v):=\sum_{i=1}^k z_i,
 \text{ and }
 S(uz)=S(u)+z,
 \quad z\in\cD_\ell.
\end{align*}
For a directed tree edge $e=(u,uz)$, define its induced undirected edge in the spread-out line graph by
\begin{align*}
 I(e):=\{S(u),S(u)+z\}.
\end{align*}
Denote the set of explored tree edges by
\begin{align*}
 \cE_\ell^{\mathrm{tr}} := \{(u,uz): uz\in\cV_\ell\}.
\end{align*}

We need the spatial footprint of the explored tree to form a collision-free simple path environment on $\dZ$. Thus, we define the overall collision event
\begin{align*}
 \fB_\ell
 :=
 \fB_\ell^{\mathrm{bt}}
 \cup
 \fB_\ell^{\mathrm{v}}
 \cup
 \fB_\ell^{\mathrm{e}},
\end{align*}
where the respective events for immediate backtracking, vertex collisions, and undirected edge collisions are given by
\begin{align*}
 \fB_\ell^{\mathrm{bt}}
 &:=
 \left\{
 \exists\,v=(z_1,\ldots,z_k)\in\cV_\ell:
 k\ge2,\ z_k=-z_{k-1}
 \right\},\\
 \fB_\ell^{\mathrm{v}}
 &:=
 \left\{
 \exists\,u\ne v\in\{\varnothing\}\cup\cV_\ell:
 S(u)=S(v)
 \right\},\\
 \fB_\ell^{\mathrm{e}}
 &:=
 \left\{
 \exists\,e\ne f\in\cE_\ell^{\mathrm{tr}}:
 I(e)=I(f)
 \right\}.
\end{align*}

Let $\cF_\ell^0$ be the sigma-field generated by the unlabelled genealogy and all birth ages up to time $Ah_\ell$. 
On $\{\abs{\cV_\ell}\le N_\ell\}$, order the explored tree edges measurably as $\{ e_1,\ldots,e_{\abs{\cV_\ell}} \}$ by the birth times of their child endpoints. 
Write
\begin{align*}
 e_k=(e_k^-,e_k^+),
 \qquad
 e_k^+=e_k^-L_k,
 \qquad
 L_k\in\cD_\ell,
\end{align*}
where $L_k$ is the direction label of $e_k$. Define the discrete label filtration
\begin{align*}
 \cH_0:=\cF_\ell^0,
 \qquad
 \cH_k:=\cF_\ell^0\vee\gs(L_1,\ldots,L_k),
 \quad 1\le k\le \abs{\cV_\ell}.
\end{align*}
Conditional on $\cF_\ell^0$, the direction labels born from each parent are sampled uniformly without replacement from $\cD_\ell$, independently across different parents. Hence, on $\{\abs{\cV_\ell}\le N_\ell\}$, for every $1\le k\le \abs{\cV_\ell}$ and every $\cH_{k-1}$-measurable random variable $Z$ with values in $\cD_\ell$,
\begin{align}
 \pr(L_k=Z\mid \cH_{k-1})
 \le
 \frac1{2\ell-N_\ell}
 \le
 \frac{C}{\ell},
\label{eq:aux-predictable-label-bound}
\end{align}
for all large $\ell$.

We first bound immediate backtracking. If $e_k$ immediately backtracks, then $e_k^-\ne\varnothing$ and, writing $e_i$ for the parent edge of $e_k^-$, we have $i<k$ and $L_k=-L_i$.
Since $-L_i$ is $\cH_{k-1}$-measurable,~\eqref{eq:aux-predictable-label-bound} gives, on $\{\abs{\cV_\ell}\le N_\ell\}$,
\begin{align}
 \pr(\fB_\ell^{\mathrm{bt}}\mid \cF_\ell^0)
 \le
 C\frac{\abs{\cV_\ell}}{\ell}
 \le
 C\frac{N_\ell}{\ell}.
\label{eq:aux-bt-bound}
\end{align}

Next we bound vertex collisions. For a vertex $v\in\{\varnothing\}\cup\cV_\ell$,
let
\begin{align*}
 \cA(v):=\{k:e_k\text{ lies on the ancestral path from }\varnothing
 \text{ to }v\}.
\end{align*}
Then
\begin{align*}
 S(v)=\sum_{k\in\cA(v)}L_k.
\end{align*}
Fix distinct vertices $u\ne v$. Since $\cA(u)\triangle\cA(v)\ne\varnothing$, let
\begin{align*}
 k_*:=\max\bigl(\cA(u)\triangle\cA(v)\bigr).
\end{align*}
There are signs $\gs_k\in\{-1,1\}$ such that
\begin{align*}
 S(u)-S(v)
 =
 \sum_{k\in\cA(u)\triangle\cA(v)}\gs_k L_k.
\end{align*}
Thus the equation $S(u)=S(v)$ can be written as
\begin{align*}
 L_{k_*}
 =
 -\gs_{k_*}
 \sum_{\substack{k\in\cA(u)\triangle\cA(v)\\ k<k_*}}
 \gs_kL_k.
\end{align*}
The right-hand side is $\cH_{k_*-1}$-measurable. If this value falls outside $\cD_\ell$, the probability is trivially zero; otherwise,~\eqref{eq:aux-predictable-label-bound} applies. In either case, on $\{|\cV_\ell| \le N_\ell\}$, we obtain
\begin{align}
 \pr(S(u)=S(v)\mid \cF_\ell^0)
 \le
 \frac{C}{\ell}.
\label{eq:aux-one-vertex-collision-bound}
\end{align}
Taking the union bound over at most $(\abs{\cV_\ell}+1)^2$ ordered pairs gives
\begin{align}
 \pr(\fB_\ell^{\mathrm{v}}\mid \cF_\ell^0)
 \le
 C\frac{N_\ell^2}{\ell}
 \quad\text{on }
 \{\abs{\cV_\ell} \le N_\ell\}.
\label{eq:aux-vertex-collision-bound}
\end{align}

It remains to control undirected edge collisions. Fix two distinct explored tree edges $e \ne f$. If they are siblings ($e^-=f^-$), then $I(e)=I(f)$ would imply their assigned directions are identical ($L_e=L_f$). This is impossible since sibling directions are sampled without replacement from $\cD_\ell$.

If they are not siblings ($e^- \ne f^-$), the equality $I(e)=I(f)$ requires their endpoint sets to be identical. This forces at least one nontrivial vertex collision $S(u')=S(v')$ for distinct $u',v'\in\{e^-,e^+,f^-,f^+\}$. By~\eqref{eq:aux-one-vertex-collision-bound}, the conditional probability of this event is at most $C/\ell$. Taking the union bound over all possible edge pairs (at most $\abs{\cV_\ell}^2$), we obtain
\begin{align}
 \pr(\fB_\ell^{\mathrm{e}}\mid \cF_\ell^0)
 \le C\frac{\abs{\cV_\ell}^2}{\ell}
 \le C\frac{N_\ell^2}{\ell}
 \quad\text{ on }\{\abs{\cV_\ell} \le N_\ell\}.
\label{eq:aux-edge-collision-bound}
\end{align}

Combining the conditional bounds for immediate backtracking, vertex collisions, and edge collisions, the overall collision event is uniformly bounded by $C N_\ell^2/\ell$ on the event $\{\abs{\cV_\ell}\le N_\ell\}$. Integrating out the conditioning yields
\begin{align}
 \pr(\fB_\ell)
 &\le \pr(\abs{\cV_\ell}>N_\ell)
 + \E\left[ \ind_{\{\abs{\cV_\ell}\le N_\ell\}} \pr(\fB_\ell\mid \cF_\ell^0) \right] \notag\\
 &\le o(1) + C\frac{N_\ell^2}{\ell} = o(1),
\label{eq:aux-bad-event-prob}
\end{align}
because $N_\ell^2/\ell = \ell^{2\gd-1}$ and $\gd < 1/2$.

We now couple the $\ell$-spread-out tree weights to $\ell$-spread-out line weights. Give the ambient spread-out line independent scaled edge weights
\begin{align*}
 W_e^{\mathrm{line}}
 :=
 \fa_\ell\go_e,
 \qquad
 \go_e\stackrel{d}=E_e^{1/\theta},
 \quad
 E_e\sim\operatorname{Exp}(1).
\end{align*}
During the tree exploration, whenever an explored tree edge $e=(u,uz)$ is requested, we couple its weight to the induced undirected line edge $I(e) = \{S(u),S(u)+z\}$. On the collision-free event $\fB_\ell^c$, all explored tree edges are mapped injectively to distinct line edges. Consequently, for every explored lineage $\varnothing=v(0)\prec\cdots\prec v(m)$, its passage time matches the sum of the line weights:
\begin{align}
 \sft_\ell(v(m))
 = \sum_{j=1}^m W_{\{S(v(j-1)),S(v(j))\}}^{\mathrm{line}}.
\label{eq:tree-line-weight-identity}
\end{align}

On $\cG_\ell\cap\fB_\ell^c$, choose a good lineage $\varnothing=v(0)\prec v(1)\prec\cdots\prec v(m)$, and set its spatial footprint $x_j:=S(v(j))$ for $0\le j\le m$. Since $\sfp_\ell(v(j))=x_j/\ell$, the defining conditions of $\cG_\ell$ guarantee that the trajectory stays non-negative ($x_j \ge 0$), enters the terminal window strictly at the end ($x_m \in [H_\ell, H_\ell+\ell]$), and does not exceed the threshold beforehand ($x_j \le H_\ell$ for $j<m$). Furthermore, since each direction belongs to $\cD_\ell$, we have $1 \le |x_j-x_{j-1}| \le \ell$.

Since $\fB_\ell^c$ guarantees the absence of vertex and edge collisions, the sequence $(x_0,x_1,\ldots,x_m)$ constitutes a \emph{simple path} in the induced spread-out line graph on $[0,H_\ell+\ell]$, starting from $0$ and ending in $[H_\ell,H_\ell+\ell]$. Since the first-passage time is bounded by the weight of any valid simple path,~\eqref{eq:tree-line-weight-identity} yields
\begin{align}
 \fa_\ell T_{[0,H_\ell+\ell]} \left(0,[H_\ell,H_\ell+\ell]\right)
 \le \sum_{j=1}^m W_{\{x_{j-1},x_j\}}^{\mathrm{line}}
 = \sft_\ell(v(m)) 
 \le Ah_\ell.
\label{eq:aux-line-bound-on-good-event}
\end{align}

Since $Ah_\ell = (1/\vstar+\eps/2)h_\ell < (1/\vstar+\eps)h_\ell$, we conclude that
\begin{align*}
 \pr\left( \fa_\ell T_{[0,H_\ell+\ell]} \left(0,[H_\ell,H_\ell+\ell]\right) \le \left(\frac1{\vstar}+\eps\right)h_\ell \right)
 &\ge \pr(\cG_\ell\cap\fB_\ell^c) \\
 &\ge 1-\pr(\cG_\ell^c)-\pr(\fB_\ell) = 1-o(1),
\end{align*}
which completes the proof.\qed
%%%%%%%%%%%%%%%%%%%%%%%%%%%%%%%%%%%%%%%%%%%%%%%%%%%

\subsection{Proof of Proposition~\ref{prop:finite-graph-window-tools}}\label{pf:finite-graph-window-tools}
We first prove the greedy bounds. At any vertex $u$, the greedy path proceeds among the edges $\{u, u+q\}$ for $\lceil\ell/2\rceil \le q \le \ell$. The number of available forward edges is $\sfN_\ell := \ell-\lceil\ell/2\rceil+1 \ge \ell/2$. Because the sequence of visited vertices is strictly increasing, the sets of forward edges at each step are disjoint. Thus, conditionally on the past, the newly queried edge weights are independent copies of $E^{1/\theta}$.

For $\sfN_\ell$ independent copies $\go_1,\ldots,\go_{\sfN_\ell}$ of $E^{1/\theta}$, and any $t\ge0$,
\begin{align}
 \pr\Big( \fa_\ell\min_{1\le i\le \sfN_\ell}\go_i>t \Big)
 = \exp\Big( -\sfN_\ell\Big(\frac{t}{\fa_\ell}\Big)^\theta \Big)
 \le \exp\big(-c t^\theta\big),
\label{eq:greedy-one-step-tail-cmj}
\end{align}
since $\fa_\ell^\theta=(2\ell-1)\Gamma(\theta+1)$ and $\sfN_\ell\ge \ell/2$. This exponential tail bound ensures that all moments are finite. In particular, for every fixed $p\ge1$,
\begin{align}
 \Big\| \fa_\ell\min_{1\le i\le \sfN_\ell}\go_i \Big\|_p \le C_p,
\label{eq:greedy-one-step-lp-cmj}
\end{align}
uniformly in $\ell$.

Before the greedy path enters $[r,r+\ell]$, its current position is strictly smaller than $r$. Each step is at least $\lceil\ell/2\rceil$, so after at most $\lceil 2r/\ell\rceil+1$ steps, the path has entered $[r,r+\ell]$ and all vertices used lie in $[0,r+\ell]$. Minkowski's inequality and
\eqref{eq:greedy-one-step-lp-cmj} give
\begin{align*}
\Big\|
 \fa_\ell
 T_{[0,r+\ell]}(0,[r,r+\ell])
\Big\|_p
\le C_p\left(\frac r\ell+1\right),
\end{align*}
which proves~\eqref{eq:greedy-upper-cmj}.

We next prove~\eqref{eq:greedy-endpoint-cmj}. Assume $r\ge\ell$. Run the same greedy procedure until the first entrance into $[r-\ell,r]$. Let $U_j$ be the position after $j$ greedy steps and set the first step index
\begin{align*}
 \tau:=\inf\{j\ge0:U_j\in[r-\ell,r]\}.
\end{align*}
Let $G_\tau$ be the passage time accumulated by this greedy procedure up to time $\tau$. The argument used for~\eqref{eq:greedy-upper-cmj}, with target window $[r-\ell,r]$, gives
\begin{align}\label{eq:endpoint-before-window-cost}
 \left\|\fa_\ell G_\tau\right\|_p
 \le C_p\left(\frac r\ell+1\right).
\end{align}

It remains to justify that the internal edge weights in $[r-\ell,r]$ are still fresh after this stopped greedy exploration. Define
\begin{align*}
 \cG_{\rm out}
 &:=
 \gs\left(
 \go_{\{u,v\}}:
 1\le |u-v|\le\ell,\ 
 \{u,v\}\not\subset [r-\ell,r]
 \right),\\
 \cG_{\rm in}
 &:=
 \gs\left(
 \go_{\{u,v\}}:
 u,v\in [r-\ell,r],\
 1\le |u-v|\le\ell
 \right).
\end{align*}
For every $j<\tau$ we have $U_j<r-\ell$, and every edge at step $j$ has the form
\begin{align*}
 \{U_j,U_j+q\},
 \qquad
 q\in\{\lceil\ell/2\rceil,\ldots,\ell\}.
\end{align*}
Thus no edge revealed before entrance has both endpoints in $[r-\ell,r]$.
The random variables
\begin{align*}
	 \tau,
	 \quad 
	 (U_j)_{0\le j\le\tau},
	 \quad
 \text{and all edge weights revealed before entrance}
\end{align*}
are measurable with respect to $\cG_{\rm out}$. Hence the sigma-field $\cF_\tau$ generated by the stopped greedy exploration up to entrance satisfies
\begin{align}
 \cF_\tau\subseteq \cG_{\rm out}.
\label{eq:greedy-stopped-sigma-out-cmj}
\end{align}
Since $\cG_{\rm out}$ and $\cG_{\rm in}$ are independent, conditionally on $\cF_\tau$, the induced complete graph on $[r-\ell,r]$ has fresh independent edge weights with law $\Exp(1)^{1/\theta}$.

We now use the two-point complete-graph bound of Lemma~\ref{lem:complete-flooding}. Since the induced graph on the interval $[r-\ell,r]$ is the complete graph $K_{\ell+1}$, applying the two-point bound in~\eqref{eq:complete-flooding} with $m=\ell+1$ gives, uniformly in $U_\tau$,
\begin{align}
 \left\|
 \fa_\ell
 T_{[r-\ell,r]}(U_\tau,r)
 \right\|_p
 \le
 C_p\log\ell .
\label{eq:endpoint-two-point-window-cmj}
\end{align}
Using
\[
 T_{[0,r]}(0,r)
 \le
 G_\tau+T_{[r-\ell,r]}(U_\tau,r),
\]
together with~\eqref{eq:endpoint-before-window-cost} and~\eqref{eq:endpoint-two-point-window-cmj}, we obtain
\begin{align*}
\left\|
 \fa_\ell
 T_{[0,r]}(0,r)
\right\|_p
&\le C_p\left(\frac r\ell+\log\ell+1\right).
\end{align*}
This proves~\eqref{eq:greedy-endpoint-cmj}.

(ii) We now prove the one-sided short-window mean. 
For simplicity, define
\begin{align*}
 \fC_\ell(h)
 := T_{[0,\lfloor h\ell\rfloor+\ell]}
 \left(0,[\lfloor h\ell\rfloor,\lfloor h\ell\rfloor+\ell]\right),
 \quad\text{for } h\ge 1.
\end{align*}
By~\eqref{eq:aux-one-sided-line-crossing}, for every $\eps>0$,
\begin{align}
\pr\left(
 \fa_\ell\fC_\ell(h_\ell)
 \le
 \left(\frac1{\vstar}+\eps\right)h_\ell
\right)
\to1.
\label{eq:short-window-high-prob-cmj}
\end{align}
Let $F_\ell$ be the complementary event in~\eqref{eq:short-window-high-prob-cmj}. Put $ H_\ell:=\lfloor h_\ell\ell\rfloor$.
By the definition of $\fC_\ell(h_\ell)$ and by~\eqref{eq:greedy-upper-cmj} with $p=2$ and $r=H_\ell$,
\begin{align*}
 \left\|
 \fa_\ell\fC_\ell(h_\ell)
 \right\|_2
 \le C\left(\frac{H_\ell}{\ell}+1\right)
 \le C h_\ell
\end{align*}
for all sufficiently large $\ell$. Therefore, by the Cauchy--Schwarz inequality,
\begin{align*}
\fa_\ell\E \fC_\ell(h_\ell)
&\le \left(\frac1{\vstar}+\eps\right)h_\ell
+ \left\|\fa_\ell\fC_\ell(h_\ell)\right\|_2
\pr(F_\ell)^{1/2}  \le \left(\frac1{\vstar}+\eps\right)h_\ell
+o(h_\ell).
\end{align*}
Since $\eps>0$ is arbitrary,~\eqref{eq:window-mean-upper} follows.

(iii) Since the edge weights follow a continuous distribution ($\Exp(1)^{1/\theta}$), the minimum-cost simple stopped path $\pi^\star$ from $x$ to $I_x$ inside $[x, x+H+\ell]$ is uniquely defined almost surely. Let $Y$ be its terminal point and $W(\pi^\star)$ be its passage time.

 Every candidate path stops upon its first entrance into $I_x$, so all of its queried edges have their left endpoints strictly less than $x+H \le y$. Therefore, the event $\{\pi^\star = \pi, Y = y, W(\pi^\star) \le s\}$ is determined by the past $\gs$-field $\cF_y^- = \gs\left( \go_{\{u,v\}}: u<v,\ 1\le v-u\le \ell,\ u<y \right)$ generated by the edges to the left of $y$.

 Moreover, $\cF_y^-$ is independent of the forward edges $\{ \go_{\{u,v\}} : 1 \le |u-v| \le \ell,\ u,v \ge y \}$. Thus, conditionally on the chosen path $\pi^\star$, its cost $W(\pi^\star)$, and $Y=y$, the forward edge weights remain completely unobserved and hence they have independent $\Exp(1)^{1/\theta}$ laws.\qed

\section{Proofs of the finite-\texorpdfstring{$n$}{n} CMJ inputs}
\label{app:macro-cmj-inputs}

The finite-$n$ CMJ lemmas are stated in Subsection~\ref{ssec:finite-cmj}, immediately before they are used in Proposition~\ref{prop:cmj-package}. This appendix contains only their proofs. The first proof combines the model-specific Malthusian calculations with the intrinsic-martingale estimate. The second proof incorporates the only renewal input needed for the window-uniform random-characteristic theorem. The final two proofs contain the genuinely spatial Fourier estimate and the ghost correction.

\subsection{Proof of the finite-$n$ normalization Lemma~\ref{lem:finite-n-cmj-martingale}}
Since $\ell/n\to\gl\in(0,1/2)$, we have $|\cD_n|=2\ell$ for all sufficiently large $n$, giving $|\cD_n|/(2\ell-1) = 1+O(n^{-1})$ and $\fa_\ell^{-\theta} = O(n^{-1})$. By~\eqref{eq:bar-mu-density}, applying $1-e^{-x}\le x$ and differentiating under the integral sign yields
\begin{align*}
 \fm_n(q) = q^{-\theta}+O(n^{-1}), \qquad \fm_n'(q) = -\theta q^{-\theta-1}+O(n^{-1}),
\end{align*}
uniformly for $q$ in compact subsets of $(0,\infty)$. The continuous, strictly decreasing function $\fm_n$ satisfies $\fm_n(0)>1$ and $\fm_n(\infty)=0$, ensuring a unique solution to $\fm_n(\alpha_n)=1$. Since $\fm_n(1)=1+O(n^{-1})$ and $\fm_n'$ is bounded away from zero near $1$, the mean-value theorem yields $\alpha_n = 1+O(n^{-1})$. Consequently, the mean age is $\theta_n = -\fm_n'(\alpha_n) = \theta+O(n^{-1})$.

The density of $\nu_n$ is $\frac{|\cD_n|}{2\ell-1} \frac{s^{\theta-1}}{\Gamma(\theta)} e^{-\alpha_ns-(s/\fa_\ell)^\theta}$, which converges pointwise to the $\Gamma(\theta,1)$ density $s^{\theta-1}e^{-s}/\Gamma(\theta)$. Because $\alpha_n\to1$, these densities are uniformly dominated by $C(1+s^{\theta-1})e^{-s/2}$. Dominated convergence thus provides both the $L^1$ convergence and a common exponential moment bound for some $\rho>0$.

To control the intrinsic martingale (whose standard properties are detailed in~\cite{nerman81,jn84}), define the offspring sum $Q_n := \sum_{z\in\cD_n} e^{-\alpha_n\fa_\ell\go_z}$. Independence and $\E Q_n = \fm_n(\alpha_n) = 1$ give
\begin{align*}
 \E Q_n^2 = \fm_n(2\alpha_n) + 1 - |\cD_n|^{-1}.
\end{align*}
By the earlier expansion,
\begin{align}\label{eq:finite-n-second-laplace}
 \fm_n(2\alpha_n) \to 2^{-\theta} < 1.
\end{align}
Thus, for a suitable $q \in (0,1)$ and all large $n$, $\sup_n \E Q_n^2 < \infty$ and $\fm_n(2\alpha_n) \le q$.

Conditionally on generation $r$, the martingale differences $\cW_{n,r+1} - \cW_{n,r} = \sum_{|u|=r} e^{-\alpha_n\tau_u} (Q_{n,u}-1)$ consist of independent, centered summands. Orthogonality then yields
\begin{align*}
 \E\abs{\cW_{n,r+1} - \cW_{n,r}}^2 = \var(Q_n) \E\sum_{|u|=r}e^{-2\alpha_n\tau_u} = \var(Q_n)\fm_n(2\alpha_n)^r \le Cq^r.
\end{align*}
Consequently, $(\cW_{n,r})_{r\ge0}$ converges in $L^2$ to a mean-one limit $\cW_{n,\infty}$, with the uniform tail bound $\E\abs{\cW_{n,\infty} - \cW_{n,r}}^2 \le Cq^r$.

For fixed $r$, let $\cW_{n,r}^{(L)}$ denote the martingale restricted to lineages with successive birth ages at most $L$. The truncation error is uniformly bounded by $r\int_L^\infty e^{-\alpha_ns}\bar\mu_n(\dd s)$, which vanishes as $L\to\infty$ via the common exponential tail. For fixed $L$ and $r$, recursive application of Lemma~\ref{lem:local-offspring-pp} across $r$ generations gives $\cW_{n,r}^{(L)} \Rightarrow \cW_r^{(L)}$ (the difference between $2\ell$ and $2\ell-1$ offspring is negligible on bounded intervals). Since the limiting process satisfies the analogous truncation bound $\E|\cW_r-\cW_r^{(L)}| \le r\int_L^\infty e^{-s}\mu_\theta(\dd s)$, standard converging-together arguments imply $\cW_{n,r} \Rightarrow \cW_r$, where $\cW_r$ is the generation-$r$ intrinsic martingale of the limiting CMJ process. 

Letting $n\to\infty$ followed by $r\to\infty$, the uniform $L^2$ bounds ensure $\cW_{n,\infty} \Rightarrow \cW_\infty$. In Nerman's normalization, $M_\infty = \cW_\infty/\theta$. Since $\alpha_n\theta_n \to \theta$, Slutsky's theorem yields $\frac{\cW_{n,\infty}}{\alpha_n\theta_n} \Rightarrow M_\infty$. Independence extends this to the joint limit. \qed

%%%%%%%%%%%%%%%%%%%
\subsection{Proof of the uniform random-characteristic lemma}

Before proving Lemma~\ref{lem:uniform-triangular-nerman}, we establish two necessary uniform estimates. Throughout this subsection, let $\cR_n:=\sum_{m\ge0}\nu_n^{*m}$.

\begin{lem}[Uniform renewal input]
\label{lem:finite-n-uniform-renewal}
By enlarging $n_0$ if necessary,
\begin{align}\label{eq:finite-n-local-renewal}
 \sup_{n\ge n_0}\sup_{x\ge0} \cR_n([x,x+1]) < \infty.
\end{align}
Moreover, for every bounded step function $\phi$ with compact support and every sequence $b_n\to\infty$,
\begin{align}\label{eq:finite-n-key-renewal}
 \sup_{v\ge b_n}
 \abs{
 \int_{[0,v]} e^{-\alpha_n(v-u)}\phi(v-u)\cR_n(\dd u)
 -
 \frac1{\theta_n} \int_0^\infty e^{-\alpha_na}\phi(a)\,\dd a
 }
 \to 0.
\end{align}
\end{lem}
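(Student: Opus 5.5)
The plan is to treat $\cR_n$ as the renewal measure of a random walk whose step law $\nu_n$ varies with $n$ but converges in $L^1$ to the $\Gamma(\theta,1)$ law. The proof then rests on two ingredients. The first is the uniform local bound \eqref{eq:finite-n-local-renewal}, obtained from a uniform lower bound on short steps. The second is a uniform key renewal theorem, obtained by coupling, or alternatively by Fourier smoothing, using the uniform spread-out property of the densities.

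\emph{Step 1 (local bound).} By Lemma~\ref{lem:finite-n-cmj-martingale}, the densities of $\nu_n$ converge in $L^1$ to the $\Gamma(\theta,1)$ density. Hence there is $\delta\in(0,1)$ with $\nu_n([0,1])\le 1-\delta$ for all $n\ge n_0$. Let $X_{n,j}$ be i.i.d.\ with law $\nu_n$. For every $x\ge 0$, the number of partial sums $S_{n,m}$ lying in $[x,x+1]$ is dominated by one plus the length of a run of consecutive steps each of size at most $1$. This count is therefore stochastically bounded by a geometric variable with parameter $\delta$, which gives $\cR_n([x,x+1])\le 1+1/\delta$ uniformly in $n$ and $x$.

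\emph{Step 2 (uniform Blackwell).} It is enough to prove \eqref{eq:finite-n-key-renewal} for $\phi=\ind_{[a,b)}$, since step functions are finite linear combinations of such indicators. On a bounded interval, $e^{-\alpha_n a}$ is uniformly approximable by constants on short pieces, using $\alpha_n=1+O(n^{-1})$ and the local bound of Step 1. So the claim reduces to
\begin{align*}
 \sup_{v\ge b_n}\bigl|\cR_n([v-b,v-a])-(b-a)/\theta_n\bigr|\to0 .
\end{align*}
I will use a coupling argument that is uniform in $n$. Each $\nu_n$ dominates $c\,\ind_{[1,2]}(s)\,\dd s$ for a fixed $c>0$ and all large $n$, by the explicit density formula. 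So I split $\nu_n=c\,\mathrm{Unif}[1,2]+(1-c)\rho_n$. I then run two walks with the same step law: one started at $0$, and a stationary delayed walk whose delay has the equilibrium law $\theta_n^{-1}\nu_n((u,\infty))\,\dd u$. This delayed renewal measure is exactly $\theta_n^{-1}\,\dd u$. Using the uniform components, the Ornstein-type coupling makes the difference of the two walks hit, and thereafter stay within, an arbitrary $\eps$ with probability at least $1-\eps'$ within $K(\eps,\eps')$ steps. Here $K(\eps,\eps')$ does not depend on $n$, because the coupling rate is determined only by $c$ and by the common exponential moment from Lemma~\ref{lem:finite-n-cmj-martingale}, which controls the overshoot.

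Once the walks have coupled, the renewal counts in a window far to the right agree up to $\eps$-boundary effects. Those boundary effects are controlled by Step 1. The contribution from before coupling is handled as follows. The coupling time has a uniformly exponential tail, and the walks have positive drift, so the first $K$ steps lie in $[0,CK]$ with high probability; once $v\ge b_n\to\infty$, these steps cannot reach the window. Finally, $\theta_n\to\theta$ lets me replace $1/\theta_n$ where needed.

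\emph{Main obstacle.} The main difficulty is making the coupling time uniform in $n$. I plan to do this explicitly: the difference walk $D_k$ is a symmetrized random walk. Using the $c\,\mathrm{Unif}[1,2]$ parts, at each step there is probability at least $c^2$ that the increments can be chosen to move $D_k$ into $[-\eps,\eps]$ whenever $|D_k|\le 1$. In addition, $D_k$ returns to $[-1,1]$ at a uniform geometric rate, since its increments are centred and have uniformly bounded exponential moments. Together these two facts give a coupling time whose tail is uniform in $n$.
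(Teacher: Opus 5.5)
Your Step 1 is correct. It is a neat elementary replacement for the paper's argument. The paper cites the uniform renewal theorem of \cite{bg07}, after checking that $(\nu_n)$ is uniformly strongly non-lattice and has a common exponential moment. This gives $\cR_n([0,x])=x/\theta_n+\kappa_n^{\mathrm R}+O(e^{-cx})$ uniformly in $n$. The paper reads the local bound off this expansion and gets the key renewal statement by integration by parts. Your reduction of \eqref{eq:finite-n-key-renewal} to a uniform Blackwell statement for intervals is also fine.

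The gap is exactly at the step you call the main obstacle. In the Ornstein coupling, before success the difference walk $D_k$ has symmetric, centred increments $X-X'$. A one-dimensional centred walk with finite variance is \emph{null} recurrent. So its hitting time of $[-1,1]$ has infinite mean and a tail of order $k^{-1/2}$, already for the limiting $\Gamma(\theta,1)$ walk. Hence two of your claims are false: that $D_k$ ``returns to $[-1,1]$ at a uniform geometric rate'', and that the coupling time has a ``uniformly exponential tail''. As written, nothing controls $K(\eps,\eps')$ uniformly in $n$.

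There are also two smaller issues.
\begin{enumeratei}
\item The maximal coupling of $\Unif[1,2]$ with its translate by $d$ succeeds only with probability $1-|d|$. You therefore need $|D_k|\le 1/2$, not $|D_k|\le 1$.
\item Because this component is spread out, the coupling can be made exact, and you should do so. The $\eps$-boundary terms are not made small by Step 1, which bounds unit intervals by a constant, not $\eps$-intervals by $O(\eps)$. They would otherwise have to be controlled by the stationarity of the delayed walk.
\end{enumeratei}

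What you actually need is only uniform tightness, $\lim_{K\to\infty}\limsup_{n\to\infty}\pr(T_n>K)=0$, and this is available. Since $\nu_n\to\Gamma(\theta,1)$ in $L^1$, the residual laws $\rho_n=(\nu_n-c\,\Unif[1,2])/(1-c)$ also converge in $L^1$. So do the equilibrium delay densities $\theta_n^{-1}\nu_n((u,\infty))$, by the common exponential moment and dominated convergence. Run the same coupling rule for the $n$-th system and for the limiting system, with maximally coupled inputs. Then for each fixed $K$, $\pr(T_n>K)\le\pr(T_\infty>K)+o_n(1)$, and $T_\infty<\infty$ a.s. by Chung--Fuchs recurrence of the limiting difference walk.

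Alternatively, if you want the exponential tail you asserted, replace the Ornstein coupling by the one that always advances the walk that is behind. Between coupling attempts the gap then evolves as $G_{k+1}=|G_k-X_{k+1}|$. This drifts down at rate $\theta_n$ outside a compact set, and the common exponential moment gives geometric return times uniformly in $n$.

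With either repair, your argument is a self-contained substitute for the citation of \cite{bg07}. It yields only qualitative convergence rather than an $O(e^{-cx})$ remainder. That is all the later proofs (Lemmas~\ref{lem:finite-n-characteristic-moments}, \ref{lem:finite-n-coming-generation} and~\ref{lem:uniform-triangular-nerman}) actually use.
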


\begin{proof}
Let $p_n$ and $p$ denote the densities of $\nu_n$ and $\Gamma(\theta,1)$, respectively. By Lemma~\ref{lem:finite-n-cmj-martingale}, $p_n\to p$ in $L^1$, $\theta_n\to\theta>0$, and the sequence $(\nu_n)$ shares a common exponential moment. Consequently, the characteristic functions converge uniformly: $\sup_{t\in\dR} \abs{\widehat\nu_n(t)-\widehat\nu(t)} \le \norm{p_n-p}_{L^1} \to 0$. Since $\abs{\widehat\nu(t)} = (1+t^2)^{-\theta/2}$, it follows that $\sup_{|t|\ge1}|\widehat\nu_n(t)|$ is bounded strictly away from $1$ uniformly for all large $n$. Because the remaining finitely many $\nu_n$ admit densities (and thus are non-lattice with characteristic functions vanishing at infinity), the sequence $(\nu_n)_{n\ge n_0}$ is uniformly strongly non-lattice. 

Combined with the tightness implied by the common exponential moment, the uniform renewal theorem \cite[Theorem~1, Lemma~4]{bg07} guarantees constants $c,C>0$ and $\kappa_n^{\mathrm R}$ such that
\begin{align}\label{eq:finite-n-renewal-expansion}
 \sup_{n\ge n_0}\sup_{x\ge0}
 e^{cx} \abs{ \cR_n([0,x]) - \frac{x}{\theta_n} - \kappa_n^{\mathrm R} } \le C.
\end{align}
Since $\theta_n \to \theta > 0$, \eqref{eq:finite-n-local-renewal} immediately follows.

Now fix a step function $\phi$ supported in $[0,L]$. Since $\alpha_n\to1$, the functions $a\mapsto e^{-\alpha_na}\phi(a)$ possess a uniformly bounded supremum and total variation. For $v > L$, Stieltjes integration by parts on $[v-L,v]$ applied to the remainder in \eqref{eq:finite-n-renewal-expansion} yields
\begin{align*}
 \abs{
 \int_{[0,v]} e^{-\alpha_n(v-u)}\phi(v-u)\cR_n(\dd u)
 -
 \frac1{\theta_n} \int_0^\infty e^{-\alpha_na}\phi(a)\,\dd a
 }
 \le C_\phi e^{-c(v-L)}.
\end{align*}
Because $b_n \to \infty$, this bound vanishes for $v \ge b_n$, proving \eqref{eq:finite-n-key-renewal}.
\end{proof}

\begin{lem}[Uniform characteristic bounds]
\label{lem:finite-n-characteristic-moments}
Let $\phi:[0,\infty)\to\dR$ satisfy $|\phi(a)|\le Ce^{\eta a}$ for some $\eta<1$. Define $h_{n,\phi}(t) := e^{-\alpha_nt}\E Z_n^\phi(t)$ and $\cZ_{n,\phi}(t) := e^{-\alpha_nt}\bigl(Z_n^\phi(t)-\E Z_n^\phi(t)\bigr)$ for $t\ge0$ (and zero otherwise). Then
\begin{align}\label{eq:finite-n-characteristic-moment-bound}
 \sup_{n\ge n_0}\sup_{t\ge0} \left( \abs{h_{n,\phi}(t)} + \E\abs{\cZ_{n,\phi}(t)}^2 \right) <\infty.
\end{align}
If, moreover, $\sup_{n\ge n_0} \bigl( \sup_{a\ge0}e^{-\alpha_na}\abs{\phi(a)} + \operatorname{TV}_{[0,\infty)} \bigl(e^{-\alpha_n\cdot}\phi\bigr) \bigr) <\infty$, there exists $C_\phi<\infty$ such that, for any interval $I$ of length at most one,
\begin{align}\label{eq:finite-n-unit-window}
 \E\sup_{t\in I} \abs{ \sum_{j=1}^m a_j\cZ_{n,\phi}^{(j)}(t-b_j) }^2 \le C_\phi\sum_{j=1}^m a_j^2
\end{align}
for all $a_j\in\dR$, $b_j\ge0$, and independent copies $\cZ_{n,\phi}^{(j)}$, uniformly in $n\ge n_0$.
\end{lem}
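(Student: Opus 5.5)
The plan is to rely on the standard renewal decomposition for CMJ characteristics and feed it the uniform renewal bounds of Lemma~\ref{lem:finite-n-uniform-renewal}. For the mean there is an exact formula: the expected number of generation-$m$ individuals born in $\dd u$ is $\bar\mu_n^{*m}(\dd u)$. Consequently
\begin{align*}
 h_{n,\phi}(t)=\int_{[0,t]}e^{-\alpha_n(t-u)}\phi(t-u)\,\cR_n(\dd u).
\end{align*}
We split $[0,t]$ into the unit intervals $[t-k-1,t-k]$. On the $k$-th interval the integrand is at most $Ce^{-(\alpha_n-\eta)k}$, and by \eqref{eq:finite-n-local-renewal} each interval has renewal mass at most $C$. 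Since $\alpha_n\to1>\eta$, the series $\sum_k e^{-(\alpha_n-\eta)k}$ converges uniformly in $n\ge n_0$, and the bound on $h_{n,\phi}$ follows.

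For the second moment we decompose over the first generation. Let $\tau_i$ denote the children's birth times. Then
\begin{align*}
 \cZ_{n,\phi}(t)=e^{-\alpha_nt}\bigl(\phi(t)-\E\phi(t)\bigr)\ \text{(deterministic root term cancels)}+\sum_i e^{-\alpha_n\tau_i}\cZ^{(i)}_{n,\phi}(t-\tau_i)+D_n(t),
\end{align*}
with the centering term
\begin{align*}
 D_n(t)=\sum_i e^{-\alpha_n\tau_i}h_{n,\phi}(t-\tau_i)-\int e^{-\alpha_n s}h_{n,\phi}(t-s)\,\bar\mu_n(\dd s).
\end{align*}
Iterating this decomposition over generations, and using that the innovations are orthogonal across individuals, we obtain the renewal identity
\begin{align*}
 \E\cZ_{n,\phi}(t)^2=\int e^{-2\alpha_n u}\,v_n(t-u)\,\cR^{(2)}_n(\dd u),
\end{align*}
where $v_n(s)=\var(D_n(s))$ and $\cR^{(2)}_n=\sum_m\bar\mu_n^{*m}$. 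Here $e^{-2\alpha_n u}\cR^{(2)}_n(\dd u)$ is summable, with total mass $\sum_m\fm_n(2\alpha_n)^m\le(1-q)^{-1}$ by \eqref{eq:finite-n-second-laplace}. By independence of the offspring weights we have $v_n(s)\le\int e^{-2\alpha_n r}h_{n,\phi}(s-r)^2\,\bar\mu_n(\dd r)$, which is uniformly bounded once $h_{n,\phi}$ is. Should the variable $\phi(t)$ be random, its contribution is handled the same way. This yields \eqref{eq:finite-n-characteristic-moment-bound}.

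For \eqref{eq:finite-n-unit-window}, independence and centering of the copies make the cross terms vanish only at fixed $t$, not for the supremum. The approach is therefore to bound $\E\sup_{t\in I}|\cZ^{(j)}_{n,\phi}(t-b_j)|^2$ and to use a maximal-inequality structure. I expect this supremum control to be the main obstacle. The first attempt is to write $e^{-\alpha_n\cdot}\phi$ as a difference of two monotone functions with uniformly bounded variation, which is possible by the TV hypothesis. For a nonincreasing nonnegative $g=e^{-\alpha_n\cdot}\phi$, the uncentered quantity $\sum_u e^{-\alpha_n\tau_u}g(t-\tau_u)$ is sandwiched on $I=[a,a+1]$ between its values with $g$ evaluated at the endpoints. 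Only individuals born in $I$ cause jumps, and their total contribution is bounded by $\sup g$ times $e^{-\alpha_n a}\cdot\#\{\tau_u\in[a,a+1]\}$. The second moment of that count, normalized, is uniformly bounded by the same many-to-two renewal argument together with \eqref{eq:finite-n-local-renewal}. Hence
\begin{align*}
 \sup_I|\cZ^{(j)}|\le C\bigl(|\cZ^{(j)}(a)|+|\cZ^{(j)}(a+1)|+\widetilde\cZ^{(j)}\bigr)+C,
\end{align*}
where $\widetilde\cZ^{(j)}$ is the centered count and the additive constant is deterministic.

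To finish, we apply the envelope copy-wise. The weighted sum of the centered endpoint terms and of the count terms consists of independent centered terms, so its second moment is at most $C\sum_j a_j^2$. The residual deterministic oscillation of the mean, $\sup_I|h_{n,\phi}(t)-h_{n,\phi}(a)|$, does not appear because we work with centered quantities. A slightly more careful variant instead writes $\sup_I|\sum_j a_j\cZ^{(j)}|\le|\sum_j a_j\cZ^{(j)}(a)|+\sum_j|a_j|\,\mathrm{osc}_I\cZ^{(j)}$ and controls the oscillations through centered counts plus mean oscillation. The oscillation of the mean, $\mathrm{osc}_I h_{n,\phi}$, must then be shown to be uniformly bounded, which follows from the TV bound and \eqref{eq:finite-n-local-renewal}. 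The step I expect to resist most is the $\sum_j|a_j|$ term, which must be turned into $\sum_j a_j^2$. The fix is to re-center the count oscillations, which are independent across $j$, so that only their means remain; those mean oscillations are then absorbed as deterministic, mean-zero-compatible corrections.
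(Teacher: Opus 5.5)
Your bounds on $h_{n,\phi}$ and on $\E\abs{\cZ_{n,\phi}(t)}^2$ follow the paper's route. That route is the renewal representation with \eqref{eq:finite-n-local-renewal}, then the first-generation variance decomposition, the independence of the offspring atoms, and the contraction $\fm_n(2\alpha_n)\le q$; run it on generation-truncated processes, since finiteness of the variance is not known in advance.

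The genuine gap is \eqref{eq:finite-n-unit-window}. Your envelope controls $\sup_I\abs{\cZ^{(j)}}$ for one copy only. To use it you must pass through $\sup_I\abs{\sum_j a_j\cZ^{(j)}(t-b_j)}\le\sum_j\abs{a_j}\sup_I\abs{\cZ^{(j)}(t-b_j)}$, or through your variant with $\sum_j\abs{a_j}\,\mathrm{osc}_I\cZ^{(j)}$. After that step every term is nonnegative and carries the coefficient $\abs{a_j}$:
\begin{enumerate}
\item $\E\abs{\cZ^{(j)}(a)}$;
\item the mean normalized birth count in $I$;
\item the oscillation of $h_{n,\phi}$ over $I$, which does appear once you center a sandwich of the uncentered process;
\item your additive constant.
\end{enumerate}
All of these are of order one. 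Re-centering the counts only moves the problem. What remains is $C\sum_j\abs{a_j}(\E N^{(j)}+\mathrm{osc}\,h_{n,\phi})$, a strictly positive deterministic quantity, and nothing makes it mean-zero. So this route gives at best $C(\sum_j\abs{a_j})^2$; for $a_j=1/m$ that is $O(1)$ rather than $O(1/m)$.

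The quadratic form is essential. In the proof of Lemma~\ref{lem:uniform-triangular-nerman} the inequality is applied with $a_u=e^{-\alpha_n\tau_u}$, $b_u=\tau_u$, $u\in\cI_n(r)$. There $\sum_u a_u=\cW_n(r)$ has mean one, while $\E\sum_u a_u^2\le e^{-\alpha_n r}$. Only the $\sum a_u^2$ bound makes $M_{n,\phi}$ negligible after covering the window by $O(1+\fs_1)$ unit intervals.

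The missing idea is a maximal inequality that keeps the cancellation between independent summands inside the supremum, combined with a self-similar recursion over generations.
\begin{enumerate}
\item \textbf{Decomposition.} The paper writes $\cZ_{n,\phi}(t)=R_{n,\phi}(t)+\sum_z e^{-\alpha_n\gs_z}\cZ^{(z)}_{n,\phi}(t-\gs_z)$ with $\gs_z=\fa_\ell\go_z$. Here $R_{n,\phi}$ is the centered sum, over the independent atoms $z$, of $e^{-\alpha_n\gs_z}h_{n,\phi}(t-\gs_z)\ind\{\gs_z\le t\}$.
\item \textbf{Bound on $R$.} Over all pairs $(j,z)$ these summands are independent. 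On $I$ each has value plus variation at most $C\abs{a_j}e^{-\alpha_n\gs_z}$, by the uniform bound on $\operatorname{TV}$ of $h_{n,\phi}$ over unit intervals. Apply symmetrization and the Rademacher maximal inequality
\[
\E_\eps\sup_{t\in I}\abs{\sum_i\eps_if_i(t)}^2\le C\sum_i\bigl(\abs{f_i(\inf I)}+\operatorname{TV}_I f_i\bigr)^2 .
\]
This inequality follows from the Jordan decomposition and then Doob's inequality, level by level, for the Rademacher partial sums ordered by the times at which the monotone pieces cross that level. It gives $C_\phi\sum_ja_j^2$, because $\E\sum_ze^{-2\alpha_n\gs_z}=\fm_n(2\alpha_n)\le q$.
\item \textbf{Recursion.} Conditionally on the first generation, the descendant term is again of the form in \eqref{eq:finite-n-unit-window}, with weights $a_je^{-\alpha_n\gs_z}$ and shifts $b_j+\gs_z$. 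This is exactly why the statement allows arbitrary weights and shifts. If $K_r$ is the best constant for the process truncated at $r$ generations, Minkowski gives $\sqrt{K_r}\le C_\phi+\sqrt q\,\sqrt{K_{r-1}}$ with $K_0=0$. Hence $\sup_rK_r<\infty$, and Fatou's lemma passes to the full process.
\end{enumerate}
Your single-copy sandwich covers only the case $m=1$, and it cannot be upgraded to many copies by the triangle inequality.
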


\begin{proof}
The mean renewal equation is $h_{n,\phi}(t) = \int_{[0,t]} e^{-\alpha_n(t-s)}\phi(t-s)\cR_n(\dd s)$. Since $\alpha_n\to1$, $\eta<1$, and \eqref{eq:finite-n-local-renewal} holds, $h_{n,\phi}$ is uniformly bounded.

For the variance, the root decomposition and the preceding mean bound yield
\begin{align*}
 e^{-2\alpha_nt}\var\bigl(Z_n^\phi(t)\bigr)
 \le
 C_\phi + \int_{[0,t]} e^{-2\alpha_ns} \left[ e^{-2\alpha_n(t-s)} \var\bigl(Z_n^\phi(t-s)\bigr) \right] \bar\mu_n(\dd s).
\end{align*}
By \eqref{eq:finite-n-second-laplace}, $\fm_n(2\alpha_n)\le q<1$ uniformly for all large $n$. Applying this recursively to generation-truncated processes and passing to the limit gives $\sup_{n,t} e^{-2\alpha_nt}\var\bigl(Z_n^\phi(t)\bigr) \le C_\phi/(1-q)$.

Under the additional bounded-variation assumption, convolution with \eqref{eq:finite-n-local-renewal} ensures that 
\[
\sup_{n\ge n_0} \sup_{|I|\le1} \operatorname{TV}_I(h_{n,\phi}) <\infty.
\]
We utilize the elementary Rademacher maximal inequality for functions of bounded variation (derived via the Jordan decomposition and Doob's inequality):
\begin{align*}
 \E_\eps\sup_{t\in I} \abs{\sum_i\eps_i f_i(t)}^2 \le C\sum_i \bigl( |f_i(\inf I)| + \operatorname{TV}_I(f_i) \bigr)^2.
\end{align*}
Writing $\sigma_z:=\fa_\ell\go_z$ and letting $R_{n,\phi}(t)$ be the centered conditional-mean term, standard symmetrization applied to the centered root decomposition yields
\begin{align*}
 \E\sup_{t\in I} \abs{ \sum_{j=1}^m a_jR_{n,\phi}^{(j)}(t-b_j) }^2 \le C_\phi\sum_{j=1}^m a_j^2,
\end{align*}
where we used $\sum_{z\in\cD_n}\E e^{-2\alpha_n\sigma_z} = \fm_n(2\alpha_n) \le q$. If $K_r$ is the optimal constant in \eqref{eq:finite-n-unit-window} for the process truncated at $r$ generations, the descendant terms yield $\sqrt{K_r} \le C_\phi+\sqrt q\,\sqrt{K_{r-1}}$ with $K_0=0$. Thus $\sup_rK_r<\infty$, and non-explosion coupled with Fatou's lemma proves \eqref{eq:finite-n-unit-window}.
\end{proof}

\begin{lem}[Stopping-line bounds]
\label{lem:finite-n-coming-generation}
For $r\ge0$, define the stopping line $\cI_n(r)$ and its associated martingale value $\cW_n(r)$ by
\begin{align*}
 \cI_n(r):= \{u\ne\varnothing:\tau_{u^-}\le r<\tau_u\}, \qquad 
 \cW_n(r):= \sum_{u\in\cI_n(r)}e^{-\alpha_n\tau_u},
\end{align*}
where $u^-$ denotes the parent of $u$. For $n_0$ sufficiently large, there exist constants $c,C>0$ such that uniformly for $n\ge n_0$ and $r\ge0$,
\begin{alignat*}{2}
 \E\cW_n(r)&=1, 
 &\E\sum_{u\in\cI_n(r)}e^{-2\alpha_n\tau_u} &\le e^{-\alpha_nr},\\
 \E\abs{\cW_{n,\infty}-\cW_n(r)}^2 &\le Ce^{-\alpha_nr},\qquad 
 &\E\sum_{u\in\cI_n(r)} e^{-\alpha_n\tau_u}\ind\{\tau_u>2r\} 
 &\le C(1+r)e^{-cr}.
\end{alignat*}
\end{lem}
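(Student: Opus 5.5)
The plan is to use the many-to-one formula along the spine walk, together with the branching property over the optional line $\cI_n(r)$. Throughout I rely on the ingredients of Lemma~\ref{lem:finite-n-cmj-martingale}: the uniform exponential moment of $\nu_n$ and $\sup_n\var(\cW_{n,\infty})<\infty$. I also use the local renewal bound~\eqref{eq:finite-n-local-renewal}.

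\emph{Step 1 (many-to-one).} Let $S_m=X_{n,1}+\cdots+X_{n,m}$ with i.i.d.\ steps of law $\nu_n$, and set $T_r:=\inf\{m:S_m>r\}$. Then $u\in\cI_n(r)$ means exactly that $\tau_{u^-}\le r<\tau_u$. Since $\nu_n=e^{-\alpha_n s}\bar\mu_n$, the expected generation-$m$ birth measure satisfies $e^{-\alpha_n s}\bar\mu_n^{*m}(\dd s)=\nu_n^{*m}(\dd s)$. Hence for non-negative $g$,
\begin{align*}
 \E\sum_{u\in\cI_n(r)}e^{-\alpha_n\tau_u}g(\tau_u)=\E\, g(S_{T_r}).
\end{align*}
\begin{itemize}
 \item With $g\equiv1$ this gives $\E\cW_n(r)=\pr(T_r<\infty)=1$, because $\theta_n>0$ forces $S_m\to\infty$.
 \item With $g(s)=e^{-\alpha_n s}$ it gives $\E e^{-\alpha_nS_{T_r}}\le e^{-\alpha_n r}$, since $S_{T_r}>r$.
 \item With $g=\ind_{(2r,\infty)}$ it gives the overshoot tail
 \begin{align*}
 \pr(S_{T_r}>2r)=\int_{[0,r]}\cR_n(\dd u)\,\nu_n((2r-u,\infty)).
 \end{align*}
 I will split $[0,r]$ into unit intervals and apply~\eqref{eq:finite-n-local-renewal} on each. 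The common exponential moment gives $\nu_n((x,\infty))\le Ce^{-\rho x}$. Since $2r-u\ge r$ for $u\le r$, the integral is bounded by $C(r+1)e^{-\rho r}$, uniformly in $n\ge n_0$. This is the fourth bound with $c=\rho$.
\end{itemize}

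\emph{Step 2 (decomposition over the line).} Fix a realisation and let $m$ exceed the maximal generation of all individuals born by time $r$. This set is a.s.\ finite by non-explosion and local finiteness of $\bar\mu_n$. Every generation-$m$ individual then has a unique ancestor in $\cI_n(r)$, so
\begin{align*}
 \cW_{n,m}=\sum_{u\in\cI_n(r)}e^{-\alpha_n\tau_u}\,\cW^{(u)}_{n,m-|u|}.
\end{align*}
Here $\cW^{(u)}$ is the generation martingale of the subtree rooted at $u$, with time measured from $\tau_u$. Letting $m\to\infty$ and using the $L^2$ convergence from Lemma~\ref{lem:finite-n-cmj-martingale} yields
\begin{align*}
 \cW_{n,\infty}-\cW_n(r)=\sum_{u\in\cI_n(r)}e^{-\alpha_n\tau_u}\bigl(\cW^{(u)}_{n,\infty}-1\bigr).
\end{align*}
Since $\cI_n(r)$ is an optional stopping line, Jagers' strong branching property says that, conditionally on the pre-line $\sigma$-field, the subtree limits $\cW^{(u)}_{n,\infty}$ are i.i.d.\ copies of $\cW_{n,\infty}$. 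They are therefore centered and independent of the weights $e^{-\alpha_n\tau_u}$. Orthogonality then gives
\begin{align*}
 \E\bigl|\cW_{n,\infty}-\cW_n(r)\bigr|^2=\var(\cW_{n,\infty})\;\E\sum_{u\in\cI_n(r)}e^{-2\alpha_n\tau_u}\le Ce^{-\alpha_n r},
\end{align*}
using Step 1 for the last inequality.

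\emph{Main obstacle.} I expect Step 2 to be the delicate point. It needs the optional-line branching property, and the interchange of the $m\to\infty$ limit with a sum over a random, though a.s.\ finite, line. I would handle this by first truncating to $\{|\cI_n(r)|\le K\}$, applying Fatou's lemma, and then letting $K\to\infty$. The uniformity in $n\ge n_0$ needs no separate argument, because it enters only through $\sup_n\var(\cW_{n,\infty})$, the common exponential moment, and~\eqref{eq:finite-n-local-renewal}.
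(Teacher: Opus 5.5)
Your proposal is correct and follows essentially the paper's route: the tail bound is the same renewal integral $\int_{[0,r]}\cR_n(\dd u)\,\nu_n((2r-u,\infty))$ controlled by~\eqref{eq:finite-n-local-renewal} and the common exponential moment, and the $L^2$ bound is the same optional-line decomposition $\cW_{n,\infty}=\sum_{u\in\cI_n(r)}e^{-\alpha_n\tau_u}\cW^{(u)}_{n,\infty}$ with conditionally i.i.d.\ mean-one subtree limits. The only cosmetic difference is that you obtain $\E\cW_n(r)=1$ and the second bound from the spine identity $\E g(S_{T_r})$, whereas the paper reads them off from $\E(\cW_{n,\infty}\mid\cF_{\cI_n(r)})=\cW_n(r)$ and the pathwise bound $\sum_{u\in\cI_n(r)}e^{-2\alpha_n\tau_u}\le e^{-\alpha_n r}\cW_n(r)$. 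The limit interchange you flag is harmless, because the line is a.s.\ finite and each of the countably many subtree martingales converges a.s.
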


\begin{proof}
Since the process is non-explosive, $\cI_n(r)$ is almost surely finite. Let $\cF_{\cI_n(r)}$ be the stopping-line $\sigma$-algebra. For any generation $m$, the intrinsic martingale $\cW_{n,m}$ satisfies
\begin{align*}
 \cW_{n,m} = \sum_{\substack{u\in\cI_n(r)\\ |u|\le m}} e^{-\alpha_n\tau_u}\cW_{n,m-|u|}^{(u)} + \sum_{\substack{|v|=m\\ \tau_v\le r}} e^{-\alpha_n\tau_v},
\end{align*}
where $\cW_{n,k}^{(u)}$ is the generation-$k$ martingale of the subtree rooted at $u$. Taking expectations, the second sum equals $\nu_n^{*m}([0,r])$. This vanishes as $m\to\infty$, ensuring $L^1$ convergence to zero. By Lemma~\ref{lem:finite-n-cmj-martingale}, $\cW_{n,m}\to\cW_{n,\infty}$ in $L^1$ and the subtree martingales converge almost surely. Since $\cI_n(r)$ is finite, passing to the limit $m\to\infty$ yields
\begin{align*}
 \cW_{n,\infty} = \sum_{u\in\cI_n(r)} e^{-\alpha_n\tau_u}\cW_{n,\infty}^{(u)}.
\end{align*}
Conditional on $\cF_{\cI_n(r)}$, the limits $\cW_{n,\infty}^{(u)}$ are independent mean-one copies of $\cW_{n,\infty}$. Thus,
\begin{align*}
 \E\bigl(\cW_{n,\infty}\mid\cF_{\cI_n(r)}\bigr) &= \cW_n(r),\\
 \E\left[ \abs{\cW_{n,\infty}-\cW_n(r)}^2 \bigm| \cF_{\cI_n(r)} \right] &= \var(\cW_{n,\infty}) \sum_{u\in\cI_n(r)}e^{-2\alpha_n\tau_u}.
\end{align*}
Because $\tau_u>r$ on $\cI_n(r)$,
\begin{align*}
 \sum_{u\in\cI_n(r)}e^{-2\alpha_n\tau_u} \le e^{-\alpha_nr}\cW_n(r).
\end{align*}
Taking expectations and bounding $\var(\cW_{n,\infty})$ uniformly via Lemma~\ref{lem:finite-n-cmj-martingale} establishes the first three estimates.
For the tail bound, the branching property yields
\begin{align*}
 \E\sum_{u\in\cI_n(r)} e^{-\alpha_n\tau_u}\ind\{\tau_u>2r\} = \int_{[0,r]} \cR_n(\dd s)\, \nu_n\bigl((2r-s,\infty)\bigr).
\end{align*}
Since the measures $(\nu_n)$ share a common exponential moment (Lemma~\ref{lem:finite-n-cmj-martingale}), the tail $\nu_n((2r-s,\infty))$ is uniformly bounded by $C e^{-c(2r-s)} \le C e^{-cr}$ for $s\le r$. Combined with $\cR_n([0,r]) \le C(1+r)$ from \eqref{eq:finite-n-local-renewal}, we obtain
\begin{align*}
 \E\sum_{u\in\cI_n(r)} e^{-\alpha_n\tau_u}\ind\{\tau_u>2r\} \le Ce^{-cr}\cR_n([0,r]) \le C(1+r)e^{-cr}.
\end{align*}
This completes the proof.
\end{proof}

\begin{proof}[Proof of Lemma~\ref{lem:uniform-triangular-nerman}]
For a characteristic $\psi$, define 
\[
c_{n,\psi} := \frac1{\theta_n} \int_0^\infty e^{-\alpha_na}\psi(a)\,\dd a
\text{ and }
X_{n,\psi}(t) := e^{-\alpha_nt}Z_n^\psi(t) - c_{n,\psi}\cW_{n,\infty}.
\]

First, let $\phi$ be a compactly supported bounded step function. Set $r:=\fs_0/3$, and recall $\cI_n(r)$ and $\cW_n(r)$ from Lemma~\ref{lem:finite-n-coming-generation}. For all large $n$, $\supp(\phi)\subset[0,r]$. For $t\in[\fs_0,\fs_1]$, any particle contributing to $Z_n^\phi(t)$ is born after time $t-r\ge2r$. The stopping-line decomposition therefore yields
\begin{align}\label{eq:finite-n-stopping-decomp-short}
 X_{n,\phi}(t) = M_{n,\phi}(t) + D_{n,\phi}(t) + c_{n,\phi} \bigl( \cW_n(r)-\cW_{n,\infty} \bigr),
\end{align}
where
\begin{align*}
 M_{n,\phi}(t) &:= \sum_{u\in\cI_n(r)} e^{-\alpha_n\tau_u} \cZ_{n,\phi}^{(u)}(t-\tau_u),
 \qquad D_{n,\phi}(t) := \sum_{u\in\cI_n(r)} e^{-\alpha_n\tau_u} \bigl( h_{n,\phi}(t-\tau_u)-c_{n,\phi} \bigr).
\end{align*}
Since $c_{n,\phi}$ is uniformly bounded, Lemma~\ref{lem:finite-n-coming-generation} ensures the final term in~\eqref{eq:finite-n-stopping-decomp-short} converges to zero in $L^2$.

By Lemma~\ref{lem:finite-n-uniform-renewal}, $\delta_n := \sup_{v\ge r} \abs{h_{n,\phi}(v)-c_{n,\phi}} \to 0$. Splitting the stopping line at $2r$ bounds the drift term:
\begin{align*}
 \sup_{\fs_0\le t\le\fs_1}\abs{D_{n,\phi}(t)} \le \delta_n\cW_n(r) + C_\phi\sum_{u\in\cI_n(r)} e^{-\alpha_n\tau_u}\ind\{\tau_u>2r\}.
\end{align*}
By Lemma~\ref{lem:finite-n-coming-generation}, $\cW_n(r)=O_{\pr}(1)$ and the second sum vanishes in $L^1$. Thus, $\sup_{\fs_0\le t\le\fs_1}\abs{D_{n,\phi}(t)} \stackrel{\pr}{\to} 0$.

Finally, covering $[\fs_0,\fs_1]$ by $O(1+\fs_1)$ unit intervals and applying~\eqref{eq:finite-n-unit-window} conditionally on the stopping line gives
\begin{align*}
 \E\sup_{\fs_0\le t\le\fs_1} \abs{M_{n,\phi}(t)}^2 \le C_\phi(1+\fs_1) \E\sum_{u\in\cI_n(r)}e^{-2\alpha_n\tau_u} \le C_\phi(1+\fs_1)e^{-\alpha_nr} = o(1),
\end{align*}
where the second inequality utilizes Lemma~\ref{lem:finite-n-coming-generation}. Combining these estimates completes the proof for compactly supported step functions.

For the tail approximation, fix $\zeta\in[0,1)$ and put $\phi_\zeta(a):=e^{\zeta a}$. Using the stopping-line decomposition alongside Lemma~\ref{lem:finite-n-characteristic-moments} and \eqref{eq:finite-n-local-renewal}, the three components map directly to uniform bounds:
\begin{align*}
 \E\sup_{\fs_0\le t\le\fs_1} e^{-(\alpha_n-\zeta)t} \sum_{v:\tau_v\le r}e^{-\zeta\tau_v} &\le C_\zeta e^{-c_\zeta r}, \\
 \sup_{\fs_0\le t\le\fs_1} \sum_{u\in\cI_n(r)} e^{-\alpha_n\tau_u} \abs{h_{n,\phi_\zeta}(t-\tau_u)} &\le C_\zeta\cW_n(r) = O_{\pr}(1), \\
 \E\sup_{\fs_0\le t\le\fs_1} \abs{ \sum_{u\in\cI_n(r)} e^{-\alpha_n\tau_u} \cZ_{n,\phi_\zeta}^{(u)}(t-\tau_u) }^2 &\le C_\zeta(1+\fs_1)e^{-\alpha_nr} = o(1).
\end{align*}
Consequently, the weighted envelope is uniformly controlled: $\sup_{\fs_0\le t\le\fs_1} e^{-\alpha_nt}Z_n^{\phi_\zeta}(t) = O_{\pr}(1)$.

Now assume $\phi$ is continuous and $|\phi(a)|\le Ce^{\eta a}$ with $\eta<1$. Choose $\zeta\in(\max\{\eta,0\},1)$. There exist compactly supported step functions $\phi_m$ such that $\norm{\phi-\phi_m}_\zeta := \sup_{a\ge0}e^{-\zeta a}|\phi(a)-\phi_m(a)| \to 0$. The difference $|X_{n,\phi}(t)-X_{n,\phi_m}(t)|$ is bounded by $\norm{\phi-\phi_m}_\zeta$ multiplied by an $O_{\pr}(1)$ bracketed term derived from the envelope bound. Taking $n\to\infty$ and then $m\to\infty$ extends the compact-step result to the continuous case.

Finally, suppose $\phi$ is bounded and locally Riemann-integrable. Set $\phi^{(L)}:=\phi\ind_{[0,L]}$. The truncation error is dominated by $\norm{\phi}_\infty e^{-\alpha_nL}$ multiplied by the supremum of the $\phi\equiv1$ process shifted by $L$, which the continuous case confirms is $O_{\pr}(1)$. Thus, for any $\delta>0$, $\lim_{L\to\infty}\limsup_{n\to\infty} \pr( \sup_{\fs_0\le t\le\fs_1} | X_{n,\phi}(t)-X_{n,\phi^{(L)}}(t) | >\delta ) = 0$. For a fixed $L$, construct bounded step functions $\phi_m^-\le\phi^{(L)}\le\phi_m^+$ supported on $[0,L]$ such that $\int_0^L (\phi_m^+(a)-\phi_m^-(a))\,\dd a \to 0$. Monotonicity sandwiches the approximation; letting $n\to\infty$, $m\to\infty$, and finally $L\to\infty$ completes the proof.
\end{proof}

%%%%%%%%%%%%%%%%%%%%%%%%%%%%%%%%%%%%%%%%%%%%%%%%%%%%%%%%%%%%%%%%%%%%%%%%%
\subsection{Proof of the Fourier-decay Lemma~\ref{lem:fourier-many-to-two}}
Fix $k\in\dZ\setminus\{0\}$ and let
\begin{align*}
 \psi_{k,n}
 :=\frac1{|\cD_n|}
 \sum_{z\in\cD_n}e^{2\pi\mathrm{i}kz/n}.
\end{align*}
Since $\ell/n\to\gl\in(0,1/2)$, the limit $\psi_{k,n}\to\frac{\sin(2\pi k\gl)}{2\pi k\gl}$ ensures $|\psi_{k,n}|$ is strictly bounded away from $1$. Thus, we can choose $\rho_k\in(1/2,1)$ close enough to $1$ such that $\sup_{n\ge n_0}|\psi_{k,n}|\fm_n(\rho_k)<1$ and $2\rho_k>\alpha_n$ for all sufficiently large $n$.

For a bounded step function $g$ with finitely many jumps, taking expectations in the root decomposition yields
\begin{align*}
 \E Y_{n,k,g}^{\op}(t)
 =
 g(t)
 +
 \psi_{k,n}
 \int_{[0,t]}
 \E Y_{n,k,g}^{\op}(t-s)\bar\mu_n(\dd s).
\end{align*}
This implies $|\E Y_{n,k,g}^{\op}(t)| \le C_{k,g}e^{\rho_kt}$. Defining the normalized mean $h_{n,k,g}(t) := e^{-\rho_kt}\E Y_{n,k,g}^{\op}(t)$, the renewal equation guarantees that $h_{n,k,g}$ has uniformly bounded supremum and total variation, since the normalized renewal kernel's total variation is strictly bounded by $1$.

Let
\begin{align*}
 S_{n,k,g}(t)
 :=\E\abs{Y_{n,k,g}^{\op}(t)}^2,
 \qquad
 a_{n,k,g}(t)
 :=
 \frac1{|\cD_n|}
 \int_{[0,t]}
 \E Y_{n,k,g}^{\op}(t-s)\bar\mu_n(\dd s).
\end{align*}
Expanding the variance over independent child branches yields
\begin{align*}
 S_{n,k,g}(t)
 &={}
 |g(t)|^2
 +
 \int_{[0,t]}S_{n,k,g}(t-s)\bar\mu_n(\dd s)
 \\
 &\quad+
 2\Re\big(
 \overline{g(t)}
 |\cD_n|\psi_{k,n}a_{n,k,g}(t)
 \big)
 +
 (|\cD_n|^2|\psi_{k,n}|^2-|\cD_n|)|a_{n,k,g}(t)|^2.
\end{align*}
Applying the first-moment bound $|a_{n,k,g}(t)| \le \frac{C_{k,g}}{|\cD_n|} e^{\rho_kt}\fm_n(\rho_k)$ simplifies this to
\begin{align*}
 S_{n,k,g}(t)
 \le
 C_{k,g}e^{2\rho_kt}
 +
 \int_{[0,t]}S_{n,k,g}(t-s)\bar\mu_n(\dd s).
\end{align*}
Because $\sup_{n\ge n_0}\fm_n(2\rho_k)<1$, iterating this bound over generation-truncated processes and passing to the limit yields $\E|Y_{n,k,g}^{\op}(t)|^2 \le C_{k,g}e^{2\rho_kt}$.

For the unit-window maximum, we apply the Rademacher maximal inequality from the proof of Lemma~\ref{lem:finite-n-characteristic-moments} to the centered process $Y_{n,k,g}^{\op}-\E Y_{n,k,g}^{\op}$, utilizing the normalization $\rho_k$ and child coefficients $e^{2\pi\mathrm{i}kz/n}$ (which have modulus one). The bounded-variation estimate for $h_{n,k,g}$ and the strict inequality $\sup_{n\ge n_0}\fm_n(2\rho_k)<1$ guarantee that
\begin{align*}
 \E\sup_{j\le t\le j+1}
 \abs{Y_{n,k,g}^{\op}(t)}^2
 \le
 C_{k,g}e^{2\rho_k(j+1)},
 \qquad j\ge0.
\end{align*}
Taking the union bound over unit intervals up to $T\ge1$ gives $\E\sup_{0\le t\le T} e^{-2\rho_kt} |Y_{n,k,g}^{\op}(t)|^2 \le C_{k,g}(1+T)$. Since $\fs_0=\eps\log n$ and $\fs_1=\frac12\log n+y$, Markov's inequality ensures
\begin{align*}
 \pr\Big(
 \sup_{\fs_0\le s\le\fs_1}
 e^{-s}\abs{Y_{n,k,g}^{\op}(s)}>\delta
 \Big)
 \le
 C\delta^{-2}(1+\fs_1)
 e^{-2(1-\rho_k)\fs_0}
 \to0,
\end{align*}
establishing the result for step functions.

For bounded, locally Riemann-integrable $g$, we truncate at a large age and approximate via bounded step functions $g_m$. The difference is point-wise dominated:
\begin{align*}
 \abs{Y_{n,k,g}^{\op}(s)-Y_{n,k,g_m}^{\op}(s)}
 \le
 Z_n^{|g-g_m|}(s).
\end{align*}
Lemma~\ref{lem:uniform-triangular-nerman} provides uniform control of this upper bound on the collision window. Since $\sup_{s\le\fs_1}|e^{(\alpha_n-1)s}-1|\to0$, letting $n\to\infty$ and subsequently eliminating the approximation error completes the proof.\qed

%%%%%%%%%%%%%%%%%%%%%%%%%%%%%%%%%%%%%%%%%%%%%%%%%%%%%%%%%%%%%%%%%%%%%%%%%
\subsection{Proof of the ghost-suppression Lemma~\ref{lem:ghost-compensator}}
Define the combined proposal population $Z_{\tot,n}^{\op}(t) := Z_{1,n}^{\op}(t)+Z_{2,n}^{\op}(t)$. A ghost is \emph{primary} if its parent is retained but its proposed label is already occupied; all ghosts descend from a unique primary ghost.

Setting $\eta_+:=\max\{\eta,0\}$, the uniform mean bounds from Lemma~\ref{lem:finite-n-characteristic-moments} yield
\begin{align*}
 \E\sum_{u:\tau_u\le t}
 e^{\eta_+(t-\tau_u)}
 \le Ce^{\alpha_nt},
 \qquad t\ge0.
\end{align*}
By the branching property, a fresh proposal subtree born at time $r$ contributes at most $C_g e^{\alpha_n(\fs_1-r)}$ in expectation to the supremum of the absolute $g$-characteristic on $[r,\fs_1]$. Summing over all primary ghosts gives
\begin{align*}
 \E G_{n,g}(\fs_1)
 \le
 C_g\E\sum_{v \text{ primary}}
 e^{\alpha_n(\fs_1-\tau_v)}.
\end{align*}

We bound primary ghost births via a one-child-deleted construction. Fix a parent and direction, remove this potential child, and condition on the remaining proposal variables. The target label's first occupation time is then independent of the removed child's birth age. Thus, a birth at age $s$ is primary only if its target was occupied strictly before $\tau_u+s$. Because there are at most $Z_{\tot,n}^{\op}(r-)$ directions from any parent to occupied labels at time $r-$, summing over directions and parents (and relaxing the retention condition) yields the compensator bound
\begin{align*}
 \E\sum_{v \text{ primary}}F(\tau_v)
 \le
 \frac1{|\cD_n|}
 \E\int_0^\infty
 F(r)Z_{\tot,n}^{\op}(r-)
 \,\dd Z_{\tot,n}^{\op}(r)
\end{align*}
for any non-negative measurable function $F$.

Applying this to $F(r)=e^{\alpha_n(\fs_1-r)}\ind\{r\le\fs_1\}$ yields
\begin{align*}
 \E G_{n,g}(\fs_1)
 \le
 \frac{C_g}{|\cD_n|}
 \E\int_{(0,\fs_1]}
 e^{\alpha_n(\fs_1-r)}
 Z_{\tot,n}^{\op}(r-)
 \,\dd Z_{\tot,n}^{\op}(r).
\end{align*}
Lemma~\ref{lem:finite-n-characteristic-moments} (with $\phi\equiv1$) provides the second-moment bound $\E[Z_{\tot,n}^{\op}(t)^2] \le Ce^{2\alpha_nt}$. Since the population increases by unit jumps, $2Z_{\tot,n}^{\op}(r-)\,\dd Z_{\tot,n}^{\op}(r) \le \dd\bigl(Z_{\tot,n}^{\op}(r)^2\bigr)$. Integration by parts against $e^{\alpha_n(\fs_1-r)}$ thus bounds the integral by
\begin{align*}
 \E\int_{(0,\fs_1]}
 e^{\alpha_n(\fs_1-r)}Z_{\tot,n}^{\op}(r-)
 \,\dd Z_{\tot,n}^{\op}(r)
 &\le
 C\E\bigl[Z_{\tot,n}^{\op}(\fs_1)^2\bigr]
 +
 C\alpha_n\int_0^{\fs_1}
 e^{\alpha_n(\fs_1-r)}
 \E\bigl[Z_{\tot,n}^{\op}(r)^2\bigr]\,\dd r
 \\
 &\le
 C_y e^{2\alpha_n\fs_1}.
\end{align*}
This estimate strictly relies on integrated birth measures, preserving its validity for $0<\theta<1$. Consequently,
\begin{align*}
 \E G_{n,g}(\fs_1)
 \le
 \frac{C_{y,g}}{|\cD_n|}
 e^{2\alpha_n\fs_1}
 =O(1),
\end{align*}
utilizing $|\cD_n|\asymp n$, $\fs_1=\frac12\log n+y$, and $\alpha_n=1+O(n^{-1})$. Finally, since $e^{\fs_0}=n^\eps\to\infty$, Markov's inequality guarantees
\begin{align*}
 e^{-\fs_0}G_{n,g}(\fs_1)
 \stackrel{\pr}{\to}0,
\end{align*}
which verifies the ghost suppression.\qed
%%%%%%%%%%%%%%%%%%%%%%%%%%%%%%%%%%%%%%%%%%%%%%%%%%%

\bibliographystyle{alphaurl}
\bibliography{fppso} 
\end{document}

%% file: SL_style.tex
\usepackage{systeme,mathrsfs,xfrac} 
\usepackage[colorlinks=true,linkcolor=blue,citecolor=blue,urlcolor=blue,pdfencoding=auto, psdextra]{hyperref} 
\usepackage{amsmath,amssymb,amsthm,amsfonts,amsbsy,latexsym,dsfont,color} 
\usepackage[foot]{amsaddr}
\usepackage{comment}
\usepackage{tikz}
\usetikzlibrary{positioning,arrows.meta,calc,bending}

\usepackage[textsize=tiny]{todonotes}
\usepackage{regexpatch}
\makeatletter
\xpatchcmd{\@todo}{\setkeys{todonotes}{#1}}{\setkeys{todonotes}{inline,#1}}{}{}
\makeatother

\usepackage{enumerate}
\newenvironment{enumeratei}{\begin{enumerate}[\upshape i.]}{\end{enumerate}}
\newenvironment{enumeratea}{\begin{enumerate}[\upshape a)]}{\end{enumerate}}
\newenvironment{enumeraten}{\begin{enumerate}[\upshape 1.]}{\end{enumerate}}

\newtheorem{thm}{Theorem}[section]
\newtheorem{lem}[thm]{Lemma}
\newtheorem{cor}[thm]{Corollary}
\newtheorem{prop}[thm]{Proposition}
\newtheorem{defn}[thm]{Definition}
\newtheorem{rem}[thm]{Remark}

\newtheorem{ass}[thm]{Assumption}

\renewcommand{\le}{\leqslant} 
\renewcommand{\ge}{\geqslant} 
\renewcommand{\leq}{\leqslant} 
 
\newcommand{\eset}{\varnothing}
\newcommand{\ra}{\rangle}
\newcommand{\la}{\langle}

\newcommand{\ind}{\mathds{1}}
\newcommand{\eps}{\varepsilon}

\newcommand{\norm}[1]{\left\Vert#1\right\Vert}
\newcommand{\abs}[1]{\left\vert#1\right\vert}

\newcommand{\ie}{\emph{i.e.,}}

\newcommand{\equald}{\stackrel{\mathrm{d}}{=}}

\def\qed{ \hfill $\blacksquare$}  
\let\ga=\alpha   \let\gd=\delta 
\let\gf=\varphi    \let\gk=\kappa \let\gl=\lambda        \let\go=\omega  \let\gr=\rho \let\gs=\sigma \let\gt=\tau \let\gth=\vartheta
  \let\gz=\zeta
 \let\gD=\Delta  \let\gL=\Lambda

\newcommand{\cA}{\mathcal{A}}\newcommand{\cB}{\mathcal{B}}\newcommand{\cC}{\mathcal{C}}
\newcommand{\cD}{\mathcal{D}}\newcommand{\cE}{\mathcal{E}}\newcommand{\cF}{\mathcal{F}}
\newcommand{\cG}{\mathcal{G}}\newcommand{\cH}{\mathcal{H}}\newcommand{\cI}{\mathcal{I}}
\newcommand{\cL}{\mathcal{L}}
\newcommand{\cN}{\mathcal{N}}
\newcommand{\cR}{\mathcal{R}}
\newcommand{\cS}{\mathcal{S}}\newcommand{\cT}{\mathcal{T}}
\newcommand{\cV}{\mathcal{V}}\newcommand{\cW}{\mathcal{W}}
\newcommand{\cZ}{\mathcal{Z}}  
\newcommand{\fB}{\mathfrak{B}}\newcommand{\fC}{\mathfrak{C}}

\newcommand{\fa}{\mathfrak{a}}

\newcommand{\fm}{\mathfrak{m}}

\newcommand{\fs}{\mathfrak{s}}

\newcommand{\dN}{\mathds{N}}

\newcommand{\dR}{\mathds{R}}
\newcommand{\dS}{\mathds{S}}\newcommand{\dT}{\mathds{T}}

\newcommand{\dZ}{\mathds{Z}} 
\newcommand{\sT}{\mathscr{T}}

\DeclareMathOperator{\E}{\mathds{E}}
\DeclareMathOperator{\pr}{\mathds{P}}

\DeclareMathOperator{\var}{Var}

\DeclareMathOperator{\N}{N}
\newcommand{\oT}{\overline{T}}

\newcommand{\sfP}{\mathsf{P}}
\newcommand{\sfT}{\mathsf{T}}

\newcommand{\sfN}{\mathsf{N}}

\newcommand{\supp}{\operatorname{supp}}

\newcommand{\oS}{\overline{S}}

\newcommand{\vstar}{\mathsf{v}^\star}
\newcommand{\act}{\mathrm{act}}
\newcommand{\col}{\mathrm{col}}

\newcommand{\dd}{\mathrm d}
\newcommand{\Exp}{\mathrm{Exp}}
\newcommand{\Unif}{\mathrm{Unif}}

\newcommand{\op}{\mathsf{op}}

\newcommand{\pihat}{\widehat\pi}
\newcommand{\gbc}{\beta_{\mathsf{c}}}
\newcommand{\sfS}{\mathsf{S}}
\DeclareMathOperator{\Gen}{\operatorname{Gen}}
\providecommand{\dTV}{d_{\mathrm{TV}}}
\newcommand{\sfM}{\mathsf{M}}
\newcommand{\unif}{\mathrm{unif}}
\newcommand{\sfu}{\mathsf{u}}
\newcommand{\sfp}{\mathsf{p}}
\newcommand{\sft}{\mathsf{t}}
\newcommand{\Bad}{\mathrm{Bad}}
\newcommand{\tot}{\mathrm{tot}}
\newcommand{\CTV}{C_{\mathrm{TV}}}